\documentclass[10-pt,a4paper,leqno]{article}
\DeclareUnicodeCharacter{0302}{\^}
\usepackage[utf8]{inputenc}
\usepackage{xcolor}
\usepackage[ left=3.00cm, right=3.00cm]{geometry}
\usepackage{amsmath}
\usepackage{amsthm}
\usepackage{enumitem}
\usepackage{booktabs}
\usepackage{tabularx}
\usepackage{graphicx}
\usepackage{amsfonts}
\usepackage{amssymb}
\usepackage{graphicx}
\usepackage{placeins}
\usepackage{mathtools}  
\usepackage{bm}    
\usepackage{algorithm}
\usepackage{algpseudocode}   
\usepackage{float}           
\usepackage{titlesec}

\usepackage[style=numeric, maxnames=99, backend=biber]{biblatex}
\usepackage{hyperref}
\usepackage{tikz}
\usetikzlibrary{arrows.meta,calc}

\titleformat{\subsubsection}[runin]
  {\normalfont\bfseries\color{black}}
  {\thesubsubsection.}
  {0.5em}
  {}
  [.\;]

\hypersetup{
    colorlinks=true,
    linkcolor=blue,
    urlcolor=blue,
    citecolor=blue
}
\newtheorem{mydef}{Definition}
\newtheorem{thm}{Theorem}

\newtheorem{rmk}{Remark}

\newtheorem{lemma}{Lemma}
\newtheorem{corollary}{Corollary}
\newtheorem{assumption}{Assumption}
\newtheorem{prop}{Proposition}
\begin{document}
\title{Dynamical Optimal Transport with $\mathfrak{so}(d)$-Invariance: From Theory to Computation}

\providecommand{\Tr}{\textup{Tr}}
  \providecommand{\comment}[2][Unknown author]{%
    \leavevmode
    \noindent\vskip.3cm%
    \fbox{%
      \parbox{0.95\linewidth}{%
        \footnotesize\color{blue}{#1: {#2}}}
      }%
      \vskip.3cm%
      \noindent
    }%

\providecommand{\dx}{\,\mathrm{d}x}
\providecommand{\dy}{\,\mathrm{d}y}
\providecommand{\dt}{\,\mathrm{d}t}
\providecommand{\ds}{\,\mathrm{d}s}
\providecommand{\dz}{\,\mathrm{d}z}
\providecommand{\dmu}{\,\mathrm{d}\mu}
\providecommand{\drho}{\,\mathrm{d}\rho}
\newcommand{\Skew}{\operatorname{skew}}
\providecommand{\Tr}{\mathrm{Tr}}
\newcommand{\tria}{\mathcal{T}}

\providecommand{\argmin}{\mathop{\mathrm{arg\,min}}}
\providecommand{\argmax}{\mathop{\mathrm{arg\,max}}}

\providecommand{\Tr}{\textup{Tr}}

\author{Kevine Meugang Toukam\footnote{Universität Leipzig, Fakultät für Mathematik und Informatik, Augustusplatz 10, 04109 Leipzig,
Germany and Max Planck Institute for Mathematics in the Sciences, 04103 Leipzig, Germany},\  Max von Renesse\footnote{Universität Leipzig, Fakultät für Mathematik und Informatik, Augustusplatz 10, 04109 Leipzig, Germany},\ Johannes Storn\footnote{Universität Leipzig, Fakultät für Mathematik und Informatik, Augustusplatz 10, 04109 Leipzig, Germany}}
    
\date{\today}
\maketitle

    \begin{abstract}
We introduce a modified Benamou--Brenier (MBB) formulation of optimal transport
that incorporates Euclidean invariance at the dynamical level.
We establish existence of minimizers for the resulting variational problem
and prove its equivalence to a static formulation defining the
Procrustes--Wasserstein distance.
In the Gaussian setting, we show that this distance admits a closed-form expression,
reducing to the Euclidean distance between the vectors of square roots of the ordered eigenvalues of the covariance matrices. 
On the computational side, we formulate a primal--dual
scheme for the discretized problem. We prove a local conditional subsequential
convergence result through an abstract analysis of a class of
parameter-dependent saddle-point problems and illustrate the method's performance
numerically.
\end{abstract}

\section{Introduction}
Optimal Transport (OT) provides a principled framework for comparing probability measures
\cite{villani2009optimal,ambrosio2005gradient}. The classical Benamou--Brenier formulation 
\cite{benamou2000computational} reinterprets the quadratic Wasserstein distance as a 
minimum kinetic energy problem: transporting mass between $\mu_0$ and $\mu_1$ is equivalent 
to finding a fluid flow of least action. 
One limitation remains. Classical OT and its dynamical equivalent are 
not invariant under applying an isometry to only one of the measures. Transporting a distribution to a rotated 
or translated copy of itself still incurs positive cost 
\cite{kloeckner2010geometric,ambrosio2005gradient}, which is undesirable in 
symmetry-aware tasks such as shape matching or data alignment \cite{courty2016optimal,arjovsky2017wasserstein}.

A line of research has therefore introduced invariance into OT through static formulations.
Procrustes--Wasserstein distances \cite{grave2019unsupervised,alvarez2019towards} jointly 
optimize over transport plans and orthogonal alignments, with applications in word embedding 
alignment, transfer learning, and shape analysis. More recent work has studied empirical Procrustes--Wasserstein distances and barycenters, combining Sinkhorn solvers \cite{adamo2025depth,cuturi2013sinkhorn,peyre2019computational} 
with iterative orthogonal projections. These methods have been successful in practice. Nevertheless, several aspects remain open: (i) closed-form formulas are lacking, even for simple Gaussian inputs; 
(ii) the dynamical intuition of Benamou--Brenier is lost when invariance is imposed only at the 
static level; (iii) the algorithms typically rely on heuristic alternating minimization and 
lack a PDE-based interpretation.

Connections between optimal transport and control theory have led to dynamical
formulations with controlled state equations
\cite{chen2016optimal,agrachev2013control,agrachev2009optimal}.
In particular, \cite{elamvazhuthi2023dynamical} introduced optimal transport
for nonlinear control-affine systems.
In this setting, the Benamou--Brenier action is generalized by incorporating
controlled dynamics of the form $\partial_t{x}_t = f(x_t,u_t)$, linking
Hamilton--Jacobi equations, Pontryagin-type optimality conditions,
and Wasserstein geometry. From a modeling viewpoint, controlled optimal transport can be approached
in two distinct ways: either by introducing control variables directly
in the kinetic energy (the perspective adopted in the present work) or by
prescribing controlled dynamics in the state evolution.

While these frameworks significantly extend the expressive power of dynamical OT,
they do not explicitly encode invariance under the full Euclidean group $E(d)$.
This naturally raises the question:
Can Euclidean invariance be built directly into the Benamou--Brenier
formulation, rather than imposed externally?
\subsubsection*{Our contributions}
In this work, we take a dynamic perspective on invariant optimal transport.
Rather than enforcing invariance at the level of static couplings, we study the quotient geometry of the Wasserstein space under Euclidean
isometries, derive a rigorous dynamic--static realization and provide a formal tangent-space interpretation through infinitesimal rigid motions Our contributions are as follows:
\begin{enumerate}
  \item \textbf{Quotient geometry of Wasserstein space.}
  We identify the tangent structure of $\mathcal P_2(\mathbb R^d)$ modulo the Euclidean group $E(d)$ and introduce a modified Benamou--Brenier action obtained by orthogonally projecting velocity fields onto the complement of infinitesimal rigid motions.  This yields a well-defined seminorm on tangent vectors that annihilates infinitesimal rotations and translations and provides the corresponding infinitesimal quotient seminorm; its equivalence with the Eulerian formulation is established under the measurable realization condition stated below. 
  \item \textbf{Closed-form Gaussian solution\footnote{A preliminary derivation of the Gaussian result appeared in the preprint \cite{toukam2025procrustes}. In the present work, we incorporate this into a broader dynamical framework and develop a computational approach.}.} In the Gaussian case, we prove that the PW distance reduces to the Euclidean distance between the vectors formed by the square roots of the ordered eigenvalues of the covariance matrices, as announced in \cite{toukam2025procrustes}.
  \item \textbf{Abstract convergence framework.} We develop an abstract conditional subsequential convergence result for an alternating scheme applied to a class of parameter-dependent finite-dimensional saddle-point problems.
  \item \textbf{Numerical scheme and experiments.} We apply this framework to the discretized modified Benamou--Brenier problem using a generalized primal-dual hybrid gradient (PDHG) \cite{chambolle2016ergodic,jacobs2019solving} method and present numerical experiments illustrating the resulting scheme.
\end{enumerate}
\section{A Modified Benamou--Brenier Problem}\label{section3}
In this section, we introduce and motivate our dynamical formulation for optimal transport. In particular, let $\mathcal{P}_2(\mathbb{R}^d)$ denote the space of Borel probability measures on $\mathbb{R}^d$ with finite second moments, endowed with the 2-Wasserstein distance $W_2$. Then we define a dynamical transport problem for given input measures $\mu_0, \mu_1 \in \mathcal{P}_2(\mathbb{R}^d)$ that accounts for global invariance under Euclidean rigid motions. The dynamical formulation is developed for orientation-preserving rigid motions in $SE(d)=SO(d)\ltimes\mathbb R^d$; the orientation-reversing branch is treated separately.
\subsection{Definition of the Modified Benamou–Brenier Problem}
In the following, we propose a modification of the Benamou--Brenier formulation that incorporates global geometric invariance into the objective function in the sense that we do not penalize global rigid motions; only the intrinsic deformation beyond such motions contributes to the cost.
\subsubsection{Euclidean Group and its Infinitesimal Action}Klöckner~\cite[Theorem~1.2]{kloeckner2010geometric} proved that, for $d\geq2$, \[ \operatorname{Isom}\bigl(\mathcal P_2(\mathbb R^d),W_2\bigr) \cong E(d)\ltimes O(d). \] Here the first factor is induced by Euclidean isometries of the underlying space, whereas the second factor fixes all Dirac measures. Since the present formulation quotients out only ambient rigid motions, we restrict attention to the first factor, and dynamically to its identity component
$SE(d)=SO(d)\ltimes\mathbb R^d$. Since our formulation is concerned with rigid motions, we restrict our attention to this subgroup $E(d)$. 
This motivates our focus on $E(d)$ as the natural symmetry group underlying the following MBB formulation. 
We first recall the groups and infinitesimal actions used below.

The group of isometries of the Euclidean space $\mathbb{R}^d$ is the 
Euclidean group, which can be written in terms of rotations and reflections defined by the orthogonal group $O(d)$ and translations $\mathbb{R}^d$ as the semidirect product
\[
E(d) \coloneqq O(d) \ltimes \mathbb{R}^d.
\]
The action of an element $(\theta, A) \in E(d)$ on $x \in \mathbb{R}^d$ is defined by
\[
(\theta, A) x \coloneqq \theta x + A.
\]
The Lie algebra of $E(d)$ at the identity $(\mathrm{id}, 0)$ is skew-symmetric matrices $\mathfrak{so}(d) \coloneqq \lbrace B \in \mathbb{R}^{d\times d}\colon B + B^\top = 0\rbrace$ and translations $\mathbb{R}^d$; that is,
\[
\mathfrak{e}(d) = \mathfrak{so}(d) \ltimes \mathbb{R}^d.
\]
Let $\mathrm{Diff}(\mathbb{R}^d)$ denote the group of orientation-preserving
$C^\infty$-diffeomorphisms of $\mathbb{R}^d$ with smooth inverses, endowed with
composition as the group operation. Its formal Lie algebra at the identity
$\mathrm{id}$ is the space of smooth vector fields
\[
T_{\mathrm{id}}\mathrm{Diff}(\mathbb{R}^d)
\coloneqq  \mathfrak X(\mathbb{R}^d)
\coloneqq  C^\infty(\mathbb{R}^d;\mathbb{R}^d),
\]
with the Lie bracket given by the commutator
$[v,w]=D w\,v - D v\,w$.

At the infinitesimal level, we decompose an Eulerian velocity field into
a deformational component and a rigid component. More precisely, for
$E\in\mathfrak{so}(d)$, $C\in\mathbb R^d$, and a vector field $v$, the
total velocity is
\begin{align}\label{eq:DecompVelo}
    x\mapsto v(x)+Ex+C.
\end{align}
The purely rigid infinitesimal velocities are therefore the affine fields
$x\mapsto Ex+C$.
Its Lie algebra at the identity $(I,0,\mathrm{id})$ for $\mathcal{G}= E(d) \ltimes \mathrm{Diff}(\mathbb{R}^d)$ is
\[
T_{(I,0,\mathrm{id})}\mathcal G
\simeq \mathfrak e(d)\ltimes\mathfrak X(\mathbb{R}^d)
= (\mathfrak{so}(d)\ltimes\mathbb{R}^d)\ltimes C^\infty(\mathbb{R}^d;\mathbb{R}^d).
\]
\paragraph{Infinitesimal Eulerian action.} The infinitesimal action of an element
$(E,C,v)\in \mathfrak e(d)\ltimes\mathfrak X(\mathbb{R}^d)$, where  $E\in\mathfrak{so}(d)$,
$C\in\mathbb{R}^d$,  in Eulerian coordinates induces the velocity field defined in \eqref{eq:DecompVelo}.
In particular, the subspace of ``purely rigid'' infinitesimal actions  corresponds to the fields of the form
$x\mapsto E x + C$ with $(E,C)\in\mathfrak e(d)$ and $v=0$.


\subsubsection{Decomposition of Motion and Action Functional}
Let \( \mu \in C([0,1];\mathcal{P}_2(\mathbb{R}^d))\) describe the motion of particles with total velocity field $\tilde{v}(x) = V(x) + Ex + C$ decomposing according to \eqref{eq:DecompVelo} into $E \in L^2((0,1);\mathfrak{so}(d)) $, $C \in L^2((0,1);\mathbb{R}^d)$, and an instantaneous velocity $V \in L^2_\mu((0,1)\times \mathbb{R}^d;\mathbb{R}^d)$.
Since mass is conserved, the total velocity satisfies the continuity equation
\begin{equation}\label{eq:continuity}
    \partial_t \mu + \nabla \cdot (\mu (V + E x + C )) = 0\quad\text{in }(0,1)\times \mathbb{R}^d.
\end{equation}
We understand this equation in a distributional sense. More precisely, 
\begin{equation*}
\int_0^1 \int_{\mathbb{R}^d} \left( \partial_t \varphi + \nabla \varphi \cdot (V + E x + C ) \right) \mathrm{d}\mu_t \dt = 0\quad\text{for all }\varphi\in C_c^\infty((0,1)\times \mathbb{R}^d).
\end{equation*} 
Since we assign zero cost for rigid motion, the action functional representing the kinetic energy equals
\begin{equation}\label{kineticenergymbb}
\mathcal{J}(\mu, V) \coloneqq  \int_0^1 \int_{\mathbb{R}^d}| V |^2 \dmu \dt.
\end{equation}
This energy functional penalizes only the non-rigid, or deformational, component $V$ from the decomposition \( \tilde{v}(x) = V + E x + C \) derived from the Lie algebra $E(d)\ltimes\mathrm{Diff}(\mathbb{R}^d)$ for masses $\mu$ that are transported by the total velocity $\tilde{v}$, see \eqref{eq:continuity}. 
%
\begin{rmk}[Classical Benamou--Brenier problem]
In contrast to our formulation, the BB problem minimizes the energy
\begin{align*}
    \mathcal{J}_\textup{BB}(\mu, V, E, C) \coloneqq  \int_0^1 \int_{\mathbb{R}^d}| V + Ex + C |^2 \dmu \dt,
\end{align*}
where $(\mu,V,E,C)$ are linked via the constraint in \eqref{eq:continuity}.
\end{rmk}
We use the energy defined in \eqref{kineticenergymbb} to introduce a measure between two probability measures $\mu_0,\mu_1\in \mathcal{P}_2(\mathbb{R}^d)$. In particular, we minimize the energy $\mathcal{J}$ over the set
\begin{align}\label{eq:DefAorig}
\begin{aligned}
      & \mathcal A = \mathcal{A}(\mu_0,\mu_1)\coloneqq \lbrace (\mu,V,E,C) \in C([0,1];\mathcal P_2(\mathbb{R}^d)) \times L^2_\mu((0,1)\times \mathbb{R}^d;\mathbb{R}^d)\\
    & \quad\quad \times  L^2((0,1);\mathfrak{so}(d))\times  L^2((0,1);\mathbb{R}^d) \colon \mu_{t=0}=\mu_0, \mu_{t=1}=\mu_1, \text{ and }\eqref{eq:continuity} \rbrace.
\end{aligned}
\end{align}
\begin{mydef}
    The {Modified Benamou--Brenier (MBB)} problem $\bar{d}\colon \mathcal{P}_2(\mathbb{R}^d) \times \mathcal{P}_2(\mathbb{R}^d) \to [0,\infty)$ between $\mu_0$ and $\mu_1$ is defined as the square root of the minimal value
\begin{equation}
\label{continuousMBB}
\bar{d}^2(\mu_0, \mu_1) \coloneqq \inf_{(\mu, V, E, C) \in \mathcal{A}(\mu_0,\mu_1)} \mathcal{J}(\mu, V).
\end{equation}
\end{mydef}

\subsection{Geometric structure of the MBB problem }
\label{subsec:mbb-geom-structure}

In this subsection we describe, at the level of Eulerian
tangent vectors, how the MBB kinetic energy ``mods out'' infinitesimal rigid
motions of the Euclidean group.

\subsubsection{Rigid velocity fields and orthogonal projection}
\begin{mydef}
\label{def:rigid-fields-mu} We denote by $\mathcal R_{\mathrm{rig}}^2$ the class of Borel vector fields $r\colon(0,1)\times\mathbb R^d\to\mathbb R^d$ for which there exist $E\in L^2((0,1);\mathfrak{so}(d)), \text{ and } C\in L^2((0,1);\mathbb R^d)$ such that $ r_t(x)=E_tx+C_t$  for almost every $t\in(0,1)$ and every $x\in\mathbb R^d$. \end{mydef} Let $\mu\in C([0,1];\mathcal P_2(\mathbb R^d))$ respectively $\mu\in \mathcal P_2(\mathbb R^d)$. Since $\sup_{t\in[0,1]} \int_{\mathbb R^d}|x|^2\,\mathrm d\mu_t(x)<\infty$, every $r\in\mathcal R_{\mathrm{rig}}^2$ belongs to $L^2_\mu((0,1)\times\mathbb R^d;\mathbb R^d)$ respectively $L^2_\mu(\mathbb R^d)$. And we define the space of rigid fields as
\[
\mathcal R_\mu
\coloneqq \Big\{\, r\in L^2_\mu(\mathbb{R}^d) \colon \exists\, (E,C)\in\mathfrak{so}(d)\times\mathbb{R}^d
\text{ with } r(x)=Ex+C \ \mu\text{-a.e.}\Big\}.\]

\begin{lemma}
\label{lem:Rmu-closed}
For every $\mu\in \mathcal P_2(\mathbb{R}^d)$, the space $\mathcal R_\mu$ is a
finite-dimensional closed linear subspace of $L^2_\mu(\mathbb{R}^d)$.
\end{lemma}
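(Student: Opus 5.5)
The plan is to exhibit $\mathcal R_\mu$ as the image of a linear map from a finite-dimensional space, and then use the standard fact that finite-dimensional subspaces of a normed space are closed.

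Define
\[
\Phi\colon \mathfrak{so}(d)\times\mathbb R^d\to L^2_\mu(\mathbb R^d),\qquad \Phi(E,C)\coloneqq [x\mapsto Ex+C].
\]
First I check that $\Phi$ is well defined. Since $\mu\in\mathcal P_2(\mathbb R^d)$, we have
\[
\int|Ex+C|^2\,\mathrm d\mu\le 2|E|^2\int|x|^2\,\mathrm d\mu+2|C|^2<\infty,
\]
so $\Phi(E,C)$ is a genuine element of $L^2_\mu(\mathbb R^d)$, understood as an equivalence class modulo $\mu$-null sets. The map $\Phi$ is clearly linear in $(E,C)$. By definition, $\mathcal R_\mu$ consists exactly of those $r\in L^2_\mu$ that agree $\mu$-a.e. with some $Ex+C$. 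Hence $\mathcal R_\mu=\operatorname{im}\Phi$.

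It follows that $\mathcal R_\mu$ is a linear subspace, since images of linear maps are subspaces. It is also finite-dimensional, with
\[
\dim\mathcal R_\mu\le \dim\mathfrak{so}(d)+d=\tfrac{d(d-1)}2+d.
\]
The dimension may be strictly smaller, because $\Phi$ need not be injective. For example, if $\mu$ is a Dirac mass or is supported on a lower-dimensional affine subspace, distinct pairs $(E,C)$ can induce the same class. This is the only subtle point, and it affects nothing here: I only need the upper bound on the dimension, not injectivity.

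For closedness, I use the standard fact that every finite-dimensional subspace $F$ of a normed space $X$ is closed. To see this, pick a basis of $F$. All norms on $F$ are equivalent, so $F$ with the induced norm is isomorphic to Euclidean space and hence complete. A complete subspace of a metric space is closed. Applying this with $X=L^2_\mu(\mathbb R^d)$ and $F=\mathcal R_\mu$ finishes the proof.

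I expect no real obstacle. The point to state carefully is that the argument works entirely with $\mu$-a.e. equivalence classes, so possible non-uniqueness of $(E,C)$ on degenerate supports is harmless.
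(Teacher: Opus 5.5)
Your proof is correct and follows the paper's argument essentially verbatim. The same second-moment bound shows that $\Phi_\mu(E,C)=Ex+C$ lies in $L^2_\mu(\mathbb{R}^d)$, $\mathcal R_\mu$ is the image of $\Phi_\mu$, and closedness follows from finite-dimensionality; your remarks on the possible non-injectivity of $\Phi_\mu$ and on why finite-dimensional subspaces are closed are accurate details that the paper leaves implicit.
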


\begin{proof}
Since $\mu\in \mathcal P_2(\mathbb{R}^d)$, we have $\int_{\mathbb{R}^d} |x|^2\,\mathrm{d}\mu<\infty$.
Hence for any $E\in\mathfrak{so}(d)$ and $C\in\mathbb{R}^d$,
\[
\int_{\mathbb{R}^d} |Ex+C|^2\,\mathrm{d}\mu(x)
\le 2|E|^2\int_{\mathbb{R}^d} |x|^2\,\mathrm{d}\mu(x) + 2|C|^2 <\infty.
\]
Consequently, the mapping $r\colon x\mapsto Ex+C$ is in $ L^2_\mu(\mathbb{R}^d)$. The linear map
$\Phi_\mu\colon \mathfrak{so}(d)\times\mathbb{R}^d\to L^2_\mu(\mathbb{R}^d)$ with
$\Phi_\mu(E,C)(x)=Ex+C$ has image $\mathcal R_\mu$.
Since the domain is finite-dimensional, the image $\mathcal R_\mu$ is finite-dimensional
and therefore closed in $L^2_\mu(\mathbb{R}^d)$.
\end{proof}

\begin{corollary}
\label{cor:orth-decomp}
For each $\mu\in \mathcal P_2(\mathbb{R}^d)$ there is an orthogonal decomposition
$
L^2_\mu(\mathbb{R}^d) = \mathcal R_\mu \oplus \mathcal R_\mu^\perp
$
in the sense that 
\begin{align*}
  \int_{\mathbb{R}^d} r \cdot v  \, \mathrm{d}\mu = 0\quad\text{for all }r \in\mathcal R_\mu, v \in\mathcal R_\mu^\perp.     
\end{align*}
\end{corollary}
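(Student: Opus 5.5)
This corollary is a direct consequence of Lemma~\ref{lem:Rmu-closed} combined with the projection theorem in Hilbert spaces. The plan is to identify $L^2_\mu(\mathbb{R}^d)$ (vector-valued, with inner product $\langle u,w\rangle_\mu=\int_{\mathbb{R}^d} u\cdot w\,\mathrm{d}\mu$, modulo $\mu$-a.e.\ equality) as a real Hilbert space. We then define
\[
\mathcal R_\mu^\perp\coloneqq\Big\{v\in L^2_\mu(\mathbb{R}^d)\colon \int_{\mathbb{R}^d} r\cdot v\,\mathrm{d}\mu=0 \text{ for all } r\in\mathcal R_\mu\Big\}.
\]
By construction this set is a closed linear subspace, since it is an intersection of kernels of continuous linear functionals. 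The orthogonality relation stated in the corollary therefore holds by definition.

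The substantive part is the direct-sum statement $L^2_\mu=\mathcal R_\mu\oplus\mathcal R_\mu^\perp$. I will argue it in two steps.

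\textbf{Trivial intersection.} If $w\in\mathcal R_\mu\cap\mathcal R_\mu^\perp$, then $\langle w,w\rangle_\mu=0$, so $w=0$ in $L^2_\mu$.

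\textbf{Spanning.} By Lemma~\ref{lem:Rmu-closed}, $\mathcal R_\mu$ is closed, so the Hilbert projection theorem applies. For each $u\in L^2_\mu$ it gives a unique minimizer $P_\mu u\in\mathcal R_\mu$ of $\|u-r\|_\mu$ over $r\in\mathcal R_\mu$. The first-order optimality condition, obtained by differentiating $\varepsilon\mapsto\|u-P_\mu u-\varepsilon r\|_\mu^2$ at $\varepsilon=0$, yields $u-P_\mu u\in\mathcal R_\mu^\perp$. Hence $u=P_\mu u+(u-P_\mu u)$ is the desired decomposition.

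Since $\mathcal R_\mu$ is finite-dimensional, the projection can also be written concretely, which avoids invoking the general theorem. Take the spanning family $\Phi_\mu(e_k)$, where $e_k$ runs over a basis of $\mathfrak{so}(d)\times\mathbb{R}^d$ and $\Phi_\mu$ is the map from the proof of Lemma~\ref{lem:Rmu-closed}. Solving the finite Gram system $\sum_j\langle\Phi_\mu e_j,\Phi_\mu e_k\rangle_\mu a_j=\langle u,\Phi_\mu e_k\rangle_\mu$ for all $k$ gives $P_\mu u=\Phi_\mu(\sum_j a_j e_j)$. This system is always solvable, because its right-hand side lies in the range of the symmetric positive semidefinite Gram matrix.

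The main subtlety is that $\Phi_\mu$ need not be injective. For example, if $\mu$ is concentrated on a line or at a point, distinct pairs $(E,C)$ give the same field $\mu$-a.e. The rigid decomposition $(E,C)$ of $P_\mu u$ is therefore not unique in general, even though $P_\mu u$ itself is. I will stress that the corollary concerns the fields, not their parameters: orthogonality and the direct sum hold in $L^2_\mu$ regardless, and any non-uniqueness only affects the choice of $(E,C)$. That choice can be fixed later if needed by taking the minimal-norm solution of the Gram system.
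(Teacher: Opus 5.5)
Your proof is correct and is the argument the paper leaves implicit (it gives no separate proof): closedness of $\mathcal R_\mu$ from Lemma~\ref{lem:Rmu-closed} plus the Hilbert projection theorem, with your Gram-system construction a valid finite-dimensional alternative that does not even need closedness. Two small cautions: the paper writes $P_\mu$ for the projection onto $\mathcal R_\mu^\perp$ and $Q_\mu$ for the projection onto $\mathcal R_\mu$, so your $P_\mu$ is the paper's $Q_\mu$; and your ``line'' example of non-injectivity of $\Phi_\mu$ only works for $d\ge 3$, because in $d=2$ a nonzero skew matrix annihilates no nonzero vector, so $\Phi_\mu$ is injective whenever $\mu$ has at least two support points.
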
 The corresponding orthogonal projections are
$P_\mu\colon L^2_\mu\to \mathcal R_\mu^\perp$ and $Q_\mu\coloneqq \mathrm{Id}-P_\mu\colon L^2_\mu\to\mathcal R_\mu$. They are equivalently characterized via the following minimization problem.
\begin{lemma}
\label{lem:projection-min}
Let $\mu\in \mathcal P_2(\mathbb{R}^d)$ and $V\in L^2_\mu(\mathbb{R}^d)$. We have
$
\int_{\mathbb{R}^d} |P_\mu V|^2\,\mathrm{d}\mu
=\min_{r\in\mathcal R_\mu}\int_{\mathbb{R}^d}|V-r|^2\,\mathrm{d}\mu,
$
and the infimum is attained at $r^\star = Q_\mu V$.
\end{lemma}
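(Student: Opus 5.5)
This is the standard best-approximation property of orthogonal projections onto closed subspaces of a Hilbert space. The plan is to use Lemma~\ref{lem:Rmu-closed} and Corollary~\ref{cor:orth-decomp}, and then apply a Pythagorean identity.

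First I fix $V\in L^2_\mu(\mathbb{R}^d)$ and decompose it as $V = P_\mu V + Q_\mu V$, where $P_\mu V\in\mathcal R_\mu^\perp$ and $Q_\mu V\in\mathcal R_\mu$. This decomposition exists and is unique by Corollary~\ref{cor:orth-decomp}, because $\mathcal R_\mu$ is closed by Lemma~\ref{lem:Rmu-closed}. Next, for an arbitrary $r\in\mathcal R_\mu$, I write
\[
V-r = P_\mu V + (Q_\mu V - r).
\]
Here $Q_\mu V - r\in\mathcal R_\mu$, since $\mathcal R_\mu$ is a linear subspace. Expanding the square and using that the cross term $\int_{\mathbb{R}^d} P_\mu V\cdot (Q_\mu V-r)\,\mathrm{d}\mu$ vanishes by Corollary~\ref{cor:orth-decomp}, I obtain
\[
\int_{\mathbb{R}^d}|V-r|^2\,\mathrm{d}\mu=\int_{\mathbb{R}^d}|P_\mu V|^2\,\mathrm{d}\mu+\int_{\mathbb{R}^d}|Q_\mu V-r|^2\,\mathrm{d}\mu .
\]

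From this identity the conclusion follows directly. The right-hand side is at least $\int|P_\mu V|^2\,\mathrm{d}\mu$ for every $r\in\mathcal R_\mu$. Equality holds exactly when $\int|Q_\mu V-r|^2\,\mathrm{d}\mu=0$, that is, when $r=Q_\mu V$ $\mu$-a.e. Since $Q_\mu V\in\mathcal R_\mu$ is an admissible competitor, the infimum is a minimum, it equals $\int|P_\mu V|^2\,\mathrm{d}\mu$, and it is attained at $r^\star=Q_\mu V$, uniquely as an element of $L^2_\mu$.

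No step is a real obstacle. The only point needing care is that $P_\mu$ and $Q_\mu$ are well defined. This rests on the closedness of $\mathcal R_\mu$ from Lemma~\ref{lem:Rmu-closed}, which gives the Hilbert-space projection theorem, together with the fact that elements of $L^2_\mu$ are identified $\mu$-a.e.
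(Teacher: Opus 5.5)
Your proof is correct and follows essentially the same route as the paper: decompose $V=P_\mu V+Q_\mu V$, observe $Q_\mu V-r\in\mathcal R_\mu$, use orthogonality to get the Pythagorean identity, and conclude that the minimum equals $\int|P_\mu V|^2\,\mathrm{d}\mu$ and is attained exactly at $r=Q_\mu V$.
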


\begin{proof}
We decompose $V=P_\mu V + Q_\mu V$ with $P_\mu V\in\mathcal R_\mu^\perp$ and
$Q_\mu V\in\mathcal R_\mu$. For any $r\in\mathcal R_\mu$ we have
$Q_\mu V-r\in\mathcal R_\mu$, hence orthogonality gives
\[
\int_{\mathbb{R}^d}|V-r|^2\,\mathrm{d}\mu
=\int_{\mathbb{R}^d}|P_\mu V|^2\,\mathrm{d}\mu+\int_{\mathbb{R}^d}|Q_\mu V-r|^2\, \mathrm{d}\mu
\ge \int_{\mathbb{R}^d}|P_\mu V|^2\,\mathrm{d}\mu.
\]
Equality holds if and only if $r=Q_\mu V$.
\end{proof}
We recall the Eulerian tangent space $T_\mu\mathcal P_2(\mathbb{R}^d)$ and
the Benamou--Brenier norm, cf.~\cite{otto2001geometry, ambrosio2005gradient}.

\begin{mydef}[Tangent space and BB norm]
\label{def:tangent-bb}
For $\mu\in\mathcal P_2(\mathbb{R}^d)$ we define the dual norm
\[
\|\xi\|_{-1,\mu}
\coloneqq \sup\Big\{\langle\xi,\varphi\rangle \colon \varphi\in C_c^\infty(\mathbb{R}^d),\
\int_{\mathbb{R}^d}|\nabla\varphi|^2\,\mathrm{d}\mu\le 1\Big\}.
\]
The tangent space reads 
$ \operatorname{Tan}_\mu\mathcal P_2(\mathbb R^d)
\coloneqq
\overline{\{\nabla\varphi:\varphi\in C_c^\infty(\mathbb R^d)\}}
^{L^2(\mu;\mathbb R^d)} $ or simply identify by $
T_\mu\mathcal P_2(\mathbb{R}^d)
\coloneqq \overline{\big\{-\nabla\!\cdot(\mu V)\colon V\in C_c^\infty(\mathbb{R}^d;\mathbb{R}^d)\big\}}
^{\ \|\cdot\|_{-1,\mu}}
$
and we set for all $\xi\in T_\mu\mathcal P_2(\mathbb{R}^d)$ the norm
\[
\|\xi\|_{\mathrm{BB},\mu}^2
\coloneqq \inf\Big\{\int_{\mathbb{R}^d}|V|^2\,\mathrm{d}\mu\colon V\in L^2_\mu(\mathbb{R}^d),\ -\nabla\!\cdot(\mu V)=\xi\Big\}.
\]
\end{mydef}

\subsubsection{MBB seminorm induced by rigid directions and geometric dynamic action}
We note that if $(\mu_t)_{t\in[0,1]}\subset\mathcal P_2(\mathbb{R}^d)$ satisfies
$ \partial_t\mu_t + \nabla\!\cdot(\mu_t \widetilde V_t)=0$ for some $\widetilde V\in L^2_{\mu_t}(\mathbb{R}^d),$
then for almost every $t\in(0,1)$ one has
\( \partial_t\mu_t \in T_{\mu_t}\mathcal P_2(\mathbb{R}^d), \)
with $\partial_t\mu_t = -\nabla\!\cdot(\mu_t \widetilde V_t)$ in the sense of distributions \cite{ambrosio2005gradient}.

\begin{mydef}[MBB seminorm at $\mu$]
\label{def:mbb-seminorm}
Let $\mu\in\mathcal P_2(\mathbb{R}^d)$ and $\xi\in T_\mu\mathcal P_2(\mathbb{R}^d)$.
We define
\[
\|\xi\|_{\mathrm{MBB},\mu}^2
\coloneqq \inf\Big\{\int_{\mathbb{R}^d}|V-r|^2\,\mathrm{d}\mu\colon V\in L^2_\mu(\mathbb{R}^d),\ r\in\mathcal R_\mu,\
-\nabla\!\cdot(\mu V)=\xi\text{ in }\mathcal D'(\mathbb{R}^d)\Big\}.
\]
\end{mydef}

\begin{lemma}[Projection form]\label{lem:mbb-projection-form}
For every $\mu\in\mathcal P_2(\mathbb R^d)$ and every $\xi\in T_\mu\mathcal P_2(\mathbb R^d)$, one has 
\begin{equation}
\label{eq:mbb-projection-form}
\|\xi\|_{\mathrm{MBB},\mu}^2
=\inf\Big\{\int_{\mathbb{R}^d}|P_\mu V|^2\,\mathrm{d}\mu\colon V\in L^2_\mu(\mathbb{R}^d)\text{ and }-\nabla\!\cdot(\mu V)=\xi\text{ in }\mathcal D'(\mathbb{R}^d)\Big\}.
\end{equation}
In particular, we have $\|\xi\|_{\mathrm{MBB},\mu}\le \|\xi\|_{\mathrm{BB},\mu}$.
\end{lemma}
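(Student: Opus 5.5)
The plan is to exchange the order of the two infima in Definition~\ref{def:mbb-seminorm} and then evaluate the inner one with Lemma~\ref{lem:projection-min}. Fix $\mu\in\mathcal P_2(\mathbb R^d)$ and $\xi\in T_\mu\mathcal P_2(\mathbb R^d)$. Set
\[
\mathcal V_\xi\coloneqq\{V\in L^2_\mu(\mathbb R^d)\colon -\nabla\cdot(\mu V)=\xi \text{ in }\mathcal D'(\mathbb R^d)\}.
\]
The constraint in Definition~\ref{def:mbb-seminorm} involves only $V$, while $r$ ranges freely over $\mathcal R_\mu$. The admissible set is therefore the product $\mathcal V_\xi\times\mathcal R_\mu$. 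An infimum over a product set equals the iterated infimum, so
\[
\|\xi\|_{\mathrm{MBB},\mu}^2=\inf_{V\in\mathcal V_\xi}\ \inf_{r\in\mathcal R_\mu}\int_{\mathbb R^d}|V-r|^2\,\mathrm d\mu .
\]
To justify this, note that for any $(V,r)$ the integrand is bounded below by the inner infimum at that $V$; conversely, for each $V$ one can pick $r$ attaining (or nearly attaining) the inner infimum. This is pure order theory and is valid in $[0,\infty]$, including the case $\mathcal V_\xi=\emptyset$.

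Next I evaluate the inner infimum. By Lemma~\ref{lem:Rmu-closed} and Corollary~\ref{cor:orth-decomp}, $\mathcal R_\mu$ is a closed subspace, so $P_\mu$ is well defined. Lemma~\ref{lem:projection-min} then gives
\[
\min_{r\in\mathcal R_\mu}\int_{\mathbb R^d}|V-r|^2\,\mathrm d\mu=\int_{\mathbb R^d}|P_\mu V|^2\,\mathrm d\mu
\]
for each $V\in L^2_\mu(\mathbb R^d)$. Substituting this into the iterated infimum yields \eqref{eq:mbb-projection-form}.

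For the inequality, $P_\mu$ is an orthogonal projection, so Pythagoras gives $\int|P_\mu V|^2\,\mathrm d\mu\le\int|V|^2\,\mathrm d\mu$ for every $V\in\mathcal V_\xi$. Equivalently, one may take $r=0\in\mathcal R_\mu$ in Definition~\ref{def:mbb-seminorm}. Taking the infimum over $V\in\mathcal V_\xi$ then gives $\|\xi\|_{\mathrm{MBB},\mu}^2\le\|\xi\|_{\mathrm{BB},\mu}^2$.

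There is no real obstacle here. The only point needing care is that the constraint set for $V$ does not depend on $r$, which is what makes the product structure, and hence the interchange of infima, legitimate.
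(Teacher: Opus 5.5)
Your proposal is correct and matches the paper's proof: since the constraint involves only $V$, the infimum over $r\in\mathcal R_\mu$ is taken first and evaluated by Lemma~\ref{lem:projection-min}, and the inequality follows from $\int|P_\mu V|^2\,\mathrm d\mu\le\int|V|^2\,\mathrm d\mu$. Your explicit justification of the interchange of infima in $[0,\infty]$, which also covers an empty constraint set, is a small but welcome addition that the paper leaves implicit.
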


\begin{proof}
Fix $V$ with $-\nabla\!\cdot(\mu V)=\xi$.
By Lemma~\ref{lem:projection-min},
$ \inf_{r\in\mathcal R_\mu}\int|V-r|^2\,\mathrm{d}\mu = \int_{\mathbb{R}^d}|P_\mu V|^2\,\mathrm{d}\mu. $
Since the constraint involves only $V$, the optimization with respect to $r$ is independent and can therefore be performed first; it yields
\eqref{eq:mbb-projection-form}. The inequality follows by 
\begin{align*}
\int_{\mathbb{R}^d}|P_\mu V|^2\,\mathrm{d}\mu& \leq \int_{\mathbb{R}^d}|V|^2\,\mathrm{d}\mu.\qedhere    
\end{align*}
\end{proof}
\begin{rmk}[Seminorm and quotient structure] The Modified Benamou--Brenier norm defines a seminorm on $T_\mu\mathcal P_2$ with kernel given by infinitesimal Euclidean isometries. It induces a genuine norm only on the quotient tangent space obtained by factoring out rigid motions. \end{rmk}

\begin{mydef}[Geometric dynamic MBB value]
\label{def:dbar-geom}
Let $\mu_0,\mu_1\in\mathcal P_2(\mathbb{R}^d)$. We say that $(\mu,\widetilde V)$ is admissible if
$\mu\in C([0,1];\mathcal P_2(\mathbb{R}^d))$, $\mu_{t=0}=\mu_0$, $\mu_{t=1}=\mu_1$,
and $\widetilde V\in L^2_{\mu_t}((0,1)\times \mathbb{R}^d;\mathbb{R}^d)$ satisfies
\[
\partial_t\mu+\nabla\!\cdot(\mu \widetilde V)=0
\quad\text{in }\mathcal D'((0,1)\times\mathbb{R}^d).
\]
For such $(\mu,\widetilde V)$, we set
$
\mathcal J_{\mathrm{geom}}(\mu,\widetilde V)
\coloneqq \, \int_0^1 \|\partial_t\mu_t\|_{\mathrm{MBB},\mu_t}^2\dt 
$
and define
\[
\bar d_{\mathrm{geom}}^2(\mu_0,\mu_1)
\coloneqq \inf\big\{\mathcal J_{\mathrm{geom}}(\mu,\widetilde V)\colon (\mu,\widetilde V)\ \text{admissible}\big\}.
\]

\end{mydef}

\begin{prop}[Geometric--Eulerian comparison and formal converse]
\label{prop:geom-eulerian-equivalence}
For all $\mu_0,\mu_1\in\mathcal P_2(\mathbb R^d)$,
\[ \bar d_{\mathrm{geom}}^2(\mu_0,\mu_1) \le    \bar d^2(\mu_0,\mu_1). \]
Conversely, equality holds provided that every admissible curve
$\mu\in C([0,1];\mathcal P_2(\mathbb R^d))$ with finite geometric action
admits, for every $\varepsilon>0$, jointly measurable fields
$W:(0,1)\times\mathbb R^d\to\mathbb R^d$ and
\[ r_t(x)=E_tx+C_t, \quad E\in L^2((0,1);\mathfrak{so}(d)), \quad C\in L^2((0,1);\mathbb R^d), \] such that  $-\nabla\cdot(\mu_tW_t)=\partial_t\mu_t$ for almost every $t$ and
\[ \int_0^1\int_{\mathbb R^d}|W_t-r_t|^2\,\mathrm d\mu_t\,\mathrm dt \le \int_0^1 \|\partial_t\mu_t\|_{\mathrm{MBB},\mu_t}^2\,\mathrm dt +\varepsilon.
\]
\end{prop}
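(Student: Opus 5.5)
We prove the inequality $\bar d_{\mathrm{geom}}^2\le\bar d^2$ by turning any Eulerian competitor into a geometric one with no larger cost. We then prove the conditional converse by turning the $\varepsilon$-almost optimal fields supplied by the hypothesis into an Eulerian competitor.

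\emph{Step 1: the inequality.} Let $(\mu,V,E,C)\in\mathcal A(\mu_0,\mu_1)$ with $\mathcal J(\mu,V)<\infty$; otherwise there is nothing to show. Set $\widetilde V_t(x)\coloneqq V_t(x)+E_tx+C_t$.
\begin{itemize}
\item By $\sup_t\int|x|^2\,\mathrm d\mu_t<\infty$ and the estimate in the proof of Lemma~\ref{lem:Rmu-closed}, $r_t(x)\coloneqq E_tx+C_t$ belongs to $L^2_\mu((0,1)\times\mathbb R^d)$, because $E,C\in L^2(0,1)$. Hence $\widetilde V\in L^2_\mu$, and $(\mu,\widetilde V)$ is admissible in the sense of Definition~\ref{def:dbar-geom}.
\item From the continuity equation, for a.e.\ $t$ we have $\partial_t\mu_t=-\nabla\cdot(\mu_t\widetilde V_t)$ in $\mathcal D'(\mathbb R^d)$. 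This disintegration in time of the space--time distributional equation is standard, cf.\ \cite{ambrosio2005gradient}, and in particular $\widetilde V_t\in L^2_{\mu_t}$ for a.e.\ $t$.
\item For such $t$, insert the pair $(\widetilde V_t,\,-r_t)$ into Definition~\ref{def:mbb-seminorm}. Note that $-r_t\in\mathcal R_{\mu_t}$. This gives
\[
\|\partial_t\mu_t\|^2_{\mathrm{MBB},\mu_t}\le\int|\widetilde V_t+r_t-r_t-r_t|^2\ldots
\]
More precisely, $\widetilde V_t-r_t=V_t$, so
\[
\|\partial_t\mu_t\|^2_{\mathrm{MBB},\mu_t}\le\int|\widetilde V_t-r_t|^2\,\mathrm d\mu_t=\int|V_t|^2\,\mathrm d\mu_t .
\]
\item Integrating in $t$ gives $\mathcal J_{\mathrm{geom}}(\mu,\widetilde V)\le\mathcal J(\mu,V)$. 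Taking the infimum over $\mathcal A$ yields the inequality.
\end{itemize}
A technical point is measurability of $t\mapsto\|\partial_t\mu_t\|_{\mathrm{MBB},\mu_t}$. I would sidestep it by reading $\int_0^1$ in Definition~\ref{def:dbar-geom} as an upper integral, or by noting that by Lemma~\ref{lem:mbb-projection-form} the seminorm is an infimum of measurable quantities.

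\emph{Step 2: the converse.} Fix an admissible $(\mu,\widetilde V)$ with $\mathcal J_{\mathrm{geom}}<\infty$; curves with infinite action do not affect the infimum. Fix $\varepsilon>0$ and take $W$, $E$, $C$ from the hypothesis. Define
\[
V_t\coloneqq W_t-r_t ,
\]
which is jointly measurable. Its $L^2_\mu$ norm is bounded by $\mathcal J_{\mathrm{geom}}+\varepsilon<\infty$, so $V\in L^2_\mu$ and hence also $W=V+r\in L^2_\mu$. Since $V_t+E_tx+C_t=W_t$ and $-\nabla\cdot(\mu_tW_t)=\partial_t\mu_t$ for a.e.\ $t$, the continuity equation \eqref{eq:continuity} holds. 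To check this, test against $\varphi\in C_c^\infty((0,1)\times\mathbb R^d)$:
\begin{itemize}
\item use the slicewise identity for a.e.\ $t$;
\item integrate in $t$, justified by Fubini because $\nabla\varphi\cdot W\in L^1(\mu_t\,\mathrm dt)$;
\item use that $t\mapsto\int\varphi_t\,\mathrm d\mu_t$ is absolutely continuous, with derivative given by the slicewise identity plus $\int\partial_t\varphi\,\mathrm d\mu_t$.
\end{itemize}
The endpoints are unchanged, so $(\mu,V,E,C)\in\mathcal A$ and
\[
\bar d^2\le\mathcal J(\mu,V)\le\mathcal J_{\mathrm{geom}}(\mu,\widetilde V)+\varepsilon .
\]
Letting $\varepsilon\to0$ and taking the infimum over admissible pairs gives $\bar d^2\le\bar d^2_{\mathrm{geom}}$, hence equality.

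\emph{Main obstacle.} Most of the analytic difficulty has been pushed into the hypothesis, namely measurable selection of near-optimizers with $L^2$-in-time rigid coefficients. What remains delicate is the passage between the space--time distributional equation and its a.e.-$t$ slices, in both directions, together with the measurability of the geometric integrand. I would handle these via the standard absolute-continuity characterization of solutions of the continuity equation in \cite{ambrosio2005gradient}, using a countable dense family of test functions so that the exceptional null set of times is independent of $\varphi$.
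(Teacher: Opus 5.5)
Your proof is correct and follows the paper's argument. In Step 1 you use the total velocity $\widetilde V_t=V_t+E_tx+C_t$ together with the rigid field $r_t$ as a competitor in Definition~\ref{def:mbb-seminorm} and integrate in time; in Step 2 you set $V_t\coloneqq W_t-r_t$ from the realization hypothesis. One slip to fix: the competitor pair is $(\widetilde V_t,r_t)$, not $(\widetilde V_t,-r_t)$, since the latter yields $\int|\widetilde V_t+r_t|^2\,\mathrm d\mu_t$; your ``more precisely'' line already uses the correct pair. Your extra remarks on $r\in L^2_\mu$, on slicing the continuity equation in time, and on measurability of the geometric integrand fill in details the paper passes over.
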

%
\begin{proof}
\emph{Step 1 ($\bar d^2\ge \bar d_{\mathrm{geom}}^2$).}
Let $(\mu,V,E,C)\in\mathcal A(\mu_0,\mu_1)$ and define the total velocity
$
\widetilde V \coloneqq V+Ex+C.
$
Thus $(\mu,\widetilde V)$ is according to \eqref{eq:DefAorig} admissible in the sense of Definition~\ref{def:dbar-geom}.
For almost every $t$, the rigid field $r_t(x)\coloneqq E_tx+C_t$ belongs to $\mathcal R_{\mu_t}$.
Hence, Definition~\ref{def:mbb-seminorm} implies
\[
\int_{\mathbb{R}^d} |V_t|^2\,\mathrm{d}\mu_t
=\int_{\mathbb{R}^d} |\widetilde V_t-r_t|^2\,\mathrm{d}\mu_t
\ge \,\|\partial_t\mu_t\|_{\mathrm{MBB},\mu_t}^2.
\]
Integrating over $t\in (0,1)$ yields
$
\mathcal J(\mu,V)\ge \mathcal J_{\mathrm{geom}}(\mu,\widetilde V).
$
Thus, taking the infimum over  all $(\mu,V,E,C)\in \mathcal A(\mu_0,\mu_1)$ gives 
\begin{align*}
\bar d^2(\mu_0,\mu_1)\ge \bar d_{\mathrm{geom}}^2(\mu_0,\mu_1).    
\end{align*}

\emph{Step 2 ($\bar d^2\le \bar d_{\mathrm{geom}}^2$).} Let $\mu\in C([0,1];\mathcal P_2(\mathbb{R}^d))$ satisfy \( \int_0^1 \|\partial_t\mu_t\|_{\mathrm{MBB},\mu_t}^2\dt < \infty. \) Assume the measurable realization property stated in the proposition.
Let $\mu$ be an admissible curve with finite geometric action and fix $\varepsilon>0$. Choose jointly measurable fields $W$ and $r_t(x)=E_tx+C_t$ as in the assumption, and define $V_t\coloneqq W_t-r_t$. Then $\partial_t\mu_t +\nabla\cdot\bigl(\mu_t(V_t+E_tx+C_t)\bigr)=0$, and $(\mu,V,E,C)$ is admissible for the Eulerian MBB problem. Moreover,
\[ \mathcal J(\mu,V) = \int_0^1\int_{\mathbb R^d}|W_t-r_t|^2\,\mathrm d\mu_t\,\mathrm dt
\le \int_0^1 \|\partial_t\mu_t\|_{\mathrm{MBB},\mu_t}^2\,\mathrm dt+\varepsilon.
\]
Taking the infimum over $\mu$ and then letting $\varepsilon\downarrow0$
gives
\[ \bar d^2(\mu_0,\mu_1) \le \bar d_{\mathrm{geom}}^2(\mu_0,\mu_1).
\]
Combining Step 1 and 2 concludes the proof.
\end{proof}
\begin{rmk}
The measurable realization property is not needed in the subsequent dynamic--static equivalence, which is proved independently in Theorem~\ref{thm:dyn-static-full}.
\end{rmk}

\subsection{A Wasserstein-like formulation of the MBB problem and existence of minimizers}\label{proofdynamictostaticMBB}
Throughout this subsection, we use the following notation: For $a\in\mathbb{R}^d$, we denote by $\tau_a(x)\coloneqq x+a$ the translation map. For $(Q,b)\in E(d)\coloneqq O(d)\ltimes\mathbb{R}^d$, we define  $ (Q,b)x\coloneqq Qx+b$ and $(Q,b)_\#\mu \coloneqq  (Q\cdot+b)_\#\mu . $ Moreover, for $\mu\in\mathcal{P}_2(\mathbb{R}^d)$, we denote its mean value by $m(\mu)\coloneqq \int_{\mathbb{R}^d}x\,\mathrm{d}\mu(x)$ and its associated centered distribution $ \bar\mu\coloneqq(\tau_{-m(\mu)})_\#\mu.$ Here, $(T)_\#\mu$ denotes the pushforward of $\mu$ by $T$, defined by $(T)_\#\mu(A)\coloneqq \mu(T^{-1}(A))$ for all Borel sets $A\subset\mathbb{R}^d$.\\
We start with the following auxiliary results.
\begin{lemma}[Rigid flow generated by $(E,C)$]
\label{lem:rigid-flow}
Let $E\in L^2((0,1);\mathfrak{so}(d))$, $C\in L^2((0,1);\mathbb{R}^d)$, and $(Q_0,b_0)\in E(d)$.
Then there exists a unique $(Q,b)\in W^{1,2}((0,1);O(d)\times\mathbb{R}^d)$ such that $Q_t\in O(d)$ for almost every  $t\in (0,1)$ and
\begin{equation}\label{eq:ODE-Qb}
\partial_t Q_t=E_tQ_t,\qquad Q_{t=0}=Q_0,\qquad
\partial_t b_t=E_tb_t+C_t,\qquad b_{t=0}=b_0.
\end{equation}
Moreover, the function $g_t(x)\coloneqq Q_tx+b_t$ is the unique solution to
\[
\partial_t g_t(x)=E_t g_t(x)+C_t\quad\text{and}\quad Dg_t(x)=Q_t.
\]
Its inverse equals $g_t^{-1}(y)=Q_t^\top(y-b_t)$.
\end{lemma}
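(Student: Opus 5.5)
The plan is to treat \eqref{eq:ODE-Qb} as a linear Carathéodory system with $L^2$ (hence $L^1$) coefficients and then check the orthogonality and flow properties by direct computation.

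\emph{Existence and uniqueness.} Write the system as $\partial_t Y_t = A_t Y_t + F_t$ for the unknown $Y=(Q,b)\in\mathbb R^{d\times d}\times\mathbb R^d$, where $A_t Y = (E_tQ,\,E_tb)$ and $F_t=(0,C_t)$. Since $A\in L^1((0,1))$ and $F\in L^1((0,1))$, the Picard iteration for the integral equation
\[
Y_t = Y_0 + \int_0^t (A_sY_s + F_s)\,\mathrm ds
\]
converges in $C([0,1])$. The standard bound $\|Y^{n+1}-Y^n\|_\infty \le (\int_0^1|A|)^n\|Y^1-Y^0\|_\infty/n!$ gives a unique absolutely continuous solution. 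Alternatively, one can use a contraction on short intervals of length $h$ with $\int_t^{t+h}|E|\le 1/2$ and concatenate. Uniqueness also follows from Grönwall applied to the difference of two solutions. Since $Y$ is bounded and $E,C\in L^2$, the derivative $\partial_tY = A Y+F$ lies in $L^2$, so $(Q,b)\in W^{1,2}$.

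\emph{Orthogonality.} Set $M_t\coloneqq Q_t^\top Q_t$, which is absolutely continuous. Using skew-symmetry of $E_t$,
\[
\partial_t M_t = Q_t^\top E_t^\top Q_t + Q_t^\top E_t Q_t = Q_t^\top(E_t^\top+E_t)Q_t = 0
\]
for almost every $t$. Hence $M_t=Q_0^\top Q_0=I$ for every $t$, so $Q_t\in O(d)$ for all $t$, not merely almost every $t$.

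\emph{The map $g_t$.} Define $g_t(x)=Q_tx+b_t$. Then $Dg_t=Q_t$, and
\[
\partial_t g_t(x)=E_tQ_tx+E_tb_t+C_t=E_tg_t(x)+C_t .
\]
For uniqueness, I interpret the claim as uniqueness among families $t\mapsto g_t(x)$ with the initial condition $g_0(x)=Q_0x+b_0$. For each fixed $x$, this is the affine ODE $\dot y=E_ty+C_t$, whose solution is unique by the same Grönwall argument. Hence any other solution coincides with $Q_tx+b_t$ pointwise. The condition $Dg_t=Q_t$ is then automatically consistent. Finally, orthogonality of $Q_t$ gives $g_t^{-1}(y)=Q_t^{-1}(y-b_t)=Q_t^\top(y-b_t)$.

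\emph{Main obstacle.} The only delicate point is low regularity: $E$ is only in $L^2$, so classical Picard--Lindelöf does not apply directly. This is handled by the Carathéodory/integral formulation above. The chain rule for $Q^\top Q$ is justified because products of absolutely continuous functions are absolutely continuous. A secondary point is making the uniqueness statement for $g$ precise, which requires fixing the initial datum $g_0=(Q_0,b_0)$, as done above.
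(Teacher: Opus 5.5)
Your proposal is correct and follows the paper's proof essentially step by step. The shared steps are: linear Carathéodory ODE theory with $L^1$ coefficients (which you make explicit via Picard iteration, where the paper cites Coddington--Levinson); boundedness of $(Q,b)$ to upgrade from $W^{1,1}$ to $W^{1,2}$; the computation $\partial_t(Q_t^\top Q_t)=Q_t^\top(E_t^\top+E_t)Q_t=0$; and direct verification of the identities for $g_t$. Your remark that uniqueness of $g$ requires fixing the initial datum $g_0(x)=Q_0x+b_0$ is a correct clarification of what the paper means by ``inherited from uniqueness of $(Q,b)$''.
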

\begin{proof}
The existence and uniqueness of $(Q,b)\in W^{1,1}((0,1);\mathbb{R}^{d\times d}\times\mathbb{R}^d)$ with \eqref{eq:ODE-Qb} follows from
classical linear ODE theory with integrable coefficients, see for example \cite[Ch.~3, \S3]{coddington1955theory}. Since $E,C\in L^2(0,1)\subset L^1(0,1)$ and $(Q,b)$ is bounded on $(0,1)$ by Grönwall’s inequality, it follows that $(\partial_t Q,\partial_t b)\in L^2(0,1)$ and hence $(Q,b)\in W^{1,2}((0,1))$. To verify the orthogonality of $Q_t$, we define
$M_t\coloneqq Q_t^\top Q_t$. Then $M\in W^{1,1}((0,1);\mathbb{R}^{d\times d})$ satisfies for almost every $t \in (0,1)$ due to the identity $E_t^\top=-E_t$
\[
\partial_t M_t = \partial_t Q_t^\top Q_t + Q_t^\top \partial_t Q_t
= Q_t^\top E_t^\top Q_t + Q_t^\top E_t Q_t
= Q_t^\top(E_t^\top+E_t)Q_t =0.
\]
Hence $M_t\equiv M_0=Q_0^\top Q_0=I$, so that $Q_t\in O(d)$ for almost every $t\in (0,1)$.
The identities for $g_t$ follow directly. Uniqueness is inherited from uniqueness of $(Q,b)$.
\end{proof}
\begin{rmk}
    The evolution equation $\partial_t Q_t=E_tQ_t$ with $E\in L^2((0,1);\mathfrak{so}(d))$ corresponds to the right-trivialized representation of the tangent dynamics on the orthogonal group $O(d)$, since $E_t=\partial_tQ_t Q_t^\top $. Throughout this work, we adopt this convention.
\end{rmk}
\begin{lemma}
\label{lem:pullback-cont}
Let $\mu\in C([0,1];\mathcal P_2(\mathbb{R}^d))$ and
$V\in  L^2_{\mu_t}((0,1)\times \mathbb{R}^d;\mathbb{R}^d)$.
Let $E\in L^1((0,1);\mathfrak{so}(d))$ and $C\in L^1((0,1);\mathbb{R}^d)$ and define
$
r_t(x)\coloneqq E_t x + C_t .
$
Assume that
\begin{equation}\label{eq:cont-with-drift}
\partial_t\mu_t+\nabla\!\cdot\big(\mu_t(V_t+r_t)\big)=0 \quad\text{in }\mathcal D'((0,1)\times\mathbb{R}^d).
\end{equation}
Let $g_t(x)=Q_t x+b_t$ be the unique rigid flow solving
\begin{equation}\label{eq:rigid-flow-ODE}
\partial_t g_t(x)=r_t(g_t(x))=E_t g_t(x)+C_t,\quad g_0(x)=Q_0x +b_0,
\end{equation}
i.e.\ $\partial_t Q_t=E_tQ_t$ and $\partial_t b_t=E_t b_t+C_t$ with $Q_0 \in O(d)$, $b_0\in \mathbb{R}^d$.
Define the pulled-back curve
$
\tilde\mu_t \coloneqq (g_t^{-1})_\#\mu_t,
$
and the pulled-back velocity field
$
\tilde V_t(y) \coloneqq Dg_t^{-1}(g_t(y))\,V_t(g_t(y))
      = Q_t^\top\, V_t(g_t(y)).
$
Then one has
\begin{equation}\label{eq:cont-pulled}
\partial_t\tilde\mu_t+\nabla\!\cdot(\tilde\mu_t\,\tilde V_t)=0 \quad\text{in }\mathcal D'((0,1)\times\mathbb{R}^d).
\end{equation}
\end{lemma}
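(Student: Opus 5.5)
We work directly with the distributional formulation. Fix a test function $\psi\in C_c^\infty((0,1)\times\mathbb R^d)$ and set
\[
\varphi(t,x)\coloneqq\psi\bigl(t,g_t^{-1}(x)\bigr)=\psi\bigl(t,Q_t^\top(x-b_t)\bigr).
\]
By Lemma~\ref{lem:rigid-flow}, $(Q,b)\in W^{1,1}$ (indeed $W^{1,2}$ when $E,C\in L^2$; here only $L^1$ coefficients are assumed, so we use $W^{1,1}$). Hence $\varphi$ is Lipschitz in $t$ with values in $C^\infty$, and $\varphi$ has compact support in $(0,1)\times\mathbb R^d$, because $g_t$ is an isometry with $b$ bounded.

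\textbf{Step 1: chain rule for $\varphi$.} For a.e.\ $t$ we have
\[
\partial_t\varphi(t,x)=\partial_t\psi(t,y)+\nabla\psi(t,y)\cdot\partial_t\bigl(g_t^{-1}(x)\bigr),\qquad y=g_t^{-1}(x).
\]
Differentiating $g_t(g_t^{-1}(x))=x$ gives $\partial_t g_t(y)+Q_t\,\partial_t(g_t^{-1}(x))=0$, hence
\[
\partial_t\bigl(g_t^{-1}(x)\bigr)=-Q_t^\top r_t(x).
\]
Also $\nabla\varphi(t,x)=Q_t\nabla\psi(t,y)$. Therefore
\[
\partial_t\varphi+\nabla\varphi\cdot r_t=\partial_t\psi(t,y).
\]

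\textbf{Step 2: test \eqref{eq:cont-with-drift} with $\varphi$.} Insert $\varphi$ and change variables via $\mu_t=(g_t)_\#\tilde\mu_t$:
\[
0=\int_0^1\!\!\int\bigl(\partial_t\varphi+\nabla\varphi\cdot(V_t+r_t)\bigr)\,\mathrm d\mu_t\,\mathrm dt
=\int_0^1\!\!\int\bigl(\partial_t\psi+Q_t\nabla\psi\cdot V_t(g_t(y))\bigr)\,\mathrm d\tilde\mu_t(y)\,\mathrm dt.
\]
Since $Q_t\nabla\psi\cdot V_t(g_t(y))=\nabla\psi\cdot Q_t^\top V_t(g_t(y))=\nabla\psi\cdot\tilde V_t(y)$, this is exactly the weak form of \eqref{eq:cont-pulled}. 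Integrability is not an issue: $\tilde V\in L^2_{\tilde\mu}$, since $\int|\tilde V_t|^2\,\mathrm d\tilde\mu_t=\int|V_t|^2\,\mathrm d\mu_t$ because $Q_t$ is orthogonal, and the support of $\psi$ is compact.

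\textbf{Main obstacle.} The step needing care is justifying that $\varphi$ is an admissible test function. It is only $W^{1,1}$ in time, not $C_c^\infty$. We handle this by extending \eqref{eq:cont-with-drift} to test functions that are compactly supported, $C^1$ in $x$, and absolutely continuous in $t$ with $\partial_t\varphi,\nabla\varphi$ bounded.

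First mollify $\varphi$ in time: $\varphi^\delta=\varphi*_t\eta_\delta$. This is smooth, and for small $\delta$ its support stays in a fixed compact set. As $\delta\to0$:
\begin{itemize}
\item $\nabla\varphi^\delta\to\nabla\varphi$ uniformly;
\item $\partial_t\varphi^\delta\to\partial_t\varphi$ in $L^1_t(L^\infty_x)$.
\end{itemize}
Passing to the limit in the weak formulation uses three facts. The first is $\sup_t\mu_t(\mathbb R^d)=1$. The second is that $V\in L^2_\mu\subset L^1_\mu$. The third is that $|r_t(x)|\le|E_t||x|+|C_t|$ is bounded on the compact support, with an $L^1$ coefficient in $t$. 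Dominated convergence then gives the identity for $\varphi$ itself.

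Continuity of $t\mapsto\tilde\mu_t$ in $\mathcal P_2$ follows from continuity of $\mu_t$ and of $(Q_t,b_t)$, although \eqref{eq:cont-pulled} itself does not require it.
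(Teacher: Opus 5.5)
Your proof is correct and follows the paper's argument essentially step for step: you compose the test function with $g_t^{-1}$, use $\partial_t\bigl(g_t^{-1}(x)\bigr)=-Q_t^\top r_t(x)$ and $\nabla\varphi=Q_t\nabla\psi$ so that the rigid drift term cancels, change variables via $\mu_t=(g_t)_\#\tilde\mu_t$, and justify the test function that is only $W^{1,1}$ in time by mollifying in time, which the paper also does, only more tersely. One small inaccuracy: with $E,C\in L^1$ the composed test function is only absolutely continuous in $t$, with $\partial_t\varphi\in L^1_t(L^\infty_x)$, rather than Lipschitz with bounded $\partial_t\varphi$. Your limit argument already uses exactly this $L^1_t(L^\infty_x)$ convergence, so nothing breaks.
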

\begin{proof}
Fix $\varphi\in C_c^\infty((0,1)\times\mathbb{R}^d)$ and set $\psi(t,x)\coloneqq \varphi(t,g_t^{-1}(x)).$

\textit{Step 1 (Time derivative of $g_t^{-1}$).} Differentiation $g_t^{-1}(g_t(y))=y$ with respect to $t$ yields
$
0=\partial_t\big(g_t^{-1}(g_t(y))\big)
=(\partial_t g_t^{-1})(g_t(y)) + Dg_t^{-1}(g_t(y))\,\partial_t g_t(y).
$
Thus, we obtain for $x=g_t(y)$ the identity
\begin{equation}\label{eq:dt-inverse}
(\partial_t g_t^{-1})(x) = -Dg_t^{-1}(x)\,\partial_t g_t(g_t^{-1}(x)).
\end{equation}

\textit{Step 2 (Chain rule for $\psi(t,x)=\varphi(t,g_t^{-1}(x))$).} Although $\psi$ need not be smooth in time, it belongs to
\[ W^{1,1}\bigl((0,1);C_c^1(\mathbb R^d)\bigr)
\] and has spatial support contained in a common compact set. Indeed, $t\mapsto g_t$ and $t\mapsto g_t^{-1}$ are absolutely continuous, while $g_t$ is affine in space. The weak formulation of the continuity equation extends to this class of test functions by time mollification. All terms are integrable because $V\in L_\mu^2$, $E,C\in L^1$, and $\mu_t\in\mathcal P_2(\mathbb R^d)$. Hence $\psi$ is an admissible test function. Let $y=g_t^{-1}(x)$. Then \eqref{eq:dt-inverse} yields almost everywhere the identity
\begin{align}\label{eq:dt-psi}
\begin{aligned}
\partial_t\psi(t,x)
&=\partial_t\varphi(t,y)+\nabla\varphi(t,y)\cdot (\partial_t g_t^{-1})(x)\\
&=\partial_t\varphi(t,y)-\nabla\varphi(t,y)\cdot Dg_t^{-1}(x)\,\partial_t g_t(y)\\
&=\partial_t\varphi(t,y)-\nabla\varphi(t,y)\cdot Q_t^\top\,\partial_t g_t(y).
\end{aligned}
\end{align}
Moreover, we have
\begin{equation}\label{eq:grad-psi}
\nabla_x\psi(t,x)=Dg_t^{-1}(x)^\top \nabla\varphi(t,y) = (Q_t^\top)^\top \nabla\varphi(t,y) = Q_t\,\nabla\varphi(t,y).
\end{equation}
Using \eqref{eq:cont-with-drift} in weak form,
$0=\int_0^1\!\!\int_{\mathbb{R}^d}\Big(\partial_t\psi(t,x)
+\nabla\psi(t,x)\cdot\big(V_t(x)+r_t(x)\big)\Big)\,\mathrm{d}\mu_t(x)\dt.$
Plugging \eqref{eq:dt-psi}--\eqref{eq:grad-psi} and writing $y=g_t^{-1}(x)$ yields
\begin{align*}
0 &=\int_0^1\!\!\int_{\mathbb{R}^d}\Big(\partial_t\varphi(t,y)-\nabla\varphi(t,y)\cdot Q_t^\top\partial_t g_t(y) +Q_t\nabla\varphi(t,y)\cdot V_t(x) +Q_t\nabla\varphi(t,y)\cdot r_t(x)\Big)\,\mathrm{d}\mu_t(x)\dt.
\end{align*}
Now $x=g_t(y)$ and by \eqref{eq:rigid-flow-ODE},
$
r_t(x)=r_t(g_t(y))=\partial_t g_t(y).
$
Therefore
\[
Q_t\nabla\varphi(t,y)\cdot r_t(x)=Q_t\nabla\varphi(t,y)\cdot\partial_t g_t(y)
=\nabla\varphi(t,y)\cdot Q_t^\top\partial_t g_t(y)
\]
which cancels exactly the term $-\nabla\varphi(t,y)\cdot Q_t^\top\partial_t g_t(y)$.
Hence, we have
$$
0=\int_0^1\!\!\int_{\mathbb{R}^d}\Big(
\partial_t\varphi(t,y)
+\nabla\varphi(t,y)\cdot\big(Q_t^\top V_t(g_t(y))\big)
\Big)\,\mathrm{d}\mu_t(g_t(y)) \dt.
$$

\textit{Step 3} Since $\tilde\mu_t=(g_t^{-1})_\#\mu_t$, the change of variables for the pushforward measure gives
$\int_{\mathbb{R}^d} f(y)\,d\tilde\mu_t(y)=\int_{\mathbb{R}^d} f(g_t^{-1}(x))\,\mathrm{d}\mu_t(x)
=\int_{\mathbb{R}^d} f(y)\,\mathrm{d}\mu_t(g_t(y))$ for every bounded measurable function $f$. Applying this identity to the last equation of Step 2 together with $\tilde V_t(y)\coloneqq Q_t^\top V_t(g_t(y)),$ we obtain 
$
0=\int_0^1\!\!\int_{\mathbb{R}^d}\Big(
\partial_t\varphi(t,y)
+\nabla\varphi(t,y)\cdot \tilde V_t(y)
\Big)\,d\tilde\mu_t(y)\dt,$  
which is the weak formulation of \eqref{eq:cont-pulled}.
\end{proof}

\begin{lemma}
\label{lem:energy-inv}
Under the assumptions of Lemma~\ref{lem:pullback-cont} we have the identity
$$
\int_0^1\int_{\mathbb{R}^d}|\tilde V_t|^2\,d\tilde\mu_t\dt
=\int_0^1\int_{\mathbb{R}^d}|V_t|^2\,\mathrm{d}\mu_t\dt.
$$
\end{lemma}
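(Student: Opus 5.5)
The identity follows from two facts: pointwise, $Q_t^\top$ is an isometry of $\mathbb{R}^d$, and the pushforward change of variables lets us rewrite the integral against $\tilde\mu_t$ as one against $\mu_t$. The argument is carried out for almost every fixed $t\in(0,1)$ and then integrated in time.

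Fix $t$ such that $Q_t\in O(d)$; by Lemma~\ref{lem:rigid-flow} this holds for almost every $t$. Since $Q_tQ_t^\top=I$, we have for every $y\in\mathbb{R}^d$
\[
|\tilde V_t(y)|^2=|Q_t^\top V_t(g_t(y))|^2=V_t(g_t(y))^\top Q_tQ_t^\top V_t(g_t(y))=|V_t(g_t(y))|^2 .
\]
Next, apply the pushforward identity $\int f\,\mathrm{d}\tilde\mu_t=\int f\circ g_t^{-1}\,\mathrm{d}\mu_t$, as in Step~3 of the proof of Lemma~\ref{lem:pullback-cont}, with the nonnegative measurable function $f(y)=|V_t(g_t(y))|^2$. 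Because $g_t\circ g_t^{-1}=\mathrm{id}$, this gives
\[
\int_{\mathbb{R}^d}|\tilde V_t|^2\,\mathrm{d}\tilde\mu_t=\int_{\mathbb{R}^d}|V_t(x)|^2\,\mathrm{d}\mu_t(x).
\]
The change of variables was stated there for bounded $f$. It extends to nonnegative measurable $f$ by monotone convergence, using the truncations $\min(f,n)$. This also handles integrability: the right-hand side is finite for almost every $t$ because $V\in L^2_\mu$.

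Finally, integrate over $t\in(0,1)$. The identity for each fixed $t$ gives equality of the time integrands almost everywhere, so the two double integrals coincide. It is legitimate to treat these as iterated integrals of $t$-measurable functions. Indeed, $(t,y)\mapsto\tilde V_t(y)$ is jointly measurable, since it is a composition of the measurable field $V$ with maps that are continuous in $(t,y)$, namely $g_t(y)=Q_ty+b_t$ and $Q_t^\top$. Tonelli's theorem for nonnegative integrands then applies.

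The only delicate point is this measurability and the bookkeeping of null sets in $t$; everything else is a direct computation.
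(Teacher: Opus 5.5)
Your proof is correct and follows the same route as the paper: for almost every $t$ you combine the orthogonal invariance $|Q_t^\top u|=|u|$ with the pushforward identity for $\tilde\mu_t=(g_t^{-1})_\#\mu_t$, and then integrate in time. The paper applies the change of variables before the isometry and omits your truncation and joint-measurability remarks, which add rigor but change nothing essential.
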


\begin{proof}
For almost every $t$,
$
\int_{\mathbb{R}^d}|\tilde V_t(y)|^2\,d\tilde\mu_t(y)
=\int_{\mathbb{R}^d}\big|Q_t^\top V_t(g_t(y))\big|^2\,d\tilde\mu_t(y).
$
Since $\tilde\mu_t=(g_t^{-1})_\#\mu_t$, we have
$
\int_{\mathbb{R}^d}\big|Q_t^\top V_t(g_t(y))\big|^2\,d\tilde\mu_t(y)
=\int_{\mathbb{R}^d}\big|Q_t^\top V_t(x)\big|^2\,\mathrm{d}\mu_t(x).
$\\
Because $Q_t\in O(d)$, $|Q_t^\top u|=|u|$, hence
$
\int_{\mathbb{R}^d}\big|Q_t^\top V_t(x)\big|^2\,\mathrm{d}\mu_t(x)=\int_{\mathbb{R}^d}|V_t(x)|^2\,\mathrm{d}\mu_t(x).
$
Integrate in $t$ to get the second identity.
\end{proof}

\begin{mydef}[Static Procrustes--Wasserstein value over $E(d)$]
\label{def:PW-Ed}
For $\mu_0,\mu_1\in\mathcal P_2(\mathbb{R}^d)$ define
\[
d_{\mathrm{PW}}^2(\mu_0,\mu_1)
\coloneqq \inf_{(Q,b)\in E(d)} W_2^2\big(\mu_0,(Q,b)_\#\mu_1\big).
\]
\end{mydef}
\begin{thm}[Dynamic--static equivalence]
\label{thm:dyn-static-full}
We denote the special Euclidean group by $SE(d)= SO(d)\ltimes \mathbb{R}^d $. For all $\mu_0,\mu_1\in\mathcal{P}_2(\mathbb{R}^d)$,
\[
\bar d^2(\mu_0,\mu_1)=\inf_{(Q,b)\in SE(d)} W_2^2\big(\mu_0,(Q,b)_\#\mu_1\big)= d_S^2(\mu_0,\mu_1) .
\]
Moreover, fixing any reflection $J\in O(d)$ with $\det (J)=-1$, the full Procrustes--Wasserstein value satisfies \[ d_{\mathrm{PW}}^2(\mu_0,\mu_1) = \min \big( \bar d^2(\mu_0,\mu_1), \, \bar d^2(\mu_0,J_\#\mu_1) \big).\]
\end{thm}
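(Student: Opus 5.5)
We prove the two inequalities between $\bar d^2(\mu_0,\mu_1)$ and $d_S^2(\mu_0,\mu_1)\coloneqq\inf_{(Q,b)\in SE(d)}W_2^2(\mu_0,(Q,b)_\#\mu_1)$ separately, using the rigid pull-back of Lemmas~\ref{lem:rigid-flow}, \ref{lem:pullback-cont} and \ref{lem:energy-inv} to pass between the modified and the classical Benamou--Brenier problems.

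\emph{Lower bound $\bar d^2\ge d_S^2$.} Take $(\mu,V,E,C)\in\mathcal A(\mu_0,\mu_1)$ with finite energy. Let $g_t(x)=Q_tx+b_t$ be the rigid flow of Lemma~\ref{lem:rigid-flow} with $g_0=\mathrm{id}$, i.e.\ $Q_0=I$ and $b_0=0$. Since $\partial_tQ_t=E_tQ_t$ with $E_t$ skew, $\det Q_t$ is constant in $t$, so $Q_t\in SO(d)$ for all $t$. By Lemma~\ref{lem:pullback-cont}, the curve $\tilde\mu_t=(g_t^{-1})_\#\mu_t$ solves the continuity equation with velocity $\tilde V_t=Q_t^\top V_t\circ g_t$. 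It connects $\tilde\mu_0=\mu_0$ to $\tilde\mu_1=(g_1^{-1})_\#\mu_1=(Q_1^\top,-Q_1^\top b_1)_\#\mu_1$, which is the image of $\mu_1$ under an element of $SE(d)$. We check that $\tilde\mu\in C([0,1];\mathcal P_2)$: this follows from the continuity of $g_t$ in $t$ together with uniform second moments. By the classical Benamou--Brenier theorem and Lemma~\ref{lem:energy-inv},
\[
W_2^2\big(\mu_0,(g_1^{-1})_\#\mu_1\big)\le\int_0^1\!\!\int|\tilde V_t|^2\,\mathrm d\tilde\mu_t\,\mathrm dt=\mathcal J(\mu,V).
\]
Taking the infimum over $\mathcal A(\mu_0,\mu_1)$ gives $d_S^2\le\bar d^2$.

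\emph{Upper bound $\bar d^2\le d_S^2$.} Fix $(R,a)\in SE(d)$. Since $SO(d)$ is connected, we can write $R=\exp(E)$ with $E\in\mathfrak{so}(d)$. Set $E_t\equiv E$ and choose the constant $C$ so that the flow with $g_0=\mathrm{id}$ satisfies $g_1=(R,a)^{-1}$; this is possible because $b_1$ depends affinely and surjectively on $C$ for the right choice of parametrization. Alternatively, we can choose $E_t$ and $C_t$ on separate time intervals, or use a path $b_t$ directly and set $C_t=\partial_tb_t-E_tb_t$. Next, let $(\nu_t,W_t)$ be a Benamou--Brenier geodesic from $\mu_0$ to $(R,a)_\#\mu_1$ with energy $W_2^2$. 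Define $\mu_t=(g_t)_\#\nu_t$ and $V_t=Q_tW_t\circ g_t^{-1}$. Reversing the computation of Lemma~\ref{lem:pullback-cont} (pushing forward instead of pulling back), $\mu$ solves \eqref{eq:continuity} with total velocity $V_t+E_tx+C_t$, and its endpoints are $\mu_0$ and $(g_1)_\#(R,a)_\#\mu_1=\mu_1$. By Lemma~\ref{lem:energy-inv}, the energy is unchanged, so $\bar d^2\le W_2^2(\mu_0,(R,a)_\#\mu_1)$. Taking the infimum over $(R,a)$ yields the equality. The main obstacle is the precise choice of the endpoint map $g_1$ so that the endpoints match, which is only bookkeeping. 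The regularity of the pushed-forward curve in $C([0,1];\mathcal P_2)$ also needs a check, which follows from the Lipschitz dependence of $g_t$ on $t$.

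\emph{Reflection statement.} Since $O(d)=SO(d)\cup SO(d)J$, we split the infimum in Definition~\ref{def:PW-Ed} into the two cosets. An element of the second coset has the form $(QJ,b)$ with $Q\in SO(d)$, and $(QJ,b)_\#\mu_1=(Q,b)_\#J_\#\mu_1$. Applying the first part to the pair $(\mu_0,J_\#\mu_1)$ gives
\[
d_{\mathrm{PW}}^2(\mu_0,\mu_1)=\min\big(d_S^2(\mu_0,\mu_1),\,d_S^2(\mu_0,J_\#\mu_1)\big)=\min\big(\bar d^2(\mu_0,\mu_1),\,\bar d^2(\mu_0,J_\#\mu_1)\big).
\]
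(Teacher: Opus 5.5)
Your proposal is correct and follows essentially the same route as the paper: for the lower bound you pull back along the rigid flow with $g_0=\mathrm{id}$ and combine Benamou--Brenier with energy invariance; for the upper bound you push a Benamou--Brenier geodesic forward along a path in $SE(d)$ ending at $(R,a)^{-1}$, with $E_t=\partial_tQ_tQ_t^\top$ and $C_t=\partial_tb_t-E_tb_t$; and for the reflection statement you split $O(d)=SO(d)\cup SO(d)J$. Your remark that $\det Q_t$ is constant also supplies the justification, left implicit in the paper, that $g_1^{-1}\in SE(d)$.

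The only slip is in your first upper-bound construction. With $E_t\equiv E$ and $\exp(E)=R$ you get $Q_1=R$ rather than $R^\top$, so you should use $-E$. Moreover, the map $C\mapsto b_1=\bigl(\int_0^1 e^{sE}\,\mathrm ds\bigr)C$ is singular when $E$ has eigenvalues in $2\pi i\mathbb Z\setminus\{0\}$, so you would have to pick a principal logarithm. The path-based alternative you mention, which is the paper's construction, avoids both issues.
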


\begin{proof}
\noindent{I. Proof of $\bar d^2(\mu_0,\mu_1)\ge d_{S}^2(\mu_0,\mu_1)$.}
Let $(\mu,V,E,C)$ be admissible, that is, $(\mu,V,E,C)\in\mathcal A(\mu_0,\mu_1)$.
Define $r_t(x)\coloneqq E_tx+C_t$.
Let $g_t(x)=Q_tx+b_t$ be the rigid flow with $g_0=\mathrm{Id}$ generated by $(E,C)$ via Lemma~\ref{lem:rigid-flow}, so $\partial_t g_t(x)=r_t(g_t(x))$ and $g_0=\mathrm{Id}$.
Set
\[
\tilde\mu_t\coloneqq (g_t^{-1})_\#\mu_t,
\quad
\tilde V_t(y)\coloneqq Q_t^\top V_t(g_t(y)).
\]
By Lemma~\ref{lem:pullback-cont},
\begin{equation}\label{eq:BB-cont-from-MBB}
\partial_t\tilde\mu_t+\nabla\!\cdot(\tilde\mu_t\tilde V_t)=0
\quad\text{in }\mathcal{D}'((0,1)\times\mathbb{R}^d),
\end{equation}
and the endpoints are
$ \tilde\mu_0=(g_0^{-1})_\#\mu_0=\mu_0,
\quad
\tilde\mu_1=(g_1^{-1})_\#\mu_1. $
Let $h\coloneqq (g_1^{-1})\in SE(d)$. By Lemma~\ref{lem:energy-inv},
$ \mathcal{J}[\mu,V,E,C]
=\int_0^1\int_{\mathbb{R}^d}|\tilde V_t(y)|^2\,d\tilde\mu_t(y)\dt. $
By the Benamou--Brenier dynamical characterization of $W_2^2$ on $\mathcal P_2(\mathbb{R}^d)$,
applied to the admissible pair $(\tilde\mu,\tilde V)$ in \eqref{eq:BB-cont-from-MBB},
\[
W_2^2(\mu_0,h_\#\mu_1)
\le \int_0^1\int_{\mathbb{R}^d}|\tilde V_t|^2\,d\tilde\mu_t\dt
=\mathcal{J}[\mu,V,E,C].
\]
Hence, for every admissible $(\mu,V,E,C)$,
\[
d_{S}^2(\mu_0,\mu_1)
=\inf_{(Q,b)\in SE(d)}W_2^2(\mu_0,(Q,b)_\#\mu_1)
\le W_2^2(\mu_0,h_\#\mu_1)
\le \mathcal{J}[\mu,V,E,C].
\]
Taking the infimum over all admissible quadruples gives
$ d_{S}^2(\mu_0,\mu_1)\le \bar d^2(\mu_0,\mu_1). $

\medskip
\noindent{II. Proof of $\bar d^2(\mu_0,\mu_1)\le d_{S}^2(\mu_0,\mu_1)$.}
Fix $(Q,b)\in SE(d)$. Let $(\hat\mu,\hat W)$ be a Benamou--Brenier minimizer between
$\mu_0$ and $(Q,b)_\#\mu_1$, i.e.
\begin{equation}\label{eq:BB-min}
\partial_t\hat\mu_t+\nabla\!\cdot(\hat\mu_t\hat W_t)=0,\quad
\hat\mu_0=\mu_0,\quad \hat\mu_1=(Q,b)_\#\mu_1,
\end{equation}
and
\[
\int_0^1\int_{\mathbb{R}^d}|\hat W_t(y)|^2\,d\hat\mu_t(y)\dt
=W_2^2\big(\mu_0,(Q,b)_\#\mu_1\big).
\]
Fix $(Q_t,b_t)\in W^{1,2}(0,1;SO(d)\times\mathbb{R}^d)$ with $(Q_0,b_0)=(I,0)$ and $(Q_1,b_1)=(Q,b)^{-1}$, and set $g_t(x)\coloneqq Q_tx+b_t$. 
Define
\( \mu_t\coloneqq (g_t)_\#\hat\mu_t, \quad V_t(x)\coloneqq Q_t\,\hat W_t(g_t^{-1}(x)).\)
Define 
\( E_t\coloneqq \partial_t Q_tQ_t^\top\in\mathfrak{so}(d), \quad C_t\coloneqq \partial_t b_t-E_tb_t\in\mathbb{R}^d, \quad r_t(x)\coloneqq E_tx+C_t. \) Then $\partial_t Q_t=E_tQ_t$ and $\partial_t b_t=E_tb_t+C_t$, hence $\partial_t g_t(x)=E_tg_t(x)+C_t=r_t(g_t(x))$.
Reversing the change of variables argument of Lemma~\ref{lem:pullback-cont} to $(\mu,V,E,C)$ and using that $(g_t^{-1})_\#\mu_t=\hat\mu_t$,
we obtain that $(\mu,V,E,C)$ satisfies
\[
\partial_t\mu_t+\nabla\!\cdot\big(\mu_t(V_t+r_t)\big)=0
\quad\text{in }\mathcal D'((0,1)\times\mathbb{R}^d),
\quad \mu_{t=0}=\mu_0,\, \mu_{t=1} =(Q,b)^{-1}_\#(Q,b)_\#\mu_1 =\mu_1,
\]
so $(\mu,V,E,C) \in  \mathcal{A}(\mu_0,\mu_1)$. 
By Lemma~\ref{lem:energy-inv} ,
\[
\int_0^1\int_{\mathbb{R}^d}|V_t(x)|^2\,\mathrm{d}\mu_t(x)\dt =
\int_0^1\int_{\mathbb{R}^d}|\hat W_t(y)|^2\,d\hat\mu_t(y)\dt =
W_2^2\big(\mu_0,(Q,b)_\#\mu_1\big).
\]
Therefore $ \bar d^2(\mu_0,\mu_1) \le \mathcal{J}[\mu,V,E,C]
= W_2^2\big(\mu_0,(Q,b)_\#\mu_1\big).
$
Taking the infimum over $(Q,b)\in SE(d)$ yields
$ \bar d^2(\mu_0,\mu_1)\le d_{S}^2(\mu_0,\mu_1). $
\medskip
From I and II, we have $ \bar d^2(\mu_0,\mu_1)=d_{S}^2(\mu_0,\mu_1). $
Finally, fix a reflection \(J\in O(d)\) with \(\det J=-1\). Since $O(d)=SO(d)\,\dot\cup\,SO(d)J$, we obtain 
\begin{align*}
d_{\mathrm{PW}}^2(\mu_0,\mu_1) &=
\min\Big\{ \inf_{\substack{R\in SO(d), b\in\mathbb R^d}} W_2^2\bigl(\mu_0,(R,b)_\#\mu_1\bigr), \inf_{\substack{R\in SO(d), b\in\mathbb R^d}} W_2^2\bigl(\mu_0,(R,b)_\#J_\#\mu_1\bigr)
\Big\}\\
&= \min\left\{ \bar d^2(\mu_0,\mu_1), \bar d^2\bigl(\mu_0,J_\#\mu_1\bigr) \right\}.
\end{align*} 
This concludes the proof.
\end{proof}

\begin{lemma}\label{prop:centered-full}
   Let $\mu_0$, $\mu_1\in P_2(\mathbb{R}^d)$. The Modified Benamou--Brenier problem \eqref{continuousMBB} admits the static Wasserstein formulation 
    \begin{equation} \label{staticMBB}
    \bar{d}^2(\mu_0,\mu_1)= \inf_{Q\in SO(d)}W_2^2\big(\bar{\mu_0},Q_\#\bar{\mu_1}\big). 
    \end{equation}
   For each fixed $Q\in SO(d)$, the unique minimizer with respect to
$b\in\mathbb R^d$ is
\[
b_Q^\star\coloneqq m(\mu_0)-Qm(\mu_1).
\]
This will be called the orientation-preserving Procrustes Wasserstein pseudometric.
\end{lemma}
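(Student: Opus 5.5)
Starting from Theorem~\ref{thm:dyn-static-full}, which gives $\bar d^2(\mu_0,\mu_1)=\inf_{(Q,b)\in SE(d)}W_2^2(\mu_0,(Q,b)_\#\mu_1)$, I will optimize explicitly over the translation $b$ for each fixed $Q\in SO(d)$. The tool is the classical mean–centering decomposition of $W_2^2$: for $\nu_0,\nu_1\in\mathcal P_2(\mathbb R^d)$,
\[
W_2^2(\nu_0,\nu_1)=|m(\nu_0)-m(\nu_1)|^2+W_2^2(\bar\nu_0,\bar\nu_1).
\]
To prove it, take any coupling $\pi$ of $\nu_0,\nu_1$ and write $x-y=(x-m(\nu_0))-(y-m(\nu_1))+(m(\nu_0)-m(\nu_1))$. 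Expanding the square, the cross terms integrate to zero because the centered variables have zero mean under the marginals. The map $(x,y)\mapsto(x-m(\nu_0),y-m(\nu_1))$ is a bijection between couplings of $(\nu_0,\nu_1)$ and couplings of $(\bar\nu_0,\bar\nu_1)$. Taking the infimum over couplings therefore gives the identity.

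Next I apply this with $\nu_0=\mu_0$ and $\nu_1=(Q,b)_\#\mu_1$. By linearity of the mean, $m((Q,b)_\#\mu_1)=Qm(\mu_1)+b$. The centered measure is
\[
\overline{(Q,b)_\#\mu_1}=(\tau_{-Qm(\mu_1)-b})_\#(Q\cdot+b)_\#\mu_1=(x\mapsto Q(x-m(\mu_1)))_\#\mu_1=Q_\#\bar\mu_1,
\]
which does not depend on $b$. Hence
\[
W_2^2(\mu_0,(Q,b)_\#\mu_1)=|m(\mu_0)-Qm(\mu_1)-b|^2+W_2^2(\bar\mu_0,Q_\#\bar\mu_1).
\]
For fixed $Q$, the second term is independent of $b$ and the first is a strictly convex quadratic in $b$. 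So $b_Q^\star=m(\mu_0)-Qm(\mu_1)$ is the unique minimizer, with minimal value $W_2^2(\bar\mu_0,Q_\#\bar\mu_1)$.

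Finally, taking the infimum first over $b$ and then over $Q\in SO(d)$, and combining with Theorem~\ref{thm:dyn-static-full}, gives \eqref{staticMBB}. The pseudometric terminology follows from symmetry and the triangle inequality inherited from $W_2$ together with the group structure of $SO(d)$. This can be stated briefly, and only the formula is really claimed.

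The main step needing care is the centering identity. One must check that all cross terms are integrable, which holds since the measures lie in $\mathcal P_2$, and that the bijection on couplings preserves marginals. The rest is bookkeeping with pushforwards.
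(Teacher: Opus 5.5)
Your proposal is correct and follows essentially the paper's argument. You reduce via Theorem~\ref{thm:dyn-static-full}, split off $|m(\mu_0)-Qm(\mu_1)-b|^2$ because the cross terms vanish, and use the bijection of couplings induced by centering. The only cosmetic difference is that you state the centering identity for $W_2^2$ as a standalone lemma and then identify $\overline{(Q,b)_\#\mu_1}=Q_\#\bar\mu_1$, whereas the paper does the same expansion directly on the cost $|x-Qy-b|^2$ over couplings of $(\mu_0,\mu_1)$.
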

    \begin{rmk}
        While the above expression resembles a term in the splitting formula given in \cite[Theorem 1.7]{burger2025covariance}, it is important to note that the meaning of the symbols differs, and therefore the expressions are not equivalent.
    \end{rmk} 
\begin{proof}[Proof of Lemma~\ref{prop:centered-full}]
By Theorem \ref{thm:dyn-static-full}, we have the following equality $$\bar d^{\,2}(\mu_0,\mu_1) = \inf_{Q\in SO(d)}
\inf_{b\in\mathbb R^d} \inf_{\pi\in\Pi(\mu_0,\mu_1)}
\int_{\mathbb R^d\times\mathbb R^d}
|x-Qy-b|^2\,d\pi(x,y).$$
Since $\int_{\mathbb R^d\times\mathbb R^d}(x-Qy)\,d\pi(x,y) =m(\mu_0)-Qm(\mu_1)$, we have, for every \(\pi\in\Pi(\mu_0,\mu_1)\),
\begin{align*}
\int_{\mathbb R^d\times\mathbb R^d}|x-Qy-b|^2\,d\pi(x,y)
&=
\int_{\mathbb R^d\times\mathbb R^d}
\bigl|(x-m(\mu_0))-Q(y-m(\mu_1))\bigr|^2\,d\pi(x,y)
+\bigl|m(\mu_0)-Qm(\mu_1)-b\bigr|^2.
\end{align*}
Hence the unique minimizer with respect to \(b\) is $b_Q^\ast=m(\mu_0)-Qm(\mu_1)$. Moreover, centering induces a one-to-one correspondence between \(\Pi(\mu_0,\mu_1)\) and \(\Pi(\bar\mu_0,\bar\mu_1)\). Therefore,
\[ \bar d^{\,2}(\mu_0,\mu_1) = \inf_{Q\in SO(d)} \inf_{\pi\in\Pi(\bar\mu_0,\bar\mu_1)} \int_{\mathbb R^d\times\mathbb R^d}|x-Qy|^2\,d\pi(x,y)= \inf_{Q\in SO(d)}
W_2^2\bigl(\bar\mu_0,Q_{\#}\bar\mu_1\bigr).
\]
This concludes the proof.
\end{proof}

\begin{thm}[Existence of static and dynamic minimizers]
\label{thm:existence-full}
Let $\mu_0,\mu_1\in\mathcal{P}_2(\mathbb{R}^d)$. We have the following:
\begin{enumerate}[noitemsep]
\item There exists $(Q^\star,b^\star)\in SE(d)$ such that
$
d_{S}^2(\mu_0,\mu_1)=W_2^2\big(\mu_0,(Q^\star,b^\star)_\#\mu_1\big).
$
\item There exists an admissible $(\mu,V,E,C)$ attaining the orientation-preserving dynamic value $\bar d^2(\mu_0,\mu_1)$:
$
\bar d^2(\mu_0,\mu_1)=\mathcal{J}(\mu,V,E,C).
$
\end{enumerate}
\end{thm}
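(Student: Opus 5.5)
The plan is to first reduce the static problem to a minimization over the compact group $SO(d)$, then transport a static minimizer into a dynamic one using the construction from Step II of the proof of Theorem~\ref{thm:dyn-static-full}.

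\emph{Part 1.} By Lemma~\ref{prop:centered-full}, for each $Q\in SO(d)$ the optimal translation is $b_Q^\star=m(\mu_0)-Qm(\mu_1)$. Moreover,
\[
W_2^2\big(\mu_0,(Q,b_Q^\star)_\#\mu_1\big)=W_2^2\big(\bar\mu_0,Q_\#\bar\mu_1\big).
\]
It therefore suffices to minimize $F(Q)\coloneqq W_2^2(\bar\mu_0,Q_\#\bar\mu_1)$ over $SO(d)$. Since $SO(d)$ is compact, it is enough to show that $F$ is continuous; I will in fact show it is Lipschitz-type continuous. For $Q,Q'\in SO(d)$, the triangle inequality for $W_2$ gives
\[
|F(Q)^{1/2}-F(Q')^{1/2}|\le W_2(Q_\#\bar\mu_1,Q'_\#\bar\mu_1).
\]
Using the coupling $(Q\cdot,Q'\cdot)_\#\bar\mu_1$, the right-hand side is bounded by
\[
\Big(\int|(Q-Q')y|^2\,\mathrm d\bar\mu_1(y)\Big)^{1/2}\le |Q-Q'|\,M_2(\bar\mu_1)^{1/2},
\]
where $M_2(\bar\mu_1)=\int|y|^2\,\mathrm d\bar\mu_1$ is finite. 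Hence a minimizer $Q^\star$ exists, and we set $b^\star=b_{Q^\star}^\star$. Since Theorem~\ref{thm:dyn-static-full} identifies $d_S^2$ with this infimum, Part 1 follows.

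\emph{Part 2.} Take $(Q^\star,b^\star)$ from Part 1 and repeat Step II of the proof of Theorem~\ref{thm:dyn-static-full} with this fixed pair.
\begin{enumerate}
\item Let $(\hat\mu,\hat W)$ be a Benamou--Brenier minimizer between $\mu_0$ and $(Q^\star,b^\star)_\#\mu_1$. It exists by the classical theory, for instance as the displacement interpolation along an optimal plan, with $\hat W$ the velocity field given by the barycentric projection.
\item Choose a $W^{1,2}$ (indeed smooth) path $(Q_t,b_t)$ in $SE(d)$ from $(I,0)$ to $(Q^\star,b^\star)^{-1}$. Such a path exists because $SO(d)$ is connected; an explicit choice is $Q_t=\exp(tA)$ with $\exp(A)=(Q^\star)^{-1}$ and $A\in\mathfrak{so}(d)$, together with linear interpolation for $b_t$.
\item Define
\[
\mu_t=(g_t)_\#\hat\mu_t,\quad V_t=Q_t\hat W_t\circ g_t^{-1},\quad E_t=\partial_tQ_tQ_t^\top,\quad C_t=\partial_tb_t-E_tb_t.
\]
\end{enumerate}
As in that proof, the quadruple $(\mu,V,E,C)$ lies in $\mathcal A(\mu_0,\mu_1)$. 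By Lemma~\ref{lem:energy-inv},
\[
\mathcal J=W_2^2\big(\mu_0,(Q^\star,b^\star)_\#\mu_1\big)=d_S^2=\bar d^2(\mu_0,\mu_1).
\]
This shows the infimum is attained.

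The main technical point is verifying that the constructed quadruple satisfies the regularity required in \eqref{eq:DefAorig}. Specifically, we need:
\begin{itemize}
\item $\mu\in C([0,1];\mathcal P_2)$, which follows from $W_2$-continuity of $\hat\mu$ together with continuity of $g_t$ and its uniform bound;
\item $V\in L^2_\mu$, which is a change of variables combined with orthogonality of $Q_t$;
\item $E,C\in L^2$, which holds since the path is smooth.
\end{itemize}
None of these is deep once the path is chosen smooth, so the genuinely essential ingredients are the continuity estimate for $F$ in Part 1 and the existence of the classical Benamou--Brenier minimizer in Part 2.
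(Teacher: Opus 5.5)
Your proposal is correct and follows the paper's proof essentially step by step. You reduce to the centered problem over the compact group $SO(d)$, obtain a minimizer by continuity, and then transport a Benamou--Brenier minimizer along a $W^{1,2}$ path in $SE(d)$ exactly as in Step II of Theorem~\ref{thm:dyn-static-full}. The only difference is cosmetic: you prove continuity of $Q\mapsto W_2^2(\bar\mu_0,Q_\#\bar\mu_1)$ by an explicit Lipschitz bound via the coupling $(Q\cdot,Q'\cdot)_\#\bar\mu_1$, whereas the paper argues via narrow convergence with constant second moments and stability of $W_2$.
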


\begin{proof}
\textit{(1) Existence of a static minimizer.}
By the translation reduction in the proof of  Lemma~\ref{prop:centered-full}, it suffices to minimize over $Q\in SO(d)$
$
\Phi(Q)\coloneqq W_2^2\big(\bar{\mu_0},Q_\#\bar{\mu_1}\big).
$
Since $SO(d)$ is compact, it is enough to show that $\Phi$ is continuous.

Let $Q_n\to Q$ in $SO(d)$, and set $\nu_n\coloneqq Q_{n\#}\bar{\mu_1}$ and $\nu\coloneqq Q_\#\bar{\mu_1}$.
For any bounded continuous $f$,
$
\int f\,d\nu_n=\int f(Q_nx)\,d\bar{\mu_1}(x)\to \int f(Qx)\,d\bar{\mu_1}(x)=\int f\,d\nu,
$
so $\nu_n\rightharpoonup \nu$ narrowly.
Moreover,
$
\int_{\mathbb{R}^d}|y|^2\,d\nu_n(y)=\int_{\mathbb{R}^d}|Q_nx|^2\,d\bar{\mu_1}(x)=\int_{\mathbb{R}^d}|x|^2\,d\bar{\mu_1}(x),
$
so $\{\nu_n\}$ has constant second moment and $\nu_n\to\nu$ in $\mathcal P_2(\mathbb{R}^d)$.
By stability of $W_2$ under $\mathcal P_2$ convergence,
\[
W_2(\bar{\mu_0},\nu_n)\to W_2(\bar{\mu_0},\nu),
\]
hence $\Phi(Q_n)\to \Phi(Q)$, i.e.\ $\Phi$ is continuous. Therefore $\Phi$
attains its minimum at some $Q^\star\in SO(d)$.
Define $b^\star\coloneqq m(\mu_0)-Q^\star m(\mu_1)$. Then we obtain by Definition~\ref{def:PW-Ed} that
$$
d_{S}^2(\mu_0,\mu_1)=W_2^2\big(\mu_0,(Q^\star,b^\star)_\#\mu_1\big).
$$

\textit{(2) Existence of a dynamic minimizer.} We now construct a minimizer for the dynamical formulation starting from the optimizer $(Q^\star,b^\star)$.
Let $(Q^\star,b^\star)\in SE(d)$ be as above and let $(\hat\mu,\hat W)$ be a Benamou--Brenier
minimizer between $\mu_0$ and $(Q^\star,b^\star)_\#\mu_1$, so \eqref{eq:BB-min} holds and
\[
\int_0^1\int_{\mathbb{R}^d}|\hat W_t(y)|^2\,d\hat\mu_t(y)\dt
=W_2^2\big(\mu_0,(Q^\star,b^\star)_\#\mu_1\big)
=d_{S}^2(\mu_0,\mu_1).
\]
Choose $(Q_t,b_t)\in W^{1,2}(0,1;SO(d)\times\mathbb{R}^d)$ with $(Q_0,b_0) = (I,0)$ and
$(Q_1,b_1)=(Q^\star,b^\star)^{-1}$, and set $g_t(x)\coloneqq Q_tx+b_t$.
Define
\[
\mu_t\coloneqq (g_t)_\#\hat\mu_t,\quad
V_t(x)\coloneqq Q_t \hat W_t(g_t^{-1}(x)),
\quad
E_t\coloneqq \partial_t Q_tQ_t^\top,\quad C_t\coloneqq \partial_t b_t-E_tb_t.
\]
Then $E\in L^2(0,1;\mathfrak{so}(d))$ and $C\in L^2(0,1;\mathbb{R}^d)$ and, as in the proof of
Theorem~\ref{thm:dyn-static-full}(II), the quadruple $(\mu,V,E,C)$ satisfies
\eqref{eq:continuity} with endpoints $\mu_0,\mu_1$, hence is admissible.
By Lemma~\ref{lem:energy-inv},
\[
\mathcal{J}[\mu,V,E,C]
=\int_0^1\int_{\mathbb{R}^d}|V_t|^2\,\mathrm{d}\mu_t\dt
=\int_0^1\int_{\mathbb{R}^d}|\hat W_t|^2\,d\hat\mu_t\dt
=d_{S}^2(\mu_0,\mu_1).
\]
Theorem~\ref{thm:dyn-static-full} yields
$
d_{S}^2(\mu_0,\mu_1)=\bar d^2(\mu_0,\mu_1),
$
and consequently 
\[
\bar d^2(\mu_0,\mu_1)\le \mathcal{J}[\mu,V,E,C]
=d_{S}^2(\mu_0,\mu_1)=\bar d^2(\mu_0,\mu_1).
\]
Hence equality holds, and $(\mu,V,E,C)$ is a minimizer.
\end{proof}

\section{Study of the static problem}

By Section~\ref{proofdynamictostaticMBB}, solving
\eqref{continuousMBB} amounts to finding an optimal orientation-preserving rigid motion in $SE(d)$. After eliminating translations by centering, this reduces to an optimization over $SO(d)$. The corresponding formulation over
the full orthogonal group $O(d)$ yields the full Procrustes--Wasserstein value. In other words, before transporting mass, one must first
choose the best representative of $\mu_0$ within its equivalence
class under isometries.
\begin{rmk}[On translations versus orthogonal transformations]
Although the dynamic formulation of the MBB problem is invariant under the full Euclidean group $E(d)$, the translation component can be eliminated: the optimal translation $b_Q^\star=m(\mu_0)-Qm(\mu_1)$ aligns the means of the two measures, and thus the problem reduces to centered measures (cf. Section~\ref{proofdynamictostaticMBB}). 
As a result, the static Procrustes--Wasserstein formulation only requires optimization over the orthogonal group $O(d)$, not the full Euclidean group. 
Hence, from this point onward we restrict to $O(d)$.
\end{rmk}

Define an equivalence relation \(\sim\) on \(\mathcal{P}_{2,0}(\mathbb{R}^d)\), with $ \mathcal{P}_{2,0}(\mathbb{R}^d)= \{\mu \in \mathcal{P}_{2}(\mathbb{R}^d): m(\mu)=0 \}$ by setting \(\mu \sim \nu\) if and only if there exists an orthogonal transformation \(O \in O(d)\) such that \(\nu = O_\# \mu\), where \(O_\# \mu\) denotes the pushforward measure. Let \(\pi: \mathcal{P}_{2,0}(\mathbb{R}^d) \to \mathcal{P}_{2,0}(\mathbb{R}^d) / \sim\) be the canonical projection.

Define the pseudometric \(\bar{d}(\mu, \nu) = \inf_{O \in O(d)} W_2(\mu, O_\# \nu)\). The quotient space \(\mathcal{P}_{2,0}(\mathbb{R}^d) / \sim\) is equipped with the induced metric \({d}'\), defined by
\begin{eqnarray}
 d'([\mu],[\nu]) = \bar{d}(\mu, \nu)  \label{quotientmetric} = \inf_{O \in O(d)} W_2(\mu, O_\# \nu),
\end{eqnarray}
where \([\mu]\) denotes the equivalence class of \(\mu\).
The space \( \mathcal{P}_{2,0}(\mathbb{R}^d)  \) endowed with the \( \bar{d} \) defined in \eqref{staticMBB} is a pseudo-metric space.

\subsection{Topological properties}
The following theorem is a direct consequence of properties of the Wasserstein space.
\begin{thm}
    The space $\left( \mathcal{P}_{2,0}(\mathbb{R}^d)/ O(d), \bar{d} \right)$ is a complete and path-connected metric space.
\end{thm}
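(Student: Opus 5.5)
We prove three things in turn: that $\bar d$ descends to a genuine metric on the quotient $\mathcal P_{2,0}(\mathbb R^d)/O(d)$, that this metric space is complete, and that it is path-connected. Throughout we use two facts repeatedly. First, $W_2$ is invariant under the diagonal action, $W_2(O_\#\mu,O_\#\nu)=W_2(\mu,\nu)$ for $O\in O(d)$. Second, $O(d)$ is compact, so for fixed $\mu,\nu$ the map $O\mapsto W_2(\mu,O_\#\nu)$ is continuous and the infimum in \eqref{quotientmetric} is attained; this is the argument of Theorem~\ref{thm:existence-full}(1) with $O(d)$ in place of $SO(d)$.

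\emph{Metric axioms.} Well-definedness on classes and symmetry both follow from the diagonal invariance of $W_2$. For example, $W_2(\mu,O_\#\nu)=W_2(O^\top_\#\mu,\nu)$, and replacing $\mu$ by $P_\#\mu$ only reparametrizes the infimum. For the triangle inequality, let $O_1$ attain $\bar d(\mu,\nu)$ and $O_2$ attain $\bar d(\nu,\rho)$. Then
\[
W_2(\mu,(O_1O_2)_\#\rho)\le W_2(\mu,O_{1\#}\nu)+W_2(O_{1\#}\nu,O_{1\#}O_{2\#}\rho)=\bar d(\mu,\nu)+\bar d(\nu,\rho).
\]
For definiteness, suppose $\bar d(\mu,\nu)=0$. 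By attainment there is $O$ with $W_2(\mu,O_\#\nu)=0$, so $\mu=O_\#\nu$ and $[\mu]=[\nu]$. Note also that $\mathcal P_{2,0}$ is closed in $(\mathcal P_2,W_2)$, because the mean is $W_2$-continuous.

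\emph{Completeness.} Let $([\mu_n])$ be Cauchy. Pass to a subsequence with $\bar d(\mu_{n_k},\mu_{n_{k+1}})<2^{-k}$. Now choose representatives inductively: set $\nu_1=\mu_{n_1}$, and given $\nu_k\in[\mu_{n_k}]$, pick $O_k$ attaining the infimum and put $\nu_{k+1}=O_{k\#}\mu_{n_{k+1}}$. Then $W_2(\nu_k,\nu_{k+1})=\bar d(\mu_{n_k},\mu_{n_{k+1}})<2^{-k}$, where we used well-definedness on classes. Hence $(\nu_k)$ is Cauchy in $(\mathcal P_2,W_2)$, which is complete, so $\nu_k\to\nu$ in $W_2$. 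Since $\mathcal P_{2,0}$ is closed, $\nu\in\mathcal P_{2,0}$. Finally, $\bar d(\nu_k,\nu)\le W_2(\nu_k,\nu)\to0$, and a Cauchy sequence with a convergent subsequence converges. This representative-alignment step is the main point needing care: a naive choice of representatives need not be $W_2$-Cauchy, and the alignment via attained minimizers is exactly what fixes this.

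\emph{Path-connectedness.} Take any $[\mu],[\nu]$ and let $\gamma$ be a $W_2$-geodesic from $\mu$ to $\nu$, given by McCann interpolation $\gamma_t=((1-t)x+ty)_\#\pi$ with $\pi$ an optimal plan. The mean of $\gamma_t$ is $(1-t)m(\mu)+tm(\nu)=0$, so $\gamma_t\in\mathcal P_{2,0}$. The curve $t\mapsto[\gamma_t]$ is continuous because $\bar d\le W_2$, that is, the projection $\pi$ is $1$-Lipschitz. Its endpoints are $[\mu]$ and $[\nu]$, as required.
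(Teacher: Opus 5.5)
Your proof is correct. It has the same skeleton as the paper's: completeness of $(\mathcal P_2,W_2)$ plus compactness of the isometric $O(d)$-action gives completeness, and projected Wasserstein geodesics give path-connectedness. The paper reduces to $\mathcal P_2(\mathbb R^d)/O(d)$ by closedness and then cites Burago--Burago--Ivanov for the general fact that a quotient of a complete metric space by a compact group of isometries is complete. You instead prove that fact inline via the representative-alignment argument.

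You also verify the metric axioms explicitly. In particular, definiteness needs the infimum to be attained, and that is what makes $\bar d$ a genuine metric rather than a pseudometric on the quotient. The paper takes this for granted.

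Your route also covers a point the paper's reduction glosses over. Closedness of $\mathcal P_{2,0}(\mathbb R^d)/O(d)$ inside $\mathcal P_2(\mathbb R^d)/O(d)$ transfers completeness, but it does not transfer path-connectedness. Your observation that the McCann interpolant between centered measures has mean $(1-t)m(\mu)+t\,m(\nu)=0$ is exactly what keeps the projected geodesic inside the centered quotient.

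The paper's version is shorter and inherits the generality of the cited lemma (any compact group acting by isometries); yours is self-contained. Two small remarks:
\begin{enumerate}
\item Continuity of $O\mapsto W_2(\mu,O_\#\nu)$ comes from the estimate $W_2(O_\#\nu,O'_\#\nu)\le \|O-O'\|\,(\int|x|^2\,\mathrm d\nu)^{1/2}$, not from compactness of $O(d)$. Compactness is only what turns continuity into attainment.
\item Attainment is needed only for definiteness. In the triangle inequality and the completeness step, near-minimizers with errors $2^{-k}$ would do.
\end{enumerate}
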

\begin{proof}
Since $ \mathcal{P}_{2,0}(\mathbb{R}^d)/ O(d)$ is a closed subset of $ \mathcal{P}_{2}(\mathbb{R}^d)/ O(d)$, it is enough to do the proof for $ \mathcal{P}_{2}(\mathbb{R}^d)/ O(d)$.
The space $(\mathcal P_2(\mathbb R^d),W_2)$ is complete and geodesic 
\cite[Thm.~7.1.5]{ambrosio2005gradient}, \cite[Ch.~7]{villani2009optimal}. 
The quotient of a complete metric space by a compact group acting by isometries 
is again complete \cite[Ch.~3]{burago2001course}. Moreover, the projection of Wasserstein 
geodesics yields continuous paths between any two orbits. Hence 
$(\mathcal P_2/O(d),d')$ is complete and path-connected.
\end{proof}
We next characterize minimizing curves in the Procrustes Wasserstein setting. 
The following result refines existing ideas about Wasserstein distances and quotient spaces by focusing on minimizing curves \cite{kloeckner2010geometric, burago2001course, do1992riemannian, gallot2004differential}.
\begin{corollary}\label{Bureprocrustegeodesic}
Let $G\in\lbrace SO(d),O(d)\rbrace$. The following properties hold.
\begin{enumerate}
\item For every rectifiable curve
$c\colon[0,1]\to\mathcal P_{2,0}(\mathbb R^d)$,
\[
L(\pi_G\circ c)\leq L(c).
\]

\item Let $Q^\star\in G$ satisfy
\[
d_G([\mu],[\nu])=W_2(\mu,Q^\star_\#\nu),
\]
and let $c$ be a $W_2$-geodesic from $\mu$ to $Q^\star_\#\nu$. Then
\[
L(\pi_G\circ c)
=
L(c)
=
W_2(\mu,Q^\star_\#\nu)
=
d_G([\mu],[\nu]).
\]
Consequently, $\pi_G\circ c$ is a minimizing geodesic in
$\mathcal P_{2,0}(\mathbb R^d)/G$. In particular $(\mathcal P_{2,0}(\mathbb R^d)/G, d_G)$ is a geodesic metric space.
\end{enumerate}
\end{corollary}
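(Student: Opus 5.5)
Both parts follow from the fact that $G$ is compact and acts on $(\mathcal P_{2,0}(\mathbb R^d),W_2)$ by isometries. Throughout, the quotient distance is $d_G([\mu],[\nu])=\inf_{Q\in G}W_2(\mu,Q_\#\nu)$, and lengths are defined as suprema over partitions $0=t_0<\dots<t_N=1$ of sums of consecutive distances.

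\emph{Part 1.} The key observation is that $\pi_G$ is $1$-Lipschitz. Taking $Q=I$ in the infimum gives $d_G(\pi_G\mu,\pi_G\nu)\le W_2(\mu,\nu)$ for all $\mu,\nu$. Hence, for any partition,
\[
\sum_{i}d_G\bigl(\pi_G c(t_{i-1}),\pi_G c(t_i)\bigr)\le\sum_i W_2\bigl(c(t_{i-1}),c(t_i)\bigr)\le L(c).
\]
Taking the supremum over partitions yields $L(\pi_G\circ c)\le L(c)$.

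Before doing this, I would check that $d_G$ is a genuine metric on the quotient. Symmetry follows from $W_2(\mu,Q_\#\nu)=W_2(Q^\top_\#\mu,\nu)$, which holds because $Q$ is an isometry. The triangle inequality follows by composing group elements. Definiteness uses attainment of the infimum: the map $Q\mapsto W_2(\mu,Q_\#\nu)$ is continuous, as in Theorem~\ref{thm:existence-full}(1), and $G$ is compact. I would also check that $c$ stays in $\mathcal P_{2,0}$. Geodesics between centered measures remain centered, since the mean is affine along the displacement interpolation $((1-t)x+tT(x))_\#$ induced by an optimal plan.

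\emph{Part 2.} Let $c$ be a constant-speed $W_2$-geodesic from $\mu$ to $Q^\star_\#\nu$, which exists by \cite[Thm.~7.1.5]{ambrosio2005gradient}. Its length is $L(c)=W_2(\mu,Q^\star_\#\nu)$, and this equals $d_G([\mu],[\nu])$ by the choice of $Q^\star$. Part 1 gives $L(\pi_G\circ c)\le L(c)$. For the reverse inequality, the length of any curve is at least the distance between its endpoints. The endpoints of $\pi_G\circ c$ are $[\mu]$ and $[Q^\star_\#\nu]=[\nu]$, so
\[
L(\pi_G\circ c)\ge d_G([\mu],[\nu]).
\]
All quantities are therefore equal.

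To show that $\pi_G\circ c$ is minimizing as a geodesic, and not merely of minimal total length, I would argue on subintervals. For $s<t$, the same $1$-Lipschitz bound gives
\[
d_G\bigl(\pi_G c(s),\pi_G c(t)\bigr)\le W_2\bigl(c(s),c(t)\bigr)=(t-s)\,d_G([\mu],[\nu]).
\]
If this inequality were strict for some pair $s<t$, the triangle inequality applied to $[0,s]$, $[s,t]$, $[t,1]$ would give $d_G([\mu],[\nu])<d_G([\mu],[\nu])$. Hence equality holds on every subinterval, and $\pi_G\circ c$ is a constant-speed minimizing geodesic.

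Existence of an optimal $Q^\star$ for arbitrary $\mu,\nu$ comes from the compactness-and-continuity argument of Theorem~\ref{thm:existence-full}(1), applied now over $G$ rather than only $SO(d)$. Combined with the construction above, this shows that any two classes are joined by such a geodesic, so the quotient is a geodesic metric space.

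The main obstacle is small: it is the bookkeeping that makes the quotient a metric space and keeps the geodesic inside the centered subspace. The length comparison itself is routine once $\pi_G$ is known to be $1$-Lipschitz.
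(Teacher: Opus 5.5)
Your proposal is correct and follows the same route as the paper: $\pi_G$ is $1$-Lipschitz, which gives $L(\pi_G\circ c)\le L(c)$. The sandwich $d_G([\mu],[\nu])\le L(\pi_G\circ c)\le L(c)=W_2(\mu,Q^\star_\#\nu)=d_G([\mu],[\nu])$ then forces equality throughout. Your additional checks are sound and fill in points the paper's proof leaves implicit: the metric axioms for $d_G$, centeredness of the displacement interpolation, equality of distances on every subinterval (so $\pi_G\circ c$ is a constant-speed geodesic), and above all existence of $Q^\star$ over all of $G$ by compactness, which the final ``geodesic metric space'' claim needs.
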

\begin{proof}
Since $\pi_G$ is $1$-Lipschitz, the length of every rectifiable curve
$c$ satisfies $L(\pi_G\circ c)\le L(c)$.
Now let $Q^\star$ be optimal and let $c$ be a constant-speed
$W_2$-geodesic from $\mu$ to $Q^\star_\#\nu$. Then
\[
L(c)=W_2(\mu,Q^\star_\#\nu)=d_G([\mu],[\nu]).
\]
On the other hand, every curve joining $[\mu]$ and $[\nu]$ has length at
least $d_G([\mu],[\nu])$. Hence
\[
d_G([\mu],[\nu])
\le L(\pi_G\circ c)
\le L(c)
=d_G([\mu],[\nu]).
\]
Thus equality holds throughout, and $\pi_G\circ c$ is a minimizing
geodesic.
\end{proof}
\subsection{Closed Form for Gaussian Distributions: A Bures-Wasserstein like Formulation}
Given $\mu_i = \mathcal{N}(m_i, \Sigma_i)$, $i \in \{0, 1\}$, two Gaussian distributions on $\mathbb{R}^d$, the 2-Wasserstein distance $W_2$ between $\mu_0$ and $\mu_1$ has a closed-form expression, often referred to as the Bures--Wasserstein formula \cite[section 1]{dowson1982frechet}, which can be written as:
\begin{equation}\label{burgmetric}
W_2^2(\mu_0, \mu_1) = \|m_0 - m_1\|^2 + \mathrm{tr}\left(\Sigma_0 + \Sigma_1 - 2\left(\Sigma_1^{1/2}\Sigma_0\Sigma_1^{1/2}\right)^{1/2}\right),
\end{equation}
where, for every symmetric positive semidefinite matrix $M$, the matrix $M^{1/2}$ denotes the unique positive semidefinite square root and $\|m_0 - m_1\|$ stands for the Euclidean norm between $m_0$ and $m_1$.
\\
Given $\mu_i = \mathcal{N}(m_i, \Sigma_i)$, $i \in \{0, 1\}$, two Gaussian distributions on $\mathbb{R}^d$, \eqref{burgmetric} provides the 2-Wasserstein distance $W_2$ between $\mu_0$ and $\mu_1$. As it only requires the expressions of the two covariance matrices, we only need one of $\bar{\mu_0}\circ(\theta)$ and $ \bar{\mu_1}$. That is, respectively $\bar{\Sigma}_0= \theta\Sigma_0 \theta^T$ and $\bar{\Sigma_1}= \Sigma_1$. 
Hence, the problem reduces with $F(\theta) \coloneqq \mathrm{tr}\big( (\Sigma_1^{1/2} \theta\Sigma_0 \theta^T \Sigma_1^{1/2})^{1/2}\big)$ to 
\begin{align}\label{modifiedgaussian}
\begin{aligned}
\bar{d}^2(\mu_0,\mu_1)&= \inf_{\theta} \left\{  \mathrm{tr}\left(\Sigma_1 + \theta\Sigma_0 \theta^T - 2(\Sigma_1^{1/2} \theta\Sigma_0 \theta^T \Sigma_1^{1/2})^{1/2}\right) \right\} \\
&= \mathrm{tr}(\Sigma_1 ) + \mathrm{tr}(\Sigma_0 ) -2  \sup_{ \theta}\{ F(\theta)\}.    
\end{aligned}
\end{align}
The following theorem provides an explicit formula for the Procrustes Wasserstein distance between two Gaussian distributions.
\begin{thm}\label{gaussianprowass}
    Given $\mu_i = \mathcal{N}(m_i, \Sigma_i)$, $i \in \{0, 1\}$, two Gaussian distributions in $\mathbb{R}^d$, we represent their eigendecompositions by $\Sigma_i = P_i A_i {P_i}^T$ for $i \in \{0, 1\}$, with $A_i$ being a diagonal matrix with diagonal vector $a_i$, the Procrustes Wasserstein distance between $\mu_0$ and $\mu_1 $ is equal to the Euclidean distance between the vectors of square roots of the ordered eigenvalues of the two covariance matrices. Namely, given vector $a_i$ the vector \(a_i=(a_{i,1},\ldots,a_{i,d})\) consists of the ordered eigenvalues of \(\Sigma_i\), with \(a_{i,1}\leq\cdots\leq a_{i,d}\), we have 
    \begin{eqnarray*}
        \bar{d}^2(\mu_0,\mu_1)= \|\sqrt{a_0} - \sqrt{a_1}\|^2.
    \end{eqnarray*}
    The optimal orthogonal transformation is given by $ P_1P_0^T $ and the optimal Monge map is the one between $\Sigma_1 $ and $  P_1P_0^T \Sigma_0 P_0 P_1^T  $ provided $\Sigma_0$ is positive definite.
 \end{thm}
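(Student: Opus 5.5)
The plan is to reduce everything to a matrix inequality via the reduction \eqref{modifiedgaussian}. By Lemma~\ref{prop:centered-full}, the means drop out: the optimal translation aligns them. Gaussians stay Gaussian under orthogonal pushforward, and the Bures--Wasserstein formula \eqref{burgmetric} applies. So it suffices to show
\[
\sup_{\theta} F(\theta)=\sum_{i=1}^d \sqrt{a_{0,i}a_{1,i}},
\]
with both eigenvalue vectors ordered increasingly. Indeed, $\mathrm{tr}(\Sigma_0)+\mathrm{tr}(\Sigma_1)-2\sum_i\sqrt{a_{0,i}a_{1,i}}=\|\sqrt{a_0}-\sqrt{a_1}\|^2$.

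\textbf{Step 1 (nuclear norm).} Set $M_\theta\coloneqq \Sigma_1^{1/2}\,\theta\Sigma_0^{1/2}\theta^T$. Since $(\theta\Sigma_0\theta^T)^{1/2}=\theta\Sigma_0^{1/2}\theta^T$, we have $M_\theta M_\theta^T=\Sigma_1^{1/2}\theta\Sigma_0\theta^T\Sigma_1^{1/2}$. Hence $F(\theta)=\mathrm{tr}\big((M_\theta M_\theta^T)^{1/2}\big)=\|M_\theta\|_*$, the sum of the singular values of $M_\theta$.

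\textbf{Step 2 (upper bound).} Write $\|M_\theta\|_*=\max_{U\in O(d)}\mathrm{tr}(UM_\theta)$. For a fixed $U$, apply von Neumann's trace inequality to the pair $U\Sigma_1^{1/2}$ and $\theta\Sigma_0^{1/2}\theta^T$:
\[
\mathrm{tr}\big(U\Sigma_1^{1/2}\,\theta\Sigma_0^{1/2}\theta^T\big)\le \sum_i \sigma_i(U\Sigma_1^{1/2})\,\sigma_i(\theta\Sigma_0^{1/2}\theta^T).
\]
Here both singular value sequences are taken in the same order. Orthogonal factors do not change singular values, so $\sigma_i(U\Sigma_1^{1/2})$ are the numbers $\sqrt{a_{1,i}}$ and $\sigma_i(\theta\Sigma_0^{1/2}\theta^T)$ are the numbers $\sqrt{a_{0,i}}$. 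The right-hand side is therefore $\sum_i\sqrt{a_{0,i}a_{1,i}}$ with matched orderings, uniformly in $\theta$ and $U$.

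\textbf{Step 3 (attainment).} Take $\theta^\star=P_1P_0^T$. Then $\theta^\star\Sigma_0\theta^{\star T}=P_1A_0P_1^T$, which commutes with $\Sigma_1=P_1A_1P_1^T$. Hence $\Sigma_1^{1/2}\theta^\star\Sigma_0\theta^{\star T}\Sigma_1^{1/2}=P_1A_1A_0P_1^T$, and $F(\theta^\star)=\mathrm{tr}\big((A_1A_0)^{1/2}\big)=\sum_i\sqrt{a_{1,i}a_{0,i}}$. The columns of $P_0$ and $P_1$ are ordered by the same increasing eigenvalue ordering, so the bound of Step 2 is attained.

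\textbf{Step 4 (restriction to $SO(d)$).} The dynamic value $\bar d$ is an infimum over $SO(d)$, so $\theta^\star$ must have determinant $+1$. If $\det(P_1P_0^T)=-1$, flip the sign of one column of $P_0$. This is still an eigendecomposition of $\Sigma_0$ with the same ordering, and it makes $\det(P_1P_0^T)=+1$. Consequently the $SO(d)$ and $O(d)$ values coincide for Gaussians.

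\textbf{Step 5 (Monge map).} Once $\theta^\star$ is fixed, the optimal transport is the classical linear Bures map between $\mathcal N(0,\Sigma_1)$ and $\mathcal N(0,P_1P_0^T\Sigma_0P_0P_1^T)$. That this map is a Monge map requires invertibility, which is why $\Sigma_0\succ0$ is assumed in the statement.

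The main obstacle is Step 2. One has to make sure that von Neumann's inequality is applied to the right pair of factors, so that the bound comes out as the \emph{ordered} sum $\sum_i\sqrt{a_{0,i}a_{1,i}}$ independently of $\theta$. One also has to identify $F$ with the nuclear norm correctly, which relies on the square root of $\theta\Sigma_0\theta^T$ conjugating through $\theta$.
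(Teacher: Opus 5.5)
Your proposal is correct and follows essentially the paper's route: reduce via the Bures--Wasserstein formula to maximizing $F$, bound $F(\theta)$ by the ordered sum $\sum_i\sqrt{a_{0,i}a_{1,i}}$ through a singular-value product inequality, and attain the bound at $\theta=P_1P_0^T$. For the bound you use nuclear-norm duality with von Neumann's trace inequality, while the paper applies Horn--Johnson's Theorem~3.3.14 to $A_1^{1/2}\Theta A_0^{1/2}$, which is the same nuclear norm; your Step~4, flipping a column of $P_0$ so that $P_1P_0^T\in SO(d)$, also makes explicit a point the paper's proof leaves implicit, since $\bar d$ is an infimum over $SO(d)$ rather than $O(d)$.
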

  A preliminary version of this result appeared in \cite{toukam2025procrustes}.

\begin{proof}
Consider  $\mu_i = \mathcal{N}(m_i, \Sigma_i)$, $i \in \{0, 1\}$, two Gaussian distributions on $\mathbb{R}^d$. We represent their eigendecomposition by $\Sigma_i = P_i A_i {P_i}^T$ for $i \in \{0, 1\}$, with $A_i$ being a diagonal matrix with diagonal vector $a_i$. The vector \(a_i=(a_{i,1},\ldots,a_{i,d})\) consists of the ordered eigenvalues of \(\Sigma_i\) and $P_i$ is an orthogonal matrix.
    \begin{lemma}\label{setofeigenvalue}
        Let $ M$, $N$ be positive semidefinite matrices. If $\{\lambda(M)\} = \{\lambda(N)\}$, then $\mathrm{tr}(M) = \mathrm{tr}(N)$ and $\mathrm{tr}(M^{\frac{1}{2}}) = \mathrm{tr}(N^{\frac{1}{2}})$.
    \end{lemma}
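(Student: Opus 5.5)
The statement to prove is Lemma~\ref{setofeigenvalue}. I read the hypothesis $\{\lambda(M)\}=\{\lambda(N)\}$ as equality of the spectra counted with multiplicity. Equivalently, the two sorted eigenvalue vectors coincide. This reading is forced by the intended application, since otherwise the trace claim fails (for instance, $\operatorname{diag}(1,1,2)$ and $\operatorname{diag}(1,2,2)$ have the same eigenvalue set but different traces). If only the sets were equal, I would add the assumption that $M$ and $N$ have the same size and identical multiplicities, which is the case in every use in the proof of Theorem~\ref{gaussianprowass}.

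The plan is to work with the spectral theorem directly. Since $M$ and $N$ are symmetric positive semidefinite, write
\[
M=U\Lambda U^\top,\qquad N=W\Lambda' W^\top,
\]
with $U,W\in O(d)$ and $\Lambda,\Lambda'$ diagonal with nonnegative entries. Ordering the eigenvalues increasingly, the hypothesis gives $\Lambda=\Lambda'$.

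\textbf{Trace identity.} By cyclicity of the trace,
\[
\mathrm{tr}(M)=\mathrm{tr}(U\Lambda U^\top)=\mathrm{tr}(\Lambda U^\top U)=\mathrm{tr}(\Lambda),
\]
and likewise $\mathrm{tr}(N)=\mathrm{tr}(\Lambda)$. Hence $\mathrm{tr}(M)=\mathrm{tr}(N)$.

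\textbf{Square-root identity.} The step that needs care is identifying $M^{1/2}$ concretely. The matrix $U\Lambda^{1/2}U^\top$ is positive semidefinite and squares to $M$. By the uniqueness of the positive semidefinite square root recalled after \eqref{burgmetric}, it follows that $M^{1/2}=U\Lambda^{1/2}U^\top$. Cyclicity then gives $\mathrm{tr}(M^{1/2})=\mathrm{tr}(\Lambda^{1/2})=\sum_k\sqrt{\lambda_k}$. The same computation for $N$ yields $\mathrm{tr}(N^{1/2})=\mathrm{tr}(\Lambda^{1/2})$ as well, so $\mathrm{tr}(M^{1/2})=\mathrm{tr}(N^{1/2})$.

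There is no genuine obstacle beyond the multiplicity convention and this uniqueness-of-square-root step. Both identities are spectral invariants: each is the sum of a fixed function ($t$ and $\sqrt t$ respectively) applied to the eigenvalues with multiplicity.
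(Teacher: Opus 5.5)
Your proof is correct. The paper gives no separate proof of this lemma; it only appeals to the fact that eigenvalues do not depend on the basis, and your argument (spectral decomposition, cyclicity of the trace, and uniqueness of the positive semidefinite square root) supplies exactly the standard justification behind that appeal. Your reading of the hypothesis as equality of spectra with multiplicity, for matrices of the same size, is a genuine and necessary sharpening of the literal statement. It holds in the application, since $\Sigma_1^{1/2}\theta\Sigma_0\theta^\top\Sigma_1^{1/2}=P_1T_\Theta T_\Theta^\top P_1^\top$ is orthogonally similar to $T_\Theta T_\Theta^\top$.
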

    If we replace the eigendecomposition of $\Sigma_i$, for all $i \in \{0,1\} $, we obtain
    \begin{align*}
        \Sigma_1^{1/2} \theta\Sigma_0 \theta^T \Sigma_1^{1/2} &= P_1 A_1^{1/2}P_1^T \theta P_0 A_0 P_0^T \theta^T P_1 A_1^{1/2}  P_1^T\\
         &= P_1 A_1^{1/2} {\Theta} A_0 {{\Theta}^T} A_1^{1/2} P_1^T \text{ with } \Theta = P_1^T \theta P_0 \text{ and } T_\Theta = A_1^{1/2}  \Theta A_0^{1/2}\\
          &= P_1 T_\Theta T_\Theta ^T P_1^T.
    \end{align*}
    This is equivalent to $\{\lambda(\Sigma_1^{1/2} \theta\Sigma_0 \theta^T \Sigma_1^{1/2})\} = \{\lambda(T_\Theta T_\Theta ^T)\}$ and thus an application of Lemma \ref{setofeigenvalue} and the fact that the eigenvalue does not depend on the choice of basis in which the matrix is represented yield    
    $$ \mathrm{tr} \left( (\Sigma_1^{1/2} \theta\Sigma_0 \theta^T \Sigma_1^{1/2})^{1/2} \right) = \mathrm{tr} \left( (T_\Theta T_\Theta ^T)^{1/2} \right). $$
    Let $\sigma(M)$ and $\lambda(M)$ denote the singular and eigenvalue of $M$.
    The definition of $F(\cdot)$ yields
    \begin{align*}
        F(\theta )&=  \mathrm{tr}\left( \left(\Sigma_1^{1/2} \theta\Sigma_0 \theta^T \Sigma_1^{1/2}\right)^{1/2}\right)= \sum_i^d \lambda_i ( \left(\Sigma_1^{1/2} \theta\Sigma_0 \theta^T \Sigma_1^{1/2}\right)^{1/2})\\
        &= \sum_i^d \lambda_i ( \left(T_\Theta T_\Theta ^T\right)^{1/2})=  \sum_i^d  \left(\lambda_i (T_\Theta T_\Theta ^T) \right)^{1/2} = \sum_i^d  \left( \sigma_i(T_\Theta T_\Theta ^T) \right)^{1/2}.
    \end{align*}
    Since $P_0$ and $P_1$ are given explicitly, and $\Theta= P_1^T \theta P_0$,  in order to get the optimal $\theta$, one should try to obtain the optimal $\Theta$, as the multiplication map is a bijection onto the set of orthogonal matrix.
    \begin{thm}[Theorem 3.3.14, \cite{horn1994topics}] \label{orderedsingularvalue}
        Let $M$, $N \in \mathbb{R} ^{d\times d}$ and denote the ordered singular values of $M$, $N$ and $MN$  by $0\leq \sigma_1(M) \leq \cdot \cdot \cdot \leq \sigma_d(M)$,  $0\leq \sigma_1(N) \leq \cdot \cdot \cdot \leq \sigma_d(N)$ and  $0\leq \sigma_1(MN) \leq \cdot \cdot \cdot \leq \sigma_d(MN)$. We have,
$\sum_{i=1}^d \sigma_i(MN)\leq \sum_{i=1}^d \sigma_i(M)\sigma_i(N)$
    \end{thm}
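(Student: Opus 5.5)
We obtain the inequality from two standard facts: the variational characterization of the trace norm, and von Neumann's trace inequality. For any $A\in\mathbb R^{d\times d}$ we have
\[
\sum_{i=1}^d\sigma_i(A)=\max_{U\in O(d)}\mathrm{tr}(UA).
\]
To see this, write an SVD $A=U_1\Sigma V_1^T$. The choice $U=V_1U_1^T$ attains $\mathrm{tr}(\Sigma)$. The upper bound is the case $B=U$ of the trace inequality below, since all singular values of an orthogonal matrix equal $1$.

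Applying the characterization to $A=MN$ and choosing an optimal $U$, we get
\[
\sum_i\sigma_i(MN)=\mathrm{tr}(UMN)=\mathrm{tr}\bigl((NU)M\bigr),
\]
and $NU$ has the same singular values as $N$. So the theorem reduces to von Neumann's inequality:
\[
\mathrm{tr}(AB)\le\sum_i\sigma_i(A)\sigma_i(B),
\]
with both sequences ordered increasingly. The ordering is not essential as long as it is the same for both.

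To prove von Neumann's inequality, take SVDs $A=U_1\Sigma V_1^T$ and $B=U_2\Gamma V_2^T$, with $\Sigma=\mathrm{diag}(s_i)$ and $\Gamma=\mathrm{diag}(g_j)$. Cyclicity gives $\mathrm{tr}(AB)=\mathrm{tr}(\Sigma W\Gamma Z)$, where $W=V_1^TU_2$ and $Z=V_2^TU_1$ are orthogonal. Expanding,
\[
\mathrm{tr}(\Sigma W\Gamma Z)=\sum_{i,j}s_ig_jW_{ij}Z_{ji}\le\sum_{i,j}s_ig_j\,\tfrac12\bigl(W_{ij}^2+Z_{ji}^2\bigr),
\]
where we used AM--GM and $s_i,g_j\ge0$. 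The matrices $(W_{ij}^2)_{ij}$ and $(Z_{ji}^2)_{ij}$ are doubly stochastic, because $W$ and $Z$ are orthogonal, and hence so is their average $D$. The right-hand side is the linear functional $D\mapsto\sum_{i,j}s_ig_jD_{ij}$ evaluated at $D$. By Birkhoff's theorem its maximum over doubly stochastic matrices is attained at a permutation matrix $\Pi_\tau$, giving $\sum_is_ig_{\tau(i)}$. The rearrangement inequality then bounds this by $\sum_is_ig_i$ with both sequences sorted the same way.

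The main obstacle is this middle step: passing from the bilinear expression in two different orthogonal matrices to a single doubly stochastic matrix. The AM--GM symmetrization handles it, and I would check carefully that the nonnegativity of the $s_i,g_j$ is what licenses that step. Everything else is routine SVD bookkeeping.
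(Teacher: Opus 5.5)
Your proof is correct, but it takes a different route from the one the paper relies on. The paper gives no proof of its own; it quotes Horn--Johnson. There the statement is the case $k=d$ of the partial-sum inequalities $\sum_{i\le k}\sigma_i^{\downarrow}(MN)\le\sum_{i\le k}\sigma_i^{\downarrow}(M)\sigma_i^{\downarrow}(N)$, with singular values in decreasing order. These are obtained from A.~Horn's multiplicative inequality $\prod_{i\le k}\sigma_i^{\downarrow}(MN)\le\prod_{i\le k}\sigma_i^{\downarrow}(M)\sigma_i^{\downarrow}(N)$ by passing from log-majorization to weak majorization.

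You work directly with the full sum. You realize $\sum_i\sigma_i(MN)$ as $\mathrm{tr}(UMN)$ for some $U\in O(d)$, move $U$ onto $N$ by cyclicity, and apply von Neumann's trace inequality, which you prove via AM--GM, Birkhoff's theorem and the rearrangement inequality. Every step checks out. The nonnegativity of the weights $s_ig_j$ is exactly what the AM--GM step needs, and only the attainment half of your trace-norm characterization is actually used.

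The textbook route yields more: all partial sums, and analogous bounds for $\sum_{i\le k}f(\sigma_i)$ whenever $t\mapsto f(e^t)$ is convex and increasing. Yours is elementary and self-contained, and it gives exactly the full-sum case. That is all the proof of Theorem~\ref{gaussianprowass} needs. It is also the only case that holds in the paper's increasing ordering. For example, with $M=\mathrm{diag}(1,2)$ and $N$ a quarter-turn rotation times $\mathrm{diag}(1,2)$, one has $\sigma_1^{\uparrow}(MN)=2>1=\sigma_1^{\uparrow}(M)\sigma_1^{\uparrow}(N)$.
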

    From Theorem \ref{orderedsingularvalue} and the fact that all singular values of an orthogonal matrix are equal to 1,  we obtain the following inequalities;
\begin{align*}
     F(\theta )&=  \sum_{i=1}^d \left( \sigma_i(T_\Theta T_\Theta ^T) \right)^{1/2}\leq \sum_{i=1}^d  \left( \sigma_i(T_\Theta) \sigma_i ( T_\Theta ^T) \right)^{1/2}= \sum_{i=1}^d  \sigma_i(A_1^{1/2}  \Theta A_0^{1/2})\\
     &\leq \sum_{i=1}^d  \sigma_i(A_1^{1/2}) \sigma_i ( \Theta A_0^{1/2})=\langle {a_0}^{1/2}, {a_1}^{1/2} \rangle.
\end{align*} 
For $\Theta = \mathrm{Id}$ equivalently $\theta =  P_1P_0^T $, we have $ \sup_{ \theta}\{ F(\theta)\}= \langle {a_0}^{1/2}, {a_1}^{1/2} \rangle= F( P_1P_0^T  ). $
Replacing the expression of the supremum in \eqref{modifiedgaussian}, we obtain
\begin{align*}
    \bar{d}^2(\mu_0,\mu_1)
&= \mathrm{tr}(\Sigma_0) + \mathrm{tr}(\Sigma_1 ) -2  \sup_{ \theta}\{ F(\theta)\}= \mathrm{tr}(P_0 A_0 {P_0}^T) + \mathrm{tr}(P_1 A_1 {P_1}^T ) -2  \langle {a_0}^{1/2}, {a_1}^{1/2}  \rangle\\
&=\sum_{i=1}^d a_{i,0}  + \sum_{i=1}^d a_{i,1}-2 \sum_{i=1}^d {a_{0,i}}^{1/2}{a_{1,i}}^{1/2}=\sum_{i=1}^d \left( \sqrt{a_{i,0}} -\sqrt{a_{i,1}}\right)^2\\
&= \|\sqrt{a_0} - \sqrt{a_1}\|^2.\qedhere
\end{align*}
\end{proof}

\section{A Computational Framework for the MBB Formulation}
Section~\ref{sec:abstractconv} analyzes problems of this type in an abstract framework, while Section~\ref{sec:conv} applies this framework to the discretized MBB problem and establishes a conditional convergence result. To simplify our presentation, we first show an equivalent formulation that allows us to solve the problem with $C = 0$. Let 
\begin{align*}
    X \coloneqq \lbrace (\mu,V,E) \colon \mu \in  C([0,1];\mathcal{P}_2(\mathbb{R}^d)),  V \in L^2_\mu((0,1)\times \mathbb{R}^d;\mathbb{R}^d), E \in L^2([0,1];\mathfrak{so}(d))\rbrace .
\end{align*}
\begin{rmk}
    Theorem \ref{thm:dyn-static-full} yields for any reflection \(J\in O(d)\setminus SO(d)\) the equivalent characterization    
    \[ d_{\mathrm{PW}}^2(\mu_0,\mu_1) = \min\left\{ \bar d^2(\mu_0,\mu_1), \bar d^2(\mu_0,J_\#\mu_1)
\right\}. \]
  Hence, throughout the remainder of this paper, we restrict our attention to the orientation-preserving formulation. The reflected case is obtained by applying the same analysis and numerical scheme to the reflected endpoint data.
\end{rmk}
\begin{prop}[Reduction to centered distributions and gauge $C\equiv0$]
\label{prop:center-C0}
Let $\mu_0,\mu_1\in\mathcal P_2(\mathbb{R}^d)$ and $\bar\mu_i\coloneqq \tau_{-m(\mu_i)\#}\mu_i$.
Then we have
\begin{equation}\label{eq:center-C0}
\bar d^2(\mu_0,\mu_1)=\inf_{\substack{(\mu,V,E) \in X\\
\partial_t\mu+\nabla\!\cdot(\mu(V+E x))=0,\ \mu(0)=\bar\mu_0,\ \mu(1)=\bar\mu_1}}
\int_0^1\int_{\mathbb{R}^d}|V_t|^2\,\mathrm{d}\mu_t\dt .
\end{equation}
\end{prop}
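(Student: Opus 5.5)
We prove the two inequalities separately. Denote by $\bar d_0^2$ the right-hand side of \eqref{eq:center-C0}. The main tools are the dynamic--static equivalence of Theorem~\ref{thm:dyn-static-full}, the centered static formulation of Lemma~\ref{prop:centered-full}, and the pull-back Lemmas~\ref{lem:pullback-cont} and~\ref{lem:energy-inv}.

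\emph{Step 1: $\bar d_0^2 \ge \bar d^2(\mu_0,\mu_1)$.} Any triple $(\mu,V,E)$ admissible for \eqref{eq:center-C0} gives the quadruple $(\mu,V,E,0)$, which lies in $\mathcal A(\bar\mu_0,\bar\mu_1)$. Hence $\bar d_0^2\ge \bar d^2(\bar\mu_0,\bar\mu_1)$. It remains to see that $\bar d^2(\bar\mu_0,\bar\mu_1)=\bar d^2(\mu_0,\mu_1)$. This follows from Lemma~\ref{prop:centered-full}: its right-hand side $\inf_{Q\in SO(d)}W_2^2(\bar\mu_0,Q_\#\bar\mu_1)$ depends only on the centered measures, and centering is idempotent.

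\emph{Step 2: $\bar d_0^2 \le \bar d^2(\mu_0,\mu_1)$.} We mimic part II of the proof of Theorem~\ref{thm:dyn-static-full}, but with a purely rotational rigid flow.
\begin{enumerate}
\item By Theorem~\ref{thm:existence-full}(1) and Lemma~\ref{prop:centered-full}, pick $Q^\star\in SO(d)$ with $\bar d^2(\mu_0,\mu_1)=W_2^2(\bar\mu_0,Q^\star_\#\bar\mu_1)$.
\item Let $(\hat\mu,\hat W)$ be a Benamou--Brenier minimizer between $\bar\mu_0$ and $Q^\star_\#\bar\mu_1$.
\item Since $SO(d)$ is path-connected, choose a smooth curve $Q_t\in SO(d)$ with $Q_0=I$ and $Q_1=(Q^\star)^\top$, e.g.\ $Q_t=\exp(tL)$ for a skew logarithm $L$ of $(Q^\star)^\top$. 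Set $b_t\equiv 0$ and $g_t(x)=Q_tx$.
\item Define $\mu_t=(g_t)_\#\hat\mu_t$, $V_t(x)=Q_t\hat W_t(Q_t^\top x)$ and $E_t=\partial_tQ_tQ_t^\top\in\mathfrak{so}(d)$, which is bounded.
\end{enumerate}
The relation $C_t=\partial_tb_t-E_tb_t$ then gives $C\equiv0$ automatically. Reversing the change of variables in Lemma~\ref{lem:pullback-cont} yields the continuity equation $\partial_t\mu+\nabla\cdot(\mu(V+Ex))=0$. The endpoints are $\mu(0)=\bar\mu_0$ and $\mu(1)=(Q^\star)^\top_\#Q^\star_\#\bar\mu_1=\bar\mu_1$. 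Lemma~\ref{lem:energy-inv} shows that the energy equals $W_2^2(\bar\mu_0,Q^\star_\#\bar\mu_1)=\bar d^2(\mu_0,\mu_1)$. Since $\mu\in C([0,1];\mathcal P_2)$, we have $(\mu,V,E)\in X$.

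\emph{Main obstacle.} The only delicate point is this reversed pull-back. One must check that a purely linear flow $g_t$, with zero translation, still satisfies the hypotheses of Lemma~\ref{lem:pullback-cont}. We handle it by applying that lemma to $(\mu,V,E,0)$ with $g_t$ itself as the rigid flow: the pulled-back curve is $(g_t^{-1})_\#\mu_t=\hat\mu_t$, and the pulled-back velocity is $Q_t^\top V_t(g_t(y))=\hat W_t(y)$. The identity is thus an equivalence in both directions, because $\psi\mapsto\psi\circ g_t^{\pm1}$ is a bijection of the admissible test-function class described in Step 2 of that lemma. Everything else reduces to earlier results.
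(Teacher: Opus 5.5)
Your proof is correct and follows the paper's route. The paper reduces to the centered static problem via Lemma~\ref{prop:centered-full}, then realizes the optimal rotation dynamically with $b_t\equiv0$ (so $C\equiv0$), using the construction of Theorem~\ref{thm:dyn-static-full}(II) together with Lemmas~\ref{lem:pullback-cont} and~\ref{lem:energy-inv}. You additionally make explicit the inequality $\ge$, which the paper leaves implicit: the $C\equiv0$ problem is a subproblem of $\mathcal A(\bar\mu_0,\bar\mu_1)$. Your smooth choice $Q_t=\exp(tL)$ also makes the reversed pull-back cleaner, since $\varphi(t,Q_t y)$ stays in $C_c^\infty$.
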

\begin{proof}
By Lemma~\ref{prop:centered-full} and Theorem~\ref{thm:dyn-static-full}, we obtain
\[
\bar d^2(\mu_0,\mu_1)=\inf_{Q\in SO(d)}W_2^2(\bar\mu_0,Q_\#\bar\mu_1).
\]
For centered endpoints, the optimal translation in the static formulation is zero.
The dynamical realization with $C\equiv0$ follows from the construction in
Theorem~\ref{thm:dyn-static-full}\, together with
Lemmas~\ref{lem:pullback-cont} and~\ref{lem:energy-inv}.
\end{proof}

\subsection{Change of Variables, Momentum Formulation, and Lagrangian}
For the computational formulation, we henceforth assume that the endpoint measures are absolutely continuous,
\[
\mu_i=\rho_i\,\mathrm dx,
\quad i\in\{0,1\},
\]
and restrict the admissible curves to measures of the form
$\mu_t=\rho_t\,\mathrm dx$.
Due to Proposition~\ref{prop:center-C0} we focus on the case $C = 0$. Thus, we redefine the set $\mathcal{A} = \mathcal{A}(\mu_0,\mu_1)$ in \eqref{eq:DefAorig} with given centered measures $\mu_0,\mu_1\in \mathcal{P}_2(\mathbb{R}^d)$ as
\begin{align*}
    \mathcal{A} \coloneqq \lbrace (\mu,V,E)\in X \colon \mu(0) = \mu_0,\ \mu(1) = \mu_1, \text{ and } (\mu,V,E,0)\text{ satisfy }\eqref{eq:continuity}\rbrace.
\end{align*}
The dynamic problem \eqref{continuousMBB} reads with momentum $m \coloneqq \rho V$ equivalently (for convenience we work with the rescaled action since the multiplication of the objective by a positive constant does not change its minimizers)
\begin{align}\label{eq:MomentumPrimal}
    \mathcal{J}(\mu,V) = \frac12 \int_0^1 \int_{\mathbb{R}^d} \frac{|m|^2}{\rho}\dx\dt.
\end{align}
The constraint in \eqref{eq:continuity} becomes  with drift term $g_E(t,x) \coloneqq -E_t x$ the identity 
\begin{align}\label{eq:VolDistNew}
   \partial_t \rho + \nabla \cdot m - \nabla \cdot \big(\rho  g_E \big)=0 \quad\text{in }\mathcal{D}'((0,1)\times \mathbb{R}^d). 
\end{align}
We define the set
\begin{align*}
    M \coloneqq \lbrace (m,\rho)& \colon m/\sqrt{\rho} \in  L^2((0,1)\times \mathbb{R}^d)\text{ and } \rho \in C([0,1];L^1(\mathbb{R}^d))\text{ is a density}, \rho|_{t=0}=\rho_0, \, \rho|_{t=1}=\rho_1 \rbrace.
\end{align*}
The following continuous Lagrangian derivation is formal and is included to motivate the discrete saddle formulation. The rigorous saddle-point and convergence analysis is carried out after finite-dimensional discretization. We include the identity \eqref{eq:VolDistNew} by using a Lagrange multiplier 
\begin{align*}
\phi \in  \Sigma = H^1((0,1)\times \mathbb{R}^d)
\end{align*}
After an integration by parts with $\phi$ under the prescribed no-flux boundary conditions at spatial infinity 
for all elements $(m,\rho) \in M$ and $E \in L^2((0,1);\mathfrak{so}(d))$, the corresponding Lagrangian reads
\begin{align}\label{eq:L-preIBP}
\begin{aligned}
&\mathcal L(m,\rho,E;\phi)
 \coloneqq \int_0^1 \int_{\mathbb{R}^d}
\Big(
\frac{|m|^2}{2\rho}
+ \phi \big(\partial_t \rho + \nabla\cdot m - \nabla\cdot(\rho g_E)\big)
\Big) \dx  \dt  \\
&\quad = \int_0^1 \int_{\mathbb{R}^d}
\Big(
\frac{|m|^2}{2\rho}
- \rho \partial_t \phi - m\cdot\nabla \phi + \rho g_E\cdot\nabla \phi
\Big)\dx\dt + \int_{\mathbb{R}^d}(\phi_1\rho_1-\phi_0\rho_0)\dx.
\end{aligned}
\end{align}
Minimizing the momentum formulation \eqref{eq:MomentumPrimal} over $\mathcal A$ formally leads to the min-max problem
\begin{align}\label{eq:Total-lagrangian}
\min_{{\substack{(m,\rho)\in M,\, E \in L^2((0,1);\mathfrak{so}(d))\\{\rho(0) = \rho_0,\ \rho(1) = \rho_1}}}}  \sup_{\phi \in \Sigma}\, \mathcal{L}(m,\rho,E;\phi).
\end{align}
Before introducing the numerical approximation scheme, we examine the convexity properties of the momentum formulation in \eqref{eq:Total-lagrangian}. 
\begin{lemma}[Convex-concave structure of the MBB Lagrangian]
\label{lem:convexity}
The Lagrangian $\mathcal{L}$ defined in \eqref{eq:L-preIBP} is convex in each primal argument. More precisely, the mapping $(m,\rho,E;\phi) \mapsto \mathcal L(m,\rho,E;\phi)$ 
\begin{enumerate}[nolistsep]
    \item is convex in $(\rho,m)$ for fixed $E\in L^2((0,1);\mathfrak{so}(d))$ and $\phi\in\Sigma$,
     \item is affine and hence concave in $\phi\in \Sigma$ for fixed $(m,\rho) \in M$ and $E \in L^2((0,1);\mathfrak{so}(d))$,
    \item is affine in $E$ for fixed functions  $(m, \rho)\in M$ and $\phi\in\Sigma$.
\end{enumerate}
\end{lemma}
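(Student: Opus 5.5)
We will treat the three claims separately, working with the integrated (post-integration-by-parts) form of $\mathcal L$ in \eqref{eq:L-preIBP}. This form splits $\mathcal L$ into four pieces. The first is the kinetic term $K(m,\rho)\coloneqq\int_0^1\int_{\mathbb R^d}\frac{|m|^2}{2\rho}\dx\dt$. The second is the transport term $-\int\!\!\int(\rho\,\partial_t\phi+m\cdot\nabla\phi)$. The third is the drift term $\int\!\!\int\rho\, g_E\cdot\nabla\phi=-\int\!\!\int\rho\,(E_tx)\cdot\nabla\phi$. The fourth is the boundary term $\int(\phi_1\rho_1-\phi_0\rho_0)$. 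Here $\rho_0,\rho_1$ are fixed data. Each claim then reduces to identifying which pieces depend on which variable, and in what way.

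\textbf{Claim 1.} For (1), we will fix $E$ and $\phi$. The transport term, the drift term and the boundary term are then linear (hence affine) in $(\rho,m)$. The drift term is linear in $\rho$ since $g_E\cdot\nabla\phi$ is a fixed function. It remains to show convexity of $K$. We will use the standard fact that the perspective function
\[
f(m,\rho)=\begin{cases}\frac{|m|^2}{2\rho},&\rho>0,\\ 0,&\rho=0,\ m=0,\\ +\infty,&\text{otherwise},\end{cases}
\]
is convex and lower semicontinuous on $\mathbb R^d\times\mathbb R$. To prove it, we will write it as the supremum of affine functions, $f(m,\rho)=\sup\{a\rho+b\cdot m\colon a+\tfrac12|b|^2\le0\}$, which is the Benamou--Brenier duality. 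Alternatively, for $\rho>0$ one can check that the Hessian is positive semidefinite. Convexity then passes to the integral by pointwise convexity and linearity of integration.

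\textbf{Claims 2 and 3.} For (2), $\phi$ appears only linearly in the transport, drift and boundary terms, and $K$ does not depend on $\phi$. Hence $\mathcal L$ is affine in $\phi$ and therefore concave. For (3), $E$ enters only through $g_E(t,x)=-E_tx$, which is linear in $E$. So the drift term is linear in $E$, and all other terms are constant in $E$. Thus $\mathcal L$ is affine in $E$.

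\textbf{Main obstacle.} The only delicate points are well-posedness issues: finiteness and integrability of the drift term $\int\rho\,|E_tx|\,|\nabla\phi|$, and the meaning of $K$ where $\rho$ vanishes. We will handle the latter through the convention $f(0,0)=0$ and $f=+\infty$ otherwise. We will treat $\mathcal L$ as an extended-real functional, so convexity holds in the extended sense. We will also note that joint convexity in $(\rho,m,E)$ fails, because of the bilinear coupling $\rho E$. This is why the statement asserts convexity only in each primal argument separately.
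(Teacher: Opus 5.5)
Your proposal is correct and follows the same route as the paper. The kinetic term $\int\!\!\int |m|^2/(2\rho)$ is a jointly convex perspective functional, and every remaining term of $\mathcal L$ is linear (or constant) in $(\rho,m)$, linear in $\phi$, and linear in $E$ through $g_E=-E_tx$. Your sup-of-affine representation and the Benamou--Brenier convention at $\rho=0$ only make explicit the convexity fact that the paper cites from Villani, where it is stated just on $\{\rho>0\}$; your remark that joint convexity in $(\rho,E)$ fails is consistent with the blockwise structure the paper later uses.
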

\begin{proof}
The kinetic term $(\rho,m)\mapsto \int_{\mathbb{R}^d} |m|^2/(2\rho) \dx $ is jointly convex on $\{\rho>0\}$, see
\cite[Ch.~7]{villani2009optimal}. All remaining terms in $\mathcal L$ depend linearly
on $(\rho,m)$ and are affine in $E$.
Linearity in $\phi$ is immediate from the definition of $\mathcal L$ in \eqref{eq:L-preIBP}.
\end{proof}
%
\begin{prop}[First-order optimality conditions for the MBB problem]
\label{prop:mbb-optimality}
Let $(m,\rho,E)\in M\times L^2((0,1);\mathfrak{so}(d))$ be a sufficiently regular minimizer of the saddle point problem in \eqref{eq:Total-lagrangian} with $ \rho>0$.
Then there exists a potential $\phi \in \Sigma$
such that one has in the sense of distributions
\begin{align*}
\begin{aligned}
\text{(SR)}\quad 
& m = \rho\,\nabla\phi
&\quad& \text{(Optimal momentum relation)}, \\[0.5ex]
\text{(HJ)}\quad 
& \partial_t\phi+\tfrac12|\nabla\phi|^2+(E_t x)\cdot\nabla\phi = 0
&& \text{(Hamilton--Jacobi with drift)}, \\[0.5ex]
\text{(CE)}\quad 
&  \partial_t\rho+\nabla\!\cdot(\rho\nabla\phi)+\nabla\!\cdot(\rho\,E_t x)=0
&& \text{(continuity in momentum form)}. \\[0.5ex]
\intertext{Moreover, for almost every $t\in [0,1]$ one has a symmetric matrix}
\text{(SC)}\quad 
& \int_{\mathbb R^d}\rho_t(x)\,\nabla\phi_t(x)\,x^\top\dx
\ \in\ \mathfrak{so}(d)^\perp
&& \text{(Symmetry condition)}. 
\end{aligned}
\end{align*}
\end{prop}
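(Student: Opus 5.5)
Since the minimizer is assumed sufficiently regular with $\rho>0$, the plan is to derive the conditions formally as stationarity of the Lagrangian $\mathcal L$ in \eqref{eq:L-preIBP}, using the convex-concave structure of Lemma~\ref{lem:convexity}. The order is: variations in $\phi$, then $m$, then $\rho$ (with the endpoints $\rho_0,\rho_1$ fixed, so only variations compactly supported in $(0,1)\times\mathbb R^d$ are allowed), and finally $E$. The existence of the multiplier $\phi\in\Sigma$ is taken as the standard Lagrange multiplier assertion for the linear constraint \eqref{eq:VolDistNew}. Under the regularity hypothesis, the natural justification is that the constraint operator $(m,\rho)\mapsto\partial_t\rho+\nabla\cdot m-\nabla\cdot(\rho g_E)$ has closed range, or equivalently that a saddle point exists by convex duality for fixed optimal $E$, exactly as in the classical Benamou--Brenier setting.

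\textbf{Variation in $\phi$.} Since $\mathcal L$ is affine in $\phi$, stationarity in $\phi$ returns the constraint \eqref{eq:VolDistNew} with $g_E=-E_tx$. Once (SR) is established below, this is exactly (CE).

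\textbf{Variation in $m$.} For a perturbation $m+\varepsilon\eta$ with $\eta$ compactly supported, differentiating the integrand $|m|^2/(2\rho)-m\cdot\nabla\phi$ gives
\[
\int\!\!\int \Big(\frac{m}{\rho}-\nabla\phi\Big)\cdot\eta\dx\dt=0 .
\]
Hence $m=\rho\nabla\phi$, which is (SR).

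\textbf{Variation in $\rho$.} Perturbing by $\varepsilon\zeta$ with $\zeta$ vanishing at $t=0,1$, so that the boundary term in \eqref{eq:L-preIBP} drops, gives
\[
-\frac{|m|^2}{2\rho^2}-\partial_t\phi+g_E\cdot\nabla\phi=0 .
\]
Inserting (SR) and $g_E=-E_tx$ yields
\[
\partial_t\phi+\tfrac12|\nabla\phi|^2+(E_tx)\cdot\nabla\phi=0,
\]
which is (HJ).

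\textbf{Variation in $E$.} Here the variation must stay in $\mathfrak{so}(d)$, so take $E+\varepsilon F$ with $F\in L^2((0,1);\mathfrak{so}(d))$ arbitrary. The only $E$-dependent term is $-\int\!\!\int\rho\,(E_tx)\cdot\nabla\phi$. Writing
\[
(E_tx)\cdot\nabla\phi=\Tr\big(E_t\,x\nabla\phi^\top\big)=\big\langle E_t^\top,\nabla\phi\,x^\top\big\rangle_F ,
\]
stationarity becomes
\[
\int_0^1\Big\langle F_t,\int\rho_t\nabla\phi_t\,x^\top\dx\Big\rangle_F\dt=0
\qquad\text{for all such } F .
\]
Localizing in time, by choosing $F_t=\chi(t)B$ with $B\in\mathfrak{so}(d)$ and $\chi$ arbitrary, gives for a.e.\ $t$ that $M_t=\int\rho_t\nabla\phi_t x^\top\dx$ is Frobenius-orthogonal to $\mathfrak{so}(d)$. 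The orthogonal complement of $\mathfrak{so}(d)$ under the Frobenius inner product is the space of symmetric matrices, so $M_t$ is symmetric, which is (SC). As a sanity check, because $\mathcal L$ is affine in $E$, stationarity here is equivalent to the coefficient vanishing, which is consistent with optimality over the unconstrained linear space $\mathfrak{so}(d)$.

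\textbf{Main obstacle.} The main difficulty is the rigor of the multiplier existence and of differentiating $|m|^2/\rho$. The plan relies on the hypotheses $\rho>0$ and ``sufficiently regular'' so that all perturbations remain admissible and dominated convergence applies. The condition $M_t\in\mathfrak{so}(d)^\perp$ requires $M_t$ to be integrable in $t$, which follows from Cauchy--Schwarz: $|M_t|\le\|\nabla\phi_t\|_{L^2_{\rho_t}}\big(\int|x|^2\rho_t\dx\big)^{1/2}$, using the finite second moments.
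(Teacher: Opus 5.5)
Your proposal follows the paper's proof: the multiplier $\phi$ comes from convex--concave duality for the fixed optimal $E$ (the paper cites Rockafellar's Theorem~28.1), and (SR), (HJ), (CE), (SC) then follow from stationarity of $\mathcal L$ in $m$, $\rho$, $\phi$ and $E$, as you describe. One harmless slip is that the intermediate identity should read $(E_tx)\cdot\nabla\phi=\langle E_t,\nabla\phi\,x^\top\rangle_F$ rather than $\langle E_t^\top,\nabla\phi\,x^\top\rangle_F$; your next display uses the correct form, and against skew-symmetric test matrices a transpose only flips the sign, so (SC) is unaffected.
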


\begin{proof}
Let $(\rho,m) \in M$ and $E \in L^2((0,1);\mathfrak{so}(d))$ be a minimizer of \eqref{continuousMBB} whose existence is guaranteed by Theorem~\ref{thm:existence-full}.
By Lemma~\ref{lem:convexity}, the momentum formulation
\eqref{eq:Total-lagrangian} defines a blockwise convex--concave saddle-point problem.
Hence, by convex--concave duality theory for every fixed $E$, there exists a potential
$\phi \in \Sigma $ such that $(\rho,m,\phi)$ is a saddle point of $\mathcal L$ see \cite[Theorem 28.1]{rockafellar1997convex}.
Stationarity with respect to $m$  and  $\rho$ yields $ m = \rho\,\nabla\phi $ and the Hamilton--Jacobi equation
\[
\partial_t\phi+\tfrac12|\nabla\phi|^2+(E_t x)\cdot\nabla\phi = 0 .
\]
Substituting $m=\rho\nabla\phi$ into the constraint recovers the continuity
equation. The variation of $\mathcal L$ with respect to $E_t\in\mathfrak{so}(d)$
gives
\[
\int_{\mathbb R^d} \rho_t(x)\,
\big(\nabla\phi_t(x)\otimes x\big) : \delta E_t \, \dx = 0
\qquad
\text{for all } \delta E_t \in \mathfrak{so}(d).
\]
This is equivalent to the orthogonality condition in (SC).
\end{proof}

\subsection{An iterative multilevel scheme for the MBB saddle-point problem}\label{sectiongproxyforMBB} 
As shown in Lemma~\ref{lem:convexity}, the variables in the  saddle-point problem \eqref{eq:Total-lagrangian} split into three blocks: the primal variables $(\rho,m)$, the dual variable $\phi$, and the skew field $E$. We exploit this structure by using an iterative multilevel optimization scheme. At each iteration $n\in \mathbb{N}$, the algorithm performs the following steps in an inner and outer loop:
\begin{enumerate}[nolistsep]
    \item Given $E^n$ compute $(\rho^{n+1},m^{n+1},\phi^{n+1})$;
    \item Given $(\rho^{n+1},m^{n+1},\phi^{n+1})$, compute $E^{n+1}$.
\end{enumerate}
For the first step, at a fixed $E^n$, the computation of $ (\rho^{n+1},m^{n+1},\phi^{n+1})$ is not obtained directly by solving \eqref{eq:Total-lagrangian}. 
 Instead the resulting subproblem is solved using the preconditioned primal-dual hybrid gradient (PDHG) method \cite{chambolle2011first,jacobs2019solving} which leads to an inner loop:
\begin{enumerate}[nolistsep,label=1\alph*.]
    \item Given $\phi^{n,j}$, compute $(\rho^{n,j+1},m^{n,j+1})$;
    \item Given \((\rho^{n,j+1},m^{n,j+1})\), compute \(\phi^{n,j+1}\).
\end{enumerate}
The remainder of this section discusses the steps in more detail. Let $Q \subset (0,1) \times \mathbb{R}^d$ bounded.
%
%
\subsubsection{Step 1a: Primal update}\label{subsubsec:Step1a}
For fixed $E^n\in L^2((0,1);\mathfrak{so}(d))$ and extrapolated dual variable $\bar{\phi}^{n,j}\coloneqq 2\phi^{n,j} -\phi^{n,j-1} \in \Sigma$, 
the $(\rho, m)$-update seeks the minimizer
\[
(\rho^{n,j+1}, m^{n,j+1}) 
= \argmin_{(\rho, m)\in M} \ \mathcal{L} (\rho, m, E^n;\bar{\phi}^{n,j})
+ \frac{1}{2\tau_\rho} \|\rho - \rho^{n,j}\|_{L^2(Q)}^2
+ \frac{1}{2\tau_m} \|m - m^{n,j}\|_{L^2(Q)}^2.
\]

The two $L^2$ proximal terms render the subproblem strictly convex on its
effective domain and therefore yield a unique minimizer. The quadratic
penalties serve as numerical stabilization terms. The first-order optimality conditions for the unknown $m=m^{n,j+1},\, \rho=\rho^{n,j+1}.$ yield
\begin{align}
\begin{aligned}
\partial_t \bar{\phi}^{n,j} 
+ \nabla \bar{\phi}^{n,j} \cdot E^n  x 
+ \frac{\| m(x) \|^2}{2 \rho(x)^2} 
- \frac{\rho(x) - \rho^{n,j}(x)}{\tau_\rho} &= 0,\\
- \nabla \bar{\phi}^{n,j}(x) 
+ \frac{m(x)}{\rho(x)} 
+ \frac{m(x) - m^{n,j}(x)}{\tau_m}&= 0.
\end{aligned}
\end{align}
The second identity provides an explicit relation between $m$ and $\rho$, which can be substituted into the first identity, leading to a scalar nonlinear equation for $\rho$. 
\subsubsection{Step 1b: Dual update}\label{subsubsec:Step1b}
Given $(\rho^{n,j+1}, m^{n,j+1})\in M$, $\phi^{n,j}\in \Sigma$, and $E^n\in L^2((0,1);\mathfrak{so}(d))$, the $\phi$-update solves
\begin{equation}
\phi^{n,j+1}  = \argmax_{\phi\in \Sigma} \, \mathcal{L}(\rho^{n,j+1}, m^{n,j+1}, E^n; \phi) - \frac{1}{2\tau_\phi} \|\phi - \phi^{n,j}\|_{H^1(Q)}^2. 
\end{equation} 
Here the $H^1$-type proximal term is a numerical choice used to precondition the dual update: it defines the metric in which the proximal step is performed at the discrete level. Such metric preconditioning is used to obtain grid-independent convergence rates in primal-dual algorithms, cf.~\cite{jacobs2019solving}. The corresponding optimality condition for the unknown $\phi$ reads
\begin{equation}
\label{eq:phi-update}
\partial_t \rho^{n,j+1} 
+ \nabla \cdot \Big( \rho^{n,j+1} E^n x + m^{n,j+1} \Big) 
+ \frac{\partial_{tt}(\phi - \phi^{n,j})}{\tau_\phi} 
+ \frac{\Delta_x (\phi -\phi^{n,j})}{\tau_\phi} = 0.
\end{equation}
We supplement this identity with the natural Neumann-in-time boundary conditions
\begin{align*}
\begin{aligned}
\rho^{n,j+1}(0, x) - \rho_0(x) 
+ 
\frac{\partial_t(\phi(0, x) -\phi^{n,j}(0, x))}{\tau_\phi} &= 0, \\
\rho^{n,j+1}(1, x) - \rho_1(x) 
+ \frac{\partial_t(\phi(1, x) - \phi^{n,j}(1, x))}{\tau_\phi} &= 0,
\end{aligned}
\end{align*}
and the zero-mean normalization
\begin{equation}
\label{eq:phi-normalization}
\int_{\mathbb{R}^d} \phi^{n,j+1}(1, x) \dx = 0.
\end{equation}
Equation \eqref{eq:phi-update} is a Poisson-type PDE for $\phi^{n+1}$ with (partial) Neumann boundary condition. The solution is known to exist and is unique up to an additive constant. The gauge condition in \eqref{eq:phi-normalization} makes the solution unique.

\subsubsection{Step 2: Optimization in the skew field} \label{sec:step3}
Given $(\rho^{n+1},m^{n+1})\in M$ and $\phi^{n+1} \in \Sigma$, define the matrix
\[ S_t^{n+1}\coloneqq \int_{\mathbb R^d}\rho_t^{n+1}(x)\,\nabla\phi_t^{n+1}(x)\,x^\top\dx
\in\mathbb R^{d\times d} \quad\text{for almost every }t\in (0,1).
\]
Since $g_E(t,x)\coloneqq -E_t x$, the dependence of the Lagrangian on $E$ is affine because the only term in $\mathcal{L}$ involving $E$ is the linear term 
\[
\int_{\mathbb R^d}\rho_t^{n+1}(x)\,g_E(t,x)\cdot\nabla\phi_t^{n+1}(x)\dx
= -\mathrm{Tr}\big(E_t(S_t^{n+1})^\top\big) = -\langle E_t,S^{n+1}_t\rangle_F.
\]
The first-order condition (Proposition \ref{prop:mbb-optimality}) associated with variations in $E$ implies that $\langle \delta E_t,S_t^\star\rangle_F = 0$ for all variations $\delta E_t \in \mathfrak{so}(d)$, which yields that $S_t^\star$ is symmetric for almost every $t\in (0,1)$. Consequently, the linear form
$E\mapsto -\mathrm{Tr}(E_t(S_t^\star)^\top)$ vanishes identically on $\mathfrak{so}(d)$ according to the identity 
\begin{align*}
    \Tr(AS) = \Tr(S^\top A^\top) = \Tr(-SA) = - \Tr(AS)\quad\text{for } A \in \mathfrak{so}(d)\text{ and }  S = S^\top.
\end{align*}
The corresponding minimization problem in $E$ is degenerate in the sense that minimizers are not unique since every skew-symmetric field yields the same value, namely zero, for the $E$-dependence of the objective.
In the numerical scheme, however, the matrix $S_t^{n+1}$ is computed from an inexact inner solve with a finite number of PDHG iterations in the inner loop, and is therefore only approximately symmetric. 
In this case, the minimization in $E$ becomes ill-posed, as the affine functional is unbounded from below as soon as $\Skew(S_t^{n+1}) \neq 0$ with
\begin{align*}
    \Skew(A) \coloneqq \frac12 (A - A^\top)\quad\text{for all }A \in \mathbb{R}^{d\times d}.
\end{align*}
To obtain a well-posed and stable update, we introduce a proximal operator in the skew field $E$, leading to
\[
E^{n+1}\in\argmin_{E\in L^2((0,1);\mathfrak{so}(d))}
\int_0^1  \Big( -\langle E_t,S_t^{n+1}\rangle_F
+\frac{1}{2\tau_E} \|E_t-E_t^n\|_F^2\Big) \dt.
\]
This problem is strictly convex on the Hilbert space $L^2((0,1);\mathfrak{so}(d))$ and admits a unique minimizer, which can be computed pointwise in time. More precisely, for almost every $t\in(0,1)$, the proximal subproblem seeks
\[
\argmin_{E_t\in\mathfrak{so}(d)}  -\langle E_t,S_t^{n+1}\rangle_F +\frac{1}{2\tau_E}\|E_t-E_t^n\|_F^2. 
\] 
The unconstrained minimizer in $\mathbb R^{d\times d}$ satisfies the first-order optimality condition \[ 0=-S_t^{n+1}+\frac{1}{\tau_E}(E_t-E_t^n), \quad\text{hence}\quad E_t=E_t^n+\tau_E S_t^{n+1}. \]  
Enforcing the constraint $E_t\in\mathfrak{so}(d)$ amounts to orthogonal projection  onto $\mathfrak{so}(d)$ with respect to the Frobenius inner product, yielding
\[
E_t^{n+1} = \Skew\big(E_t^n+\tau_E S_t^{n+1}\big) = E_t^n+\tau_E \Skew(S_t^{n+1}).
\]
In particular, if $S_t^{n+1}$ is symmetric, then $E_t^{n+1}=E_t^n$, showing that the proximal step acts as a stable selection rule in the degenerate limit while providing robustness with respect to numerical and discretization errors.

\section{A bilevel saddle framework with explicit outer descent}\label{sec:abstractconv}
The scheme studied below combines a primal--dual solution of the frozen inner
saddle problem with an explicit update of the outer variable. Primal--dual,
penalty, and regularization-based approaches have been developed for so-called bilevel
problems with nonunique, constrained, or saddle-structured lower-level
problems; see, for instance,
\cite{jiang2024primal,yao2025overcoming,solla2026optimistic,shen2026penalty}.
These works obtain stability and convergence through constrained
reformulations, penalty terms, regularized gap functions, or regularized
lower-level solution mappings.

The present analysis follows a different route. We retain the original
parameter-dependent saddle problem and work locally around a regular block
solution, without adding a regularization to the frozen inner functional. An
implicit-function argument yields a locally unique Lipschitz saddle branch,
while a computable optimality residual controls the error of the accepted
inner iterate. These ingredients allow the inner inexactness to be incorporated
into the explicit outer descent and lead to a conditional subsequential
convergence result. The framework is subsequently adapted for the fully
discrete modified Benamou--Brenier problem in Section \ref{sec:conv}.
Let \(X,Y,W\) be finite-dimensional real Hilbert spaces. We consider a
finite-dimensional saddle functional of the form
\[
    \mathcal L(x,y,w)
    =
    F(x)+\langle K(x,w),y\rangle_Y-G^\ast(y),
\]
and consider the corresponding bilevel saddle problem
\[
    \min_{w\in W}\ \min_{x\in X}\ \max_{y\in Y}
    \mathcal L(x,y,w).
\]
Here \(w\) denotes the outer variable, while \((x,y)\) are the inner
primal-dual variables. 
\begin{assumption}[Standing assumptions for the scheme]
\label{ass:abstract-standing}
The following properties hold.
\begin{enumerate}
\item  The functionals $F\colon X\to(-\infty,+\infty]$ and $G^\ast\colon Y\to(-\infty,+\infty]$ are proper, convex, and lower semicontinuous. 
\item $K:X \times W\to Y$ is $C^1$ and for every \(w\in W\), the map $K(\cdot,w):X\to Y$
is linear. Moreover, \(w\mapsto K(x,w)\) is affine for all $x\in X$.
\item For every \(w\in W\), the saddle problem \(\min_{x\in X}\max_{y\in Y}\mathcal L(x,y,w) \) admits at least one saddle point, denoted $(x^{\star}(w),y^{\star}(w)).$
\end{enumerate}
\end{assumption}

\begin{algorithm}[H]
\caption{Alternating saddle iteration with explicit outer descent}
\label{alg:alternating-saddle}
Choose $\tau_x,\tau_y,\tau_w>0$ and $(x^0,y^0,w^0)\in X\times Y\times W$.
Then we iteratively compute \((x^n,y^n,w^n)\) as follows.
\medskip

\textbf{Step 1 (Frozen inner saddle step).}
For fixed \(w^n\), approximately solve the saddle point problem $\min_{x\in X}\max_{y\in Y}
\mathcal L(x,y,w^n) $  using a primal–dual solver (For example \cite{chambolle2011first,jacobs2019solving}) and obtain an accepted pair $ (x^{n+1},y^{n+1})$.
\medskip

\textbf{Step 2 (Outer step).}
Perform a  gradient descent step
\[ w^{n+1} = w^n - \tau_w \nabla_w \mathcal L(x^{n+1},y^{n+1},w^n) .
\]

\end{algorithm}
\subsection{A local fixed-parameter saddle framework}

For each fixed \(w\in W\), we consider the inner
saddle problem
\( \min_{x\in X}\max_{y\in Y}\mathcal L(x,y,w).\)
Writing  $  z=(x,y)\in Z\coloneqq X\times Y$, we define wherever the derivatives are well defined, the fixed-\(w\) optimality operator $   \mathcal A_w\colon Z\to Z$ by
\[
    \mathcal A_w(z)
    =
    \begin{pmatrix}
        \nabla_x\mathcal L(x,y,w)\\
        -\nabla_y\mathcal L(x,y,w)
    \end{pmatrix}.
\]
We note that all gradients are understood as Hilbert gradients with respect to the inner products induced by the chosen norms on the corresponding spaces.
Thus a fixed-\(w\) saddle point satisfies \(\mathcal A_w(z)=0.\) Since \(K(\cdot,w)\) is affine by Assumption \ref{ass:abstract-standing}, we may write \(K(x,w)= K_wx + b \), with $b$ constant in $x$. Hence,
\[  \mathcal A_w(x,y) =\begin{pmatrix} \nabla F(x)+K_w^\ast y\\ \nabla G^\ast(y)-K_wx \end{pmatrix}. \]
\begin{assumption}[Regular reference inner saddle point]
\label{ass:regular-reference-saddle}
There exist $w^\star\in W$ and a saddle point $z^\star\coloneqq z^\star(w^\star)
=(x^\star(w^\star),y^\star(w^\star))\in Z$
of the frozen problem at $w^\star$ such that the mapping $(w,z)\mapsto\mathcal A_w(z)$
is continuously differentiable in a neighborhood of
$(w^\star,z^\star)$ and
\[
D_z\mathcal A_{w^\star}(z^\star)\colon Z\to Z
\]
is an isomorphism. The functionals $F$ and $G^\ast$ are finite and twice continuously
differentiable in neighborhoods of $x^\star$ and $y^\star$, respectively.
\end{assumption}
\begin{lemma}[Local branch and local reliability estimate]
\label{lem:abstract-local-branch}
Under Assumption~\ref{ass:regular-reference-saddle}, there exist open convex neighborhoods
\[
    U_{w^*}\subset W
    \quad\text{of }w^\star,
    \qquad
    U_{z^*}\subset Z
    \quad\text{of }z^\star(w^\star),
\]
a unique \(C^1\) map
\[
    U_{w^*}\ni w\mapsto z^\star(w)\in U_{z^*},
\]
and constants \(L_w,C_{\rm reg}>0\), uniform for \(w\in U_{w^*}\), such that \(\mathcal A_w(z^\star(w))=0 \text{ for all }w\in U_{w^*}\). Moreover,
\[
    \|z^\star(w_1)-z^\star(w_2)\|_Z
    \le
    L_w\|w_1-w_2\|_W
    \quad \text{ for all } w_1,w_2\in U_{w^*},
\]
and
\[
    \|z-z^\star(w)\|_Z
    \le
    C_{\rm reg}\|\mathcal A_w(z)\|_Z
\]
for all \(w\in U_{w^*}\) and all \(z\in U_{z^*}\).
\end{lemma}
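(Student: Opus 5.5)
The plan is to apply the implicit function theorem to the map $\Psi\colon W\times Z\to Z$, $\Psi(w,z)\coloneqq\mathcal A_w(z)$, at the point $(w^\star,z^\star)$. By Assumption~\ref{ass:regular-reference-saddle}, $\Psi$ is $C^1$ in a neighborhood of $(w^\star,z^\star)$, $\Psi(w^\star,z^\star)=0$ because $z^\star$ is a saddle point, and $D_z\Psi(w^\star,z^\star)=D_z\mathcal A_{w^\star}(z^\star)$ is an isomorphism. The finite-dimensional implicit function theorem then yields open neighborhoods $\tilde U_w\ni w^\star$ and $\tilde U_z\ni z^\star$, together with a unique $C^1$ map $w\mapsto z^\star(w)\in\tilde U_z$ satisfying $\Psi(w,z^\star(w))=0$. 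Uniqueness here means that $z^\star(w)$ is the only zero of $\mathcal A_w$ in $\tilde U_z$.

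\textbf{Step 1: shrinking to convex sets with uniform bounds.} Since $z\mapsto D_z\Psi(w,z)$ is continuous and the set of isomorphisms is open, with $T\mapsto T^{-1}$ continuous on it, I will choose closed balls $\bar B_r(w^\star)\subset\tilde U_w$ and $\bar B_\rho(z^\star)\subset\tilde U_z$ on which $D_z\Psi(w,z)$ stays invertible and
\[
\|D_z\Psi(w,z)^{-1}\|\le 2\|D_z\Psi(w^\star,z^\star)^{-1}\|,\qquad \|D_w\Psi(w,z)\|\le M .
\]
I then take $U_{z^*}$ to be the open ball of radius $\rho$, and $U_{w^*}$ an open ball of radius $r'\le r$ small enough that $z^\star(U_{w^*})\subset U_{z^*}$; this is possible by continuity of $z^\star$. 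Balls are convex, as the statement requires.

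\textbf{Step 2: Lipschitz bound for the branch.} Differentiating the identity $\Psi(w,z^\star(w))=0$ gives
\[
Dz^\star(w)=-D_z\Psi(w,z^\star(w))^{-1}D_w\Psi(w,z^\star(w)),
\]
so $\|Dz^\star(w)\|\le 2\|D_z\Psi(w^\star,z^\star)^{-1}\|M\eqqcolon L_w$. The mean value inequality along segments in the convex set $U_{w^*}$ then gives the Lipschitz estimate.

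\textbf{Step 3: residual estimate (main obstacle).} For $z\in U_{z^*}$ and $w\in U_{w^*}$, I write
\[
\mathcal A_w(z)=\mathcal A_w(z)-\mathcal A_w(z^\star(w))=\Big(\int_0^1 D_z\Psi\big(w,z^\star(w)+s(z-z^\star(w))\big)\,ds\Big)(z-z^\star(w)).
\]
The segment between $z$ and $z^\star(w)$ lies in the convex set $U_{z^*}$. The difficulty is that invertibility of each individual $D_z\Psi$ does not imply invertibility of their average. To handle this, I will use continuity of $D_z\Psi$ and shrink $r,\rho$ further so that
\[
\|D_z\Psi(w,z)-D_z\Psi(w^\star,z^\star)\|\le \frac{1}{2\|D_z\Psi(w^\star,z^\star)^{-1}\|}
\]
on the whole product of balls. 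The averaged operator $\bar A$ then satisfies $\|\bar A-A^\star\|\le 1/(2\|(A^\star)^{-1}\|)$, where $A^\star\coloneqq D_z\Psi(w^\star,z^\star)$. A Neumann series argument shows that $\bar A$ is invertible with $\|\bar A^{-1}\|\le 2\|(A^\star)^{-1}\|\eqqcolon C_{\rm reg}$. This yields $\|z-z^\star(w)\|_Z\le C_{\rm reg}\|\mathcal A_w(z)\|_Z$ uniformly in $w\in U_{w^*}$ and $z\in U_{z^*}$.

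The $C^2$ regularity of $F$ and $G^\ast$ assumed in Assumption~\ref{ass:regular-reference-saddle} is exactly what makes $\mathcal A_w$ well defined and $C^1$ near $z^\star$. The only real care needed is to perform the shrinking in Step 3 before fixing $U_{w^*}$ in Step 1, so that all three properties hold on the same neighborhoods.
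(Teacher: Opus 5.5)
Your proposal is correct and follows the paper's proof essentially step for step. Both use the implicit function theorem for the branch, a bound on $Dz^\star$ over a ball with mean-value inequality for the Lipschitz estimate, and the perturbation bound $\|D_z\mathcal A_w(\zeta)-L_\ast\|\le 1/(2\|L_\ast^{-1}\|)$ along the convex segment, which gives $C_{\rm reg}=2\|L_\ast^{-1}\|$. The only cosmetic difference is that you invert the averaged derivative by a Neumann series, whereas the paper uses the reverse triangle inequality to obtain $\|\mathcal A_w(z)\|_Z\ge \|h\|_Z/(2\|L_\ast^{-1}\|)$ directly; both yield the same constant.
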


\begin{proof}
The proof follows from the implicit function theorem. Indeed,
\((w,z)\mapsto \mathcal A_w(z)\) is \(C^1\) near
\((w^\star,z^\star(w^\star))\),
\[
    \mathcal A_{w^\star}(z^\star(w^\star))=0,
\]
and
\[
    D_z\mathcal A_{w^\star}(z^\star(w^\star)):Z\to Z
\]
is an isomorphism. Hence the implicit function theorem, see e.g. \cite[Section 1.1]{dontchev2009implicit}, gives
neighbourhoods \(U_{w^*}\), \(U_{z^*}\), and a unique \(C^1\)
\(w\mapsto z^\star(w)\) satisfying
\[
    \mathcal A_w(z^\star(w))=0.
\]
By possibly shrinking the neighbourhoods, we may take them to be bounded
balls whose closures remain inside the implicit-function-theorem
neighbourhood. Since \(z^\star(\cdot)\) is \(C^1\), its derivative is
bounded on \(U_{w^*}\). Therefore, for some \(L_w>0\),
\[
    \|z^\star(w_1)-z^\star(w_2)\|_Z
    \le
    L_w\|w_1-w_2\|_W.
\]
The local reliability estimate follows from a quantitative version of the local invertibility of the fixed-\(w\) optimality map. Set \[ L_\ast \coloneqq  D_z\mathcal A_{w^\ast}(z^\ast(w^\ast)). \]
By Assumption~\ref{ass:regular-reference-saddle}, \(L_\ast:Z\to Z\) is an isomorphism. Hence
\(M\coloneqq \|L_\ast^{-1}\|_{\mathcal L(Z,Z)}<\infty\). Since
\((w,z)\mapsto D_z\mathcal A_w(z)\) is continuous, the neighbourhoods \(U_{w^*}\) and \(U_{z^*}\) may be chosen sufficiently small, with \(U_{z^*}\) convex, so that \[ \|D_z\mathcal A_w(\zeta)-L_\ast\|_{\mathcal L(Z,Z)} \le \frac{1}{2M} \] for all \(w\in U_{w^*}\) and all \(\zeta\in U_{z^*}\). Let \(w\in U_{w^*}\) and \(z\in U_{z^*}\). Since \(z^\ast(w)\in U_{z^*}\) and \(U_{z^*}\) is convex, the segment \(z^\ast(w)+s(z-z^\ast(w)),\, s\in[0,1], \) is contained in \(U_{z^*}\). We set \(h\coloneqq z-z^\ast(w)\). By the fundamental theorem of calculus applied to the \(C^1\) map \(z\mapsto\mathcal A_w(z)\) along this segment, \[ \mathcal A_w(z)-\mathcal A_w(z^\ast(w))= \int_0^1 D_z\mathcal A_w(z^\ast(w)+sh)h\,ds . \] Adding and subtracting \(L_\ast h\), and using the reverse triangle inequality, we obtain \[\begin{aligned} \|\mathcal A_w(z)-\mathcal A_w(z^\ast(w))\|_Z &\ge \|L_\ast h\|_Z - \int_0^1 \|D_z\mathcal A_w(z^\ast(w)+sh)-L_\ast\|_{\mathcal L(Z,Z)} \|h\|_Z\,ds . \end{aligned} \] Moreover, since \(h=L_\ast^{-1}L_\ast h\), we have \(\|L_\ast h\|_Z \ge \frac{1}{M}\|h\|_Z . \) Therefore, \[ \|\mathcal A_w(z)-\mathcal A_w(z^\ast(w))\|_Z \ge \frac{1}{M}\|h\|_Z-\frac{1}{2M}\|h\|_Z =
\frac{1}{2M}\|h\|_Z .
\]
Consequently,
\(\|z-z^\ast(w)\|_Z \le 2M\|\mathcal A_w(z)-\mathcal A_w(z^\ast(w))\|_Z .\)
Since \(\mathcal A_w(z^\ast(w))=0\), this gives
\[\|z-z^\ast(w)\|_Z\le C_{\rm reg}\|\mathcal A_w(z)\|_Z\qquad\text{and} \qquad C_{\rm reg} = 2\|L_\ast^{-1}\|_{\mathcal L(Z,Z)}. \qedhere
\]
\end{proof}

\begin{mydef}[Fixed-point residual]
We define for all \(z_1, \, z_2 \in Z\) the fixed-point residual 
\begin{align}\label{KKTresidual}
\begin{aligned}
\mathrm{Res}(z_1, \, z_2 ) &= \|z_2-z_1\|_Z \coloneqq  \big(
\|x_2-x_1\|_{X}^2
+\|y_2-y_1\|_{Y}^2
\big)^{1/2}.
\end{aligned}
\end{align}
For each n, let \(z^{n,j}\in Z\) the sequence of inner iterates generated by the primal–dual solver ( for example \cite{chambolle2011first}), for all $j,n\in \mathbb{N}_*$, we abbreviate this fixed-point residual 
\begin{align*}
 \mathrm{Res}_{n,j} \coloneqq   \mathrm{Res}(z^{n,j-1},z^{n,j}).
\end{align*}
\end{mydef}
\begin{prop}[Convergence of the frozen inner loop and enforceable accuracy]
\label{prop:frozen-inner-loop}
Fix an outer iteration index \(n\ge 0\) and freeze the outer variable
\(w^n\in U_{w^*}\). Consider the frozen inner saddle problem \[ \min_{x\in X}\max_{y\in Y}\mathcal L(x,y,w^n). \] Let \((z^{n,j})_{j\ge0}\), with \(z^{n,j}=(x^{n,j},y^{n,j})\), be the inner primal-dual iterates generated by the Chambolle--Pock scheme. Choose the primal and dual step sizes satisfying the standard Chambolle--Pock condition \[ \tau_x\tau_y\|K(\cdot,w^n)\|_{\mathcal L(X,Y)}^2<1.
\]
Then the inner iterates converge to a saddle point $ z^\star(w^n)=(x^\star(w^n),y^\star(w^n))$ of the frozen inner problem. In the local regime considered below, this saddle point is the branch value. In particular, the corresponding fixed-point residuals satisfy
\(\operatorname{Res}_{n,j}\to 0\text{ as }j\to\infty. \) Let \[\varepsilon_n = \frac{\varepsilon_0}{(n+1)^{1+\delta}},\qquad \varepsilon_0>0,\quad \delta>0. \] Then there exists a finite iteration index \(j_n\in\mathbb N\) such that \(\operatorname{Res}_{n,j_n} + \operatorname{Res}_{n,j_n-1} \le \varepsilon_n. \) We define the accepted inner output by \[ z^{n+1}\coloneqq z^{n,j_n}. \] 
\end{prop}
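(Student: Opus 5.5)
Proving Proposition~\ref{prop:frozen-inner-loop} comes down to three things: convergence of the Chambolle--Pock (PDHG) iterates for a fixed convex--concave problem, identifying the limit with the branch value $z^\star(w^n)$ from Lemma~\ref{lem:abstract-local-branch}, and an elementary argument that residuals tending to zero yield a finite acceptance index.

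\emph{Convergence.} With $w^n$ frozen, $K(\cdot,w^n)=K_{w^n}\cdot+b$ is affine. By Assumption~\ref{ass:abstract-standing}, $F$ and $G^\ast$ are proper, convex and lsc, and a saddle point exists. The frozen problem therefore has the standard form $\min_x\max_y F(x)+\langle K_{w^n}x,y\rangle-(G^\ast(y)-\langle b,y\rangle)$; the shifted dual function is still proper, convex and lsc, and the step-size condition $\tau_x\tau_y\|K_{w^n}\|^2<1$ holds. The convergence theorem of Chambolle--Pock \cite[Theorem~1]{chambolle2011first} then applies in finite dimensions. It shows that the iterates are bounded, are Fej\'er monotone with respect to the saddle set in the weighted norm induced by $\begin{pmatrix}\tau_x^{-1}I & -K_{w^n}^\ast\\ -K_{w^n} & \tau_y^{-1}I\end{pmatrix}$ (which is positive definite under the step condition), and converge to some saddle point $\hat z$. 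The same Fej\'er estimate gives $\sum_j\|z^{n,j}-z^{n,j-1}\|^2<\infty$ after using norm equivalence, and hence $\operatorname{Res}_{n,j}\to0$. Alternatively, this follows directly from convergence of the sequence, since consecutive differences of a convergent sequence tend to zero.

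\emph{Identification with the branch.} A saddle point satisfies $\mathcal A_{w^n}(\hat z)=0$ wherever the derivatives exist. If $\hat z\in U_{z^*}$, the reliability estimate of Lemma~\ref{lem:abstract-local-branch} gives $\|\hat z-z^\star(w^n)\|\le C_{\rm reg}\|\mathcal A_{w^n}(\hat z)\|=0$, so $\hat z=z^\star(w^n)$. This is the main obstacle: PDHG only guarantees convergence to \emph{some} saddle point, and the saddle set could contain points outside $U_{z^*}$. I would handle this through the phrase ``in the local regime considered below.'' The plan is to assume, or arrange by warm-starting $z^{n,0}=z^n$ close to $z^\star(w^n)$, that the limit lies in $U_{z^*}$. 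A cleaner route is convexity: the saddle set of a convex--concave problem is convex, so if it contained a second point $\hat z\neq z^\star(w^n)$, the segment between them would consist of saddle points. Points on this segment near $z^\star(w^n)$ lie in $U_{z^*}$ and have zero residual, which contradicts uniqueness in Lemma~\ref{lem:abstract-local-branch}. Hence the saddle point is unique and equals $z^\star(w^n)$, provided $\mathcal A_{w^n}$ is defined, i.e.\ $F$ and $G^\ast$ are differentiable, along that portion of the segment. Assumption~\ref{ass:regular-reference-saddle} supplies exactly this near $z^\star$.

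\emph{Finite acceptance.} Since $\operatorname{Res}_{n,j}\to0$, we have $\operatorname{Res}_{n,j}+\operatorname{Res}_{n,j-1}\to0$. For $\varepsilon_n=\varepsilon_0(n+1)^{-1-\delta}>0$ there is therefore a first index $j_n$ with $\operatorname{Res}_{n,j_n}+\operatorname{Res}_{n,j_n-1}\le\varepsilon_n$, and defining $z^{n+1}\coloneqq z^{n,j_n}$ is well posed. The summability of $\varepsilon_n$ plays no role here; it is only needed for the later outer-loop analysis.
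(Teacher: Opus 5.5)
Your proof is correct. For the parts the paper actually proves, it follows the same route. The paper invokes \cite[Theorem~1]{pock2011diagonal} to get convergence of the frozen PDHG iterates to a saddle point, with strong convergence since the spaces are finite-dimensional. You use \cite[Theorem~1]{chambolle2011first}, which serves equally well; absorbing the affine shift $b$ into the dual term is slightly more careful than the paper. Both then deduce $\operatorname{Res}_{n,j}\to0$ because consecutive differences of a convergent sequence vanish, and read off a finite index $j_n$.

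The real difference is the identification of the limit with the branch value. The paper only asserts that ``in the local regime'' the limit is $z^\star(w^n)$ and gives no argument. Your convexity argument actually proves it, and proves more: for every $w^n\in U_{w^*}$ the frozen saddle set is the singleton $\{z^\star(w^n)\}$, so the warm-start caveat is unnecessary.

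To make that argument airtight, state explicitly the two facts it relies on. First, $\mathcal A_{w^n}(z^\star(w^n))=0$, together with the convex--concave structure, makes $z^\star(w^n)$ a saddle point, because first-order conditions are sufficient here. Second, Lemma~\ref{lem:abstract-local-branch} chooses $U_{z^*}$ inside the neighbourhood where Assumption~\ref{ass:regular-reference-saddle} makes $F$ and $G^\ast$ finite and differentiable. Hence every saddle point in $U_{z^*}$ satisfies $\mathcal A_{w^n}=0$, and the reliability estimate applies to points of the segment close to $z^\star(w^n)$.
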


\begin{proof}
For fixed \(w^n\), Assumption \ref{ass:abstract-standing} implies that the frozen inner problem is a finite-dimensional
convex--concave saddle problem with linear coupling. 
Under the uniform stability step-size condition $ \tau_x\tau_y\|K(\cdot,w^n)\|_{\mathcal L(X,Y)}^2<1$, the convergence theorem \cite[Theorem 1]{pock2011diagonal}, implies weak convergence of the inner iterates \(z^{n,j}\) to a saddle point \(z^\star(w^n)\) since we are in finite dimension, the convergence is in fact strong. Consequently, the fixed-point residuals converge to zero: $\operatorname{Res}_{n,j}\to0$.
Hence also
\[
    \operatorname{Res}_{n,j}
    +
    \operatorname{Res}_{n,j-1}
    \to0.
\]
Therefore, for the prescribed tolerance \(\varepsilon_n= \varepsilon_0 (n+1)^{-(1+\delta)},\) there exists a finite index \(j_n\) with
\[
    \operatorname{Res}_{n,j_n}
    +
    \operatorname{Res}_{n,j_n-1}
    \le
    \varepsilon_n.
\] This concludes the proof.\qedhere
\end{proof}
\begin{rmk} As a consequence, if we assume in addition that the chosen primal-dual solver satisfies the a posteriori estimate $\|\mathcal A_{w^n}(z^{n,j})\|_Z    \le C_{\rm est}   \bigl(  \operatorname{Res}_{n,j} +  \operatorname{Res}_{n,j-1} \bigr)$ with a fixed $C_{\rm est}> 0$ for the chosen residual, we obtain
\[
    \|\mathcal A_{w^n}(z^{n+1})\|_Z
    \le
    C_{\rm est}\varepsilon_n.
\]
Such an estimate will later be proved in Lemma \ref{lem:kkt-by-res}.
\end{rmk}

\subsection{Outer descent as an inexact gradient step}
\label{subsec:outer-inexact-gradient}

We now analyze the outer update in the abstract finite-dimensional setting.
For fixed \(w\), the exact inner
saddle point is denoted by
\[
    z^\star(w)=(x^\star(w),y^\star(w)).
\]
On the local neighbourhood given by the implicit-function-theorem
argument in Lemma \ref{lem:abstract-local-branch}, we define the reduced outer functional \[ \Psi(w)\coloneqq   \mathcal L(x^\star(w),y^\star(w),w). \]
\begin{prop}[Outer descent with inexact inner solves]
\label{prop:outer-inexact-descent}
Assume that the local branch constructed in Lemma~\ref{lem:abstract-local-branch} is defined on \(U_{w^*}\), and that the accepted inner outputs from Proposition \ref{prop:frozen-inner-loop} satisfy $w^n\in U_{w^*}$ and $z^{n+1}\in U_{z^*}$ for every $n\in \mathbb{N}$, the reduced functional \(\Psi\) is bounded from below on \(U_{w^*}\),
and that the outer step size satisfies
\(0<\tau_w<\frac{1}{L_\Psi}\) with $L_\Psi $ denoting the Lipschitz gradient constant of $\Psi$. Assume, in addition, that
\[
\|\mathcal A_{w^n}(z^{n+1})\|_Z
\le C_{\rm est}\varepsilon_n
\qquad\text{for all }n,
\]
where $C_{\rm est}>0$ is independent of $n$. Thus, \[ \sum_{n=0}^{\infty}\|w^{n+1}-w^n\|_W^2<\infty. \]
In particular, we have
\[
    \|w^{n+1}-w^n\|_W\to0.
\]
Moreover, $\nabla\Psi(w^n)\to0$.
Consequently, every accumulation point \(\bar w\) of the outer sequence
satisfies
\[
    \nabla\Psi(\bar w)=0.
\]
\end{prop}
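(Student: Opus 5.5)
The plan is to treat the outer update as an inexact gradient step on the reduced functional $\Psi$ and run the standard descent-lemma argument, with a summable error. First I would identify $\nabla\Psi$. On $U_{w^*}$ the branch $w\mapsto z^\star(w)$ is $C^1$ by Lemma~\ref{lem:abstract-local-branch}, and $\mathcal A_w(z^\star(w))=0$, so $\nabla_x\mathcal L$ and $\nabla_y\mathcal L$ vanish at $(z^\star(w),w)$. The chain rule then gives the envelope identity
\[
\nabla\Psi(w)=\nabla_w\mathcal L(x^\star(w),y^\star(w),w).
\]

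Next I would estimate the gradient error. Write the outer step as $w^{n+1}=w^n-\tau_w(\nabla\Psi(w^n)+e_n)$ with
\[
e_n\coloneqq\nabla_w\mathcal L(z^{n+1},w^n)-\nabla_w\mathcal L(z^\star(w^n),w^n).
\]
Since $K$ is $C^1$ and affine in $w$, we have $\nabla_w\mathcal L(x,y,w)=D_wK(x,w)^\ast y$, which is $C^1$ in $z$. After shrinking $U_{z^*}$ to a bounded convex set, this map is Lipschitz in $z$ with a constant $L_z$ uniform for $w\in U_{w^*}$. Combining this with the reliability estimate of Lemma~\ref{lem:abstract-local-branch} and the assumed residual bound gives
\[
\|e_n\|_W\le L_z\|z^{n+1}-z^\star(w^n)\|_Z\le L_zC_{\rm reg}C_{\rm est}\,\varepsilon_n .
\]
Because $\varepsilon_n=\varepsilon_0(n+1)^{-(1+\delta)}$, the sequence $(\|e_n\|_W)$ is summable, and so is $(\|e_n\|^2_W)$.

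Then comes the descent step. Since $U_{w^*}$ is convex and $\nabla\Psi$ is $L_\Psi$-Lipschitz there, the descent lemma gives
\[
\Psi(w^{n+1})\le\Psi(w^n)-\tau_w\langle\nabla\Psi(w^n),\nabla\Psi(w^n)+e_n\rangle+\tfrac{L_\Psi\tau_w^2}{2}\|\nabla\Psi(w^n)+e_n\|^2 .
\]
Writing $g_n=\nabla\Psi(w^n)+e_n$, so that $w^{n+1}-w^n=-\tau_wg_n$, I would rewrite this as
\[
\Psi(w^{n+1})\le\Psi(w^n)-\tau_w\bigl(1-\tfrac{L_\Psi\tau_w}{2}\bigr)\|g_n\|^2+\tau_w\langle e_n,g_n\rangle .
\]
Young's inequality bounds the last term by $\tfrac{c\tau_w}{2}\|g_n\|^2+\tfrac{\tau_w}{2c}\|e_n\|^2$ with $c=1-L_\Psi\tau_w>0$. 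This yields
\[
\Psi(w^{n+1})\le\Psi(w^n)-\tfrac{\tau_w}{2}\bigl(1-L_\Psi\tau_w\bigr)\|g_n\|^2+C\|e_n\|^2 .
\]
Telescoping, using that $\Psi$ is bounded below on $U_{w^*}$ and that $\sum\|e_n\|^2<\infty$, shows $\sum_n\|g_n\|^2<\infty$. Hence $\sum_n\|w^{n+1}-w^n\|^2=\tau_w^2\sum_n\|g_n\|^2<\infty$, and in particular $\|w^{n+1}-w^n\|\to0$.

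Finally, $\|\nabla\Psi(w^n)\|\le\|g_n\|+\|e_n\|\to0$. For an accumulation point $\bar w$, the continuity of $\nabla\Psi$ (implicitly on $\overline{U_{w^*}}$, using that the neighbourhoods were taken with closures inside the implicit-function domain) gives $\nabla\Psi(\bar w)=0$.

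The main obstacle is the first step together with the uniformity claims. The envelope identity requires $\Psi$ to be differentiable with the stated gradient, and the existence of $L_\Psi$ and of a uniform $L_z$ requires $C^2$-type regularity of $\mathcal L$ near the branch. I would obtain these from Assumption~\ref{ass:regular-reference-saddle}: $F$ and $G^\ast$ are $C^2$ there, $K$ is bilinear-affine, and the closures of the shrunken neighbourhoods are compact. Since the statement already postulates $L_\Psi$, only the identification of $\nabla\Psi$ and the uniform Lipschitz bound on $\nabla_w\mathcal L$ in $z$ need to be checked.
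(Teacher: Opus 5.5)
Your proposal is correct and follows the paper's proof essentially step for step. You use the envelope identity for $\nabla\Psi$ and bound the gradient error $e_n$ via Lipschitz continuity of $\nabla_w\mathcal L$ in $z$ together with the reliability estimate of Lemma~\ref{lem:abstract-local-branch}. You then apply the descent lemma, Young's inequality, and telescoping against the lower bound on $\Psi$. The differences are cosmetic: you write the descent inequality in terms of $g_n$ rather than $w^{n+1}-w^n$, and you explicitly justify two points the paper leaves implicit, namely convexity of $U_{w^*}$ for the descent lemma and continuity of $\nabla\Psi$ at accumulation points.
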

\begin{proof}
Since \(z^\star(w)\) satisfies the fixed-\(w\) inner optimality system,
the derivative of the reduced functional using the chain rule gives the envelope identity  (\cite[Theorem 4.26]{bonnans2013perturbation})  
\[
    \nabla \Psi(w)
    =
    \nabla_w\mathcal L(x^\star(w),y^\star(w),w).
\]
In the present structure, \(F\) and \(G^\ast\) do not depend on \(w\), and
therefore
\[
    \nabla_w\mathcal L(x,y,w)
    =
    \nabla_w\langle K(x,w),y\rangle_Y.
\]
Since \(K\) is affine in \(w\) (see Assumption \ref{ass:abstract-standing}), this gradient depends only on the inner variables \((x,y)\). We write
\[
    G(x,y)\coloneqq \nabla_w\mathcal L(x,y,w).
\]
Thus, $   \nabla\Psi(w)=G(z^\star(w))$.
In the algorithm, the exact inner saddle point is not available. Instead,
at the \(n\)-th outer iteration, we use an accepted inner output \(z^{n+1}=(x^{n+1},y^{n+1})\) and perform the outer update
\(w^{n+1} = w^n-\tau_w G(z^{n+1}).\)
Equivalently,
\[w^{n+1} = w^n-\tau_w\bigl(\nabla\Psi(w^n)+e^n\bigr),
\]
where the inexact-gradient error is
\[e^n \coloneqq  G(z^{n+1})-G(z^\star(w^n)).
\]
We now estimate this error. Since \(G\) is induced by the bilinear
coupling term \(\langle K(x,w),y\rangle_Y\), it is Lipschitz in the inner variables on the bounded local neighbourhood \(U_{z^*}\). Hence
there exists \(C_G>0\) such that
\[
    \|G(z_1)-G(z_2)\|_W
    \le
    C_G\|z_1-z_2\|_Z
    \qquad
    \text{for all } z_1,z_2\in U_{z^*}.
\]
Using the local residual estimate for the fixed-\(w\) inner system, we
obtain
\[ \|z^{n+1}-z^\star(w^n)\|_Z \le C_{\rm reg}\|\mathcal A_{w^n}(z^{n+1})\|_Z. \]
Therefore, by choice of $z^{n+1}$ from Proposition \ref{prop:frozen-inner-loop}, if $\|\mathcal A_{w^n}(z^{n,j})\|_Z    \le C_{\rm est}   \bigl(  \operatorname{Res}_{n,j} +  \operatorname{Res}_{n,j-1} \bigr)$, the accepted inner output satisfies
\[ \|\mathcal A_{w^n}(z^{n+1})\|_Z\le C_{\rm est} \varepsilon_n,\]
and we have \(\|e^n\|_W \le C_G C_{\rm reg}C_{\rm est}\varepsilon_n.\) The same structure also gives the local Lipschitz continuity of the
reduced gradient. Indeed, for \(w_1,w_2\in U_{w^*}\),
\[
\begin{aligned}
    \|\nabla\Psi(w_1)-\nabla\Psi(w_2)\|_W
    &=
    \|G(z^\star(w_1))-G(z^\star(w_2))\|_W  \\
    &\le
    C_G\|z^\star(w_1)-z^\star(w_2)\|_Z  \\
    &\le
    C_G L_w\|w_1-w_2\|_W .
\end{aligned}
\]
Thus \(\nabla\Psi\) is Lipschitz on \(U_{w^*}\), with Lipschitz constant $L_\Psi\coloneqq C_G L_w$. We now assume that the outer step size satisfies $ 0<\tau_w<\frac{1}{L_\Psi}$. The descent lemma for $L$-smooth functions (see \cite[Section 1.2.3]{nesterov2018lectures}) yields
\[
    \Psi(w^{n+1})
    \le
    \Psi(w^n)
    +
    \langle \nabla\Psi(w^n),w^{n+1}-w^n\rangle
    +
    \frac{L_\Psi}{2}\|w^{n+1}-w^n\|_W^2 .
\]
Since $w^{n+1}-w^n = -\tau_w\bigl(\nabla\Psi(w^n)+e^n\bigr)$, we may write
\[ \nabla\Psi(w^n) = -\frac{1}{\tau_w}(w^{n+1}-w^n)-e^n.
\]
Substituting this identity into the descent estimate gives
\[
\begin{aligned}
    \Psi(w^{n+1})
    &\le
    \Psi(w^n)
    -
    \left(\frac{1}{\tau_w}-\frac{L_\Psi}{2}\right)
    \|w^{n+1}-w^n\|_W^2
    -
    \langle e^n,w^{n+1}-w^n\rangle .
\end{aligned}
\]
Using Young's inequality, we obtain constants \(c_w>0\) and \(C_w>0\),
independent of \(n\), such that
\[
    \Psi(w^{n+1})
    \le
    \Psi(w^n)
    -
    c_w\|w^{n+1}-w^n\|_W^2
    +
    C_w\|e^n\|_W^2 .
\]
Combining this with the estimate on \(e^n\), we find
\(\Psi(w^{n+1}) \le \Psi(w^n) - c_w\|w^{n+1}-w^n\|_W^2 + C\varepsilon_n^2 \)
for some constant $C>0$ independent of $n$. 
Consequently, if \(\Psi\) is bounded from below on the local region, then summing the previous inequality yields \(\sum_{n=0}^{\infty}\|w^{n+1}-w^n\|_W^2<\infty.\) In particular, \(\|w^{n+1}-w^n\|_W\to 0.\)
Moreover, since \[ w^{n+1}-w^n = -\tau_w\bigl(\nabla\Psi(w^n)+e^n\bigr),\] and \(e^n\to0\), we also obtain \(\nabla\Psi(w^n)\to 0.\) Hence every accumulation point \(\bar w\) of the outer sequence satisfies
\[\nabla\Psi(\bar w)=0.
\]
Thus, under ``the local-regime assumption" ($ w^n\in U_{w^*},\, z^{n+1}\in U_{z^*} \,\text{ for all }n.$) and square-summable tolerance choice of Proposition \ref{prop:frozen-inner-loop}, the outer iteration is an inexact gradient descent with summable gradient error \cite[Section 2, Proposition 1]{bertsekas2000gradient}. Consequently, the outer increments are square-summable  and every outer accumulation point is stationary for the reduced functional
\(\Psi\).
\end{proof}
The following theorem summarizes the resulting conditional local convergence statement.
\begin{thm}[Local subsequential convergence of the alternating scheme]
\label{thm:local-subsequential-convergence}
Let Assumptions~\ref{ass:abstract-standing} and \ref{ass:regular-reference-saddle} hold, and let
\(U_{w^*}\subset W\), \(U_{z^*}\subset Z\), \(L_w\), and \(C_{\rm reg}\) be given
by Lemma~\ref{lem:abstract-local-branch}. Suppose that the iterates
generated by Algorithm~\ref{alg:alternating-saddle} remain in the local
regularity region; that is,
\[ w^n\in U_{w^*}\qquad \text{and}\qquad z^{n+1}\in U_{z^*} \quad\text{for all }n \in \mathbb{N}.
\]
Let \((\varepsilon_n)_n\) be the tolerance sequence from Proposition~\ref{prop:frozen-inner-loop}, and let \(z^{n+1}\) be the
corresponding accepted inner output. 
Assume that the accepted inner iterates satisfy
\[
\|\mathcal A_{w^n}(z^{n+1})\|_Z
\leq
C_{\rm est}\varepsilon_n
\qquad\text{for all }n \in \mathbb{N}.
\]
Assume also that \(\Psi\) is bounded from below on \(U_{w^*}\), and that the
outer step size satisfies $0<\tau_w<\frac{1}{L_\Psi}$ where $L_\Psi$ denotes the local Lipschitz constant from Proposition \ref{prop:outer-inexact-descent}. Then the outer increments are square-summable:
\[
    \sum_{n=0}^{\infty}\|w^{n+1}-w^n\|_W^2<\infty.
\]
Furthermore, the accepted inner outputs satisfy $\nabla\Psi(w^n)\to0$ and
 \[
    \|z^{n+1}-z^\star(w^n)\|_Z
    \le C_{\rm est}
    C_{\rm reg}\varepsilon_n.
\]
Consequently, every accumulation point \(\bar w\) of the outer sequence is
stationary for the reduced functional; that is,
\[ \nabla\Psi(\bar w)=0 \text{ and }z^\star(w^{n_j})\to z^\star(\bar w),
\]
and the corresponding accepted inner outputs satisfying \(\varepsilon_{n_j}\to0\) obey
\[
    z^{n_j+1}\to z^\star(\bar w).
\]
Thus, if \(w^{n_j}\to\bar w\), then we have $z^{n_j+1}\to z^\star(\bar w)$ and $\nabla\Psi(\bar w)=0$.
Equivalently, every accumulation point \((\bar w,\bar z)\) of the local
alternating scheme satisfies $\bar z=z^\star(\bar w)$,
and $\nabla\Psi(\bar w)=0$.
\end{thm}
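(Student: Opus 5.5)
The theorem essentially assembles Lemma~\ref{lem:abstract-local-branch}, Proposition~\ref{prop:frozen-inner-loop} and Proposition~\ref{prop:outer-inexact-descent}, so the plan is to invoke them in that order and then add a short continuity argument for the inner variables.

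First, since the standing hypotheses coincide with those of Proposition~\ref{prop:outer-inexact-descent}, we apply it directly. These hypotheses are the local-regime assumption $w^n\in U_{w^*}$, $z^{n+1}\in U_{z^*}$, the residual bound $\|\mathcal A_{w^n}(z^{n+1})\|_Z\le C_{\rm est}\varepsilon_n$, boundedness from below of $\Psi$, and $0<\tau_w<1/L_\Psi$. The proposition yields the square summability $\sum_n\|w^{n+1}-w^n\|_W^2<\infty$, the convergence $\nabla\Psi(w^n)\to0$, and the stationarity $\nabla\Psi(\bar w)=0$ at every accumulation point $\bar w$. The only point to check is that the error $e^n$ there is controlled by $\varepsilon_n$, which tends to zero; this is exactly what the residual hypothesis and Lemma~\ref{lem:abstract-local-branch} provide.

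Second, for the inner error estimate we combine the local reliability estimate of Lemma~\ref{lem:abstract-local-branch} with the residual assumption. For $w^n\in U_{w^*}$ and $z^{n+1}\in U_{z^*}$ we get
\[
\|z^{n+1}-z^\star(w^n)\|_Z\le C_{\rm reg}\|\mathcal A_{w^n}(z^{n+1})\|_Z\le C_{\rm reg}C_{\rm est}\varepsilon_n .
\]

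Third, we handle accumulation points. Let $w^{n_j}\to\bar w$. Here lies the main (mild) obstacle: we must ensure that $\bar w$ lies in the domain of the branch, since $U_{w^*}$ is open and the limit could a priori sit on its boundary. Following the proof of Lemma~\ref{lem:abstract-local-branch}, we may choose $U_{w^*}$ as a ball whose closure lies inside the neighbourhood given by the implicit function theorem. Then $z^\star$ extends as a $C^1$, hence Lipschitz, map to $\overline{U_{w^*}}$ with the same constant $L_w$. This gives
\[
\|z^\star(w^{n_j})-z^\star(\bar w)\|_Z\le L_w\|w^{n_j}-\bar w\|_W\to0 .
\]
Continuity of $\nabla\Psi=G\circ z^\star$ on this closure then also justifies passing to the limit in $\nabla\Psi(w^{n_j})\to0$, which gives $\nabla\Psi(\bar w)=0$.

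Finally, the triangle inequality yields
\[
\|z^{n_j+1}-z^\star(\bar w)\|_Z\le C_{\rm reg}C_{\rm est}\varepsilon_{n_j}+L_w\|w^{n_j}-\bar w\|_W\to0 .
\]
For the equivalent formulation, take an accumulation point $(\bar w,\bar z)$ of the pairs $(w^{n},z^{n+1})$ along a common subsequence. Uniqueness of limits forces $\bar z=z^\star(\bar w)$, and stationarity $\nabla\Psi(\bar w)=0$ follows from the third step.
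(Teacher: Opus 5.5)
Your proposal is correct and follows essentially the paper's route. The theorem is assembled from Proposition~\ref{prop:outer-inexact-descent} (square summability, $\nabla\Psi(w^n)\to0$, stationarity), the reliability estimate of Lemma~\ref{lem:abstract-local-branch} for the bound $\|z^{n+1}-z^\star(w^n)\|_Z\le C_{\rm reg}C_{\rm est}\varepsilon_n$, and the Lipschitz branch plus the triangle inequality for $z^{n_j+1}\to z^\star(\bar w)$, mirroring the proof of Theorem~\ref{thm:conditional_convergence_discrete_scheme}. Your treatment of accumulation points on $\partial U_{w^*}$ is a sound refinement that the paper leaves implicit; it is justified by the construction in the proof of Lemma~\ref{lem:abstract-local-branch}, where $U_{w^*}$ is a ball whose closure lies inside the implicit-function-theorem neighbourhood.
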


\begin{rmk}[On the local-regime assumption]
Such assumptions are standard in local convergence and perturbation
analysis. In particular, local solution-mapping theory typically gives
stability only in a neighbourhood of a regular reference solution; see~\cite{dontchev2009implicit}. Similarly,
local convergence results for first-order and Newton-type methods are
usually formulated under the condition that the iterates remain in a
neighbourhood where the required regularity and Lipschitz estimates hold;
see~\cite{nocedal2006numerical}. Related boundedness or
limit-point assumptions also appear in the convergence analysis of
block-coordinate and alternating minimization methods for nonconvex
problems; see, for example, \cite[Assumption 1, Corollary 1]{razaviyayn2013unified} and \cite[Section 2]{tseng2001convergence}.

\end{rmk}

\section{Application to the fixed-grid MBB problem}
\label{sec:conv}
In Section~\ref{sectiongproxyforMBB} we described, at the formal level, the alternating multilevel structure of the modified Benamou--Brenier saddle problem \eqref{eq:Total-lagrangian}. The variables naturally split into the primal density--momentum variables \((\rho,m)\), the
dual variable \(\phi\), and the skew-symmetric field \(E\). Section~\ref{sec:abstractconv} then
isolated the finite-dimensional convergence mechanism for a general bilevel saddle problem with explicit outer descent.\\
The purpose of this section is to connect these two parts. We first introduce a suitable discretization of our reformulated transport problem \eqref{eq:Total-lagrangian} and the corresponding numerical scheme. We then identify the discrete variables
with the abstract framework of Section~\ref{sec:abstractconv},
\[
        w=E,\qquad x=(\rho,m),\qquad y=\phi,
\]
and verify the assumptions required by the abstract convergence theorem for Algorithm \ref{alg:alternating-saddle}. \\
Throughout this section, all variables and operators are understood in their
fully discrete form. The convergence result is therefore a fixed-grid local conditional subsequential convergence result; no continuum limit is considered here.
\subsection{Discretization}
\label{subsec:setup}
We work on a bounded (hyper-)rectangular
computational domain $\Omega\subset\mathbb{R}^d$ that is sufficiently large so that the transported mass remains well contained in $\Omega$. This motivates no-flux boundary conditions.
Our discretization extends classical finite-volume schemes \cite[Section 2-3]{eymard2000finite}, \cite[Chapter 1-2]{leveque2002finite} to the space-time setting in the sense that we combine a symmetric finite difference in time and an interface-based discretization in space. More precisely, we use the following discretization.
We consider a uniform Cartesian discretization of $\Omega$ with mesh sizes $\Delta x_1,\dots,\Delta x_d$ and cell volume \(\Delta V\coloneqq \prod_{\ell=1}^d \Delta x_\ell\) in the sense that we decompose $\Omega$ into $N_x \approx |\Omega| / \Delta V$ (hyper-)rectangles of volume $\Delta V$. The partition containing all these cells is denoted by $\tria_x$. Furthermore, we exploit a uniform time discretization of $[0,1]$ with step size $\Delta t=1/K$ and time steps $t_k \coloneqq k\Delta t$ for all $k=0,\dots,K$ and some $K\in \mathbb{N}$. The resulting time grid reads
\[
 \tria_t =\{ [t_k,t_{k+1}]\colon k=0,\dots,K-1\}.
\]
The tensor product $\tria_{t,x} \coloneqq \tria_t \otimes \tria_x$ induces a partition of the time-space cylinder $Q \coloneqq [0,1]\times \Omega$.
Let $\mathbb{P}_0(\tria_{t,x}) \coloneqq \lbrace v \in L^2(Q)\colon v|_K \in \mathbb{R}\text{ for all }K\in \tria_{t,x}\rbrace$  denote the space of piecewise constant functions on the space-time grid.
We evaluate the piecewise constant functions with the following finite-volume operators: the discrete time derivative \(\partial_t\)  is a centered finite-difference operator defined for all $\phi \in \mathbb{P}_0(\tria_{t,x})$ with time evaluations $\phi_k \coloneqq \phi|_{[t_{k},t_{k+1}]\times \Omega} \in \mathbb{P}_0(\tria_{t,x})$ for $k=0,\dots,K-1$ and $\phi_{-1} \coloneqq \phi_1$, $\phi_{K}\coloneqq \phi_{K-2}$ by
\begin{align*}
    \partial_t \phi_k \coloneqq \frac{\phi_{k+1}- \phi_{k-1}}{2 \Delta t} \quad\text{for all }k=0,\dots,K-1. 
\end{align*}
The ghost-cell reflections $\phi_{-1}\coloneqq \phi_{1}$ and $\phi_{K}\coloneqq \phi_{K-2}$ include homogeneous time-Neumann boundary conditions corresponding to a zero temporal flux at $t=0$ and $t=1$; that is, we set the time derivative $\partial_t \phi_0 \coloneqq  \partial_t \phi_{K-1} \coloneqq 0$.
In space, the discrete gradient is defined in a face-based manner: spatial differences are evaluated across interfaces in
\[ \mathcal F_x \coloneqq \{ f \colon f \text{ is a face of $K\in \tria_{t,x}$ that is orthogonal to a spatial direction} \}. \]
We denote the space of piecewise constant functions on the interfaces $\mathcal{F}_x$ by $\mathbb P_0(\mathcal F_x)$. The discrete spatial gradient $\tilde \nabla_x\phi \in \mathbb P_0(\mathcal F_x;\mathbb R^d)$ is defined facewise as follows: For any $\phi \in \mathbb{P}_0(\tria_{t,x})$ and space neighboring cells $K_-,K_+ \in \tria$ in the sense that $K_-\cap K_+ = f \in \mathcal{F}_x$ and $\textup{mid}(K_-) + \Delta x_{\ell(f)} e_{\ell(f)} = \textup{mid}(K_+)$ with index $\ell(f) \in \lbrace 1,\dots, d\rbrace$ we define
\begin{align}\label{eq:DefDisreteGradient}
(\tilde  \nabla_x\phi)|_f \coloneqq
\frac{\phi|_{K_+}-\phi|_{K_-}}{\Delta x_{\ell(f)}}\, n_f\quad\text{with }n_f = e_{\ell(f)} \text{ denoting the normal vector of }f.
\end{align}
Accounting for Neumann boundary conditions, we set the gradient on the boundary as
\begin{align*}
    (\tilde  \nabla_x\phi)|_f \coloneqq 0 \quad\text{for all }f\in \mathcal{F}_x \cap [0,1] \times \partial \Omega.
\end{align*}
Since $\tilde{\nabla}_x $ is face-based, we also introduce a cell-centered gradient $\nabla_x \colon  \mathbb{P}_0(\tria_{t,x}) \to \mathbb P_0(\tria_{t,x};\mathbb R^d)$ obtained  for all $\phi \in \mathbb{P}_0(\tria_{t,x})$ and $K\in \tria_{t,x}$ by averaging face values; that is,
\begin{align*}
  (\nabla_x\phi)|_K  \coloneqq  \sum_{\ell=1}^d \Big(\sum_{f'\subset \partial K\cap\mathcal F_x,\ell(f')=\ell } |f'|\Big)^{-1} \sum_{f\subset \partial K\cap\mathcal F_x, \ell(f)=\ell} |f|\;   (\tilde \nabla_x\phi)_f.
\end{align*}
\begin{mydef}[Discrete divergence]\label{def:Avediv}
We define the discrete divergence $\nabla_x\cdot \colon  \mathbb P_0(\tria_{t,x};\mathbb R^d) \to  \mathbb P_0(\tria_{t,x})$
as the unique operator satisfying the discrete integration-by-parts identity
\[
\langle \nabla_x\!\cdot q,\phi\rangle_{L^2(Q)} = - \langle q,\nabla_x\phi\rangle_{L^2(Q)} \qquad \text{for all } q\in \mathbb P_0(\tria_{t,x}, \mathbb{R}^d),\phi\in\mathbb P_0(\tria_{t,x}).
\]
\end{mydef}
Since $\mathbb P_0(\tria_{t,x})$ is finite-dimensional, the duality relation above uniquely defines a cellwise constant representative of $\nabla_x\!\cdot q$. In the sequel, $ (\nabla_x\!\cdot q)_K$ denotes its value on the cell $ K$.\\
We use the discrete operators above to define a discrete version of the \(H^1(Q)\) semi-inner product and norm. Moreover, we introduce a version scaled by the step size  $\tau_\phi > 0$ used in the generalized dual update \eqref{eq:phi-update}. In particular, we set for all $\phi,\psi \in \mathbb{P}_0(\tria_{t,x})$ 
\begin{align}\label{eq:normofzh}   
\begin{aligned}
    \langle \phi,\psi\rangle_{H^1(Q)}
&\coloneqq \langle \partial_t\phi,\partial_t\psi\rangle_{L^2(Q)}
+\langle \nabla_x\phi,\nabla_x\psi\rangle_{L^2(Q)}\quad&\text{and}&\quad \lVert \phi \rVert_{H^1(Q)} = \langle \phi,\phi\rangle_{H^1(Q)}^{1/2},\\
\langle \phi,\psi\rangle_{\Sigma_h}
&\coloneqq  \tau_\phi^{-1}  \langle \phi,\psi\rangle_{H^1(Q)}\quad&\text{and}&\quad \lVert \phi \rVert_{\Sigma_h} = \langle \phi,\phi\rangle_{\Sigma_h}^{1/2}.
\end{aligned}
\end{align}
We set the discrete dual space in \eqref{eq:L-preIBP} by removing the kernel of the $H^1$-seminorm, that is
\begin{align*}
  \Sigma_h \coloneqq \Big\lbrace \phi \in \mathbb{P}_0(\tria_{t,x})&\colon \phi \perp \ker(\partial_t, \nabla_x)\Big\rbrace,
\end{align*}
where $\ker(\partial_t, \nabla_x)=\Big\lbrace \phi \in \mathbb{P}_0(\tria_{t,x})\colon \partial_t\phi=0 \text{ and } \nabla_x \phi =0\Big\rbrace.$
Equivalently, one may impose gauge conditions fixing the two parity-constant modes induced by the centered time derivative, for instance $ \int_\Omega \phi_{K-1} \dx = 0$ and $\int_\Omega \phi_{K-2} \dx = 0$ consistent with the fixed gauge at final time in~\eqref{eq:phi-normalization}.
This is a Hilbert space equipped with norm $\lVert \cdot \rVert_{\Sigma_h}$ and Riesz mapping 
\begin{align}\label{eq:ScalledRiesz}
\mathbf S\coloneqq \tau_\phi^{-1}(\partial_t^\top \partial_t-\Delta_x) \colon \Sigma_h \to \Sigma_h^*.
\end{align}
We define the discrete primal space with the $L^2$ orthogonal projection $\Pi_0 \colon L^2(Q;\mathbb{R}^d) \to \mathbb{P}_0(\tria_{t,x};\mathbb{R}^d)$ onto piecewise constants as
\begin{align*}
    M_h \coloneqq \big\lbrace (\rho,m) \in  \mathbb{P}_0(\tria_{t,x}) \times \mathbb{P}_0(\tria_{t,x};\mathbb{R}^d) \big\rbrace.
 \end{align*}
 For the prescribed discrete endpoint densities $ \rho_0,\, \rho_{K-1}\in \mathbb{P}_0(\tria_{x})$, we restrict the density variable to the affine set 
\begin{align*}
    M_h(\rho_0, \rho_{K-1}) \coloneqq \big\lbrace (\rho,m) \in M_h: \, \rho_k|_{k=0}= \rho_0 \text{ and } \rho_k|_{k=K-1} =\rho_{K-1} \big\rbrace.
 \end{align*}
 We continue to denote this set by \(M_h\) for simplicity.
 \subsection{Discrete saddle formulation and identification with the abstract framework}
We now formulate the discrete problem in the saddle-point form introduced in Section~\ref{sec:abstractconv}. The
abstract variables are identified as
\[w=E,\, x=(\rho,m),\, y=\phi,
\text{ with } E\in W_h\coloneqq P_0(\mathcal T_t;\mathfrak{so}(d)),    \quad (\rho,m)\in M_h, \quad \phi\in\Sigma_h.
\]
For \(E\in W_h\), we set \( g_E(t,x)\coloneqq -E_t x .\) The discrete MBB Lagrangian \eqref{eq:L-preIBP} is given by
\[ \mathcal L_h(\rho,m,\phi,E) \coloneqq  F_h(\rho,m) +\big\langle K_h((\rho,m),E),\phi\big\rangle ,
\]
where \(F_h(\rho,m)\coloneqq \frac{1}{2}\| {m}/{\sqrt{\rho}}\|_{L^2(Q)}^2 \)
The integrand in $m/\sqrt{\rho}$ is understood in the convention of Benamou--Brenier \cite{benamou2000computational}; that is, we read 
\begin{align}\label{eq:BBconvention}
 \frac{|m|^2}{\rho}&= \begin{cases}
        \frac{|m|^2}{\rho}&\text{if }  \rho>0,\\
        0&\text{if }   (\rho,m)=(0,0),\\
        +\infty&\text{otherwise}.
    \end{cases}
\end{align}
For \((\rho,m)\in M_h\) and \(E\in W_h\), we define the discrete
total flux by \(J_k \coloneqq  m_k-\rho_k\Pi_0 g_{E_k}, \quad g_{E_k}(x)=-E_kx .\)
The cellwise representative of the discrete continuity residual is
\begin{equation}\label{eq:DefResidualKh}
R_{h,E}(\rho,m)_k\coloneqq  \partial_t\rho_k+\nabla_x\cdot J_k,\quad k=0,\ldots,K-1,
\end{equation}
that is,
\(R_{h,E}(\rho,m)_k=\frac{\rho_{k+1}-\rho_{k-1}}{2\Delta t} +\nabla_x\cdot \left(m_k-\rho_k\Pi_0 g_{E_k} \right). \)
The discrete continuity equation is \(R_{h,E}(\rho,m)_k=0, \quad k=0,\ldots,K-1.\)
The discretized version $K_h \coloneqq K_h(E)\colon M_h \to \Sigma_h^*$ of the operator $K(\cdot;E) \colon M\to \Sigma^*$ in \eqref{eq:VolDistNew} can be obtained by testing this cellwise residual against the gauge-fixed dual space: \[\langle K_h(\rho,m;E),\phi\rangle\coloneqq  \sum_{k=0}^{K-1} \int_\Omega R_{h,E}(\rho,m)_k\,\phi_k\,dx,\quad \phi\in\Sigma_h.
\]
Equivalently, \(\langle K_h(\rho,m;E),\phi\rangle =\int_Q \left( \partial_t\rho+\nabla_x\cdot m -\nabla_x\cdot(\rho\Pi_0 g_E) \right)\phi\,dz .\)
And the discretized version $K_h\colon M_h \to \Sigma_h^*$ of the operator $K \colon M\to \Sigma^*$ in \eqref{eq:VolDistNew} reads
\begin{align}\label{eq:DefKh} \langle K_h(\rho,m;E), \phi\rangle \coloneqq  \int_Q (\partial_t \rho + \nabla \cdot m - \nabla \cdot \big(\rho \Pi_0  g_E \big))\phi \dz \quad\text{for all }(\rho,m) \in M_h\text{ and }\phi \in \Sigma_h. \end{align}
Similarly, for fixed \(E\), we write $K_h(\rho,m)\coloneqq  K_h(\rho,m;E)$. Thus the discrete saddle problem has the abstract form
\begin{equation}\label{eq:DiscreteLgrangian}
    \mathcal L_h(x,y,w) = F_h(x)+\langle K_h(x,w),y\rangle_{\Sigma_h^\ast,\Sigma_h} -G_h^\ast(y), \qquad G_h^\ast\equiv 0.
\end{equation}
The following lemmas collect the mass conservative structure and the basic linearity and
boundedness properties of the discrete coupling operator \(K_h\).
%
As discussed above, we express piecewise constant functions in terms of vectors. Similarly, we order our cells $K_x \in \tria_{x}$ and associate them with an index $\alpha = 1,\dots, N_x$. Let $x_\alpha$ denote the center of gravity of the $\alpha$-th element $K_\alpha \in \tria_x$. Then the $L^2$ orthogonal projection satisfies $(\Pi_0 g_E)|_{[t_k,t_{k+1}] \times K_\alpha} = - E_k x_\alpha$ for all $\alpha = 1,\dots,N_x$.

\begin{lemma}[Compatibility of the reduced and cellwise constraints]
\label{lem:constraint-compatibility}
Let $(\rho,m)\in M_h$ and $E\in W_h$, and let
$R_{h,E}(\rho,m)$ denote the discrete continuity residual defined in \eqref{eq:DefResidualKh}. Assume that the number $K$ of time slabs is even. Then $K_h(\rho,m;E)=0 \quad\text{in }\Sigma_h^\ast$
implies $R_{h,E}(\rho,m)_k=0,
\quad k=0,\ldots,K-1.$
\end{lemma}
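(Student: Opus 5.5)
The plan is to exploit the definition of $\Sigma_h$ as the orthogonal complement of $\ker(\partial_t,\nabla_x)$. Since $\langle K_h(\rho,m;E),\phi\rangle=\langle R_{h,E}(\rho,m),\phi\rangle_{L^2(Q)}$ for $\phi\in\Sigma_h$, the hypothesis $K_h(\rho,m;E)=0$ in $\Sigma_h^\ast$ says that $R\coloneqq R_{h,E}(\rho,m)\in\mathbb P_0(\tria_{t,x})$ is $L^2$-orthogonal to $\Sigma_h$. Because $\mathbb P_0(\tria_{t,x})=\Sigma_h\oplus\ker(\partial_t,\nabla_x)$ is a finite-dimensional orthogonal decomposition, this gives $R\in\ker(\partial_t,\nabla_x)$. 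The proof then has two steps: describe this kernel explicitly, and show that the only kernel element compatible with the structure of $R$ is zero.

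\emph{Step 1 (the kernel).} First take $\nabla_x\phi=0$. In a fixed direction $\ell$, the cell-centered gradient averages the two face differences. At a boundary cell one of these faces carries the value $0$ (Neumann convention), so the condition forces equality with the interior neighbour. Moving inward, the averaged (central) difference then propagates this equality, which rules out checkerboard modes. Hence $\phi_k$ is spatially constant for every $k$. Next take $\partial_t\phi=0$. It is automatic at $k=0$ and $k=K-1$, and for $1\le k\le K-2$ it gives $\phi_{k+1}=\phi_{k-1}$. So the kernel consists exactly of the functions with $\phi_k\equiv a$ for even $k$ and $\phi_k\equiv b$ for odd $k$, where $a,b\in\mathbb R$. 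In particular $R_k\equiv a$ on $\Omega$ for even $k$ and $R_k\equiv b$ for odd $k$.

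\emph{Step 2 (the constants vanish).} Integrate $R_k$ over $\Omega$ at the two endpoint slabs. By the ghost-cell convention, $\partial_t\rho_0=\partial_t\rho_{K-1}=0$, so $R_0=\nabla_x\cdot J_0$ and $R_{K-1}=\nabla_x\cdot J_{K-1}$. The discrete integration by parts of Definition~\ref{def:Avediv}, tested against spatially constant functions whose gradient vanishes by Step 1, gives $\int_\Omega\nabla_x\cdot J_k\,\dx=0$. This is the discrete mass conservation, applied slabwise by testing with functions supported in a single time slab. Hence $a|\Omega|=\int_\Omega R_0\,\dx=0$, since $0$ is even. Because $K$ is even, $K-1$ is odd, so $b|\Omega|=\int_\Omega R_{K-1}\,\dx=0$. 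Thus $a=b=0$ and $R\equiv0$, which means $R_{h,E}(\rho,m)_k=0$ for all $k$.

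The main obstacle is Step 1: one must verify carefully that the averaged cell-centered gradient with zero boundary face values has only constants in its kernel, and that the centered time derivative with the reflections $\phi_{-1}=\phi_1$, $\phi_K=\phi_{K-2}$ produces exactly the two parity modes. The parity assumption on $K$ enters only in Step 2. It guarantees that the two endpoint slabs, where the time-difference term drops out, belong to different parity classes, so that each of the two constants is controlled by a pure divergence term.
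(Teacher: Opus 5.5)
Your proposal is correct and follows the paper's argument. Orthogonality to $\Sigma_h$ puts the residual in $\ker(\partial_t,\nabla_x)$, so it is spatially constant with one constant per time parity; at the two endpoint slabs the reflected centered difference vanishes, leaving a pure discrete divergence with zero spatial integral, and these slabs lie in different parity classes because $K$ is even. Your Step~1 also justifies the kernel description, which the paper only asserts: the averaged gradient with zero boundary faces has only constants in its kernel, and the reflected centered time difference has exactly the two parity modes.
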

\begin{proof}
By the definition of $K_h$ \eqref{eq:DefKh}, the condition
$K_h(\rho,m;E)=0$ means that
\[\bigl\langle R_{h,E}(\rho,m),\phi\bigr\rangle_{L^2(Q)}=0
\quad\text{for every }\phi\in\Sigma_h.
\]
Since $\Sigma_h= \ker(\partial_t,\nabla_x)^\perp$,
it follows that
$R_{h,E}(\rho,m)\in\ker(\partial_t,\nabla_x)$. Hence the residual
is spatially constant and constant separately on the even and odd time indices. Consequently, there exist constants
$c_{\mathrm{ev}},c_{\mathrm{odd}}\in\mathbb R$ such that
\[R_{h,E}(\rho,m)_k =
\begin{cases} c_{\mathrm{ev}}, & k \text{ even},\\ c_{\mathrm{odd}}, & k \text{ odd}. \end{cases}
\]
At the initial time index, the ghost-cell convention gives $\partial_t\rho_0=0$. Therefore \(R_{h,E}(\rho,m)_0=\nabla_x\cdot J_0. \) Using the adjoint definition of the discrete divergence Definition \ref{def:Avediv} with the test function $\mathbf{1} \in \mathbb P_0(\tria_{t,x},\mathbb{R})$ such that it is $1$ in space on the k-th time slab, we obtain
\[\int_\Omega R_{h,E}(\rho,m)_0dx = \int_\Omega \nabla_x\cdot J_0,dx = -\langle J_0, \nabla_x \mathbf{1} \rangle_{L^2(\Omega)}=0. \]
Since $R_{h,E}(\rho,m)_0=c_{\mathrm{ev}}$ is spatially constant, this yields $c_{\mathrm{ev}}=0$. Similarly, the terminal ghost-cell convention gives $\partial_t\rho_{K-1}=0$, and therefore
\(\int_\Omega R_{h,E}(\rho,m)_{K-1},dx=0.\) Because $K$ is even, the index $K-1$ is odd. Hence $R_{h,E}(\rho,m)_{K-1}=c_{\mathrm{odd}}$, and consequently $c_{\mathrm{odd}}=0$. Thus both parity components vanish, and therefore $R_{h,E}(\rho,m)_k=0$ \text{for every }$k=0,\ldots,K-1$. \qedhere
\end{proof}

\begin{lemma}[Discrete mass conservation]
\label{lem:discrete-mass-conservation}
Let $(\rho,m)\in M_h$ and $E\in W_h$ satisfy $K_h(\rho,m;E)=0 \quad\text{in }\Sigma_h^\ast$. In particular, if  $\mu_0, \mu_{K-1} \in \mathcal{P}_2(\Omega) $ with $\int_\Omega \rho_0\,\mathrm dx = \int_\Omega \rho_{K-1}\,\mathrm dx = 1$ and $K$ is even, we have 
\[ \int_\Omega \rho_k\dx = 1 \qquad \text{for all }k=0,\dots,K-1 \text{ with } \rho_k \coloneqq \rho|_{[t_k,t_{k+1}]} \in \mathbb{P}_0(\tria_{x}) . \]
\end{lemma}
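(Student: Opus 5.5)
The plan is to combine Lemma~\ref{lem:constraint-compatibility} with a telescoping argument in time for the centered difference quotient. Since $K$ is even and $K_h(\rho,m;E)=0$ in $\Sigma_h^\ast$, Lemma~\ref{lem:constraint-compatibility} gives the cellwise identity $R_{h,E}(\rho,m)_k=0$ for every $k=0,\dots,K-1$. We then integrate this identity over $\Omega$. As in the proof of Lemma~\ref{lem:constraint-compatibility}, we test Definition~\ref{def:Avediv} with the function equal to $1$ on the $k$-th slab. Because the discrete gradient of a spatial constant vanishes (the interior face differences are zero and the boundary faces are set to zero), this gives $\int_\Omega \nabla_x\cdot J_k\dx=0$ for each $k$. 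Consequently $\int_\Omega \partial_t\rho_k\dx=0$ for all $k$.

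Writing $M_k\coloneqq\int_\Omega\rho_k\dx$, the interior indices $k=1,\dots,K-2$ satisfy $\partial_t\rho_k=(\rho_{k+1}-\rho_{k-1})/(2\Delta t)$, so the previous identity becomes
\[
M_{k+1}=M_{k-1},\qquad k=1,\dots,K-2 .
\]
This is a two-step recursion, so it propagates only along each parity class. All even-indexed masses equal $M_0$, and all odd-indexed masses equal $M_{K-1}$, because $K-1$ is odd and the odd indices $1,3,\dots,K-1$ are linked by the recursion. The boundary indices $k=0$ and $k=K-1$ give nothing new, since there $\partial_t\rho$ is set to zero by the ghost-cell convention. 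The data then give $M_0=\int_\Omega\rho_0\dx=1$ and $M_{K-1}=\int_\Omega\rho_{K-1}\dx=1$, so $M_k=1$ for every $k$.

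The main point requiring care is the parity decoupling of the centered scheme. A single endpoint constraint would fix only one parity class. Evenness of $K$ is exactly what places $K-1$ in the odd class, so that the two prescribed endpoint masses pin down both classes. I would also check that the recursion indices cover every even index $\le K-2$ and every odd index $\le K-1$. The recursion starting from $k=1$ yields $M_2=M_0$, then $M_4=M_2$, and so on up to $M_{K-2}$. On the odd side it yields $M_3=M_1$ up to $M_{K-1}=M_{K-3}$. Hence $M_1=M_{K-1}$, which completes the coverage.
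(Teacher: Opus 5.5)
Your proposal is correct and follows essentially the same route as the paper. Like the paper, you invoke Lemma~\ref{lem:constraint-compatibility} to get the cellwise continuity equation, integrate over $\Omega$ using $\nabla_x\mathbf{1}=0$ in Definition~\ref{def:Avediv}, and combine the two-step recursion $M_{k+1}=M_{k-1}$ with the evenness of $K$ so that the two endpoint masses fix both parity classes. Your bookkeeping is slightly more careful than the paper's: you restrict the recursion explicitly to $k=1,\dots,K-2$ and note that the ghost-cell convention makes the endpoint indices vacuous, whereas the paper states the recursion without an index range.
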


\begin{proof}
By Lemma~\ref{lem:constraint-compatibility}, the reduced constraint implies the cellwise discrete continuity equation \[ \partial_t\rho_k+\nabla_x\cdot J_k=0, \quad k=0,\ldots,K-1. \] 
Testing with an arbitrary test function $\phi_k \in  \mathbb P_0(\tria_{x})$ and integrating over $\Omega$ yields \[\int_\Omega \partial_t\rho_k \phi_k\dx + \int_\Omega (\nabla_x\cdot J_k )\phi_k \dx=0\qquad\text{for all }k=0,\dots,K-1. \]
By the definition of the discrete divergence see Definition \ref{def:Avediv}, testing with the constant function $\phi_k=\mathbf{1} \in \mathbb P_0(\tria_{t,x})$ such that it is $1$ in space on the k-th time slab yields
\begin{align*}
\int_\Omega \nabla_x\cdot J_k\dx&= \langle \nabla_x\cdot J_k ,\mathbf{1}\rangle_{L^2(\Omega)}  = \langle \nabla_x\!\cdot J_k,\mathbf{1} \rangle_{L^2(\Omega)} 
    = - \langle J_k,\nabla_x\mathbf{1} \rangle_{L^2(\Omega)}
    =0.
\end{align*}
This yields for the centered difference
\( \partial_t\rho_k= \frac{1}{2 \Delta t}(\rho_{k+1}- \rho_{k-1}) \) the identity $\int_\Omega \partial_t\rho_{k} \dx =0$. Thus
\[\int_\Omega \rho_{k+1}\dx = \int_\Omega \rho_{k-1} \dx.
\]
 Furthermore, this relation implies that the masses are constant separately on the even and odd time indices. Consequently, since $\mu_0, \mu_{K-1} \in \mathcal{P}_2(\Omega) $ with $\dmu_i=\rho_i\dx$ for $i \in \{0,K-1\}$, we have $\int_\Omega \rho_{K-1}\,\mathrm dx = \int_\Omega \rho_0\,\mathrm dx = 1$. Hence both parity classes carry unit mass and for every $k=0,\dots,K-1$,
\begin{align*}
&\int_\Omega \rho_k\,\mathrm d  x = 1. \qedhere
\end{align*}
\end{proof}

\begin{lemma}[Linearity and well-definedness of $K_h$]\label{BoundenessofKh}
The operator $K_h \colon M_h \to \Sigma_h^*$ defined in \eqref{eq:DefKh} is linear and well defined.
\end{lemma}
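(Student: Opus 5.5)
The plan is to verify, in this order, that the defining expression is linear in $(\rho,m)$, that it is a bounded functional on the finite-dimensional Hilbert space $\Sigma_h$, and that it is compatible with the affine endpoint constraint. All operators involved, $\partial_t$, $\nabla_x\cdot$ and $\Pi_0$, act on the finite-dimensional space $\mathbb P_0(\tria_{t,x})$.

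For \emph{linearity}, fix $E\in W_h$. The map $\rho\mapsto\partial_t\rho$ is the centered difference with the ghost-cell conventions. It is linear because the reflections $\rho_{-1}\coloneqq\rho_1$ and $\rho_K\coloneqq\rho_{K-2}$ are linear assignments. The map $m\mapsto\nabla_x\cdot m$ is linear, since by Definition~\ref{def:Avediv} it is the negative adjoint of the linear operator $\nabla_x$. The map $\rho\mapsto\rho\,\Pi_0 g_E$ is linear because, on each slab and cell, $(\Pi_0 g_E)|_{[t_k,t_{k+1}]\times K_\alpha}=-E_kx_\alpha$ is a fixed vector, independent of $\rho$. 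Hence the residual $R_{h,E}(\rho,m)$ in \eqref{eq:DefResidualKh} is linear in $(\rho,m)$. The pairing $\phi\mapsto\int_Q R_{h,E}(\rho,m)\phi\,\mathrm dz$ is linear in $\phi$, so $K_h$ is linear from $M_h$ into the algebraic dual of $\Sigma_h$. I also note that $R_{h,E}$ is affine in $E$, which is what Assumption~\ref{ass:abstract-standing} requires.

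For \emph{well-definedness}, i.e.\ that the value lies in $\Sigma_h^\ast$, I use Cauchy--Schwarz:
\[
|\langle K_h(\rho,m;E),\phi\rangle|\le \|R_{h,E}(\rho,m)\|_{L^2(Q)}\|\phi\|_{L^2(Q)}.
\]
It remains to control $\|\phi\|_{L^2(Q)}$ by $\|\phi\|_{\Sigma_h}$. This is the only point needing care, because $\|\cdot\|_{H^1(Q)}$ is only a seminorm on $\mathbb P_0(\tria_{t,x})$. However, $\Sigma_h$ is by definition the orthogonal complement of $\ker(\partial_t,\nabla_x)$, the parity-constant modes. On $\Sigma_h$, therefore, $\|\cdot\|_{\Sigma_h}$ vanishes only at zero and is a genuine norm. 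In finite dimensions all norms are equivalent, which gives a discrete Poincaré constant $C_P$ with $\|\phi\|_{L^2(Q)}\le C_P\tau_\phi^{1/2}\|\phi\|_{\Sigma_h}$. This yields
\[
\|K_h(\rho,m;E)\|_{\Sigma_h^\ast}\le C_P\tau_\phi^{1/2}\|R_{h,E}(\rho,m)\|_{L^2(Q)}\le C(E)\|(\rho,m)\|.
\]
The constant $C(E)$ comes from the operator norms of $\partial_t$, $\nabla_x\cdot$, and multiplication by $\Pi_0g_E$. The last of these is bounded by $\max_k|E_k|\max_\alpha|x_\alpha|$, which is finite because $\Omega$ is bounded.

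Finally, I will remark that on the affine set $M_h(\rho_0,\rho_{K-1})$ the same formula is affine rather than linear. Linearity is meant on the underlying vector space $M_h$, which is the space entering the abstract framework.

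The \emph{main obstacle} is the identification of $\ker(\partial_t,\nabla_x)$ and the resulting positive definiteness of $\|\cdot\|_{\Sigma_h}$. I intend to settle it by the orthogonal-complement definition of $\Sigma_h$ together with norm equivalence, rather than by computing an explicit Poincaré constant.
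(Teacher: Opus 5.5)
Your proposal is correct and follows the same line as the paper: linearity holds by construction, and well-definedness rests on the boundedness of the drift $\Pi_0 g_E$ on the bounded domain $\Omega$. Your version is more complete than the paper's one-line proof in two respects. First, it makes explicit a step the paper leaves implicit: the pairing is continuous in the $\Sigma_h$-norm because the $H^1$-seminorm is definite on $\Sigma_h=\ker(\partial_t,\nabla_x)^\perp$. Second, it uses the $L^\infty$ bound $\max_k|E_k|\max_\alpha|x_\alpha|$ on the drift, which is what the product $\rho\,\Pi_0 g_E$ actually requires, whereas the paper's proof quotes only the $L^2$ bound $\|g_E\|_{L^2(Q)}\le C_\Omega\|E\|_{L^2(0,1)}$.
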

\begin{proof}
Since linearity follows by definition, it remains to verify the well-definedness. On our bounded domain \(\Omega\), one has the estimate
\( \|  g_E\|_{L^2(Q)}\le C_\Omega\,\|E\|_{L^2(0,1)} \) with the constant $C_\Omega =|\Omega|^{1/2} \max\lbrace |x| \colon x\in \Omega\rbrace $. 
Therefore, for every fixed \(E\) the map \(\rho\mapsto \nabla_x\!\cdot(\rho \Pi_0 g_E) \in \Sigma_h^*\) for all $\rho \in \mathbb{P}_0(\tria_{t,x})$ is a well-defined linear operator. Consequently, the operator $K_h$ defined in \eqref{eq:DefKh} is linear and well-defined for every fixed $E$.    
\end{proof}
Set the weight \(\mathbf T^{-1}\coloneqq \mathrm{diag}(\tau_\rho^{-1}I,\tau_m^{-1}I) \colon  \mathbb{P}_0(\tria_{t,x}) \times \mathbb{P}_0(\tria_{t,x};\mathbb{R}^d) \to  \mathbb{P}_0(\tria_{t,x}) \times \mathbb{P}_0(\tria_{t,x};\mathbb{R}^d)\); that is, $\mathbf T^{-1}(\rho,m) = (\tau_\rho^{-1} \rho, \tau_m^{-1} m)$.
Recall the discrete scaled $H^1(Q)$ inner product $\langle \cdot ,  \cdot \rangle_{\Sigma_h }= \tau^{-1}_\phi \langle \cdot,\cdot\rangle_{H^1(Q)}$ introduced in \eqref{eq:normofzh}, whose Riesz mapping is given by the operator $\mathbf S$ in \eqref{eq:ScalledRiesz} i.e. $\langle \cdot,\cdot \rangle_{\Sigma_h}\coloneqq \langle \cdot , \mathbf{S} \cdot \rangle_{L^2(Q)}$. The associated norm reads $\lVert \phi \rVert_{\Sigma_h}^2 =\tau^{-1}_\phi ( \|\partial_t \phi\|_{L^2(Q)}^2+ \|\nabla_x \phi \|_{L^2(Q)}^2)$.
Similarly, we set the weighted inner product $\langle \cdot ,  \cdot \rangle_{T^{-1}} = \langle \cdot ,\mathbf T^{-1} \cdot \rangle_{L^2(Q)}$ and its induced norm on $M_h $ by $\lVert (\rho,m) \rVert_{M_h}^2 = \frac1{\tau_\rho}\|\rho\|_{L^2(Q)}^2+\frac1{\tau_m}\|m\|_{L^2(Q)}^2$.
Let $\|\cdot\|_{\mathcal L(M_h,\Sigma_h)}$ denote the induced operator norm with weighted Hilbert spaces $(M_h,\langle\cdot,\cdot\rangle_{\mathbf T^{-1}})$ and
$(\Sigma_h,\langle\cdot,\cdot\rangle_{\mathbf S})$, i.e.,
\begin{equation}\label{normfromTandS}
    \|A\|_{\mathcal L(M_h,\Sigma_h)} \coloneqq \sup_{z \in M_h \setminus \lbrace  0 \rbrace}\frac{\|Az\|_{\Sigma_h}}{\|z\|_{M_h}}.
    \end{equation}
Throughout the sequel, we identify \(\Sigma_h^*\) with \(\Sigma_h\)
via the Riesz isomorphism induced by
\(\langle\cdot,\cdot\rangle_{\Sigma_h}\),
so that \(K_h\colon M_h\to \Sigma_h^*\) is identified with an operator \(K_h\colon M_h\to \Sigma_h\).
Let $\partial_{t,h}^{*}$ denote the $L^2(Q)$-adjoint of the centered temporal-difference operator. Since $\|\cdot\|_{H^1(Q)}$ is a norm on $\Sigma_h$, we set \[ C_{t,h} \coloneqq \sup_{\phi\in\Sigma_h\setminus\{0\}} \frac{\|\partial_{t,h}^{*}\phi\|_{L^2(Q)}} {\|\phi\|_{H^1(Q)}}<\infty.\]
The constant $C_{t,h}$ may depend on the fixed discretization.
\begin{lemma}[Quantitative operator norm estimate]
\label{lem:metric-operator-norm}
With the above conventions, the identification of $\Sigma_h^*$ with $\Sigma_h$ and the induced operator norm of
\(K_h \colon  M_h \to \Sigma_h^*\),
there exists a constant $C_E= \|E\|_{L^\infty(0,1)}\max_{x\in \Omega}|x|>0$, depending only on $\Omega$ and $E$, but independent of the discretization, such that 
\begin{equation}
\label{eq:uniform-precond-bound}
\|K_h\|_{\mathcal L(M_h,\Sigma_h^*)}
\leq
\sqrt{\tau_\phi}\,\sqrt{(C_{t,h} +C_E)^2\,\tau_\rho+\tau_m}.
\end{equation}
Hence, the operator norm is bounded for every fixed discretization and uniformly for $E$ in bounded subsets of $W_h$.
\end{lemma}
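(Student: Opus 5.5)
We bound the operator norm by duality: for $(\rho,m)\in M_h$ and $\phi\in\Sigma_h$ we estimate $\langle K_h(\rho,m;E),\phi\rangle$ and then take the supremum over $\|\phi\|_{\Sigma_h}\le 1$ and $\|(\rho,m)\|_{M_h}\le 1$. Under the Riesz identification $\Sigma_h^*\cong\Sigma_h$ via $\mathbf S$, this gives
\[
\|K_h\|_{\mathcal L(M_h,\Sigma_h^*)}=\sup\bigl\{\langle K_h(\rho,m;E),\phi\rangle \colon \|(\rho,m)\|_{M_h}\le 1,\ \|\phi\|_{\Sigma_h}\le 1\bigr\}.
\]
We first move all discrete derivatives onto $\phi$. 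By the definition of $\partial_{t,h}^*$ and Definition~\ref{def:Avediv},
\[
\langle K_h(\rho,m;E),\phi\rangle=\langle \rho,\partial_{t,h}^*\phi\rangle_{L^2(Q)}-\langle m,\nabla_x\phi\rangle_{L^2(Q)}+\langle \rho\,\Pi_0 g_E,\nabla_x\phi\rangle_{L^2(Q)}.
\]

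Next we bound each term by Cauchy--Schwarz. For the time term, the definition of $C_{t,h}$ gives $|\langle\rho,\partial_{t,h}^*\phi\rangle|\le C_{t,h}\|\rho\|_{L^2}\|\phi\|_{H^1}$. For the momentum term, $|\langle m,\nabla_x\phi\rangle|\le \|m\|_{L^2}\|\nabla_x\phi\|_{L^2}\le\|m\|_{L^2}\|\phi\|_{H^1}$. For the drift term we use the cellwise representation $(\Pi_0 g_E)|_{[t_k,t_{k+1}]\times K_\alpha}=-E_kx_\alpha$. Since the centers $x_\alpha$ lie in $\Omega$, we have $|\Pi_0g_E|\le \|E\|_{L^\infty(0,1)}\max_{x\in\Omega}|x|=C_E$ pointwise. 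Hence $|\langle\rho\,\Pi_0g_E,\nabla_x\phi\rangle|\le C_E\|\rho\|_{L^2}\|\phi\|_{H^1}$. Summing the three bounds yields
\[
|\langle K_h(\rho,m;E),\phi\rangle|\le\bigl((C_{t,h}+C_E)\|\rho\|_{L^2}+\|m\|_{L^2}\bigr)\|\phi\|_{H^1(Q)}.
\]

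We now pass to the weighted norms. Write $\|\phi\|_{H^1}=\sqrt{\tau_\phi}\,\|\phi\|_{\Sigma_h}$, $\|\rho\|_{L^2}=\sqrt{\tau_\rho}\,(\tau_\rho^{-1/2}\|\rho\|_{L^2})$ and $\|m\|_{L^2}=\sqrt{\tau_m}\,(\tau_m^{-1/2}\|m\|_{L^2})$. A discrete Cauchy--Schwarz inequality in $\mathbb R^2$ then gives
\[
(C_{t,h}+C_E)\|\rho\|_{L^2}+\|m\|_{L^2}\le\sqrt{(C_{t,h}+C_E)^2\tau_\rho+\tau_m}\;\|(\rho,m)\|_{M_h}.
\]
Combining this with the previous bound and taking the supremum gives \eqref{eq:uniform-precond-bound}. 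Since $C_E$ is linear in $\|E\|_{L^\infty}$, which is a norm on the finite-dimensional space $W_h$, the bound is uniform for $E$ in bounded subsets of $W_h$.

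The main obstacle is to handle the adjoint pairings carefully on $\Sigma_h$. The ghost-cell convention makes $\partial_t$ non-skew, which is why $\partial_{t,h}^*$ and the fixed-grid constant $C_{t,h}$ appear rather than $\|\partial_t\phi\|$ directly. The pairing in $K_h$ is the $L^2$ pairing, while the norm is the $\mathbf S$-weighted one, so the Riesz identification must be tracked consistently. One must also check that the cell-centred $\nabla_x$ used in the divergence adjoint satisfies $\|\nabla_x\phi\|_{L^2}\le\|\phi\|_{H^1(Q)}$; this holds because $\|\phi\|_{H^1(Q)}$ was defined with the same operator.
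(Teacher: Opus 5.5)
Your proof is correct and follows the paper's argument essentially step for step. You move the discrete derivatives onto $\phi$ via $\partial_{t,h}^*$ and the adjoint definition of the divergence, bound each pairing by Cauchy--Schwarz against $\|\phi\|_{H^1(Q)}=\sqrt{\tau_\phi}\,\|\phi\|_{\Sigma_h}$ with the drift controlled by $C_E$, and conclude with the weighted Cauchy--Schwarz inequality in $\mathbb{R}^2$. Your cellwise bound $|\Pi_0 g_E|\le C_E$ through the cell centres is slightly more precise than the paper's bound on $\|\rho g_E\|$, since the operator involves $\Pi_0 g_E$ rather than $g_E$.
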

\begin{proof}
Using the definition \eqref{eq:DefKh}, the $L^2(Q)$-adjoint relation in time and the discrete integration-by-parts identity in space, we obtain \[ \langle K_h(\rho,m),\phi\rangle = \langle \rho,\partial_{t,h} ^{*}\phi\rangle_{L^2(Q)} - \langle m-\rho\Pi_0g_E,\nabla_x\phi\rangle_{L^2(Q)}. \] Consequently, by the definition of $C_{t,h}$ and the Cauchy--Schwarz inequality, \[ \big|\langle K_h(\rho,m),\phi\rangle\big| \le \left( C_{t,h}\|\rho\|_{L^2(Q)} + \|m-\rho\Pi_0g_E\|_{L^2(Q)} \right) \|\phi\|_{H^1(Q)}. \]
Dividing by $\|\phi\|_{H^1(Q)}$ and taking the supremum gives
\[
\|K_h(\rho,m)\|_{\Sigma_h^*}
\le
\sqrt{\tau_\phi}\,
\big( C_{t,h}\|\rho\|_{L^2(Q)}+\|m \|_{L^2(Q)}  + \|\rho \Pi_0 g_E\|_{L^2(Q)}\big).
\]
Since $g_E(x)=-E_k x$ on each space-time cell and $E$ is fixed here and piecewise constant in time with $\Omega$ is bounded, i.e. $g_E \in {L^\infty(Q)}$; hence there exists $C_E>0$, depending only on $\Omega$ and $\|E\|_{L^\infty(0,1)}$, such that
\[
\|\rho g_E\|_{L^2(Q)}\le C_E\,\|\rho\|_{L^2(Q)}.
\]
Hence, we obtain the bound
\[
\|K_h(\rho,m)\|_{\Sigma_h^*}
\le \sqrt{\tau_\phi} \, \begin{pmatrix}
(C_{t,h}+C_E) \tau_\rho^{1/2} \\
\tau_m^{1/2}
\end{pmatrix} \cdot 
\begin{pmatrix}
\tau_\rho^{-1/2} \lVert \rho \rVert_{L^2(Q)}\\
\tau_m^{-1/2} \lVert m \rVert_{L^2(Q)}
\end{pmatrix}.
\]
The Cauchy--Schwarz inequality thus yields
\begin{align*}
\|K_h(\rho,m)\|_{\Sigma_h^*}
& \le
\sqrt{\tau_\phi}\,
\sqrt{(C_{t,h}+C_E)^2\,\tau_\rho+\tau_m}\,
\|(\rho,m)\|_{M_h}. \qedhere
\end{align*}
\end{proof}

\begin{corollary}[Uniform stability]
\label{cor:uniform-stability}
If one chooses step sizes such that for some fixed $\varepsilon \in (0,1)$
\[
\tau_\phi\big((C_{t,h}+C_E)^2\,\tau_\rho+\tau_m\big) \leq 1 - \varepsilon  < 1,
\]
the operator norm of $K_h$ is bounded away from one in the sense that we have the bound
\[ \|K_h\|_{\mathcal L(M_h,\Sigma_h^*)} \leq 1 - \delta < 1\qquad\text{with}\qquad \delta \coloneqq 1 - \sqrt{1-\varepsilon}. \]
\end{corollary}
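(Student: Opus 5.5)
The corollary follows directly from Lemma~\ref{lem:metric-operator-norm}. That lemma gives, for every fixed discretization and every $E\in W_h$,
\[
\|K_h\|_{\mathcal L(M_h,\Sigma_h^*)}\le \sqrt{\tau_\phi\big((C_{t,h}+C_E)^2\tau_\rho+\tau_m\big)}.
\]
The proof therefore reduces to inserting the assumed step-size condition under the square root and then relating $\sqrt{1-\varepsilon}$ to $1-\delta$.

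\textbf{Step 1 (insert the hypothesis).} The quantity under the square root is nonnegative and, by hypothesis, bounded by $1-\varepsilon$. Since $s\mapsto\sqrt s$ is monotone on $[0,\infty)$, we get
\[
\|K_h\|_{\mathcal L(M_h,\Sigma_h^*)}\le\sqrt{1-\varepsilon}.
\]

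\textbf{Step 2 (identify $\delta$).} By the definition $\delta=1-\sqrt{1-\varepsilon}$, we have $\sqrt{1-\varepsilon}=1-\delta$ identically. For $\varepsilon\in(0,1)$ one has $0<1-\varepsilon<1$, hence $0<\sqrt{1-\varepsilon}<1$ and $\delta\in(0,1)$. This gives $\|K_h\|\le 1-\delta<1$.

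\textbf{Step 3 (uniformity in $E$).} The estimate depends on $E$ only through $C_E=\|E\|_{L^\infty(0,1)}\max_{x\in\Omega}|x|$, and it depends on the grid only through $C_{t,h}$. Fix a bound on $\|E\|_{L^\infty}$, for instance on $U_{w^*}$, and take $C_E$ to be the corresponding supremum. The chosen step sizes then give the bound uniformly in $E$, which is the sense of ``uniform stability'' used later in Proposition~\ref{prop:frozen-inner-loop}.

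There is no genuine obstacle in this argument. The only point requiring care is to read ``uniform'' correctly: the step-size condition must be stated with $C_E$ replaced by its supremum over the bounded set of outer variables under consideration, rather than with the value at a single $E$.
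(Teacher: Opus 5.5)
Your proof is correct and is the paper's argument. The paper gives no separate proof: inserting the step-size condition into the bound $\sqrt{\tau_\phi\big((C_{t,h}+C_E)^2\tau_\rho+\tau_m\big)}$ from Lemma~\ref{lem:metric-operator-norm} and noting $\sqrt{1-\varepsilon}=1-\delta$ gives the claim, and your remark on uniformity in $E$ simply restates that lemma's final sentence.
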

\begin{lemma}[Existence of a saddle point for the frozen-\(E\) problem]
\label{lem:frozen-E-existence}
For every fixed \(E\in W_h\), the discrete frozen-\(E\) problem admits at
least one saddle point
\[
    z^\star(E)
    =
    \bigl(\rho^\star(E),m^\star(E),\phi^\star(E)\bigr)
    \in M_h\times\Sigma_h .
\]
\end{lemma}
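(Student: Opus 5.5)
The plan is to reduce existence of a saddle point of the frozen-$E$ problem to finite-dimensional convex duality. For fixed $E\in W_h$ the problem is
\[
\min_{(\rho,m)\in M_h}\ \max_{\phi\in\Sigma_h}\ F_h(\rho,m)+\langle K_h(\rho,m;E),\phi\rangle .
\]
Since $G_h^\ast\equiv0$, the inner maximum equals $F_h(\rho,m)$ if $K_h(\rho,m;E)=0$ in $\Sigma_h^\ast$, and $+\infty$ otherwise. The primal problem is therefore the constrained convex program
\[
\min\big\{F_h(\rho,m)\colon (\rho,m)\in M_h(\rho_0,\rho_{K-1}),\ K_h(\rho,m;E)=0\big\}.
\]
Here $F_h$ is proper, convex and lower semicontinuous under the Benamou--Brenier convention \eqref{eq:BBconvention}, since it is a perspective function. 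The constraint is affine by Lemma~\ref{BoundenessofKh}. A saddle point then exists if (i) the primal problem has a minimizer and (ii) a Lagrange multiplier exists. I would obtain (ii) from the finite-dimensional KKT/Fenchel--Rockafellar theorem \cite[Theorem 28.1 / Cor.~28.2.2]{rockafellar1997convex}.

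\textbf{Step 1 (feasibility).} I need one $(\rho,m)$ with finite energy satisfying the affine constraints and the endpoint conditions. By Lemma~\ref{lem:constraint-compatibility} ($K$ even), this means solving the cellwise equation $R_{h,E}(\rho,m)_k=0$. I would pick $\rho_k$ strictly positive for the interior indices, for example with the parity masses fixed as in Lemma~\ref{lem:discrete-mass-conservation}, and then solve $\nabla_x\cdot m_k=-\partial_t\rho_k+\nabla_x\cdot(\rho_k\Pi_0g_{E_k})$ for $m_k$. This is solvable because the right-hand side has zero spatial mean: the range of the discrete divergence is the orthogonal complement of $\ker\nabla_x$, which is the spatial constants on each slab (this requires the cell-centred gradient's kernel to be only the constants). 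Finite energy then needs $\rho>0$ wherever $m\neq0$, which the strictly positive interior choice gives. I expect \textbf{this step to be the main obstacle}. The endpoint slabs $k=0$ and $k=K-1$ require $m_0,m_{K-1}$ to be supported where $\rho_0,\rho_{K-1}>0$, which can fail for endpoint densities with zeros. In that case I would first assume positive discrete endpoint densities and then treat the general case by noting that the ghost convention makes $R_0=\nabla_x\cdot J_0$, so $m_0=\rho_0\Pi_0g_{E_0}$ is admissible there, and similarly at $K-1$.

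\textbf{Step 2 (primal minimizer).} The objective $F_h\ge0$ is lower semicontinuous on the closed affine feasible set, so it suffices to show sublevel sets are bounded. The densities are nonnegative wherever $F_h<\infty$ and have bounded mass by Lemma~\ref{lem:discrete-mass-conservation}, so $\rho$ is bounded in finite dimensions. Cauchy--Schwarz then gives $\|m\|_{L^1}\le (2F_h)^{1/2}\|\rho\|_{L^1}^{1/2}$, which bounds $m$. Weierstrass yields a minimizer.

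\textbf{Step 3 (multiplier).} The constraints are affine and $F_h$ is polyhedral-free but convex, so I need a Slater-type condition: a feasible point in the relative interior of $\operatorname{dom}F_h$. I would use the Step~1 point with $\rho>0$ on the interior slabs, handling the endpoint slabs through the affine fixing, so only relative-interior conditions in the free variables are required. Rockafellar's theorem then gives $\phi^\star\in\Sigma_h$ such that $(\rho^\star,m^\star,\phi^\star)$ is a saddle point. Working in $\Sigma_h=\ker(\partial_t,\nabla_x)^\perp$ matches the constraint space $\Sigma_h^\ast$ exactly, so no extra multiplier is needed.
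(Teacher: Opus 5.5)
Your proposal is correct and follows essentially the same route as the paper. Both arguments build a finite-energy competitor with strictly positive interior densities of unit mass and $m_k=\rho_k\Pi_0 g_{E_k}$ (i.e.\ $J_k=0$) on the two endpoint slabs, get bounded sublevel sets from discrete mass conservation ($K$ even), and obtain $\phi^\star\in\Sigma_h$ from a relative-interior constraint qualification for the affinely constrained convex program. The differences are cosmetic: you bound $m$ via the $L^1$ Cauchy--Schwarz estimate where the paper uses $\|m\|_{L^2}^2\le 2\|\rho\|_{L^\infty}F_h$, you derive $\rho\ge0$ from finite energy where the paper assumes it, and you cite Rockafellar's Kuhn--Tucker theorem where the paper cites Ekeland--Temam.
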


\begin{proof}
We first construct an admissible pair with finite energy. Choose a discrete
curve \(\bar\rho\) satisfying the prescribed endpoint conditions $\mu_0, \mu_{K-1} \in \mathcal{P}_2(\Omega) $, and whose free density components are strictly positive, such a curve can be obtained for example by interpolating the endpoint densities. Since $ \int_\Omega \bar\rho_k\,dx=1 \text{ for every time index }k$, the discrete time derivative preserves the spatial mean and we have
\[
    \int_\Omega \partial_t\bar\rho_k\,dx=0
    \qquad\text{for every time index }k.
\]
Thus \(-\partial_t\bar\rho_k\) is orthogonal to the spatially constant discrete functions. By the definition of \(\nabla_x\!\cdot\) as the negative adjoint of \(\nabla_x\) see Definition \ref{def:Avediv}, the finite-dimensional identity
\(\operatorname{Ran}(\nabla_x\!\cdot) = \ker(\nabla_x)^\perp \) holds. Since \(\ker(\nabla_x)\) consists of the spatially constant
functions, there exists, for every \(k\), a discrete vector field \(J_k\) such that \(\nabla_x\!\cdot J_k=-\partial_t\bar\rho_k . \)
At the initial and final time indices, the ghost-point convention gives
\(\partial_t\bar\rho_k=0\), and we take \(J_k=0\). We now set
\(\bar m_k\coloneqq J_k+\bar\rho_k\Pi_0g_{E,k}. \)
Then
\[\partial_t\bar\rho_k + \nabla_x\!\cdot \bigl(\bar m_k-\bar\rho_k\Pi_0g_{E,k}\bigr) =0,
\]
and therefore \((\bar\rho,\bar m)\) is admissible by the compatibility result established above see Lemma \ref{lem:constraint-compatibility}. The strict positivity of the free components of
\(\bar\rho\), together with \(J_k=0\) at the endpoint time levels, implies
that
\[ F_h(\bar\rho,\bar m)<\infty .
\]
Let \((\rho^\ell,m^\ell)_{\ell\in\mathbb N}\) be a minimizing sequence for the frozen-\(E\) primal problem. The admissible competitor shows that the sequence may be chosen so that
\[
    F_h(\rho^\ell,m^\ell)\leq C
\]
for some \(C>0\) independent of \(\ell\). By the discrete mass-conservation (Lemma \ref{lem:discrete-mass-conservation}) and the nonnegativity of \(\rho^\ell\) assumed, the sequence
\((\rho^\ell)\) is bounded on the fixed space--time grid, and hence in \(L^\infty(Q)\cap L^2(Q)\). Moreover, using the Benamou-Brenier convention on the Kinetic energy \eqref{eq:BBconvention},
\[\|m^\ell\|_{L^2(Q)}^2= \int_{\{\rho^\ell>0\}} \rho^\ell\frac{|m^\ell|^2}{\rho^\ell} \leq 2\|\rho^\ell\|_{L^\infty(Q)} F_h(\rho^\ell,m^\ell). \]
Thus \((\rho^\ell,m^\ell)\) is bounded in the finite-dimensional space \(M_h\). After extraction of a subsequence, it converges to some \((\rho^\star,m^\star)\in M_h\). The admissible set is closed, 
hence \((\rho^\star,m^\star)\) is admissible. The convex kinetic functional \(F_h\) is lower semicontinuous, and therefore
\[ F_h(\rho^\star,m^\star) \leq \liminf_{\ell\to\infty}  F_h(\rho^\ell,m^\ell). \]
Consequently, \((\rho^\star,m^\star)\) is a minimizer of the frozen-\(E\) primal problem. Finally, the admissible pair \((\bar\rho,\bar m)\) constructed above belongs to the relative interior of the effective domain with respect to the free
density variables. The standard convex duality theorem for a proper convex functional under affine equality constraints therefore yields a multiplier \(\phi^\star\in\Sigma_h\); see \cite[Chapter~III, Section~2]{ekeland1999convex}. Hence \[ \bigl(\rho^\star,m^\star,\phi^\star\bigr) \] is a saddle point of the discrete frozen-\(E\) Lagrangian in \eqref{eq:DiscreteLgrangian}.
\end{proof}
The preceding results verify the standing structural assumptions of the abstract
saddle framework for the discrete MBB problem. The only local restriction
concerns the differentiability of the kinetic perspective, which is smooth on
the set of strictly positive discrete densities.

\begin{prop}[Verification of Assumption~\ref{ass:abstract-standing}]
\label{prop:verification-assumption-one}
Let the discrete spaces \(M_h\), \(\Sigma_h\), and \(W_h\), the functional
\(F_h\), and the coupling operator \(K_h\) be defined as above. Then the
discrete MBB saddle problem satisfies the structural requirements of
Assumption~\ref{ass:abstract-standing}. More precisely:
\begin{enumerate}
    \item \(M_h\), \(\Sigma_h\), and \(W_h\) are finite-dimensional Hilbert
    spaces. The functional \(F_h\) is proper, convex, and lower
    semicontinuous on \(M_h\), and it is continuously differentiable on the
    subset of \(M_h\) on which all free density components are strictly
    positive. Moreover, \(G_h^\ast\equiv 0\) is convex and continuously
    differentiable.

    \item The map
    \[
        K_h:M_h\times W_h\longrightarrow \Sigma_h^\ast
    \]
    is continuously differentiable. For every fixed \(E\in W_h\), the map
    \[
        K_h(\,\cdot\,;E):M_h\longrightarrow \Sigma_h^\ast
    \]
    is linear and bounded, and for every fixed \((\rho,m)\in M_h\), the map
    \[
        E\longmapsto K_h(\rho,m;E)
    \]
    is affine.

    \item For every \(E\in W_h\), the frozen-\(E\) saddle problem admits at
    least one saddle point.
\end{enumerate}
\end{prop}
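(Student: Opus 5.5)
The plan is to verify the three items separately, relying on the discrete constructions and earlier lemmas. Most of the work is bookkeeping on finite-dimensional spaces; the one genuinely analytic point is differentiability of the kinetic perspective, which only holds on positive densities.

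\emph{Item 1.} The spaces \(M_h\), \(\Sigma_h\) and \(W_h=\mathbb P_0(\tria_t;\mathfrak{so}(d))\) are finite-dimensional. For \(M_h\), the endpoint constraint makes it an affine subspace, so we work with the linear subspace of free components with the weighted inner product \(\langle\cdot,\cdot\rangle_{\mathbf T^{-1}}\). The space \(\Sigma_h\) carries \(\langle\cdot,\cdot\rangle_{\Sigma_h}\), which is positive definite because the kernel of \((\partial_t,\nabla_x)\) has been removed. The space \(W_h\) carries the Frobenius \(L^2\) product.

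The functional \(F_h\) is a finite sum over space-time cells of \(\tfrac12\,\mathrm{vol}(K)\,f(\rho_K,m_K)\), with \(f(\rho,m)=|m|^2/\rho\) interpreted via the convention \eqref{eq:BBconvention}.
\begin{itemize}
\item[(a)] \(f\) is the perspective of \(|m|^2\), equivalently the support function \(\sup\{a\rho+b\cdot m\colon a+|b|^2/4\le 0\}\). Being a supremum of linear functions, it is convex and lower semicontinuous.
\item[(b)] \(f\) is finite at \((1,0)\), so \(F_h\) is proper. A competitor with finite energy already exists by Lemma~\ref{lem:frozen-E-existence}.
\item[(c)] On \(\{\rho>0\}\), \(f\) is a rational function, hence \(C^\infty\). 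Thus \(F_h\) is \(C^1\) wherever all free density components are strictly positive; the fixed endpoint components play no role.
\item[(d)] \(G_h^\ast\equiv0\) is trivially convex and smooth.
\end{itemize}

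\emph{Item 2.} Write \(K_h(\rho,m;E)\) through \eqref{eq:DefKh} as
\[
K_h(\rho,m;E)=\partial_t\rho+\nabla_x\cdot m+\nabla_x\cdot\big(\rho\,\Pi_0 g_E\big),
\]
where, on each cell, \((\Pi_0 g_E)|_{[t_k,t_{k+1}]\times K_\alpha}=-E_kx_\alpha\).
\begin{itemize}
\item[(a)] \emph{Linearity in \((\rho,m)\) for fixed \(E\).} This is exactly Lemma~\ref{BoundenessofKh}. Boundedness is automatic in finite dimensions and is quantified by Lemma~\ref{lem:metric-operator-norm}.
\item[(b)] \emph{Affine dependence on \(E\).} The only \(E\)-dependent term is \(\rho\,\Pi_0 g_E\). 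It is linear in \(E\) because \(\Pi_0\) is linear and \(E\mapsto -E_kx_\alpha\) is linear.
\item[(c)] \emph{Continuous differentiability.} Jointly, \(K_h\) is the sum of a linear map in \((\rho,m)\) and a bilinear map \((\rho,E)\mapsto\nabla_x\cdot(\rho\,\Pi_0g_E)\) between finite-dimensional spaces. Bilinear maps of this kind are polynomial, hence \(C^\infty\). Composing with the fixed Riesz identification \(\Sigma_h^\ast\cong\Sigma_h\) preserves all of these properties.
\end{itemize}

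\emph{Item 3.} This is precisely Lemma~\ref{lem:frozen-E-existence}, which I would simply invoke.

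The main obstacle is item 1. Differentiability genuinely fails at the boundary \(\rho=0\), so the statement must be restricted to positive densities, and convexity and lower semicontinuity must be checked carefully at \((0,0)\) under convention \eqref{eq:BBconvention}. The support-function representation in item 1(a) handles both issues at once, so I would use it rather than attempting a Hessian computation.
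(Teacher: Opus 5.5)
Your route is the paper's route. You treat the same three items, let Lemmas~\ref{BoundenessofKh}, \ref{lem:metric-operator-norm} and~\ref{lem:frozen-E-existence} carry the load, and your added details are correct and make explicit what the paper only asserts:
the support-function representation $|m|^2/\rho=\sup\{a\rho+b\cdot m\colon a+|b|^2/4\le 0\}$ gives convexity and lower semicontinuity including the corner $(0,0)$;
bilinearity of $(\rho,E)\mapsto\nabla_x\cdot(\rho\,\Pi_0 g_E)$ gives joint $C^1$;
and passing to the linear subspace of free components handles the fact that $M_h(\rho_0,\rho_{K-1})$ is only affine.
Your displayed formula for $K_h$ has the opposite sign on the drift term compared with \eqref{eq:DefKh}; this does not matter for anything you use.

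There is, however, one false step in your item~1(c), which the paper's one-line proof passes over in the same way. Your remark that the fixed endpoint components play no role is not correct in general. The densities on the first and last time slabs are fixed, but the momenta there are still free variables. On a spatial cell $K_\alpha$ where the prescribed $\rho_0$ (or $\rho_{K-1}$) vanishes, the corresponding summand of $F_h$ is a multiple of $f(0,m_{0,\alpha})$. By \eqref{eq:BBconvention} this is the indicator of $\{m_{0,\alpha}=0\}$.

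Consequently, on the set where all free densities are strictly positive, $F_h=+\infty$ whenever $m_{0,\alpha}\neq 0$. At points with $m_{0,\alpha}=0$, every neighbourhood contains points where $F_h=+\infty$. So $F_h$ is not differentiable there, regardless of the free densities. This case is not exotic: the experiments in Section~\ref{sec:numerics} use compactly supported endpoint densities, and the proof of Lemma~\ref{lem:frozen-E-existence} deliberately arranges $\bar m=0$ on such cells.

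Your argument for item~1 becomes correct with either of two modifications:
\begin{itemize}
\item assume the discrete endpoint densities are strictly positive on every cell; or
\item remove the endpoint-slab momenta on cells where the endpoint density vanishes. Finite energy forces these momenta to be zero, so neither the competitor nor the minimizer in Lemma~\ref{lem:frozen-E-existence} is affected.
\end{itemize}
With either change, the remaining items go through exactly as you wrote them.
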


\begin{proof}
The finite-dimensional Hilbert structure follows from the definitions of
\(M_h\), \(\Sigma_h\), and \(W_h\). The claimed convexity and lower
semicontinuity properties of \(F_h\) follow from the Benamou--Brenier
convention in~\eqref{eq:BBconvention}, while
continuous differentiability holds on the positive-density region. Since
\(G_h^\ast\equiv 0\), the corresponding properties are immediate. For fixed \(E\), the linearity and well-definedness of
\(K_h(\,\cdot\,;E)\) are given by Lemma~\ref{BoundenessofKh}, and its boundedness follows by
Lemma~\ref{lem:metric-operator-norm}. The dependence on \(E\) is linear by the definition of \(g_E\) and
the representation~\eqref{eq:DefKh}; hence \(K_h\) is
continuously differentiable on \(M_h\times W_h\). Finally, the existence of a saddle point for every frozen \(E\in W_h\) is established in Lemma~\ref{lem:frozen-E-existence}.
\end{proof}
We next introduce the frozen $E$-Optimality Operator and present some structural properties needed to verify the second main assumption of Section \ref{sec:abstractconv}, namely Assumption \ref{ass:regular-reference-saddle}.
\begin{mydef}[Frozen-\(E\) optimality operator]
\label{def:frozen-optimality-operator}
Let $Z_h\coloneqq M_h\times\Sigma_h$ with $z=(\rho,m,\phi)\in Z_h$,
and fix \(E\in W_h\). On the set where the free density components of
\(\rho\) are strictly positive, we define the frozen-\(E\) optimality
operator
\begin{equation}
\label{eq:frozen-optimality-operator-dual}
    A_E:Z_h\longrightarrow Z_h^\ast
\end{equation}
by
\[
    A_E(\rho,m,\phi) \coloneqq 
    \begin{pmatrix}
        \nabla_{(\rho,m)}
        \mathcal L_h(\rho,m,\phi,E)\\[1mm]
        -\nabla_\phi
        \mathcal L_h(\rho,m,\phi,E)
    \end{pmatrix}.
\]
Since \(Z_h\) is a finite-dimensional Hilbert space, the Riesz isomorphism
identifies \(Z_h^\ast\) with \(Z_h\). Under this identification, we regard
\(A_E\) as an operator from \(Z_h\) into \(Z_h\), namely
\begin{equation}
\label{eq:frozen-optimality-operator}
    A_E(\rho,m,\phi)
    =
    \begin{pmatrix}
        \nabla F_h(\rho,m)+K_E^\ast\phi\\[1mm]
        -K_E(\rho,m)
    \end{pmatrix},
\end{equation}
where $K_E\coloneqq K_h(\,\cdot\,;E):M_h\longrightarrow\Sigma_h^\ast$
and \(K_E^\ast:\Sigma_h\to M_h^\ast\) denotes its adjoint. 
\end{mydef}
\begin{lemma}[Characterization of frozen-\(E\) saddle points]
\label{lem:frozen-saddle-characterization}
Fix \(E\in W_h\), and let
\[
    z^\star(E)
    =
    \bigl(\rho^\star(E),m^\star(E),\phi^\star(E)\bigr)
    \in Z_h
\]
be such that the free components of \(\rho^\star(E)\) are strictly positive.
Then \(z^\star(E)\) is a saddle point of the frozen-\(E\) discrete
Lagrangian if and only if
\begin{equation}
\label{eq:frozen-optimality-equation}
    A_E\bigl(z^\star(E)\bigr)=0 .
\end{equation}
\end{lemma}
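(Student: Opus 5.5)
The argument is a direct application of the convex--concave saddle-point characterization in finite dimensions. The starting point is the structure of the frozen-$E$ Lagrangian $\mathcal L_h(\cdot,\cdot,E)$ from \eqref{eq:DiscreteLgrangian}. It equals $F_h(\rho,m)+\langle K_E(\rho,m),\phi\rangle$. For fixed $\phi$ it is convex in $(\rho,m)$, by Proposition~\ref{prop:verification-assumption-one}(1) and the linearity of $K_E$ from Lemma~\ref{BoundenessofKh}. For fixed $(\rho,m)$ it is affine, hence concave, in $\phi$. By definition, $z^\star=(\rho^\star,m^\star,\phi^\star)$ is a saddle point if and only if two conditions hold:
\[
\mathcal L_h(\rho^\star,m^\star,\phi,E)\le \mathcal L_h(\rho^\star,m^\star,\phi^\star,E)\quad\text{for all }\phi\in\Sigma_h,
\]
\[
\mathcal L_h(\rho^\star,m^\star,\phi^\star,E)\le \mathcal L_h(\rho,m,\phi^\star,E)\quad\text{for all }(\rho,m)\in M_h.
\]
I treat the two conditions separately.

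\emph{Dual condition.} The map $\phi\mapsto\langle K_E(\rho^\star,m^\star),\phi\rangle$ is linear on the vector space $\Sigma_h$. A linear functional attains its maximum at $\phi^\star$ if and only if it vanishes identically. This gives $K_E(\rho^\star,m^\star)=0$ in $\Sigma_h^\ast$, which is the second row of \eqref{eq:frozen-optimality-operator}, i.e.\ $-\nabla_\phi\mathcal L_h=0$. Conversely, if $K_E(\rho^\star,m^\star)=0$, then $\mathcal L_h(\rho^\star,m^\star,\cdot,E)$ is constant in $\phi$, so the dual inequality holds trivially.

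\emph{Primal condition.} The free density components of $\rho^\star$ are strictly positive, and the endpoint components are fixed by the affine constraint in $M_h$. Hence $(\rho^\star,m^\star)$ lies in the relative interior of the effective domain of $F_h$ within the affine set $M_h$, with the variations taken in the free components. On a neighbourhood of this point, $F_h$ is $C^1$ by Proposition~\ref{prop:verification-assumption-one}(1), and it is convex. The function $(\rho,m)\mapsto F_h(\rho,m)+\langle K_E(\rho,m),\phi^\star\rangle$ is therefore convex and differentiable at an interior point. For such functions, a point is a global minimizer if and only if the gradient vanishes there, the gradient being taken with respect to the $M_h$ inner product on the tangent space of the affine set. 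One direction is the standard first-order necessary condition at an interior minimizer. The other follows from the gradient inequality for convex functions,
\[
f(u)\ge f(u^\star)+\langle\nabla f(u^\star),u-u^\star\rangle ,
\]
which extends to all of $M_h$, including points where $F_h=+\infty$ under the convention \eqref{eq:BBconvention}, because convexity of $F_h$ holds on the whole space. The vanishing gradient is exactly $\nabla F_h(\rho^\star,m^\star)+K_E^\ast\phi^\star=0$, the first row of $A_E$.

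Combining the two equivalences gives: $z^\star(E)$ is a saddle point if and only if $A_E(z^\star(E))=0$. The only step needing care is the primal one. There I must justify that interior first-order stationarity implies global minimality even though $F_h$ takes the value $+\infty$ on part of $M_h$. This is exactly where convexity on all of $M_h$ and the positivity of the free components are used. I also need to check that the Riesz identification of $Z_h^\ast$ with $Z_h$ in Definition~\ref{def:frozen-optimality-operator} does not affect the vanishing conditions; it does not, since the Riesz map is an isomorphism.
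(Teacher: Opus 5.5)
Your proposal is correct and follows essentially the same route as the paper: affine dependence on $\phi$ forces $K_E(\rho^\star,m^\star)=0$ (and conversely makes the dual inequality trivial), while convexity and differentiability of $\mathcal L_h(\cdot,\cdot,\phi^\star,E)$ on the positive-density region make a vanishing $(\rho,m)$-gradient equivalent to global minimality. Your treatment of the gradient on the tangent space of the affine set $M_h$, and of the gradient inequality for the extended-valued $F_h$, just spells out what the paper uses implicitly. Both arguments rely equally on the asserted $C^1$-regularity of $F_h$ at such points, which tacitly also needs the fixed endpoint densities $\rho_0,\rho_{K-1}$ to be positive wherever $m$ may vary.
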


\begin{proof}
Suppose first that \(z^\star(E)\) is a saddle point. Then we have
\[
    (\rho^\star(E),m^\star(E))
    \in
    \operatorname*{argmin}_{(\rho,m)\in M_h}
    \mathcal L_h
    \bigl(\rho,m,\phi^\star(E),E\bigr).
\]
Since the frozen Lagrangian is convex and continuously differentiable with
respect to \((\rho,m)\) on the positive-density region, the first-order
optimality condition gives
\[
    \nabla_{(\rho,m)}
    \mathcal L_h
    \bigl(\rho^\star(E),m^\star(E),\phi^\star(E),E\bigr)
    =0.
\]
Similarly, $\phi^\star(E)
    \in
    \operatorname*{argmax}_{\phi\in\Sigma_h}
    \mathcal L_h
    \bigl(\rho^\star(E),m^\star(E),\phi,E\bigr)$.
The Lagrangian is affine in \(\phi\), and hence $\nabla_\phi
    \mathcal L_h
    \bigl(\rho^\star(E),m^\star(E),\phi^\star(E),E\bigr)
    =0$.
By Definition~\ref{def:frozen-optimality-operator}, these two identities are equivalent to~\eqref{eq:frozen-optimality-equation}. Conversely, assume that
\(A_E\bigl(z^\star(E)\bigr)=0\). Then
\[
    \nabla_{(\rho,m)}
    \mathcal L_h
    \bigl(\rho^\star(E),m^\star(E),\phi^\star(E),E\bigr)
    =0.
\]
By convexity of the frozen Lagrangian with respect to \((\rho,m)\), it follows
that, for every \((\rho,m)\in M_h\),
\[
    \mathcal L_h
    \bigl(\rho^\star(E),m^\star(E),\phi^\star(E),E\bigr)
    \leq
    \mathcal L_h
    \bigl(\rho,m,\phi^\star(E),E\bigr).
\]
Moreover, $\nabla_\phi \mathcal L_h
\bigl(\rho^\star(E),m^\star(E),\phi^\star(E),E\bigr)
=0$. Since the dependence on \(\phi\) is affine, this implies that, for every
\(\phi\in\Sigma_h\), $\mathcal L_h \bigl(\rho^\star(E),m^\star(E),\phi,E\bigr) = \mathcal L_h \bigl(\rho^\star(E),m^\star(E),\phi^\star(E),E\bigr)$.
Therefore,
\[
    \mathcal L_h
    \bigl(\rho^\star(E),m^\star(E),\phi,E\bigr)
    \leq
    \mathcal L_h
    \bigl(\rho^\star(E),m^\star(E),\phi^\star(E),E\bigr)
    \leq
    \mathcal L_h
    \bigl(\rho,m,\phi^\star(E),E\bigr),
\]
which is precisely the saddle-point property.
\end{proof}
For a fixed \(E\in W_h\), the saddle point characterized in
Lemma~\ref{lem:frozen-saddle-characterization} is approximated by the
primal--dual scheme of Chambolle and Pock. Since Section~\ref{sec:abstractconv} only specifies the
use of this method at the abstract level, we now record its specialization to
the discrete frozen-\(E\) problem. This specialization will be used below to define the computable fixed-point residual associated with the inner iteration. Given \((\rho^j,m^j,\phi^j)\in M_h\times\Sigma_h\), set $\bar\phi^j\coloneqq 2\phi^j-\phi^{j-1}$.
The next iterate is defined by
\begin{equation}
\label{eq:frozen-E-CP-iteration}
\left\{
\begin{aligned}
(\rho^{j+1},m^{j+1})
&=
\operatorname*{argmin}_{(\rho,m)\in M_h}
\left\{
\mathcal L_h(\rho,m,\bar\phi^j,E)
+
\frac12
\left\|(\rho,m)-(\rho^j,m^j)\right\|_{M_h}^{2}
\right\},
\\[1mm]
\phi^{j+1}
&=
\operatorname*{argmax}_{\phi\in\Sigma_h}
\left\{
\mathcal L_h(\rho^{j+1},m^{j+1},\phi,E) -
\frac12
\left\|\phi-\phi^j\right\|_{\Sigma_h}^{2}
\right\}.
\end{aligned}
\right.
\end{equation}
We use the fixed-point residual introduced in Section~\ref{sec:abstractconv} and defined in
\eqref{KKTresidual}, specialized to the iterates generated by
\eqref{eq:frozen-E-CP-iteration}.

\begin{lemma}[A posteriori estimator]
\label{lem:kkt-by-res}
Let \(E\in W_h\) be fixed. The inner PDHG iterates
\[
    z^j=(\rho^j,m^j,\phi^j)\in M_h\times\Sigma_h
\]
generated by~\eqref{eq:frozen-E-CP-iteration} satisfy
\begin{equation}
\label{eq:kkt-by-res-final}
    \|A_E(z^{j+1})\|_{(M_h\times\Sigma_h)^*}
    \leq
    2\sqrt{2}\bigl(\mathrm{Res}_{j+1}+\mathrm{Res}_{j}\bigr),
    \qquad j\in\mathbb N_*,
\end{equation}
where \(\mathrm{Res}_j\) denotes the fixed-point residual introduced
in~\eqref{KKTresidual}.
\end{lemma}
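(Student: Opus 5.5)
We will derive the estimate directly from the optimality conditions of the two proximal subproblems in \eqref{eq:frozen-E-CP-iteration}, working in the weighted metrics $\langle\cdot,\cdot\rangle_{\mathbf T^{-1}}$ on $M_h$ and $\langle\cdot,\cdot\rangle_{\Sigma_h}$ on $\Sigma_h$. The dual norm $\|\cdot\|_{(M_h\times\Sigma_h)^*}$ is taken with respect to these metrics, so Riesz representatives are $\mathbf T^{-1}$ and $\mathbf S$ applied to differences. Write $x=(\rho,m)$ and $y=\phi$.

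The primal step (on the positive-density region, where $F_h$ is differentiable, as in Proposition~\ref{prop:verification-assumption-one}) gives
\[
\nabla F_h(x^{j+1})+K_E^\ast\bar\phi^j+\mathbf T^{-1}(x^{j+1}-x^j)=0 .
\]
Since $G_h^\ast\equiv0$ and $\mathcal L_h$ is affine in $\phi$, the dual step gives
\[
-K_E x^{j+1}+\mathbf S(\phi^{j+1}-\phi^j)=0 .
\]
Substituting into \eqref{eq:frozen-optimality-operator} yields the two components of the optimality operator:
\begin{align*}
A_E(z^{j+1})_x&=K_E^\ast(\phi^{j+1}-\bar\phi^j)-\mathbf T^{-1}(x^{j+1}-x^j)
=K_E^\ast\bigl((\phi^{j+1}-\phi^j)-(\phi^j-\phi^{j-1})\bigr)-\mathbf T^{-1}(x^{j+1}-x^j),\\
A_E(z^{j+1})_y&=-\mathbf S(\phi^{j+1}-\phi^j).
\end{align*}

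Each term is then bounded in the dual norm. For the Riesz terms we have $\|\mathbf T^{-1}(x^{j+1}-x^j)\|_{M_h^*}=\|x^{j+1}-x^j\|_{M_h}$ and $\|\mathbf S(\phi^{j+1}-\phi^j)\|_{\Sigma_h^*}=\|\phi^{j+1}-\phi^j\|_{\Sigma_h}$. For the coupling term, $\|K_E^\ast\|=\|K_E\|\le 1$ under the step-size condition of Corollary~\ref{cor:uniform-stability}, together with Lemma~\ref{lem:metric-operator-norm}. Hence
\[
\|K_E^\ast(\cdots)\|\le\|\phi^{j+1}-\phi^j\|_{\Sigma_h}+\|\phi^j-\phi^{j-1}\|_{\Sigma_h}\le \mathrm{Res}_{j+1}+\mathrm{Res}_j .
\]
Collecting terms, the $x$-component is at most $2\,\mathrm{Res}_{j+1}+\mathrm{Res}_j$ and the $y$-component is at most $\mathrm{Res}_{j+1}$. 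It follows that
\[
\|A_E(z^{j+1})\|^2\le(2\mathrm{Res}_{j+1}+\mathrm{Res}_j)^2+\mathrm{Res}_{j+1}^2\le 8(\mathrm{Res}_{j+1}+\mathrm{Res}_j)^2 ,
\]
which is the constant $2\sqrt2$.

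The main obstacle is justifying the identifications. First, the norm in \eqref{KKTresidual} must be read as the weighted $\|\cdot\|_{M_h}$ and $\|\cdot\|_{\Sigma_h}$ norms, consistent with the metrics used in the proximal steps; otherwise the constants pick up factors of $\tau$. Second, $\|K_E\|\le1$ requires the step-size condition, so we will invoke it explicitly; if it is not available we would obtain a constant depending on $\|K_E\|$. Third, the primal iterate must lie in the positive-density region for the gradient optimality condition to hold. Outside that region the subdifferential version still gives an element of $\partial F_h+K_E^\ast\phi^{j+1}$ with the same bound, and we would interpret $A_E$ through this minimal-norm element. Finally, since $\phi$ is gauge-restricted to $\Sigma_h$, the dual optimality condition holds in $\Sigma_h^*$, which is exactly the space where $A_E(\cdot)_y$ lives, so no additional gauge term appears.
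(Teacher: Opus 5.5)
Your proposal is correct and follows the paper's proof essentially step for step. You use the same two proximal optimality conditions, rewrite the components of \(A_E(z^{j+1})\) in the same way as \(K_E^\ast(\delta\phi^j-\delta\phi^{j-1})-\mathbf T^{-1}(\delta\rho^j,\delta m^j)\) and \(-\mathbf S\,\delta\phi^j\), and make the same appeal to Corollary~\ref{cor:uniform-stability} for \(\|K_E^\ast\|\le 1\); as you note, this last point is an implicit hypothesis of the lemma. The only difference is cosmetic: you keep the Euclidean product norm, which in fact gives the sharper constant \(\sqrt5\), whereas the paper bounds the product norm by the sum of component norms and then uses \(a+b\le\sqrt2\,(a^2+b^2)^{1/2}\) to reach \(2\sqrt2\).
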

As a consequence, since Proposition \ref{prop:verification-assumption-one}, Lemma~\ref{lem:frozen-E-existence} and Corollary~\ref{cor:uniform-stability} ensure that the assumptions of the inner convergence result of Section~\ref{sec:abstractconv} are satisfied, thus, \(z^j\) converges to a frozen-\(E\) saddle point and \(\mathrm{Res}_j\to0\). 
Consequently, for every \(\varepsilon>0\), the stopping index \[ j_\varepsilon \coloneqq  \min\bigl\{ j\in\mathbb N_*: \mathrm{Res}_j+\mathrm{Res}_{j-1}\leq\varepsilon \bigr\} \] is finite, and the accepted iterate satisfies $ \|A_E(z^{j_\varepsilon})\|_{(M_h\times\Sigma_h)^*}    \leq 2\sqrt{2} \varepsilon $.
\begin{proof} Set
$\delta\rho^j\coloneqq \rho^{j+1}-\rho^j,
    \qquad
    \delta m^j\coloneqq m^{j+1}-m^j,
    \qquad
    \delta\phi^j\coloneqq \phi^{j+1}-\phi^j$.
The primal and dual iterates \((\rho^{j+1},m^{j+1})\) and
\(\phi^{j+1}\) solve exactly the proximal subproblems
in~\eqref{eq:frozen-E-CP-iteration}, with $\bar\phi^{j+1}=2\phi^{j+1}-\phi^j$. Thus,
\begin{align*}
(\rho^{j+1},m^{j+1})
&=
\operatorname*{argmin}_{(\rho,m)\in M_h}
\left\{
\mathcal L_h(\rho,m,\bar\phi^j,E)
+
\frac12
\|(\rho,m)-(\rho^j,m^j)\|_{M_h}^2
\right\},
\\
\phi^{j+1}
&=
\operatorname*{argmax}_{\phi\in\Sigma_h}
\left\{
\mathcal L_h(\rho^{j+1},m^{j+1},\phi,E)
-
\frac12
\|\phi-\phi^j\|_{\Sigma_h}^2
\right\}.
\end{align*}

The first-order optimality condition for the primal proximal step gives
\begin{equation}
\label{eq:oc-primal}
    0
    =
    \nabla_{(\rho,m)}F_h(\rho^{j+1},m^{j+1})
    +
    K_h(\cdot;E)^*\bar\phi^j
    +
    \mathbf T^{-1}(\delta\rho^j,\delta m^j).
\end{equation}
Similarly, optimality of the dual proximal step yields
\begin{equation}
\label{eq:oc-dual}
    0
    =
    \nabla_\phi G_h^*(\phi^{j+1})
    -
    K_h(\rho^{j+1},m^{j+1};E)
    +
    \mathbf S\,\delta\phi^j.
\end{equation}

By~\eqref{eq:frozen-optimality-operator},
\[
    A_E(z^{j+1})
    =
    \begin{pmatrix}
    \nabla_{(\rho,m)}F_h(\rho^{j+1},m^{j+1})
    +
    K_h(\cdot;E)^*\phi^{j+1}
    \\[1mm]
    \nabla_\phi G_h^*(\phi^{j+1})
    -
    K_h(\rho^{j+1},m^{j+1};E)
    \end{pmatrix}.
\]
Using~\eqref{eq:oc-primal}, its first component becomes
\[
\begin{aligned}
&\nabla_{(\rho,m)}F_h(\rho^{j+1},m^{j+1}) + K_h(\cdot;E)^*\phi^{j+1} =
K_h(\cdot;E)^*
\bigl(\phi^{j+1}-\bar\phi^j\bigr)
-
\mathbf T^{-1}(\delta\rho^j,\delta m^j).
\end{aligned}
\]
Since \(\bar\phi^j=2\phi^j-\phi^{j-1}\), we have $\phi^{j+1}-\bar\phi^j= \delta\phi^j-\delta\phi^{j-1}$.
Hence,
\[
\begin{aligned}
&\nabla_{(\rho,m)}F_h(\rho^{j+1},m^{j+1}) + K_h(\cdot;E)^*\phi^{j+1} = K_h(\cdot;E)^*
\bigl(\delta\phi^j-\delta\phi^{j-1}\bigr)
-
\mathbf T^{-1}(\delta\rho^j,\delta m^j).
\end{aligned}
\]
Using~\eqref{eq:oc-dual}, the second component is $\nabla_\phi G_h^*(\phi^{j+1}) - K_h(\rho^{j+1},m^{j+1};E) = -\mathbf S\,\delta\phi^j$.
Taking the product dual norm and using the triangle inequality, we obtain
\begin{align*}
\|A_E(z^{j+1})\|_{(M_h\times\Sigma_h)^*}
&\leq \left\| \mathbf T^{-1}(\delta\rho^j,\delta m^j)\right\|_{M_h^*} + \left\| K_h(\cdot;E)^* \bigl(\delta\phi^j-\delta\phi^{j-1}\bigr) \right\|_{M_h^*} +
\|\mathbf S\,\delta\phi^j\|_{\Sigma_h^*}.
\end{align*}
By the definitions of the norms on \(M_h\) and \(\Sigma_h\),
\[
    \left\|
    \mathbf T^{-1}(\delta\rho^j,\delta m^j)
    \right\|_{M_h^*}
    =
    \|(\delta\rho^j,\delta m^j)\|_{M_h},
    \qquad
    \|\mathbf S\,\delta\phi^j\|_{\Sigma_h^*}
    =
    \|\delta\phi^j\|_{\Sigma_h}.
\]
Moreover, $\left\|
K_h(\cdot;E)^*
\bigl(\delta\phi^j-\delta\phi^{j-1}\bigr)
\right\|_{M_h^*}
\leq
\|K_h(\cdot;E)\|_{\mathcal L(M_h,\Sigma_h^*)}
\,
\|\delta\phi^j-\delta\phi^{j-1}\|_{\Sigma_h}$.
By Corollary~\ref{cor:uniform-stability}, $\left\| K_h(\cdot;E)^* \bigl(\delta\phi^j-\delta\phi^{j-1}\bigr) \right\|_{M_h^*} \leq \|\delta\phi^j-\delta\phi^{j-1}\|_{\Sigma_h}$.
Combining these estimates gives
\begin{align*}
\|A_E(z^{j+1})\|_{(M_h\times\Sigma_h)^*}
&\leq
\|(\delta\rho^j,\delta m^j)\|_{M_h}
+
2\|\delta\phi^j\|_{\Sigma_h}
+
\|\delta\phi^{j-1}\|_{\Sigma_h}
\\
&\leq
2\Bigl(
\|(\delta\rho^j,\delta m^j)\|_{M_h}
+
\|\delta\phi^j\|_{\Sigma_h}
\\
&\hspace{20mm}
+
\|(\delta\rho^{j-1},\delta m^{j-1})\|_{M_h}
+
\|\delta\phi^{j-1}\|_{\Sigma_h}
\Bigr).
\end{align*}
Using the definition of
\(\mathrm{Res}_j\) in~\eqref{KKTresidual},  we obtain
\[ \|A_{E}(z^{j+1})\|_{(M_h\times\Sigma_h)^*} 
\le 2\sqrt{2}(\mathrm{Res}_{j+1}+ \mathrm{Res}_{j}).
\] \qedhere
\end{proof}
The following corollary records the specialization of the inner convergence result from Section~\ref{sec:abstractconv}, Proposition \ref{prop:frozen-inner-loop} to the discrete frozen-\(E\) problem.


We now verify Assumption~\ref{ass:regular-reference-saddle} at a reference  solution. We first record
two consequences of the fixed finite-dimensional discretization.

\begin{lemma}[Coercivity of the centered time difference]
\label{lem:coercivity_discrete_time}
Assume that the number \(K\) of time slabs is even. Then there exists a
constant \(c_h>0\), depending only on the fixed discretization, such that
\begin{equation}
\label{eq:coercivity_discrete_time}
    c_h\|(\delta\rho,0)\|_{M_h}
    \leq
    \|\partial_t\delta\rho\|_{\Sigma_h^*}
\end{equation}
for every density variation \((\delta\rho,0)\in M_h\) satisfying $\delta\rho_0=\delta\rho_{K-1}=0$.
\end{lemma}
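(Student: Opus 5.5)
The plan is a finite-dimensional injectivity argument, with compactness supplying the constant. Both sides of \eqref{eq:coercivity_discrete_time} are seminorms on the finite-dimensional subspace
\[
D_h\coloneqq\{\delta\rho\in\mathbb P_0(\tria_{t,x})\colon \delta\rho_0=\delta\rho_{K-1}=0\}.
\]
The left-hand side $\|(\delta\rho,0)\|_{M_h}=\tau_\rho^{-1/2}\|\delta\rho\|_{L^2(Q)}$ is a norm there. So it suffices to show that $\delta\rho\mapsto\|\partial_t\delta\rho\|_{\Sigma_h^*}$ is also a norm on $D_h$, i.e.\ that it vanishes only for $\delta\rho=0$. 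Any two norms on a finite-dimensional space are equivalent, which yields some $c_h>0$. Concretely, $c_h$ is the minimum of the continuous function $\delta\rho\mapsto\|\partial_t\delta\rho\|_{\Sigma_h^*}$ on the compact unit sphere of $D_h$. It depends only on the grid and the step sizes $\tau_\rho,\tau_\phi$.

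The key step is injectivity. Suppose $\|\partial_t\delta\rho\|_{\Sigma_h^*}=0$, meaning $\langle\partial_t\delta\rho,\phi\rangle_{L^2(Q)}=0$ for all $\phi\in\Sigma_h=\ker(\partial_t,\nabla_x)^\perp$. Then $\partial_t\delta\rho\in\ker(\partial_t,\nabla_x)$. I will argue as in the proof of Lemma~\ref{lem:constraint-compatibility}: the kernel consists of functions that are spatially constant and constant on each of the even and odd time-index classes. Hence $\partial_t\delta\rho_k=c_{\mathrm{ev}}$ for even $k$ and $c_{\mathrm{odd}}$ for odd $k$. The ghost-cell convention gives $\partial_t\delta\rho_0=0$, and since $K$ is even, $K-1$ is odd with $\partial_t\delta\rho_{K-1}=0$. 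Therefore $c_{\mathrm{ev}}=c_{\mathrm{odd}}=0$, so $\partial_t\delta\rho\equiv0$.

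Next I will show that $\partial_t\delta\rho\equiv0$ forces $\delta\rho=0$ on $D_h$. For $1\le k\le K-2$, the relation $\partial_t\delta\rho_k=0$ gives $\delta\rho_{k+1}=\delta\rho_{k-1}$. So $\delta\rho_k=\delta\rho_0=0$ for all even $k$, and $\delta\rho_k=\delta\rho_{K-1}=0$ for all odd $k$, using again that $K-1$ is odd. The two parity chains are thus pinned down by the two endpoint constraints, and $\delta\rho=0$.

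The main obstacle I expect is the treatment of the endpoint rows. Under the ghost-cell convention, $\partial_t$ at $k=0$ and $k=K-1$ is defined to be zero, so the recursion must be taken only over the interior indices $1\le k\le K-2$. It is precisely the combination of even $K$ with the two Dirichlet-in-time constraints that kills both checkerboard modes. A further minor point is to make sure $\|\cdot\|_{\Sigma_h^*}$ is read through the $\mathbf S$-Riesz identification. That only rescales by a positive factor and does not affect positivity of the constant.
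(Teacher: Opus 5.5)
Your proof is correct and follows the paper's argument. You reduce to injectivity of the centered difference on the endpoint-constrained subspace, deduce $\partial_t\delta\rho\equiv0$ from $\partial_t\delta\rho\in\ker(\partial_t,\nabla_x)$, run the two parity chains using that $K-1$ is odd, and conclude by finite-dimensional norm equivalence; the only difference is that you verify the kernel step directly from the ghost-cell zeros at $k=0$ and $k=K-1$, where the paper cites Lemma~\ref{lem:constraint-compatibility}.
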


\begin{proof}
It is enough to show that the centered discrete time derivative is injective
on the subspace of density variations satisfying the homogeneous endpoint
conditions. Let \(\delta\rho\) belong to this subspace and suppose that $\|\partial_t\delta\rho\|_{\Sigma_h^*}=0$. 
Then $\bigl\langle \partial_t\delta\rho,\phi\bigr\rangle_{L^2(Q)} =0$ for every $\phi \in \Sigma_h$. By the compatibility Lemma \ref{lem:constraint-compatibility}, this implies $\partial_t \delta \rho=0$ cellwise.
By the definition of the centered time difference,
\[
    \delta\rho_{k+1}=\delta\rho_{k-1},
    \qquad k=1,\ldots,K-2.
\]
Hence, all even time components coincide with \(\delta\rho_0\), whereas
all odd time components coincide with \(\delta\rho_1\). Since
\(\delta\rho_0=0\), all even components vanish. Moreover, \(K\) is even, so
\(K-1\) is odd, and therefore
\[
    0=\delta\rho_{K-1}=\delta\rho_1.
\]
Hence all odd components vanish as well, and thus \(\delta\rho=0\). The restricted centered time-difference operator is therefore injective. Since its domain is finite-dimensional, its smallest singular value with
respect to the chosen discrete norms is strictly positive. This gives
\eqref{eq:coercivity_discrete_time}.
\end{proof}

\begin{lemma}[Boundedness of the discrete transport term]
\label{lem:boundedness_discrete_transport}
There exists a constant \(C_h>0\), depending only on the fixed discretization,
such that
\begin{equation}
\label{eq:discrete_transport_bound}
    \left\|
        \nabla_x\!\cdot\!\bigl(b\,\delta\rho\bigr)
    \right\|_{\Sigma_h^*}
    \leq
    C_h\|b\|_{L^\infty(Q)}
    \|(\delta\rho, 0)\|_{M_h}
\end{equation}
for every discrete vector field \(b\) and every density variation
\((\delta\rho,0)\in M_h\).
\end{lemma}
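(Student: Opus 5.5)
The bound follows from finite dimensionality together with one explicit Cauchy--Schwarz estimate. The first step is to express the $\Sigma_h^*$-norm by duality. Identifying $\Sigma_h^*$ with $\Sigma_h$ through $\mathbf S$, we have
\[
\left\|\nabla_x\!\cdot(b\,\delta\rho)\right\|_{\Sigma_h^*}=\sup_{\phi\in\Sigma_h\setminus\{0\}}\frac{\langle \nabla_x\!\cdot(b\,\delta\rho),\phi\rangle_{L^2(Q)}}{\|\phi\|_{\Sigma_h}}.
\]
The discrete integration-by-parts identity of Definition~\ref{def:Avediv} rewrites the numerator as $-\langle b\,\delta\rho,\nabla_x\phi\rangle_{L^2(Q)}$. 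This is exactly the manipulation already used in the proof of Lemma~\ref{lem:metric-operator-norm} for the term $\rho\Pi_0 g_E$.

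The second step is Cauchy--Schwarz followed by the elementary pointwise bound $\|b\,\delta\rho\|_{L^2(Q)}\le \|b\|_{L^\infty(Q)}\|\delta\rho\|_{L^2(Q)}$. This gives
\[
|\langle b\,\delta\rho,\nabla_x\phi\rangle_{L^2(Q)}|\le \|b\|_{L^\infty(Q)}\|\delta\rho\|_{L^2(Q)}\|\nabla_x\phi\|_{L^2(Q)}.
\]
Since $\|\phi\|_{\Sigma_h}^2=\tau_\phi^{-1}\big(\|\partial_t\phi\|^2_{L^2(Q)}+\|\nabla_x\phi\|^2_{L^2(Q)}\big)$, we have $\|\nabla_x\phi\|_{L^2(Q)}\le \sqrt{\tau_\phi}\,\|\phi\|_{\Sigma_h}$. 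Taking the supremum over $\phi$ yields
\[
\left\|\nabla_x\!\cdot(b\,\delta\rho)\right\|_{\Sigma_h^*}\le \sqrt{\tau_\phi}\,\|b\|_{L^\infty(Q)}\|\delta\rho\|_{L^2(Q)}.
\]

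The third step converts to the $M_h$ norm. By definition, $\|(\delta\rho,0)\|_{M_h}=\tau_\rho^{-1/2}\|\delta\rho\|_{L^2(Q)}$. Therefore the estimate \eqref{eq:discrete_transport_bound} holds with $C_h=\sqrt{\tau_\phi\tau_\rho}$, a constant that depends only on the fixed discretization and step sizes.

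The main point needing care is the meaning of the product $b\,\delta\rho$. If $b$ is face-based rather than cell-centered, one must fix how the product is formed (for example by averaging $\delta\rho$ onto faces) and then check that the $L^2$ bound on the product still holds. For averaging this is fine: averaging is an $L^2$-contraction up to a mesh-dependent factor, which can be absorbed into $C_h$. If the paper's convention for the product is less transparent, an alternative is available: the map $(b,\delta\rho)\mapsto\nabla_x\!\cdot(b\,\delta\rho)$ is bilinear on finite-dimensional spaces, so it is bounded with respect to the $L^\infty$ norm on $b$ and the $M_h$ norm on $\delta\rho$. This gives a constant $C_h$ directly, without tracking its value.
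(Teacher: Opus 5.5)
Your proof is correct and follows the paper's argument essentially step for step: discrete integration by parts, Cauchy--Schwarz with $\|b\,\delta\rho\|_{L^2(Q)}\le\|b\|_{L^\infty(Q)}\|\delta\rho\|_{L^2(Q)}$, then the scalings $\|\nabla_x\psi\|_{L^2(Q)}\le\sqrt{\tau_\phi}\,\|\psi\|_{\Sigma_h}$ and $\|\delta\rho\|_{L^2(Q)}=\sqrt{\tau_\rho}\,\|(\delta\rho,0)\|_{M_h}$, which give the same constant $C_h=\sqrt{\tau_\phi\tau_\rho}$. Your concern about face-based $b$ does not arise here, because the divergence in Definition~\ref{def:Avediv} acts on cell-centered fields in $\mathbb P_0(\tria_{t,x};\mathbb R^d)$, so $b\,\delta\rho$ is simply the cellwise product.
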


\begin{proof}
Using the discrete integration-by-parts formula, for every
\(\psi\in\Sigma_h\),
\[
    \left\langle
        \nabla_x\!\cdot\!\bigl(b\,\delta\rho\bigr),\psi
    \right\rangle
    =
    -\left\langle
        b\,\delta\rho,\nabla_x\psi
    \right\rangle.
\]
Therefore,
\[
    \left|
        \left\langle
            \nabla_x\!\cdot\!\bigl(b\,\delta\rho\bigr),\psi
        \right\rangle
    \right|
    \leq
   \sqrt{\tau_\rho} \|b\|_{L^\infty(Q)}
    \|(\delta\rho,0)\|_{M_h}
    \|\nabla_x\psi\|.
\]
Since the discretization is fixed and the spaces are finite-dimensional,
there exists \(C_h= \sqrt{\tau_\phi \tau_\rho }>0\) such that 
\eqref{eq:discrete_transport_bound}.
\end{proof}

\begin{prop}[A sufficient condition for Assumption~\ref{ass:regular-reference-saddle}]
\label{prop:verification_assumption_2}
Let $\bigl(E^\star,z^\star(E^\star)\bigr) = \bigl( E^\star, \rho^\star(E^\star), m^\star(E^\star), \phi^\star(E^\star) \bigr)$
be a solution of the discrete problem. Assume that \(K\) is even, the free components
of \(\rho^\star(E^\star)\) are strictly positive, and 
\begin{equation}
\label{eq:effective_velocity_smallness}
    C_h
    \left\|
        \frac{m^\star(E^\star)}
             {\rho^\star(E^\star)}
        -
        \Pi_0g_{E^\star}
    \right\|_{L^\infty(Q)}
    <c_h,
\end{equation}
where \(c_h\) and \(C_h\) are the constants from
Lemmas~\ref{lem:coercivity_discrete_time}
and~\ref{lem:boundedness_discrete_transport}, respectively. Then
\[
    D_zA_{E^\star}\bigl(z^\star(E^\star)\bigr)
    \colon Z_h\longrightarrow Z_h^*
\]
is an isomorphism. Consequently, Assumption~\ref{ass:regular-reference-saddle} is satisfied at
\(\bigl(E^\star,z^\star(E^\star)\bigr)\).
\end{prop}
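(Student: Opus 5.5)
Since $Z_h$ is finite-dimensional, it suffices to prove that $D_zA_{E^\star}(z^\star)$ is injective. Write $z^\star=z^\star(E^\star)$, $v^\star\coloneqq m^\star/\rho^\star-\Pi_0 g_{E^\star}$ and $K\coloneqq K_{E^\star}$. Note that $K$ is linear in $(\rho,m)$ and the dual block has $G_h^\ast\equiv0$. Hence
\[
D_zA_{E^\star}(z^\star)(\delta\rho,\delta m,\delta\phi)=\begin{pmatrix} H(\delta\rho,\delta m)+K^\ast\delta\phi\\ -K(\delta\rho,\delta m)\end{pmatrix},\qquad H\coloneqq D^2F_h(\rho^\star,m^\star).
\]
Here the variations satisfy $\delta\rho_0=\delta\rho_{K-1}=0$, because they lie in the tangent space of the affine set $M_h(\rho_0,\rho_{K-1})$. $H$ is well defined since the free densities are positive.

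\textbf{Step 1: energy identity.} Suppose the image vanishes. Pairing the first row with $(\delta\rho,\delta m)$ and using $K(\delta\rho,\delta m)=0$ gives
\[
\langle H(\delta\rho,\delta m),(\delta\rho,\delta m)\rangle=0 .
\]
Cellwise, the Hessian of $|m|^2/(2\rho)$ is the quadratic form $\rho^{-1}|\delta m-(m/\rho)\delta\rho|^2$. This form is positive semidefinite, and its kernel is spanned by $(1,m/\rho)$. Therefore $\delta m=(m^\star/\rho^\star)\delta\rho$ on every cell with free density. On the endpoint slabs $\delta\rho=0$, so $\delta m=0$ there as well. (If the endpoint $m$-components enter the energy only through $\rho_0,\rho_{K-1}>0$, the same argument applies. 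Otherwise one must check this separately.)

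\textbf{Step 2: reduction to a perturbed transport equation.} Substituting $\delta m$ into $K(\delta\rho,\delta m)=0$ yields
\[
\partial_t\delta\rho+\nabla_x\cdot\bigl(v^\star\delta\rho\bigr)=0 \quad\text{in }\Sigma_h^\ast .
\]
Lemma~\ref{lem:coercivity_discrete_time} gives $c_h\|(\delta\rho,0)\|_{M_h}\le\|\partial_t\delta\rho\|_{\Sigma_h^*}$. Lemma~\ref{lem:boundedness_discrete_transport} with $b=v^\star$ gives $\|\nabla_x\cdot(v^\star\delta\rho)\|_{\Sigma_h^*}\le C_h\|v^\star\|_{L^\infty}\|(\delta\rho,0)\|_{M_h}$. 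Combining the two bounds,
\[
\bigl(c_h-C_h\|v^\star\|_{L^\infty(Q)}\bigr)\|(\delta\rho,0)\|_{M_h}\le0 .
\]
By \eqref{eq:effective_velocity_smallness} the prefactor is positive, so $\delta\rho=0$, and then $\delta m=0$.

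\textbf{Step 3: the dual component.} The first row now reduces to $K^\ast\delta\phi=0$, so it remains to show that $K^\ast$ is injective on $\Sigma_h$. Test with density-only variations $(\delta\rho,0)$ supported on the free slabs:
\[
0=\langle\delta\rho,\partial_{t,h}^\ast\delta\phi+\Pi_0 g_{E^\star}\cdot\nabla_x\delta\phi\rangle .
\]
Then test with momentum variations $(0,\delta m)$, including those on the endpoint slabs:
\[
0=-\langle\delta m,\nabla_x\delta\phi\rangle .
\]
Since $\delta m$ is arbitrary, the second identity gives $\nabla_x\delta\phi=0$ on all slabs. Plugging this into the first identity gives $\partial_{t,h}^\ast\delta\phi=0$ on the free slabs. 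Together with the ghost-cell conventions and the parity argument of Lemma~\ref{lem:constraint-compatibility} ($K$ even), this should force $\delta\phi\in\ker(\partial_t,\nabla_x)$. Since $\Sigma_h\perp\ker(\partial_t,\nabla_x)$, it follows that $\delta\phi=0$.

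\textbf{Conclusion and main obstacle.} With injectivity established, the Jacobian is an isomorphism. The remaining items of Assumption~\ref{ass:regular-reference-saddle} follow from Proposition~\ref{prop:verification-assumption-one}: $F_h$ is $C^2$ near positive densities, $G_h^\ast\equiv0$, and $A_E$ is $C^1$ jointly in $(E,z)$ because it is affine in $E$. I expect Step 3 to be the main obstacle. The endpoint slabs are constrained in $\rho$, so $\partial_{t,h}^\ast\delta\phi$ is controlled only on the free slabs, and the precise boundary treatment of the centered difference determines whether the two parity modes are the only possible kernel elements. Step 1 also needs care regarding whether the endpoint $m$-components are free variables.
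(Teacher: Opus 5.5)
Your Steps 1 and 2 match the paper's proof: the Hessian kernel gives $\delta m=(m^\star/\rho^\star)\delta\rho$, you reduce to the perturbed discrete transport equation, and you compare coercivity against boundedness. In Step 3, testing $K^\ast\delta\phi=0$ first with momentum variations and then with density variations is also what the paper does. The gap is the last inference of Step 3. You claim that $\nabla_x\delta\phi=0$ together with $\partial_{t,h}^\ast\delta\phi=0$ on the free slabs forces $\delta\phi\in\ker(\partial_t,\nabla_x)$, and this is false.

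Because of the ghost-cell convention, $(\partial_t\delta\rho)_0=(\partial_t\delta\rho)_{K-1}=0$ identically. So $\delta\phi_0$ and $\delta\phi_{K-1}$ never enter $\langle\partial_t\delta\rho,\delta\phi\rangle$. For example, $\delta\phi=\mathbf 1_{\{k=0\}}$ (equal to $1$ on the first slab, $0$ elsewhere) annihilates every admissible variation. Yet $(\partial_t\delta\phi)_1=-1/(2\Delta t)\neq0$, so it is not in $\ker(\partial_t,\nabla_x)$. Conversely, let $e_{\mathrm{ev}}$ be the indicator of the even slabs. A density variation supported on slab $1$ gives $\langle\partial_t\delta\rho,e_{\mathrm{ev}}\rangle_{L^2(Q)}=-\tfrac12\int_\Omega\delta\rho_1\,dx$, which is nonzero for suitable $\delta\rho_1$. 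So the parity modes are not kernel elements at all, and the argument $\delta\phi\in\ker(\partial_t,\nabla_x)\cap\Sigma_h=\{0\}$ cannot be completed as you suggest.

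The missing piece is the explicit stencil computation. For $1\le j\le K-2$, the $j$-th component of $\partial_{t,h}^\ast\delta\phi$ is proportional to $\delta\phi_{j-1}\mathbf 1_{\{j\ge2\}}-\delta\phi_{j+1}\mathbf 1_{\{j\le K-3\}}$. This yields three kinds of relations:
\begin{itemize}
\item $j=1$ gives $\delta\phi_2=0$;
\item $j=K-2$ gives $\delta\phi_{K-3}=0$;
\item $2\le j\le K-3$ gives $\delta\phi_{j-1}=\delta\phi_{j+1}$.
\end{itemize}
For $K$ even these propagate to $\delta\phi_1=\dots=\delta\phi_{K-2}=0$; for $K$ odd, a common value on the odd interior slabs would remain free. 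What survives is $\delta\phi=a\mathbf 1_{\{k=0\}}+b\mathbf 1_{\{k=K-1\}}$ with constants $a,b$. This two-dimensional space meets $\Sigma_h$ only in $0$, but it is not $\ker(\partial_t,\nabla_x)$. It is removed by the orthogonality that defines $\Sigma_h$. Since $0$ is even and $K-1$ is odd, $\langle\delta\phi,e_{\mathrm{ev}}\rangle_{L^2(Q)}=a\,\Delta t\,|\Omega|$ and $\langle\delta\phi,e_{\mathrm{odd}}\rangle_{L^2(Q)}=b\,\Delta t\,|\Omega|$, so $a=b=0$.

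This is exactly the paper's closing argument: the interior components vanish by the stencil, and the two endpoint components are killed by the parity gauge of $\Sigma_h$. The evenness of $K$ is therefore used twice in this step. Your caveat in Step 1 about endpoint cells where $\rho_0$ or $\rho_{K-1}$ vanishes is legitimate. The paper writes the Hessian over all of $Q$ and so tacitly assumes the same positivity there.
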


\begin{proof}
Let $(\delta\rho,\delta m,\delta\phi)\in Z_h$ satisfy
\begin{equation}
\label{eq:kernel_optimality_derivative}
    D_zA_{E^\star}\bigl(z^\star(E^\star)\bigr)
    (\delta\rho,\delta m,\delta\phi)=0.
\end{equation}
By the definition of \(A_E\), equation
\eqref{eq:kernel_optimality_derivative} is equivalent to
\begin{align}
    D^2F_h\bigl(
        \rho^\star(E^\star),m^\star(E^\star)
    \bigr)
    (\delta\rho,\delta m)
    +
    K_h(\cdot;E^\star)^*\delta\phi
    &=0,
    \label{eq:kernel_first_block}
    \\
    K_h(\delta\rho,\delta m;E^\star)
    &=0.
    \label{eq:kernel_second_block}
\end{align}
Taking the inner product of  \eqref{eq:kernel_first_block} with
\((\delta\rho,\delta m)\) and using
\eqref{eq:kernel_second_block}, we obtain
\[
    \left\langle
        D^2F_h\bigl(
            \rho^\star(E^\star),m^\star(E^\star)
        \bigr)
        (\delta\rho,\delta m),
        (\delta\rho,\delta m)
    \right\rangle
    =0.
\]
Since the free components of \(\rho^\star(E^\star)\) are strictly positive,
the Hessian of the discrete kinetic functional is given by
\[
    \int_Q
    \frac{1}{2\rho^\star(E^\star)}
    \left|
        \delta m
        -
        \frac{m^\star(E^\star)}
             {\rho^\star(E^\star)}
        \delta\rho
    \right|^2
    =0.
\]
It follows that
\begin{equation}
\label{eq:hessian_kernel_relation}
    \delta m
    =
    \frac{m^\star(E^\star)}
         {\rho^\star(E^\star)}
    \delta\rho.
\end{equation}

Substituting \eqref{eq:hessian_kernel_relation} into
\eqref{eq:kernel_second_block} and using Lemma~\ref{lem:constraint-compatibility} of the
discrete continuity constraint yields
\begin{equation}
\label{eq:homogeneous_linearized_continuity}
    \partial_t\delta\rho
    +
    \nabla_x\!\cdot\!\left[
        \left(
            \frac{m^\star(E^\star)}
                 {\rho^\star(E^\star)}
            -
            \Pi_0g_{E^\star}
        \right)
        \delta\rho
    \right]
    =0.
\end{equation}
Since the endpoint densities are fixed, $\delta\rho_0=\delta\rho_{K-1}=0$. Applying Lemmas~\ref{lem:coercivity_discrete_time}
and~\ref{lem:boundedness_discrete_transport} to
\eqref{eq:homogeneous_linearized_continuity}, we obtain
\begin{align*} 
c_h\|\delta\rho\|_{M_h}
&\leq \|\partial_t\delta\rho\|_{\Sigma_h^*} = \left\|   \nabla_x\!\cdot\!\left[  \left(    \frac{m^\star(E^\star)} {\rho^\star(E^\star)}-               \Pi_0g_{E^\star} \right) \delta\rho \right] \right\|_{\Sigma_h^*} \\
    &\leq C_h \left\| \frac{m^\star(E^\star)}           {\rho^\star(E^\star)} - \Pi_0g_{E^\star} \right\|_{L^\infty(Q)}   \|\delta\rho\|_{M_h}.
\end{align*}
By condition \eqref{eq:effective_velocity_smallness}, it follows that $\delta\rho=0$. Equation \eqref{eq:hessian_kernel_relation} then gives $\delta m=0$. The first block equation consequently reduces to $ K_h(\cdot;E^\star)^*\delta\phi=0$.
Testing this identity against momentum variations gives
\(\nabla_x\delta\phi=0\).
Testing it against density variations satisfying the homogeneous
endpoint conditions gives
\[
\langle\partial_t\delta\rho,\delta\phi\rangle_{L^2(Q)}=0
\qquad\text{for every admissible }\delta\rho.
\]
Equivalently, the free temporal components of
\(\partial_{t,h}^*\delta\phi\) vanish. By the explicit form of the
centered-difference stencil, all interior temporal components of
\(\delta\phi\) therefore vanish. Since
\(\delta\phi\in\Sigma_h\), the two parity gauge conditions also force
the two endpoint components to vanish. Hence \(\delta\phi=0\).
Thus, \[ \ker    D_zA_{E^\star}\bigl(z^\star(E^\star)\bigr) = \{0\}. \] Since \(Z_h\) and \(Z_h^*\) have the same finite dimension, \(D_zA_{E^\star}\bigl(z^\star(E^\star)\bigr)\) is an isomorphism.
\end{proof}

\begin{rmk}
Condition \eqref{eq:effective_velocity_smallness} is the following smallness condition on the effective non-rigid velocity of the selected solution. This condition is expected to be satisfied when two endpoints density are close to being related by a rigid motion. 
\end{rmk}
As a consequence of Propositions~\ref{prop:verification-assumption-one} and~\ref{prop:verification_assumption_2}, the abstract local regularity result from Lemma \ref{lem:abstract-local-branch} applies to the discrete frozen-\(E\) problem.
\begin{corollary}[Local saddle branch and reliability estimate]
\label{cor:local_branch_reliability_discrete}
Let $\bigl(E^\star,z^\star(E^\star)\bigr)$ be the reference solution considered in Proposition~\ref{prop:verification_assumption_2}. Then there exist
neighborhoods $U_{E^\star}\subset W_h$ and $U_{z^\star}\subset Z_h$ of \(E^\star\) and \(z^\star(E^\star)\), respectively, and a locally unique
mapping $z^\star(\,\cdot\,) \colon U_{E^\star}\longrightarrow U_{z^\star}$ such that $A_E\bigl(z^\star(E)\bigr)=0$ \text{for every }$E\in U_{E^\star}$. Moreover, there exists a constant \(L_\star>0\) such that
\begin{equation}
\label{eq:lipschitz_discrete_saddle_branch}
    \bigl\|
        z^\star(E_1)-z^\star(E_2)
    \bigr\|_{Z_h}
    \leq
    L_\star\|E_1-E_2\|_{W_h} \qquad\text{for all } E_1,E_2\in U_{E^\star}.
\end{equation}
In addition, there exists a constant \(C_{\mathrm{reg}}>0\) such that
\begin{equation}
\label{eq:discrete_reliability_estimate}
    \bigl\|
        z-z^\star(E)
    \bigr\|_{Z_h}
    \leq
    C_{\mathrm{reg}}
    \|A_E(z)\|_{Z_h^*} \qquad \text{for all } (E,z)\in U_{E^\star}\times U_{z^\star}
\end{equation}
\end{corollary}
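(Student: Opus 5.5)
The corollary follows by applying Lemma~\ref{lem:abstract-local-branch} to the discrete frozen-$E$ problem. The work consists of checking that Assumption~\ref{ass:regular-reference-saddle} holds at $\bigl(E^\star,z^\star(E^\star)\bigr)$ under the identification $w=E$, $x=(\rho,m)$, $y=\phi$, and $\mathcal A_w=A_E$.

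\emph{Step 1: differentiability of $(E,z)\mapsto A_E(z)$.} By Proposition~\ref{prop:verification-assumption-one}, $F_h$ is $C^\infty$ on the open set where the free density components are strictly positive, since it is a sum of the smooth functions $|m|^2/(2\rho)$ there. By assumption, $\rho^\star(E^\star)$ lies in this set. The coupling $K_h(\rho,m;E)$ is bilinear in $(\rho,E)$ and linear in $m$, through the term $\rho\,\Pi_0 g_E$. Hence $K_E$ and its adjoint $K_E^\ast$ depend polynomially on $(E,\rho,m,\phi)$. It follows from \eqref{eq:frozen-optimality-operator} that $(E,z)\mapsto A_E(z)$ is $C^1$, indeed $C^\infty$, on a neighbourhood of $(E^\star,z^\star(E^\star))$. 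The same argument shows that $F_h$ and $G_h^\ast\equiv0$ are finite and $C^2$ near the reference point.

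\emph{Step 2: the reference point is a zero of $A_{E^\star}$, and the derivative is an isomorphism.} Since $(E^\star,z^\star(E^\star))$ solves the discrete problem and its density is positive, Lemma~\ref{lem:frozen-saddle-characterization} gives $A_{E^\star}(z^\star(E^\star))=0$. Proposition~\ref{prop:verification_assumption_2} shows that $D_zA_{E^\star}(z^\star(E^\star))$ is an isomorphism. This derivative maps $Z_h\to Z_h^\ast$; composing it with the Riesz isomorphism gives an isomorphism $Z_h\to Z_h$, which is the form used in Lemma~\ref{lem:abstract-local-branch}. Together with Step 1, this verifies Assumption~\ref{ass:regular-reference-saddle}.

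\emph{Step 3: conclusion.} Lemma~\ref{lem:abstract-local-branch} now yields the neighbourhoods $U_{E^\star}$ and $U_{z^\star}$, the locally unique $C^1$ branch $E\mapsto z^\star(E)$ with $A_E(z^\star(E))=0$, the Lipschitz bound \eqref{eq:lipschitz_discrete_saddle_branch} with $L_\star=L_w$, and the reliability estimate with $C_{\rm reg}=2\|L_\ast^{-1}\|$. The estimate is stated in the Riesz-transported norm, and this equals the dual norm $\|A_E(z)\|_{Z_h^\ast}$ because the Riesz map is an isometry. This gives \eqref{eq:discrete_reliability_estimate}.

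The only delicate point is bookkeeping. The neighbourhood $U_{z^\star}$ must be shrunk so that it stays inside the positive-density region, because $A_E$ is defined and smooth only there. The abstract lemma permits this shrinking, since it already chooses small balls. One must also check that the dual versus primal norm conventions coincide under the Riesz identification.
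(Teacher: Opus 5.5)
Your proposal is correct and follows the same route as the paper: verify Assumption~\ref{ass:regular-reference-saddle} at the reference point via Propositions~\ref{prop:verification-assumption-one} and~\ref{prop:verification_assumption_2}, then invoke Lemma~\ref{lem:abstract-local-branch}. Your extra bookkeeping makes explicit what the paper's one-line proof leaves implicit: you obtain $A_{E^\star}(z^\star(E^\star))=0$ from Lemma~\ref{lem:frozen-saddle-characterization}, use the Riesz isometry to pass to the dual norm $\|A_E(z)\|_{Z_h^\ast}$, and shrink $U_{z^\star}$ into the positive-density region.
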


\begin{proof}
Proposition~\ref{prop:verification-assumption-one} verifies the smoothness and
well-posedness requirements of Assumption~\ref{ass:abstract-standing} for the discrete frozen-\(E\)
problem, while Proposition~\ref{prop:verification_assumption_2} shows that $D_zA_{E^\star}\bigl(z^\star(E^\star)\bigr)$ is an isomorphism. The conclusions therefore follow directly from the local
branch and reliability result established in Section~\ref{sec:abstractconv}.
\end{proof}

We can now specialize the abstract conditional convergence result to the fully discrete modified Benamou--Brenier problem with a fixed grid.

\begin{thm}[Conditional subsequential convergence of the discrete scheme]
\label{thm:conditional_convergence_discrete_scheme}
Let
\[
    \bigl(E^\star,z^\star(E^\star)\bigr)
\]
be a reference solution satisfying the hypotheses of
Proposition~\ref{prop:verification_assumption_2}, and let
\(U_{E^\star}\) and \(U_{z^\star}\) be the neighborhoods furnished by
Corollary~\ref{cor:local_branch_reliability_discrete}. Consider the discrete alternating scheme in which, at every outer iteration
\(n\), the frozen-\(E^n\) Chambolle--Pock loop is stopped at the first index
for which
\begin{equation}
\label{eq:accepted_inner_residual}
    \mathrm{Res}_{j_n}+ \mathrm{Res}_{j_n-1}\le\varepsilon_n \implies \bigl\|A_{E^n}(z^{n+1})\bigr\|_{Z_h^*}
    \leq 2\sqrt{2} \varepsilon_n,
\end{equation}
and the outer variable is then updated according to the inexact gradient step
introduced in Section~\ref{sec:abstractconv} Algorithm \ref{alg:alternating-saddle}. Assume that the step-size and tolerance hypotheses of
Theorem~\ref{thm:local-subsequential-convergence} are satisfied and that the generated iterates remain inside the local regularity neighbourhood
\begin{equation}
\label{eq:discrete_local_regime}
    E^n\in U_{E^\star},
    \qquad
    z^{n+1}\in U_{z^\star}
    \qquad
    \text{for every }n\geq0.
\end{equation}
Then the accepted inner iterates satisfy
\begin{equation}
\label{eq:accepted_inner_error}
    \bigl\|
        z^{n+1}-z^\star(E^n)
    \bigr\|_{Z_h}
    \leq
    2\sqrt{2} C_{\mathrm{reg}}\varepsilon_n
    \qquad
    \text{for every }n\geq0.
\end{equation}
In particular, $\bigl\| z^{n+1}-z^\star(E^n) \bigr\|_{Z_h} \longrightarrow 0$.
Moreover, the outer increments are square summable:
\begin{equation}
\label{eq:outer_increment_square_summability}
    \sum_{n=0}^{\infty}
    \bigl\|E^{n+1}-E^n\bigr\|_{W_h}^{2}
    <\infty.
\end{equation}
Consequently,
\begin{equation}
\label{eq:outer_increment_vanishing}
    \bigl\|E^{n+1}-E^n\bigr\|_{W_h}
    \longrightarrow0.
\end{equation}
Since \(W_h\) is finite-dimensional and \((E^n)_n\) remains in \(U_{E^\star}\), the sequence admits an accumulation point. For every subsequence \((E^{n_j})_j\) such that $E^{n_j}\longrightarrow\bar E$, one has $z^{n_j+1} \longrightarrow z^\star(\bar E)$, and the pair $\bigl(\bar E,z^\star(\bar E)\bigr)$ satisfies the optimality conditions of the discrete modified
Benamou--Brenier problem. Thus every accumulation point of the generated sequence is a discrete block
stationary point.
\end{thm}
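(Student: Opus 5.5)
The plan is to reduce the statement to the abstract Theorem~\ref{thm:local-subsequential-convergence}, using the identification $w=E$, $x=(\rho,m)$, $y=\phi$ and $Z=Z_h$, $W=W_h$. The order of steps is as follows.

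\textbf{Step 1: the hypotheses of the abstract theorem.} Proposition~\ref{prop:verification-assumption-one} gives Assumption~\ref{ass:abstract-standing}. Proposition~\ref{prop:verification_assumption_2} gives Assumption~\ref{ass:regular-reference-saddle} at $(E^\star,z^\star(E^\star))$. For the regularity part of that assumption, we note the following:
\begin{itemize}
\item $F_h$ is smooth near $(\rho^\star,m^\star)$ because the free density components are strictly positive there.
\item $G_h^\ast\equiv0$ is trivially smooth.
\item $(E,z)\mapsto A_E(z)$ is $C^1$, since $K_h$ is bilinear in $((\rho,m),E)$.
\end{itemize}
Corollary~\ref{cor:local_branch_reliability_discrete} then supplies the branch $z^\star(\cdot)$, the Lipschitz constant $L_\star$ and the reliability constant $C_{\mathrm{reg}}$.

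\textbf{Step 2: inexactness of the inner solves.} Lemma~\ref{lem:kkt-by-res} gives the a posteriori estimate with $C_{\rm est}=2\sqrt2$. Corollary~\ref{cor:uniform-stability} provides the step-size condition, so that Proposition~\ref{prop:frozen-inner-loop} applies and the stopping index $j_n$ is finite. Hence $\|A_{E^n}(z^{n+1})\|\le 2\sqrt2\,\varepsilon_n$.

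By the local-regime assumption \eqref{eq:discrete_local_regime}, the estimate \eqref{eq:discrete_reliability_estimate} applies to $(E^n,z^{n+1})$. This yields \eqref{eq:accepted_inner_error}, and since $\varepsilon_n\to0$ we obtain $\|z^{n+1}-z^\star(E^n)\|\to0$.

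\textbf{Step 3: outer descent.} We define $\Psi(E)=\mathcal L_h(z^\star(E),E)$ on $U_{E^\star}$. Boundedness below of $\Psi$ and $\tau_w<1/L_\Psi$ are part of the assumed step-size and tolerance hypotheses. Proposition~\ref{prop:outer-inexact-descent} then gives \eqref{eq:outer_increment_square_summability}, hence \eqref{eq:outer_increment_vanishing}, together with $\nabla\Psi(E^n)\to0$.

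\textbf{Step 4: accumulation points.} Since $U_{E^\star}$ can be taken to be a bounded ball (as in the proof of Lemma~\ref{lem:abstract-local-branch}), the sequence $(E^n)$ is bounded and Bolzano--Weierstrass gives an accumulation point. Let $E^{n_j}\to\bar E$. A small technical point here is that $\bar E$ lies in the closure of $U_{E^\star}$. I would handle this by shrinking the neighbourhood so that its closure lies inside the implicit-function-theorem domain, where $z^\star(\cdot)$ is still defined and continuous.

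Continuity of $z^\star$ then gives $z^\star(E^{n_j})\to z^\star(\bar E)$. Combined with \eqref{eq:accepted_inner_error}, this gives $z^{n_j+1}\to z^\star(\bar E)$.

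\textbf{Step 5: interpreting the limit.} We have $A_{\bar E}(z^\star(\bar E))=0$, so Lemma~\ref{lem:frozen-saddle-characterization} shows that $z^\star(\bar E)$ is a frozen-$\bar E$ saddle point. This requires $\rho^\star(\bar E)>0$, which holds near the reference point after shrinking $U_{z^\star}$.

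Moreover, continuity of $\nabla\Psi$ (it is Lipschitz) gives $\nabla\Psi(\bar E)=0$. By the envelope identity,
\[
\nabla_E\mathcal L_h(z^\star(\bar E),\bar E)=0 .
\]
This is the discrete analogue of the symmetry condition (SC): the skew part of the discrete matrix $S_t$ vanishes. Together with $A_{\bar E}(z^\star(\bar E))=0$, it constitutes block stationarity.

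\textbf{Main obstacle.} The main difficulty is the closure issue in Step 4: making sure that limits of the iterates stay in the region where the branch is defined and continuous and where the density remains strictly positive. My first attempt is to choose closed balls strictly inside the implicit-function-theorem neighbourhoods from the start.
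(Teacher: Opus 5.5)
Your proposal is correct and follows essentially the same route as the paper: you verify Assumptions~\ref{ass:abstract-standing} and~\ref{ass:regular-reference-saddle} via the two verification propositions, combine the reliability estimate of Corollary~\ref{cor:local_branch_reliability_discrete} with Lemma~\ref{lem:kkt-by-res} to obtain the accepted-inner error bound, invoke Theorem~\ref{thm:local-subsequential-convergence} for square summability and stationarity, and conclude $z^{n_j+1}\to z^\star(\bar E)$ by the triangle inequality and continuity of the branch (the paper uses the Lipschitz constant $L_\star$ here). You also treat two points the paper's proof passes over silently: that $\bar E$ may lie on $\partial U_{E^\star}$, which you handle by taking closed balls inside the implicit-function-theorem neighbourhood as in the proof of Lemma~\ref{lem:abstract-local-branch}, and that $\rho^\star(\bar E)$ must stay positive.
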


\begin{proof}
Proposition~\ref{prop:verification-assumption-one} verifies Assumption~\ref{ass:abstract-standing} of the
abstract framework, whereas
Proposition~\ref{prop:verification_assumption_2} verifies Assumption~\ref{ass:regular-reference-saddle} at the
reference solution. Hence
Corollary~\ref{cor:local_branch_reliability_discrete} provides the locally
unique saddle branch \(E\mapsto z^\star(E)\) and the reliability estimate
\[
    \bigl\|
        z-z^\star(E)
    \bigr\|_{Z_h}
    \leq
    C_{\mathrm{reg}}
    \|A_E(z)\|_{Z_h^*}
\]
on \(U_{E^\star}\times U_{z^\star}\).

Applying this estimate with \(E=E^n\), \(z=z^{n+1}\), and using
\eqref{eq:accepted_inner_residual}, gives
\eqref{eq:accepted_inner_error}. All the remaining hypotheses of Theorem~\ref{thm:local-subsequential-convergence} are satisfied by
assumption, and therefore \eqref{eq:outer_increment_square_summability},
\eqref{eq:outer_increment_vanishing}, and the stationarity of every accumulation point follow directly from that theorem. Finally, if \(E^{n_j}\to\bar E\), the Lipschitz continuity of the local saddle
branch and \eqref{eq:accepted_inner_error} yield
\begin{align*}
    \bigl\|
        z^{n_j+1}-z^\star(\bar E)
    \bigr\|_{Z_h}
    &\leq
    \bigl\|
        z^{n_j+1}-z^\star(E^{n_j})
    \bigr\|_{Z_h}
    +
    \bigl\|
        z^\star(E^{n_j})-z^\star(\bar E)
    \bigr\|_{Z_h}
    \\
    &\leq
    2\sqrt{2} C_{\mathrm{reg}}\varepsilon_{n_j}
    +
    L_\star
    \bigl\|
        E^{n_j}-\bar E
    \bigr\|_{W_h}
    \longrightarrow0.
\end{align*}\qedhere
\end{proof}
\begin{rmk}
We would like to point out that Theorem \ref{thm:conditional_convergence_discrete_scheme} is conditional based on the ``local regime assumption". 
In numerical implementations, this local regime can be monitored or
encouraged by safeguards such as reducing the outer step size, regularization, rejecting
updates that leave the admissible region, monitoring positivity and residuals, or using a trust-region restriction on the outer step.

The iteration can be initialized  by a coarse-grid warm start. Nested refinement strategies of this type are standard in nonlinear optimization and multilevel optimization methods \cite{brandt1977multi,nash2000multigrid} and aim at producing initial iterates lying in a stable local regime where the convergence analysis applies.
\end{rmk}

\begin{rmk}[Scope of the fixed-grid analysis]
The convergence result established in this section concerns the finite-dimensional saddle problem associated with a fixed spatial--temporal grid. It is local and conditional: the iterates are assumed to remain in a neighborhood of a regular discrete saddle branch, and the accepted inner iterates are required to satisfy the stated residual conditions. We do not establish convergence to a global minimizer of the discrete problem, nor consistency of the discretization or convergence of discrete solutions to solutions of the continuous MBB problem as the mesh is refined. The numerical experiments should therefore be interpreted as illustrations of the behavior of the alternating scheme on the selected fixed grid.
\end{rmk}

\section{Numerical Experiments}
\label{sec:numerics}

This section presents numerical illustrations of the proposed Modified Benamou--Brenier (MBB)
formulation and the corresponding solver. 
We consider three representative scenarios: (i) a {pure rotation} case, where the motion is entirely explained by the orthogonal component (\S~\ref{subsec:pure-rotation}); (ii) an {effective transport} case, where no exploitable orthogonal structure is present and the orthogonal component remains negligible, so that the MBB formulation recovers the classical BB behaviour (\S ~\ref{subsec:effective-transport}); and (iii) a {mixed rotation--deformation} case, in which both the advective velocity field and the orthogonal component contribute to the transport (\S~\ref{subsec:mixed}).

\subsection{General setup and implementation details}

All computations are performed on a uniform Cartesian grid on the domain \(\Omega=(-0.5,0.5)^2\), equipped with no-flux (homogeneous Neumann) boundary conditions. Unless stated otherwise, we use \(n_x=n_y=n_t=64\) grid points in space and time, with time step \(h_t=1/(n_t-1)\). The same inner primal--dual hybrid gradient (PDHG) parameters are used across all experiments.

The primal--dual step sizes \((\tau_\rho,\tau_m,\tau_\phi)\) are adapted using the residual-balancing strategy proposed in \cite{goldstein2013adaptive}, which promotes balanced primal and dual progress and robust numerical behaviour. The drift proximal step size \(\tau_E\) is kept fixed in the rotation-dominant and mixed benchmarks, and is mildly reduced in the rotation-free case to suppress spurious drift and recover the classical Benamou--Brenier behaviour. The dual variable \(\phi\) is updated using an \(H^1(I\times\Omega)\) proximal operator, which provides the grid-robust preconditioning properties established in \cite{jacobs2019solving}. The numerical scheme is fully mass conservative: the total mass of each density \(\rho_t\) is preserved at every time slice up to machine precision, without explicit renormalization cf Lemma \ref{lem:constraint-compatibility}--\ref{lem:discrete-mass-conservation}. The implementation follows the alternating scheme analyzed in Sections \ref{sec:abstractconv}--\ref{sec:conv}; the practical differences are discussed in Section \ref{subsec:generalremarkreproduc}.

For each outer iterate \(E^n\), the corresponding frozen inner saddle problem is approximated by running the PDHG solver up to a prescribed inner iteration cap. The resulting inner output is then used to update the skew-symmetric field. The complete alternating scheme is terminated when either the outer iterate-difference criterion, defined as the sum of the \(L^2(I\times\Omega)\)-differences for \((\rho,m)\) and the \(H^1(I\times\Omega)\)-difference for \(\phi\) between successive outer iterations, falls below \(10^{-3}\), or a prescribed outer iteration cap is reached. This practical outer stopping criterion is distinct from the inner fixed-point residual used in the convergence analysis.

For the MBB formulation, the skew-symmetric field \(E_t\) is updated via an
exact proximal step and projected onto \(\mathfrak{so}(2)\) at each outer
iteration. For visualization purposes only, the associated orthogonal path is
reconstructed a posteriori through the exponential time-stepping scheme
\[
    \Theta_{k+1}
    = \exp(h_tE_k)\Theta_k
    \qquad\text{with}\qquad 
    \Theta_0=I,
\]
which discretizes the matrix ODE
\[
    \partial_t\Theta_t=E_t\Theta_t
\]
previously studied in Lemma \ref{lem:rigid-flow} and preserves \(\Theta_k\in SO(2)\) at every time step. All experiments are
implemented in Python using \texttt{NumPy}.

\vspace{1em}
\subsection{Pure Rotational Transport}
\label{subsec:pure-rotation}

\paragraph{Test distribution.}
We first consider a benchmark designed to isolate the effect of a global rigid motion.
The source and target densities, shown in Fig.~\ref{fig:pure_rho0rhoT}, consist of the same
double-crescent distribution, with the target obtained by applying a rigid rotation of
approximately $45^\circ$ about the origin.
Both densities are smooth, compactly supported in $\Omega=(-0.5,0.5)^2$, and remain well
separated from the boundary, so that the exact transport between them corresponds to a
pure orthogonal motion without deformation.
This setting provides a controlled test to assess whether the MBB formulation can
correctly identify and encode a global rotation.

\begin{figure}[H]
    \centering
    \includegraphics[width=.85\linewidth]{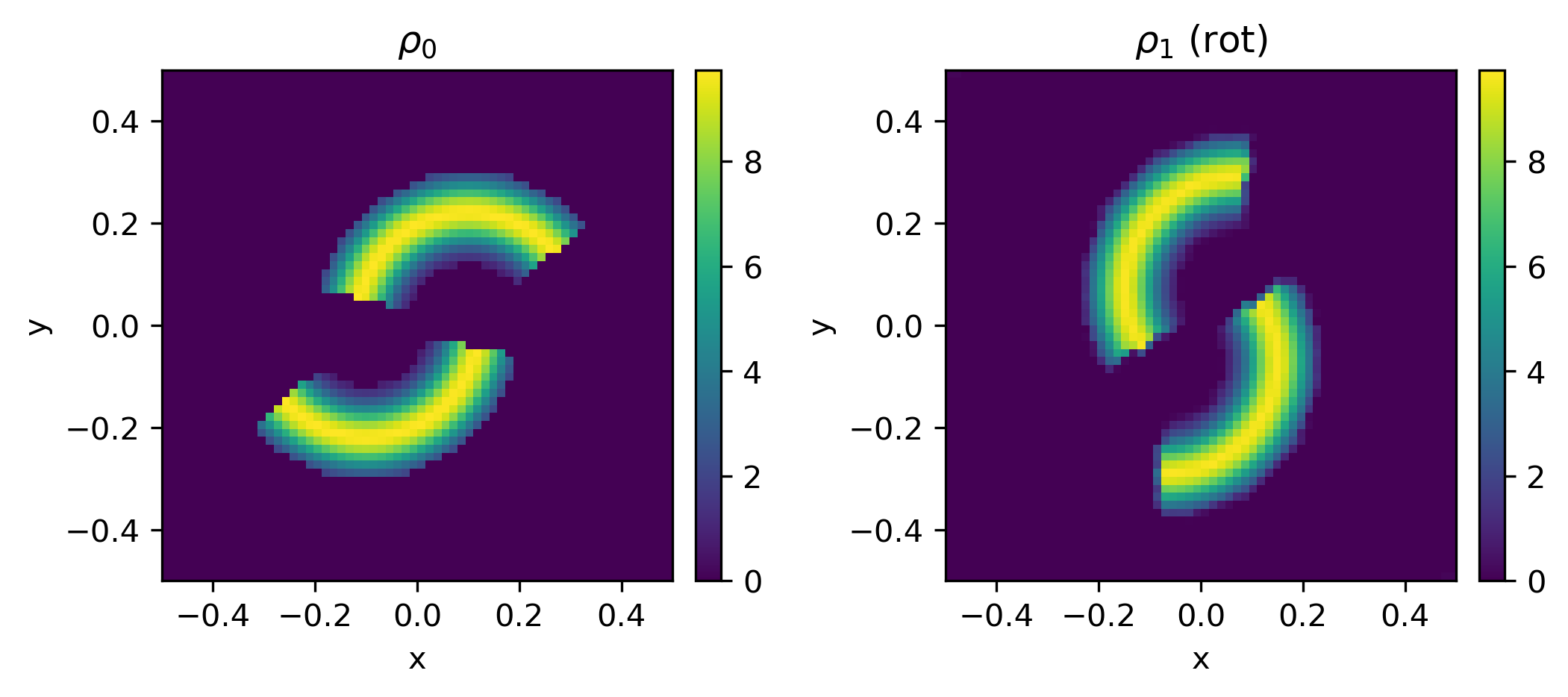}
    \caption{Pure rotation test: source $\rho_0$ and target $\rho_1$ differing by a rigid $45^\circ$ rotation.}
    \label{fig:pure_rho0rhoT}
\end{figure}

\paragraph{Evolution of densities.}
In Figure~\ref{fig:PR_compare_MBB_BB}  the top row displays the evolution of the interpolating density under
the MBB dynamics, while the bottom row shows the corresponding interpolation
obtained with the classical Benamou--Brenier (BB) formulation.
The MBB interpolation follows a coherent rotation of the entire density around the origin,
with negligible diffusion and noticeable preservation of the geometric structure.
In contrast, the BB interpolation exhibits visible shearing and mild blurring, as the
global rotation must be approximated through potential velocity fields rather than an
explicit orthogonal component.

\begin{figure}[H]
    \centering
    \includegraphics[width=1\linewidth]{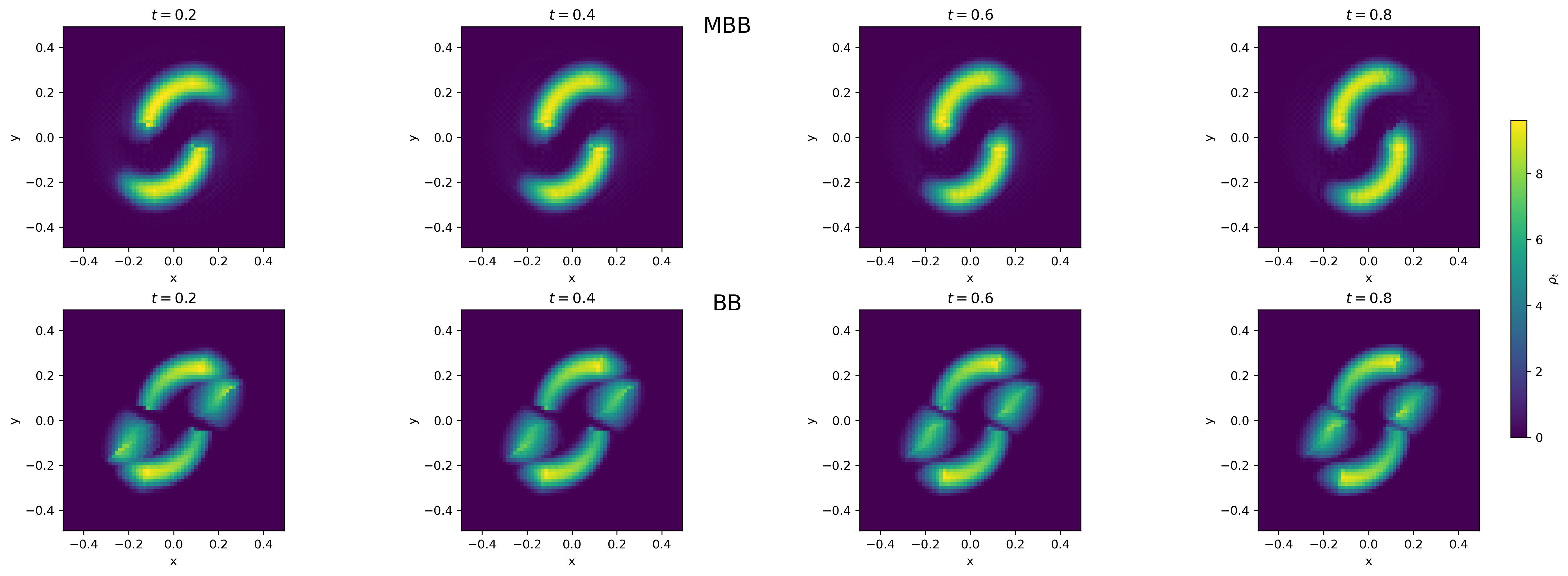}
\caption{Evolution of $\rho_t$ for the pure rotation experiment.}
    \label{fig:PR_compare_MBB_BB} 
\end{figure}

\paragraph{Convergence behaviour.}
Figure~\ref{fig:pure_residuals} reports the convergence of the fixed-point residual for both
formulations.
In both cases, the residual decreases rapidly during the first iterations and stabilizes
below the prescribed tolerance.
The comparable behaviour of the two curves suggests that, in this experiment, introducing the orthogonal variable does not adversely affect the numerical stability of the underlying primal--dual scheme.

\begin{figure}[H]
    \centering
    \begin{minipage}[t]{0.50\linewidth}
    \centering
    \includegraphics[width=\linewidth]{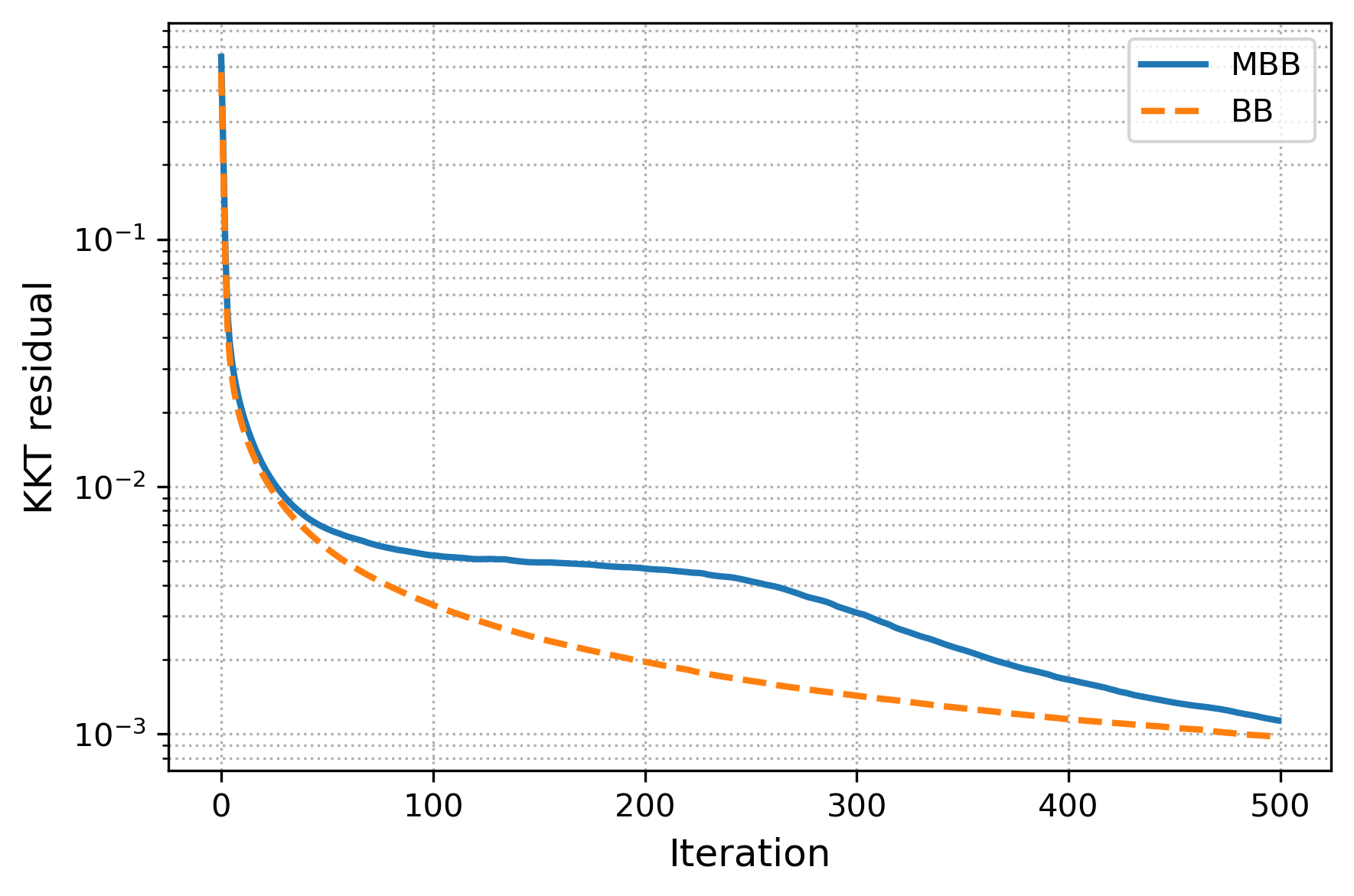}
    \caption{Convergence of residuals in the pure rotation experiment for both formulations.}
    \label{fig:pure_residuals}
    \end{minipage}
    \hfill
    \begin{minipage}[t]{0.48\linewidth}
    \centering
    \includegraphics[width=\linewidth]{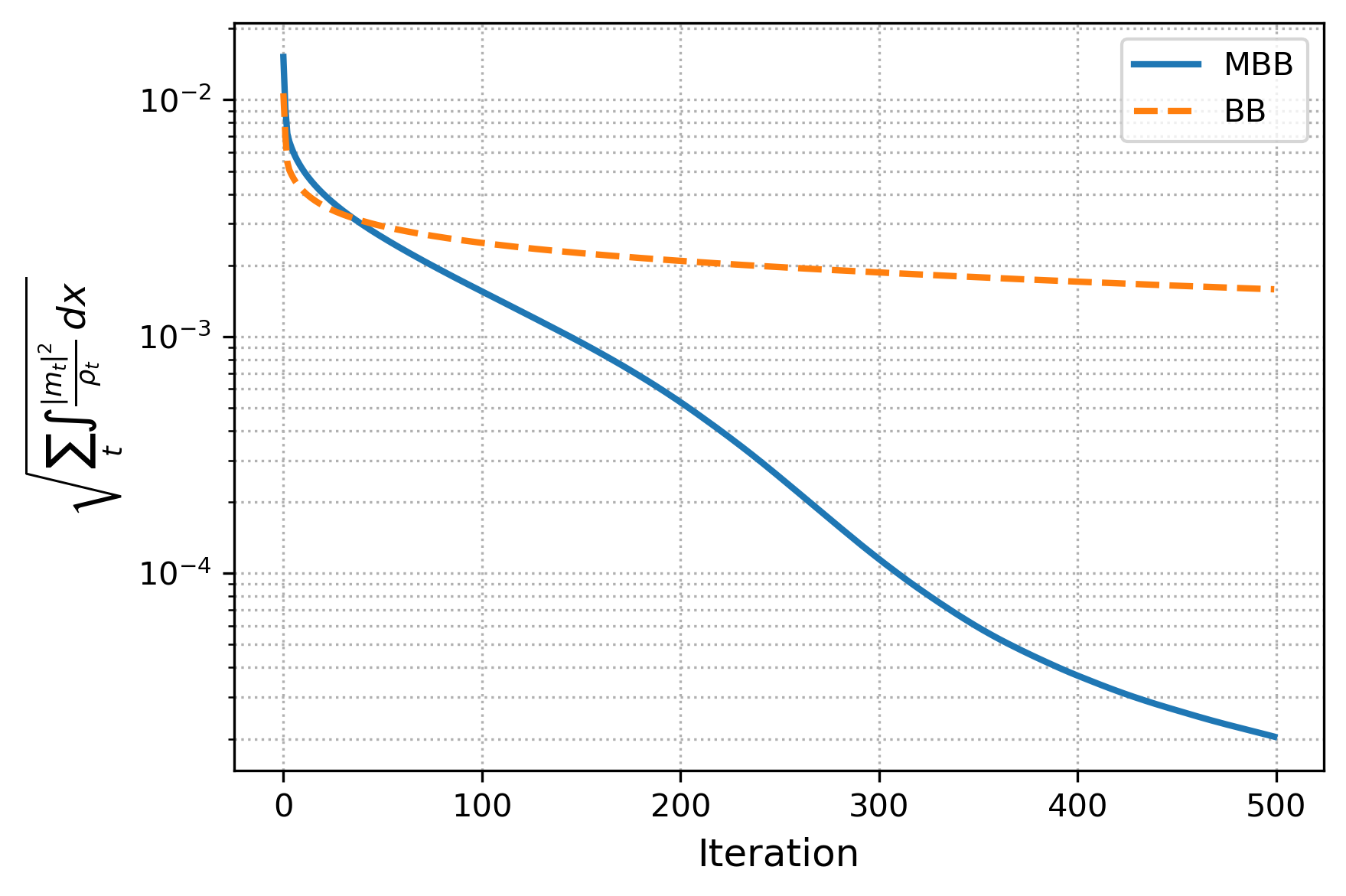}
    \caption{Total kinetic energy per iteration in the pure rotation case.}
    \label{fig:pure_momentum}
    \end{minipage}
\end{figure}

\paragraph{Momentum and orthogonal component.}
Figure~\ref{fig:pure_momentum} compares the total kinetic energy defined by $\int_0^1 \int_\Omega {|m_t|^2}/{\rho_t}\dx\dt$ for the MBB and BB formulations. In the MBB case, the kinetic energy decays rapidly and stabilizes at a value close to zero, indicating that the transport cost is almost entirely absorbed by the orthogonal component.
By contrast, the BB formulation maintains a significantly higher kinetic energy, as the
rotation must be realized through advective transport.

The evolution of the reconstructed orthogonal path $\Theta_t$ is shown in
Fig.~\ref{fig:pure_angle}.
The rotation angle increases smoothly and monotonically over time, reaching approximately
$33^\circ$ at $t=1$.
Although this value is below the nominal $45^\circ$ rotation used to generate the
target density, the qualitative behaviour is correctly captured.
The remaining discrepancy is absorbed by the transport field and may also reflect fixed finite inner solves,  temporal and spatial discretization and step-size effects affect the ability of the MBB formulation to identify and encode the global rigid motion.
\begin{figure}[H]
    \centering
     \includegraphics[width=.6\linewidth]{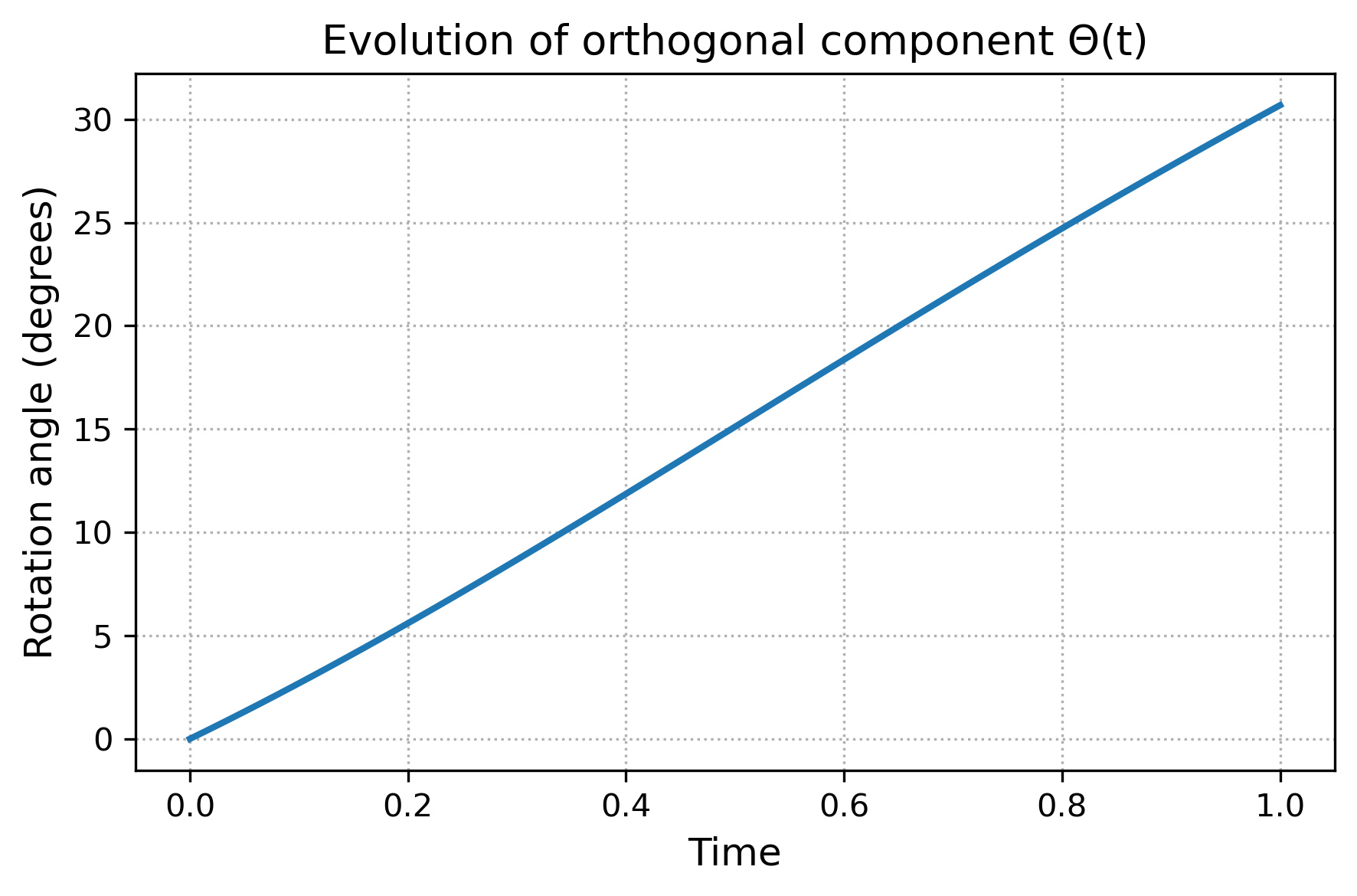}
    \caption{Evolution of the orthogonal component $\Theta_t$ (angle reconstructed from $E_t$).}
    \label{fig:pure_angle}
\end{figure}

\paragraph{Discussion.}
This experiment demonstrates that the MBB formulation correctly captures a purely rotational
motion while maintaining stable convergence and negligible transport energy.
Compared with the classical BB interpolation, which compensates for the rotation through numerical smearing, the MBB approach provides a clear separation between rigid motion
and mass redistribution at the discrete level.
This indicates that the additional orthogonal degree of freedom provides
a geometrically consistent mechanism for handling global isometries
within the dynamical optimal transport framework.

\vspace{1em}
\subsection{Effective Mass Transport}
\label{subsec:effective-transport}

\paragraph{Test distributions.}
We next consider a configuration designed to suppress any meaningful global orthogonal motion.
The initial density $\rho_0$ is again given by the double-crescent distribution,
while the target density $\rho_1$ is constructed by smoothly stretching the crescents
into two approximately horizontal elongated structures.
This transformation induces a substantial redistribution of mass but does not correspond
to any rigid rotation or reflection of the domain.
The resulting pair $(\rho_0,\rho_1)$ therefore provides a benchmark in which
the optimal transport is expected to be purely advective.

\begin{figure}
    \centering
    \includegraphics[width=.9\linewidth]{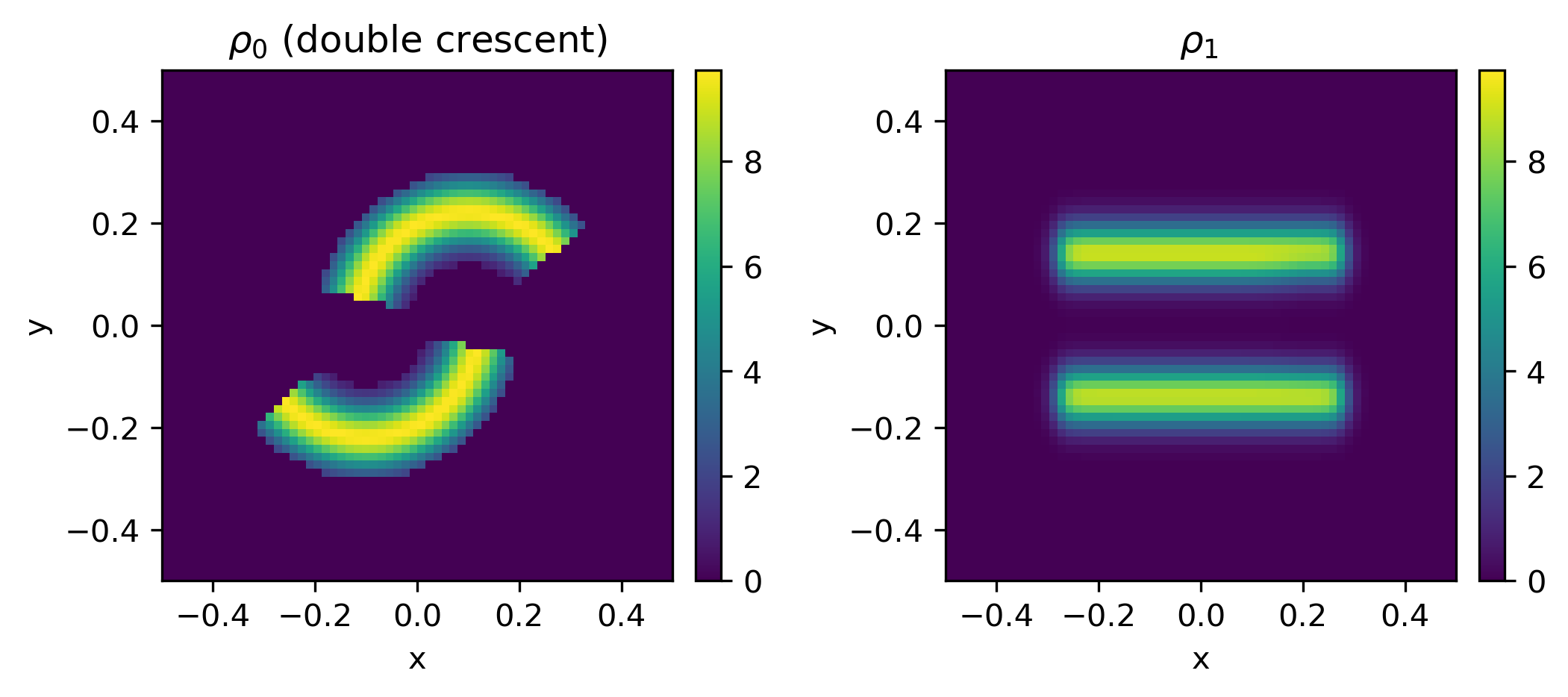}
    \caption{Effective transport test: source $\rho_0$ (double crescent) and target $\rho_1$
    obtained by horizontal stretching, yielding two elongated structures.}
    \label{fig:effective_rho0rhoT}
\end{figure}

\paragraph{Evolution of densities.}
In Figure~\ref{fig:ET_compare_MBB_BB} the top row displays the evolution of the interpolating density under the MBB dynamics, while the bottom row shows the corresponding interpolation
obtained with the classical Benamou--Brenier (BB) formulation.
In this setting, both formulations produce visually indistinguishable density evolutions:
the mass is transported and stretched smoothly into the target configuration without
exhibiting coherent rotation.
This indicates that, when no global orthogonal structure is present in the data,
the additional drift variable in the MBB formulation does not alter the transport mechanism.

\begin{figure}[H]
    \centering
    \includegraphics[width=1\linewidth]{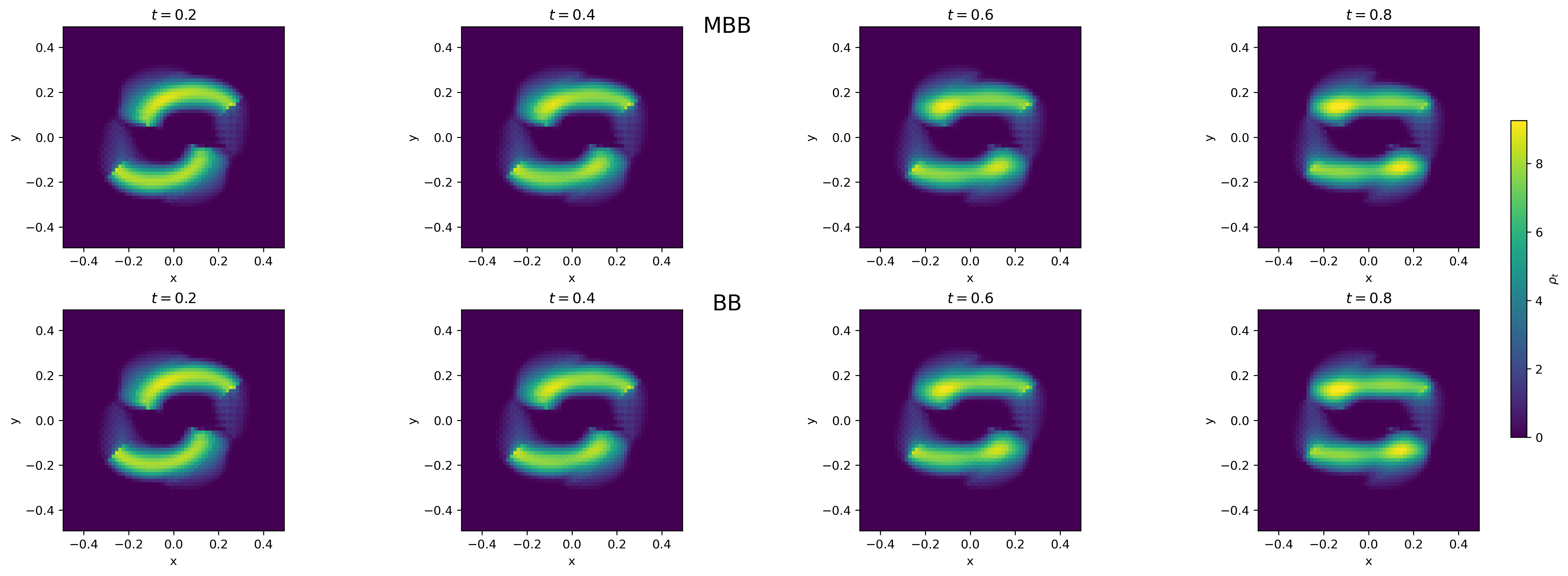}
 \caption{Evolution of $\rho_t$ for the Effective Transport experiment.}
    \label{fig:ET_compare_MBB_BB}  
\end{figure}

\paragraph{Convergence behaviour.}
The convergence histories and kinetic energy decay for both formulations
are reported in Figs.~\ref{fig:effective_residuals} and~\ref{fig:effective_momentum}.
The residual curves are nearly identical, and the total kinetic energies
evolve in close agreement throughout the iterations.
This behaviour indicates that, in the absence of an exploitable orthogonal component,
the MBB formulation reduces numerically to the classical Benamou--Brenier dynamics.

\begin{figure}[H]
    \centering
    \begin{minipage}[t]{0.50\linewidth}
        \centering
        \includegraphics[width=\linewidth]{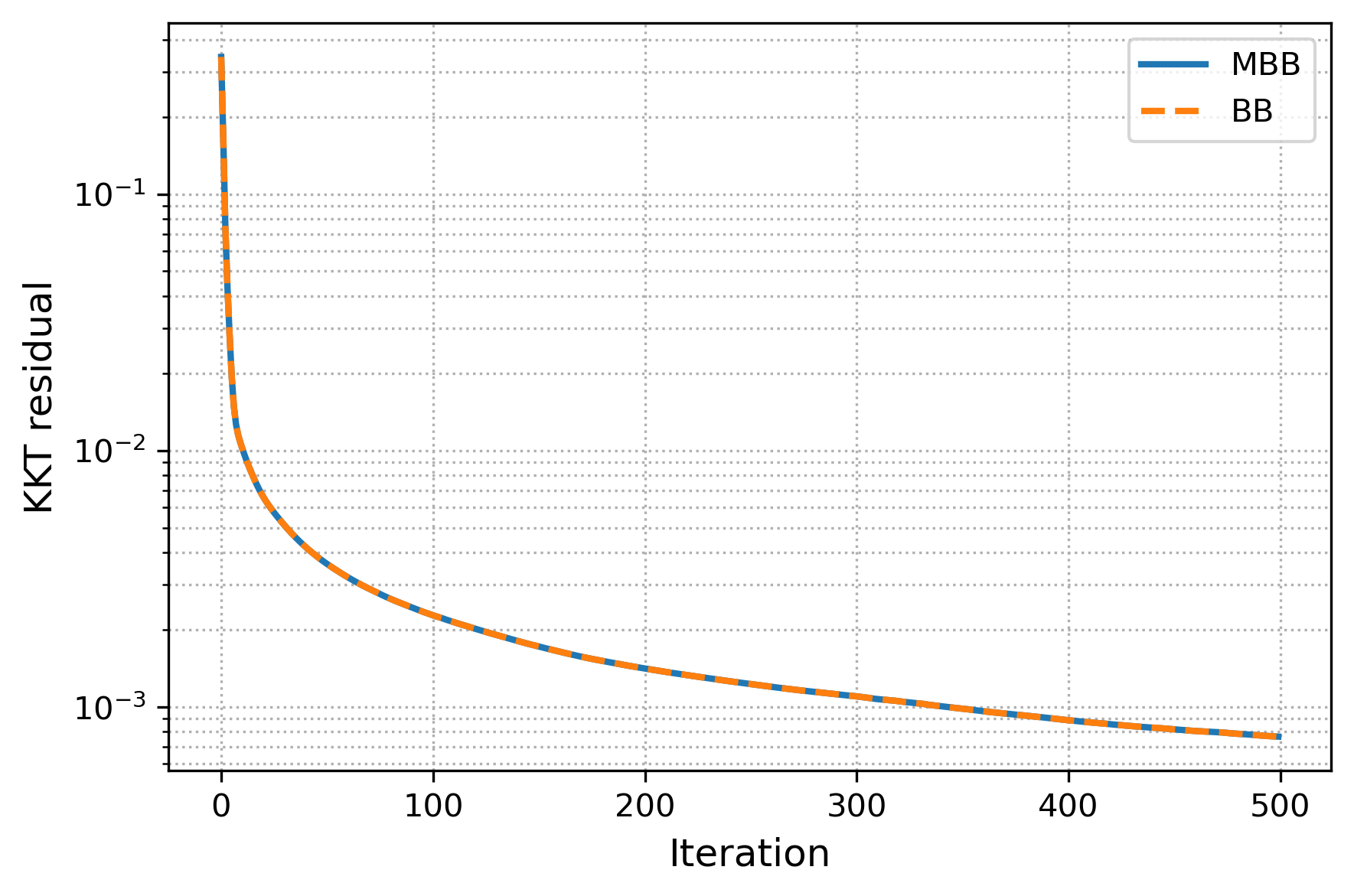}
    \caption{Convergence of residuals in the effective transport experiment for both formulations.}
    \label{fig:effective_residuals}
    \end{minipage}
    \hfill
    \begin{minipage}[t]{0.48\linewidth}
        \centering
        \includegraphics[width=\linewidth]{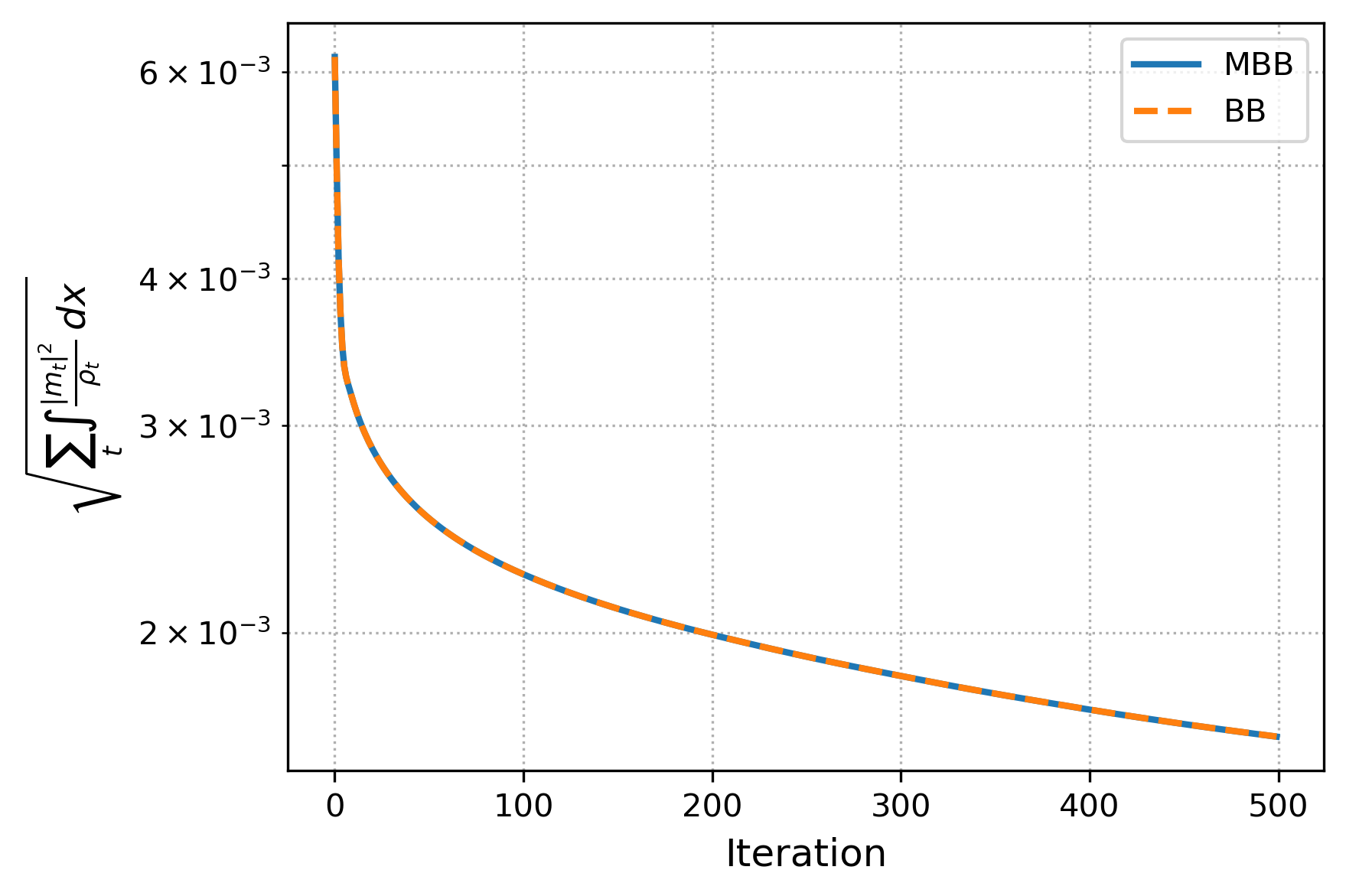}
    \caption{Total kinetic energy per iteration in the effective transport case.}
    \label{fig:effective_momentum}
    \end{minipage}
\end{figure}

\paragraph{Orthogonal component.}
Figure~\ref{fig:effective_angle} displays the evolution of the angle associated with
the orthogonal component $\Theta_t$.
The angle remains extremely small throughout the time interval,
confirming that no significant rotation is learned by the MBB scheme.
Minor fluctuations can be attributed to discretization and numerical tolerance,
and do not result in a visible effect on the density evolution.

\begin{figure}[H]
    \centering
    \includegraphics[width=.6\linewidth]{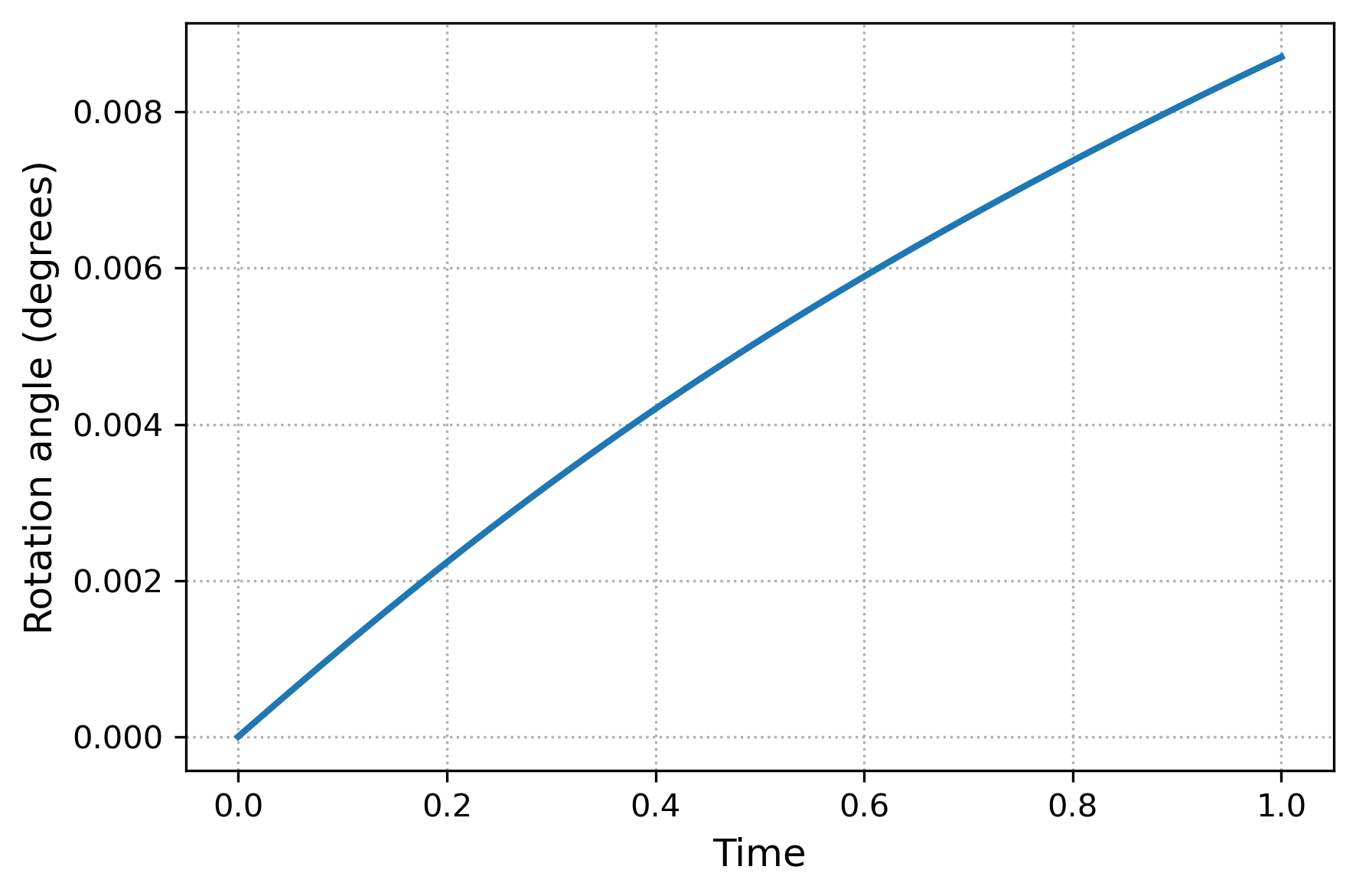}
    \caption{Evolution of the orthogonal component $\Theta_t$ (angle reconstructed from $E_t$) in the effective transport case.}
    \label{fig:effective_angle}
\end{figure}

\paragraph{Discussion.}
This experiment demonstrates that the Modified Benamou--Brenier formulation
closely reproduces the classical Benamou--Brenier dynamics when the optimal transport
does not involve a global orthogonal motion. This indicates that the MBB framework extends classical optimal transport
without altering its behaviour in purely advective regimes.

\subsection{Mixed Rotation–Deformation Transport}
\label{subsec:mixed}

\paragraph{Test distribution.}
The third experiment considers a configuration combining global rotation
with non-uniform deformation.
The source density is the double-crescent distribution introduced earlier,
and the target density is obtained by applying a rigid rotation of approximately $25^\circ$
together with a mild anisotropic stretching.
This setting is designed to test whether the MBB formulation can disentangle
rigid and advective components when both are simultaneously present.

\begin{figure}[H]
    \centering
    \includegraphics[width=.8\linewidth]{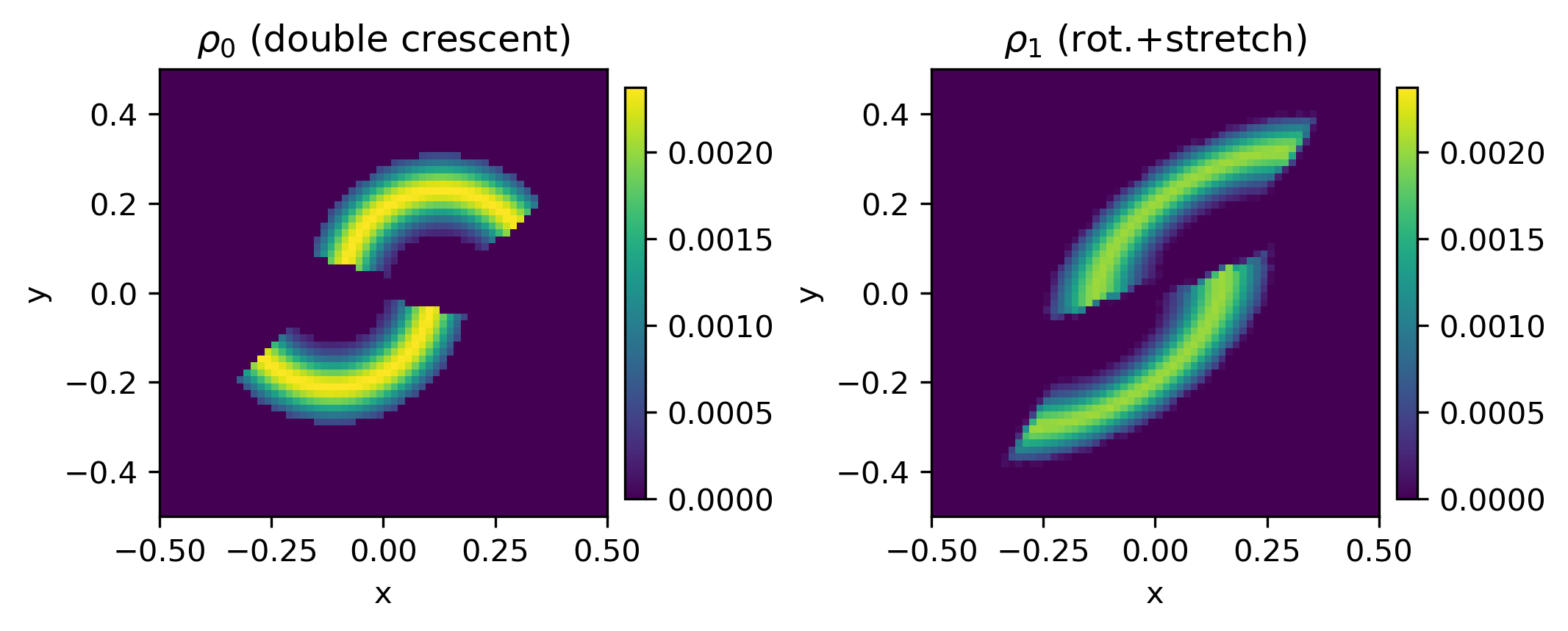}
    \caption{Mixed rotation–deformation test: source $\rho_0$ and target $\rho_T$.}
    \label{fig:mixed_rho0rhoT}
\end{figure}

\paragraph{Evolution of densities.} Figure~\ref{fig:MT_compare_rows} displays the evolution of the interpolating density $\rho_t$
under the MBB and BB formulations, respectively.
While both schemes transport mass toward the target configuration,
the MBB interpolation appears to preserve sharper structures and improved geometric coherence.
In contrast, the classical BB evolution shows mild diffusion,
particularly around intermediate time slices where rotational motion
must be approximated through potential flows.

\begin{figure}[H]
    \centering
    \includegraphics[width=1\linewidth]{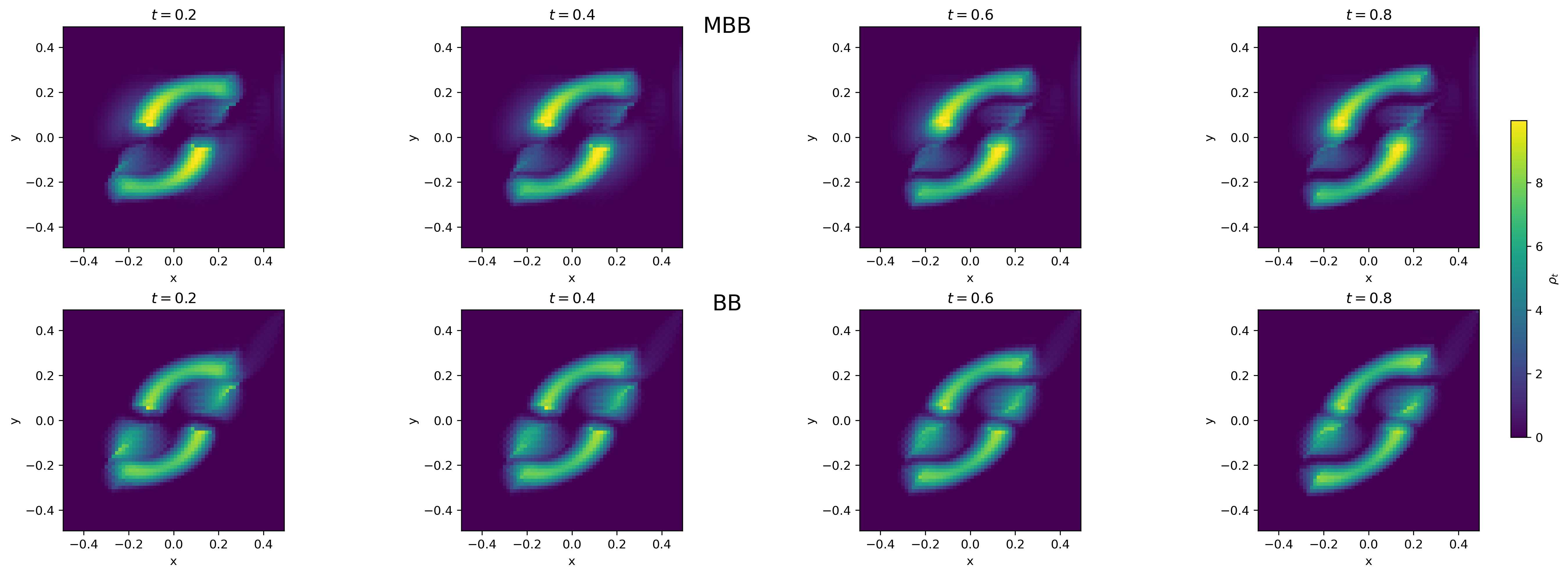}
    \caption{Evolution of $\rho_t$ for the mixed motion experiment.}
    \label{fig:MT_compare_rows}
\end{figure}

\paragraph{Convergence behaviour.}
The decay of the Fixed-point residuals is shown in Fig.~\ref{fig:mixed_residuals}.
Both formulations converge within the prescribed tolerance.
The MBB residual stabilizes at slightly lower values,
suggesting that the availability of an explicit orthogonal component
facilitates the minimization of the action functional
when rigid motion is present in the data.

\begin{figure}[H]
    \centering
    \begin{minipage}[t]{0.50\linewidth}
        \centering
        \includegraphics[width=\linewidth]{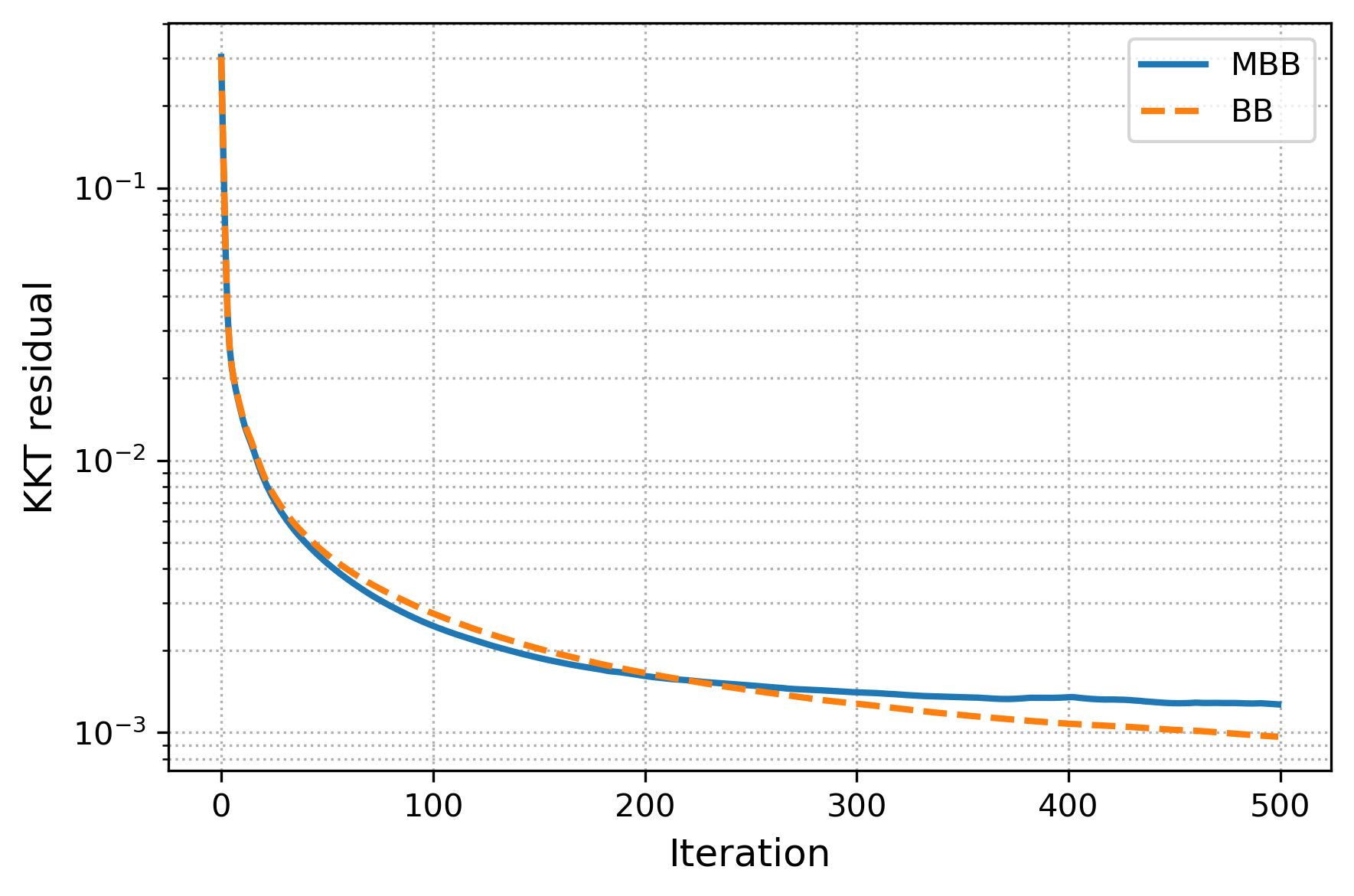}
    \caption{Convergence of KKT residuals in the mixed rotation–deformation experiment.}
    \label{fig:mixed_residuals}
    \end{minipage}
    \hfill
    \begin{minipage}[t]{0.48\linewidth}
        \centering
        \includegraphics[width=\linewidth]{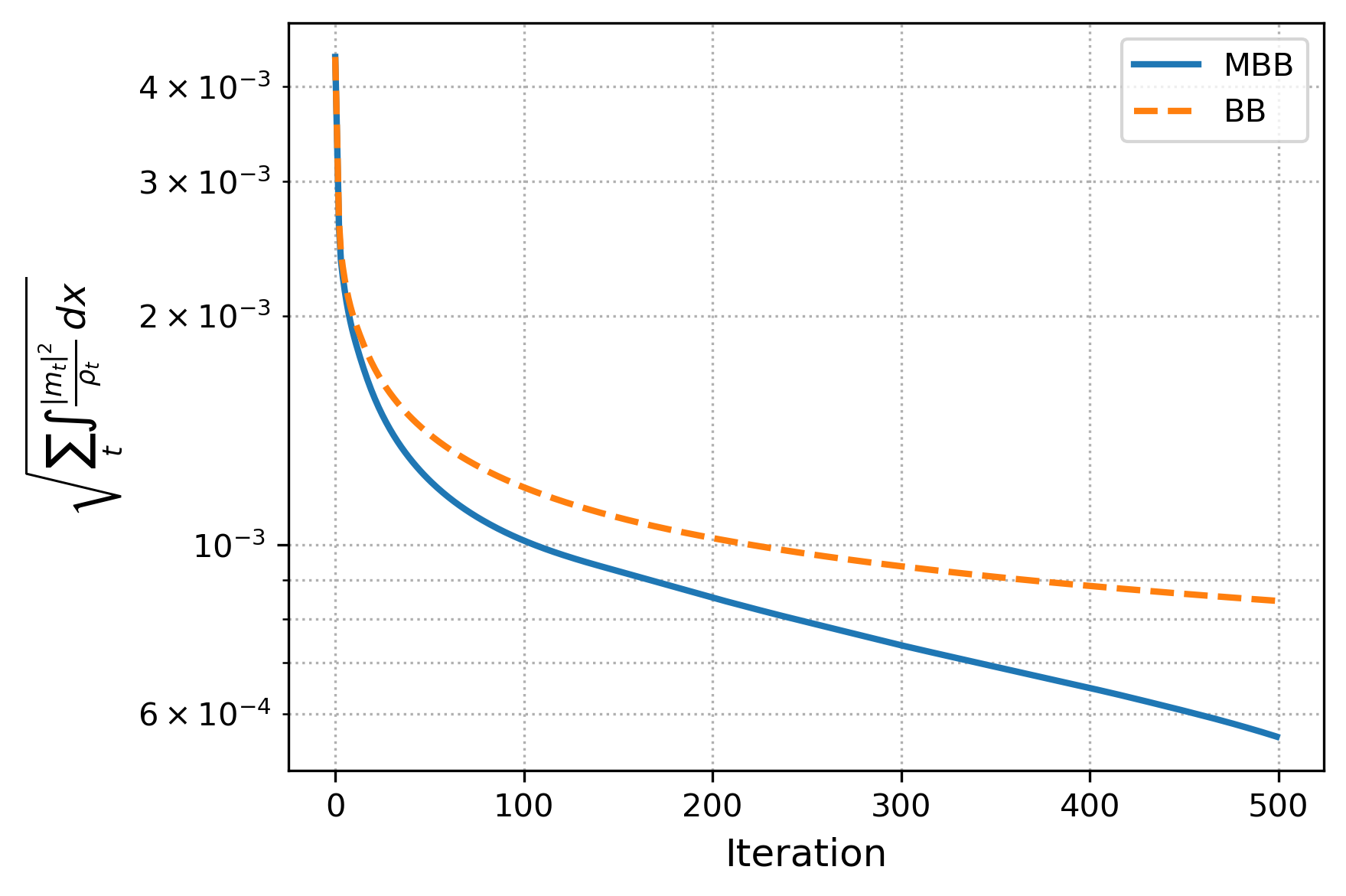}
    \caption{Total kinetic energy per iteration in the mixed motion case.}
    \label{fig:mixed_momentum}
    \end{minipage}
\end{figure}

\paragraph{Momentum and orthogonal component.}
Figure~\ref{fig:mixed_momentum} reports the total kinetic energy
associated with the advective momentum.
The MBB formulation consistently yields lower kinetic energy than BB, suggesting that part of the motion is absorbed by the orthogonal drift.
The corresponding orthogonal path $\Theta_t$,
reconstructed from the optimized skew-symmetric field $E_t$,
is shown in Fig.~\ref{fig:mixed_angle}.
The recovered angle increases smoothly in time and reaches values around $14^\circ$,
identifying a substantial rotational component, while the remaining motion is represented by the advective transport field.

\begin{figure}[H]
    \centering
     \includegraphics[width=.6\linewidth]{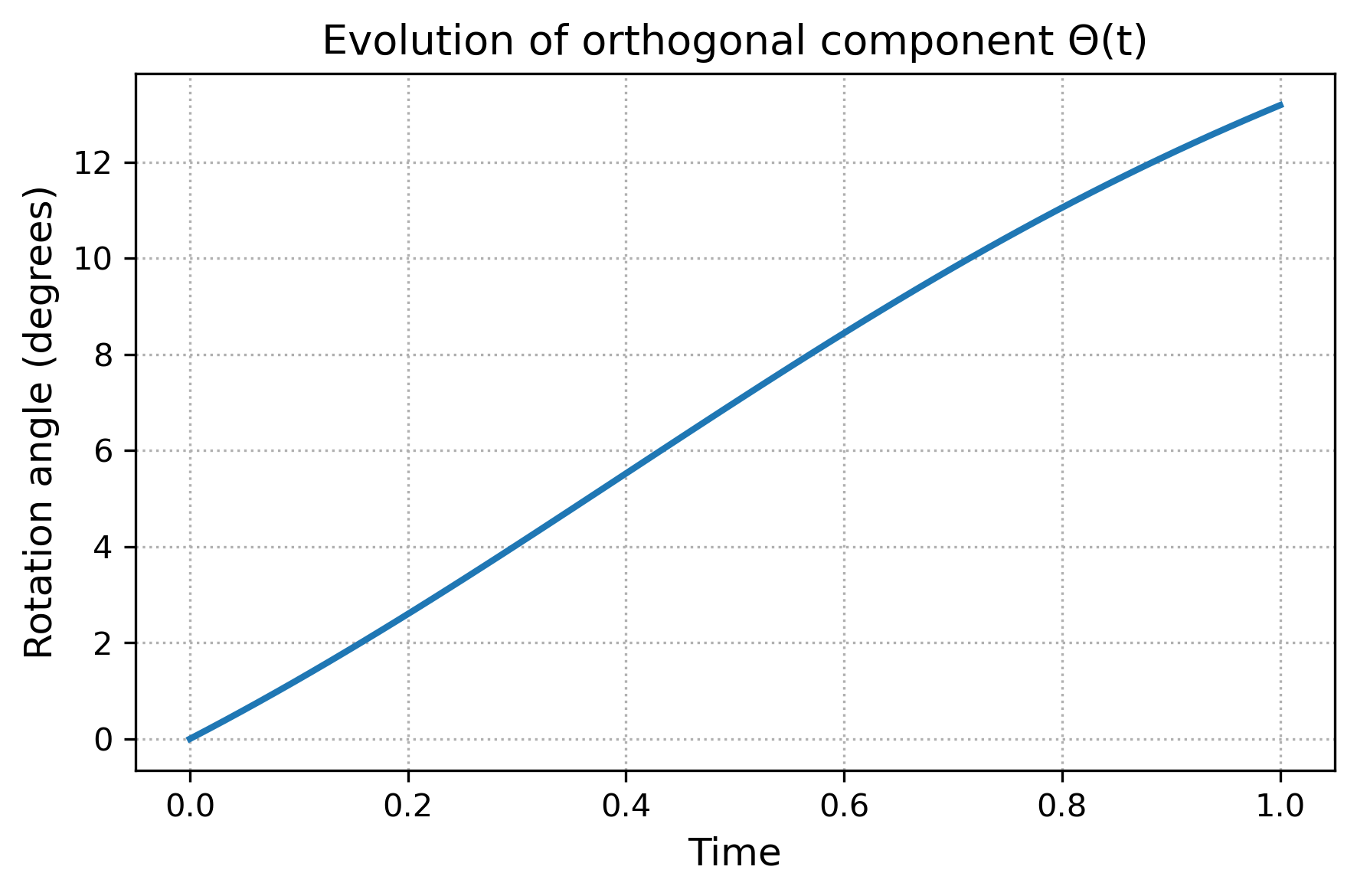}
    \caption{Evolution of the orthogonal component $\Theta_t$ (angle reconstructed from $E_t$).}
    \label{fig:mixed_angle}
\end{figure}

\paragraph{Discussion.}
In this mixed regime, the Modified Benamou–Brenier formulation
naturally decomposes the motion into a rigid rotational part
and a deformable transport component.
The additional skew-symmetric drift $E_t$ provides a compact and interpretable
representation of the global rotation,
while the advective velocity field accounts for the remaining deformation.
Compared with the classical BB formulation,
the MBB approach yields clearer density evolutions and reduced artificial diffusion,
without compromising convergence or mass conservation.
This experiment demonstrates the ability of MBB to extend classical optimal transport
to settings with intrinsic orthogonal structure,
while remaining consistent with BB in purely advective regimes.


\subsection{General remarks and reproducibility}
\label{subsec:generalremarkreproduc}
For each frozen outer field \(E^n\), the corresponding inner saddle problem is
approximated by performing a fixed number of PDHG iterations. Unless stated
otherwise, we use an inner iteration cap of five steps. The resulting inner
output is then used to update the skew-symmetric field, and all computations
reported in this section are run for a common outer horizon of \(500\)
iterations. This common iteration budget allows the MBB and BB formulations
to be compared over the same computational horizon.
During the computations, we monitor the outer iterate-difference
\[
    \|\rho^{n+1}-\rho^n\|_{L^2(I\times\Omega)}
    +
    \|m^{n+1}-m^n\|_{L^2(I\times\Omega)}
    +
    \|\phi^{n+1}-\phi^n\|_{H^1(I\times\Omega)}.
\]
This quantity is used as a diagnostic of stabilization rather than as the
stopping criterion for the experiments displayed here. Indeed, the MBB and BB
iterates need not reach a prescribed tolerance at the same outer iteration,
and the reported runs are therefore continued until the common outer cap.
The variables are initialized by setting
\[
    E^0=0,
    \qquad
    m^0=0,
\]
and taking \(\rho^0\) to be the linear interpolation between the prescribed
endpoint densities. The initial dual variable \(\phi^0\) is obtained from a
bootstrap solve of the dual update associated with this initial
density--momentum pair.

The primal--dual step sizes
\((\tau_\rho,\tau_m,\tau_\phi)\) are adapted numerically using an adaptive residual-balancing strategy following \cite{goldstein2013adaptive}. This adaptation strategy is used only in the implementation. In the local convergence analysis of
Sections~\ref{sec:abstractconv}--\ref{sec:conv}, the primal--dual step sizes are instead fixed and chosen to
satisfy the uniform stability condition Corollary \ref{cor:uniform-stability} and the outer step size \(\tau_E\) is chosen to satisfy Proposition \ref{prop:outer-inexact-descent} on the prescribed regularity
neighborhood. Numerically, the outer step size \(\tau_E\) is kept fixed within each
experiment.

In practice, the iterates typically enter a stable regime before the final
outer iteration. Once the primal--dual variables are close to equilibrium,
continued residual balancing and outer updates may produce small numerical
fluctuations without a visible improvement in the reconstructed transport.
Similar late-stage behaviour has been observed for adaptive primal--dual
methods when step sizes continue to evolve near equilibrium
\parencite[\S IV]{goldstein2013adaptive}.

The convergence analysis in Sections~\ref{sec:abstractconv}--\ref{sec:conv} uses a residual-based inner
acceptance rule with a summable tolerance sequence. The experiments implement
a practical finite-budget variant, based on a fixed inner iteration cap and a
fixed outer iteration horizon. Thus, the theoretical and computational
settings share the same alternating update structure but use different
controls on the inner accuracy.

\section{Discussion: Why only local convergence?}

The local convergence analysis developed in Sections \ref{sec:abstractconv}--\ref{sec:conv} assumes local regularity of the skew-symmetric field $E$. The following examples explain why such assumptions are unavoidable in general.

The examples are formulated in two dimensions on the ball \(\Omega=B_R(0)\subset\mathbb R^2\), although the same phenomena arise on \(\mathbb R^2\). Let $J\coloneqq
  \begin{pmatrix}
    0&-1\\
    1&0
  \end{pmatrix} $ and $ \mathfrak{so}(2)=\{\alpha J:\alpha\in\mathbb R\}$.
Since \(Jx\) is tangent to \(\partial B_R(0)\), the no-flux condition $(m+\rho E_t x)\cdot n=0$ is automatically satisfied for \(m=0\) and \(E_t=\alpha_tJ\).

\paragraph{1. Exact nonuniqueness for radial densities.}
Let \(\bar\rho(x)=\bar r(|x|)\) be a smooth radial probability density on \(\Omega\), and set $ \rho_0=\rho_1=\bar\rho$.
Consider the stationary path \(\rho_t=\bar\rho\) and choose \(m_t=0\). Then, for every \(\alpha\in L^2(0,1)\) and \(E_t=\alpha_tJ\), $K(\rho,m,E)=  \partial_t\rho+\nabla\cdot m+\nabla\cdot(\rho E_t x) =
  \nabla\cdot(\bar\rho\,\alpha_tJx)$.
Since \(\operatorname{tr}J=0\) and \(\nabla\bar\rho(x)\) is parallel to \(x\), while \(Jx\) is orthogonal to \(x\), we obtain
\begin{align*}
  \nabla\cdot(\bar\rho\,\alpha_tJx)
  =
  \alpha_t\nabla\bar\rho(x)\cdot Jx
  +
  \alpha_t\bar\rho(x)\operatorname{tr}J
  =0 .
\end{align*}
Thus \((\rho,m,E)\) is feasible for every \(\alpha\in L^2(0,1)\), while
\begin{align*}
  \mathcal J(\rho,m)
  =
  \frac12\int_0^1\int_\Omega\frac{|m|^2}{\rho}\dx\dt
  =0.
\end{align*}
Hence the minimizer is nonunique in the \(E\)-variable. In particular, the residual \(K(\rho,m,E)\) and the energy \(\mathcal J(\rho,m)\) do not control \(\|E\|_{L^2(0,1)}\). Therefore no estimate of the form
\begin{align*}
  \|E-E^\ast\|_{L^2(0,1)}
  \le
  C\bigl(\|K(\rho,m,E)\|+\text{optimality residuals}\bigr)
\end{align*}
can hold without an additional gauge condition or regularization. The corresponding linearized \(E\)-operator at \(\bar\rho\) is $A_{\bar\rho}\delta E  \coloneqq  \nabla\cdot(\bar\rho\,\delta E x)$.
The computation above gives
\begin{align*}
  \ker A_{\bar\rho}=\mathfrak{so}(2).
\end{align*}
Thus the linearized operator is not invertible in the \(E\)-direction at such configurations. In particular, no uniform local stability estimate with respect to \(E\) can hold.

\paragraph{2. A small perturbation destroys the exact kernel but creates ill-conditioning.}
The previous example is not merely a symmetric special case; it also indicates what happens near symmetric densities. Let \(\chi\in C^\infty_c(0,R)\) be nonzero and define, in polar coordinates \(x=(r,\theta)\),
\begin{align*}
  \rho_\varepsilon(r,\theta)
  \coloneqq
  \bar r(r)\bigl(1+\varepsilon\chi(r)\cos\theta\bigr),
  \qquad
  0<|\varepsilon|\ll1.
\end{align*}
For \(|\varepsilon|\) small enough, \(\rho_\varepsilon\) is positive and has the same total mass as \(\bar\rho\), since the angular average of \(\cos\theta\) vanishes. For \(E=\alpha J\), $A_{\rho_\varepsilon}(\alpha J) = \nabla\cdot(\rho_\varepsilon\alpha Jx) =  \alpha\nabla\rho_\varepsilon(x)\cdot Jx$. Since \(Jx=r e_\theta\) and
\[
\nabla\rho_\varepsilon = \partial_r\rho_\varepsilon e_r
+ r^{-1}\partial_\theta\rho_\varepsilon e_\theta,
\]
we have $\nabla\rho_\varepsilon(x)\cdot Jx =  \partial_\theta\rho_\varepsilon(r,\theta) = -\varepsilon\bar r(r)\chi(r)\sin\theta $.
Consequently,
\begin{align*}
  A_{\rho_\varepsilon}(\alpha J)
  =
  -\alpha\varepsilon\bar r(r)\chi(r)\sin\theta .
\end{align*}
Thus \(A_{\rho_\varepsilon}\) is injective on \(\mathfrak{so}(2)\) for every \(\varepsilon\neq0\), but its smallest singular value is of order \(|\varepsilon|\). Equivalently, the stability estimate has the form
\begin{align*}
  |\alpha|
  \le
  \frac{C}{|\varepsilon|}
  \|A_{\rho_\varepsilon}(\alpha J)\|.
\end{align*}
The constant blows up as \(\varepsilon\to0\).

This example shows that even when the kernel is trivial, the recovery of \(E\) can be ill-conditioned near symmetric configurations. Hence one should not expect stability estimates for \(E\) with constants that are uniform near such configurations.

\paragraph{3. Closed rotational loops and nonuniqueness of the time-dependent generator.}
We now point out a source of nonuniqueness arising from the time-dependent rotational gauge. Even if the instantaneous kernel is trivial for a nonradial density, the time-dependent field \(E_t\) is not determined by the endpoints and the action alone. Let \((\rho,m,E)\) be any feasible triple, and let \(R_t\in SO(2)\) be a smooth closed rotation loop satisfying $ R_0=R_1=I$. Define
\begin{align*}
  \widetilde\rho_t=(R_t)_\#\rho_t,\qquad
  \widetilde m_t(y)=R_t m_t(R_t^\top y),\qquad
  \widetilde E_t=\dot R_tR_t^\top+R_tE_tR_t^\top .
\end{align*}
Then \(\widetilde\rho_0=\rho_0\) and \(\widetilde\rho_1=\rho_1\). Moreover, the transformed triple satisfies
\begin{align*}
  \partial_t\widetilde\rho+\nabla\cdot\widetilde m
  +\nabla\cdot(\widetilde\rho\,\widetilde E_t y)
  =0.
\end{align*}
The action is unchanged:
\begin{align*}
  \frac12\int_0^1\int_\Omega
  \frac{|\widetilde m_t(y)|^2}{\widetilde\rho_t(y)}
  \dy\dt
  =
  \frac12\int_0^1\int_\Omega
  \frac{|m_t(x)|^2}{\rho_t(x)}
  \dx\dt .
\end{align*}
Thus a closed rotational loop changes the intermediate density path and the generator \(E_t\), but it does not change the endpoints or the value of the action. In particular, take any nonradial density \(\rho_\varepsilon\) and set $\rho_0=\rho_1=\rho_\varepsilon$. The stationary path \(\rho_t=\rho_\varepsilon\), \(m_t=0\), \(E_t=0\), has zero action. However, for any closed rotation loop \(R_t\), the path
\begin{align*}
  \widetilde\rho_t=(R_t)_\#\rho_\varepsilon,\qquad
  \widetilde m_t=0,\qquad
  \widetilde E_t=\dot R_tR_t^\top
\end{align*}
also has zero action and the same endpoints. Hence even a nonradial density, for which the instantaneous kernel may be trivial, does not determine a unique time-dependent field \(E_t\).

\paragraph{Conclusion.} The first example exhibits exact nonuniqueness through the existence of a nontrivial kernel of the linearized \(E\)-operator. The second shows that this kernel may disappear under arbitrarily small perturbations, while the associated stability constants become arbitrarily large. The third shows that, even when the instantaneous kernel is trivial, the time-dependent generator still has a gauge-type nonuniqueness.

Consequently, one cannot expect global identification or a uniform stability estimate with respect to a fixed representative \(E^\star\) in the unregularized formulation. These examples therefore motivate the use of a local saddle branch around a regular reference solution, as in Sections~\ref{sec:abstractconv}--\ref{sec:conv}. They do not exclude convergence to a set of equivalent stationary solutions.

\printbibliography

@article{shen2026penalty,
  title={Penalty-Based First-Order Methods for Bilevel Optimization with Minimax and Constrained Lower-Level Problems},
  author={Shen, Yiyang and He, Yutian and Wang, Weiran and Lin, Qihang},
  journal={arXiv preprint arXiv:2605.08006},
  year={2026}
}

@article{solla2026optimistic,
  title={Optimistic Bilevel Optimization with Composite Lower-Level Problem},
  author={Solla, Mattia and Royset, Johannes O},
  journal={arXiv preprint arXiv:2602.05417},
  year={2026}
}

@inproceedings{yao2025overcoming,
  title={Overcoming lower-level constraints in bilevel optimization: A novel approach with regularized gap functions},
  author={Yao, Wei and Yin, Haian and Zeng, Shangzhi and Zhang, Jin},
  booktitle={International Conference on Learning Representations},
  volume={2025},
  pages={55516--55549},
  year={2025}
}

@article{jiang2024primal,
  title={A Primal-Dual-Assisted Penalty Approach to Bilevel Optimization with Coupled Constraints},
  author={Jiang, Liuyuan and Xiao, Quan and Tenorio, Victor and Real-Rojas, Fernando and Marques, Antonio G and Chen, Tianyi},
  journal={Advances in Neural Information Processing Systems},
  volume={37},
  pages={95026--95066},
  year={2024}
}

@book{rockafellar1997convex,
  title={Convex analysis},
  author={Rockafellar, R Tyrrell},
  volume={28},
  year={1997},
  publisher={Princeton University press}
}

@book{nocedal2006numerical,
  title={Numerical optimization},
  author={Nocedal, Jorge and Wright, Stephen J},
  year={2006},
  publisher={Springer}
}

@book{dontchev2009implicit,
  title={Implicit functions and solution mappings},
  author={Dontchev, Asen L and Rockafellar, R Tyrrell},
  volume={543},
  year={2009},
  publisher={Springer}
}

@book{villani2009optimal,
  title={Optimal transport: old and new},
  author={Villani, C{\'e}dric},
  volume={338},
  year={2009},
  publisher={Springer}
}

@article{benamou2000computational,
  title={A computational fluid mechanics solution to the Monge-Kantorovich mass transfer problem},
  author={Benamou, Jean-David and Brenier, Yann},
  journal={Numerische Mathematik},
  volume={84},
  number={3},
  pages={375--393},
  year={2000},
  publisher={Springer-Verlag Berlin/Heidelberg}
}

@article{dowson1982frechet,
  title={The Fr{\'e}chet distance between multivariate normal distributions},
  author={Dowson, DC and Landau, BV666017},
  journal={Journal of multivariate analysis},
  volume={12},
  number={3},
  pages={450--455},
  year={1982},
  publisher={Elsevier}
}

@inproceedings{alvarez2019towards,
  title={Towards optimal transport with global invariances},
  author={Alvarez-Melis, David and Jegelka, Stefanie and Jaakkola, Tommi S},
  booktitle={The 22nd International Conference on Artificial Intelligence and Statistics},
  pages={1870--1879},
  year={2019},
  organization={PMLR}
}

@article{peyre2019computational,
  title={Computational optimal transport: With applications to data science},
  author={Peyr{\'e}, Gabriel and Cuturi, Marco and others},
  journal={Foundations and Trends{\textregistered} in Machine Learning},
  volume={11},
  number={5-6},
  pages={355--607},
  year={2019},
  publisher={Now Publishers, Inc.}
}

@book{horn1994topics,
  title={Topics in matrix analysis},
  author={Horn, Roger A and Johnson, Charles R},
  year={1994},
  publisher={Cambridge University press}
}

@article{kloeckner2010geometric,
  title={A geometric study of {W}asserstein spaces: Euclidean spaces},
  author={Kloeckner, Beno{\^\i}t},
  journal={Annali della Scuola Normale Superiore di Pisa-Classe di Scienze},
  volume={9},
  number={2},
  pages={297--323},
  year={2010}
}

@book{burago2001course,
    AUTHOR = {Burago, Dmitri and Burago, Yuri and Ivanov, Sergei},
     TITLE = {A course in metric geometry},
    SERIES = {Graduate Studies in Mathematics},
    VOLUME = {33},
 PUBLISHER = {American Mathematical Society, Providence, RI},
      YEAR = {2001},
     PAGES = {xiv+415},
      ISBN = {0-8218-2129-6},
   MRCLASS = {53C23},
  MRNUMBER = {1835418},
MRREVIEWER = {Mario\ Bonk},
       DOI = {10.1090/gsm/033},
       URL = {https://doi.org/10.1090/gsm/033},
}

@book{do1992riemannian,
  title={Riemannian geometry},
  author={Do Carmo, Manfredo Perdigao and Flaherty Francis, J},
  volume={2},
  year={1992},
  publisher={Springer}
}

@incollection{gallot2004differential,
  title={Differential manifolds},
  author={Gallot, Sylvestre and Hulin, Dominique and Lafontaine, Jacques},
  booktitle={Riemannian Geometry},
  pages={1--49},
  year={2004},
  publisher={Springer}
}

@inproceedings{grave2019unsupervised,
  title={Unsupervised alignment of embeddings with {W}asserstein procrustes},
  author={Grave, Edouard and Joulin, Armand and Berthet, Quentin},
  booktitle={The 22nd International Conference on Artificial Intelligence and Statistics},
  pages={1880--1890},
  year={2019},
  organization={PMLR}
}

@article{burger2025covariance,
  title={Covariance-modulated optimal transport and gradient flows},
  author={Burger, Martin and Erbar, Matthias and Hoffmann, Franca and Matthes, Daniel and Schlichting, Andr{\'e}},
  journal={Archive for Rational Mechanics and Analysis},
  volume={249},
  number={1},
  pages={},
  year={2025},
  publisher={Springer}
}

@book{ambrosio2005gradient,
    AUTHOR = {Ambrosio, Luigi and Gigli, Nicola and Savar\'{e}, Giuseppe},
     TITLE = {Gradient flows in metric spaces and in the space of
              probability measures},
    SERIES = {Lectures in Mathematics ETH Z\"{u}rich},
   EDITION = {Second},
 PUBLISHER = {Birkh\"{a}user Verlag, Basel},
      YEAR = {2008},
     PAGES = {x+334},
      ISBN = {978-3-7643-8721-1},
   MRCLASS = {49-02 (28A33 35K55 35K90 49Q20 60B05)},
  MRNUMBER = {2401600},
MRREVIEWER = {Pietro\ Celada},
}

@inproceedings{adamo2025depth,
  title={An in depth look at the Procrustes-{W}asserstein distance: properties and barycenters},
  author={Adamo, Davide and Corneli, Marco and Vuillien, Manon and Vila, Emmanuelle},
  booktitle={International Conference on Machine Learning},
  pages={444--459},
  year={2025},
  organization={PMLR}
}

@article{chambolle2011first,
  title={A first-order primal-dual algorithm for convex problems with applications to imaging},
  author={Chambolle, Antonin and Pock, Thomas},
  journal={Journal of mathematical imaging and vision},
  volume={40},
  number={1},
  pages={120--145},
  year={2011},
  publisher={Springer}
}

@article{courty2016optimal,
  title={Optimal transport for domain adaptation},
  author={Courty, Nicolas and Flamary, R{\'e}mi and Tuia, Devis and Rakotomamonjy, Alain},
  journal={IEEE transactions on pattern analysis and machine intelligence},
  volume={39},
  number={9},
  pages={1853--1865},
  year={2016},
  publisher={IEEE}
}

@inproceedings{arjovsky2017wasserstein,
  title={{W}asserstein generative adversarial networks},
  author={Arjovsky, Martin and Chintala, Soumith and Bottou, L{\'e}on},
  booktitle={International conference on machine learning},
  pages={214--223},
  year={2017},
  organization={PMLR}
}

@article{chambolle2016ergodic,
  title={On the ergodic convergence rates of a first-order primal--dual algorithm},
  author={Chambolle, Antonin and Pock, Thomas},
  journal={Mathematical Programming},
  volume={159},
  number={1},
  pages={253--287},
  year={2016},
  publisher={Springer}
}

@article{elamvazhuthi2023dynamical,
  title={Dynamical optimal transport of nonlinear control-affine systems},
  author={Elamvazhuthi, Karthik and Liu, Siting and Li, Wuchen and Osher, Stanley},
  journal={Journal of Computational Dynamics},
  volume={10},
  number={4},
  pages={425--449},
  year={2023},
  publisher={Journal of Computational Dynamics}
}

@article{cuturi2013sinkhorn,
  title={Sinkhorn distances: Lightspeed computation of optimal transport},
  author={Cuturi, Marco},
  journal={Advances in neural information processing systems},
  volume={26},
  year={2013}
}

@article{otto2001geometry,
  title={The geometry of dissipative evolution equations: The porous medium equation},
  author={Otto, F},
  journal={Communications in Partial Differential Equations},
  volume={26},
  number={1-2},
  pages={101--174},
  year={2001},
  publisher={Marcel Dekker Inc.}
}

@article{chen2016optimal,
  title={Optimal transport over a linear dynamical system},
  author={Chen, Yongxin and Georgiou, Tryphon T and Pavon, Michele},
  journal={IEEE Transactions on Automatic Control},
  volume={62},
  number={5},
  pages={2137--2152},
  year={2016},
  publisher={IEEE}
}

@book{agrachev2013control,
    AUTHOR = {Agrachev, Andrei A. and Sachkov, Yuri L.},
     TITLE = {Control theory from the geometric viewpoint},
    SERIES = {Encyclopaedia of Mathematical Sciences},
    VOLUME = {87},
      NOTE = {Control Theory and Optimization, II},
 PUBLISHER = {Springer-Verlag, Berlin},
      YEAR = {2004},
     PAGES = {xiv+412},
      ISBN = {3-540-21019-9},
   MRCLASS = {93-02 (49-02 49K15 93B27 93C15)},
  MRNUMBER = {2062547},
MRREVIEWER = {Kevin\ A.\ Grasse},
       DOI = {10.1007/978-3-662-06404-7},
       URL = {https://doi.org/10.1007/978-3-662-06404-7},
}

@article{agrachev2009optimal,
  title={Optimal transportation under nonholonomic constraints},
  author={Agrachev, Andrei and Lee, Paul},
  journal={Transactions of the American Mathematical Society},
  volume={361},
  number={11},
  pages={6019--6047},
  year={2009}
}

@article{jacobs2019solving,
  title={Solving large-scale optimization problems with a convergence rate independent of grid size},
  author={Jacobs, Matt and L{\'e}ger, Flavien and Li, Wuchen and Osher, Stanley},
  journal={SIAM Journal on Numerical Analysis},
  volume={57},
  number={3},
  pages={1100--1123},
  year={2019},
  publisher={SIAM}
}

@article{goldstein2013adaptive,
  title={Adaptive primal-dual hybrid gradient methods for saddle-point problems},
  author={Goldstein, Tom and Li, Min and Yuan, Xiaoming and Esser, Ernie and Baraniuk, Richard},
  journal={arXiv preprint arXiv:1305.0546},
  year={2013}
}

@book{coddington1955theory,
  title={Theory of ordinary differential equations},
  author={Coddington, Earl A and Levinson, Norman},
  year={1955},
  publisher={McGraw-Hill New York}
}

@inproceedings{pock2011diagonal,
  title={Diagonal preconditioning for first order primal-dual algorithms in convex optimization},
  author={Pock, Thomas and Chambolle, Antonin},
  booktitle={2011 International Conference on Computer Vision},
  pages={1762--1769},
  year={2011},
  organization={IEEE}
}

@book{nesterov2018lectures,
  title={Lectures on convex optimization},
  author={Nesterov, Yurii and others},
  volume={137},
  year={2018},
  publisher={Springer}
}

@book{bonnans2013perturbation,
  title={Perturbation analysis of optimization problems},
  author={Bonnans, J Fr{\'e}d{\'e}ric and Shapiro, Alexander},
  year={2013},
  publisher={Springer Science \& Business Media}
}

@article{tseng2001convergence,
  title={Convergence of a block coordinate descent method for nondifferentiable minimization},
  author={Tseng, Paul},
  journal={Journal of optimization theory and applications},
  volume={109},
  number={3},
  pages={475--494},
  year={2001},
  publisher={Springer}
}

@article{razaviyayn2013unified,
  title={A unified convergence analysis of block successive minimization methods for nonsmooth optimization},
  author={Razaviyayn, Meisam and Hong, Mingyi and Luo, Zhi-Quan},
  journal={SIAM Journal on Optimization},
  volume={23},
  number={2},
  pages={1126--1153},
  year={2013},
  publisher={SIAM}
}

@article{brandt1977multi,
  title={Multi-level adaptive solutions to boundary-value problems},
  author={Brandt, Achi},
  journal={Mathematics of computation},
  volume={31},
  number={138},
  pages={333--390},
  year={1977}
}

@article{nash2000multigrid,
  title={A multigrid approach to discretized optimization problems},
  author={Nash, Stephen G},
  journal={Optimization Methods and Software},
  volume={14},
  number={1-2},
  pages={99--116},
  year={2000},
  publisher={Taylor \& Francis}
}

@article{bertsekas2000gradient,
  title={Gradient convergence in gradient methods with errors},
  author={Bertsekas, Dimitri P and Tsitsiklis, John N},
  journal={SIAM Journal on Optimization},
  volume={10},
  number={3},
  pages={627--642},
  year={2000},
  publisher={SIAM}
}

@article{toukam2025procrustes,
  title={Procrustes {W}asserstein metric: A modified benamou-brenier approach with applications to latent gaussian distributions},
  author={Toukam, Kevine Meugang},
  journal={arXiv preprint arXiv:2503.16580},
  year={2025}
}

@article{eymard2000finite,
  title={Finite volume methods},
  author={Eymard, Robert and Gallou{\"e}t, Thierry and Herbin, Rapha{\`e}le},
  journal={Handbook of numerical analysis},
  volume={7},
  pages={713--1018},
  year={2000},
  publisher={Elsevier}
}

@book{leveque2002finite,
  title={Finite volume methods for hyperbolic problems},
  author={LeVeque, Randall J},
  volume={31},
  year={2002},
  publisher={Cambridge university press}
}

@book{ekeland1999convex,
  title={Convex analysis and variational problems},
  author={Ekeland, Ivar and Temam, Roger},
  year={1999},
  publisher={SIAM}
}
\end{document}